\def\section{\@startsection{section}{1}\z@{.9\linespacing\@plus\linespacing}%
  {.7\linespacing} {\fontsize{13}{15}\selectfont\scshape\centering}}
\def\paragraph{\@startsection{paragraph}{4}%
  \z@{0.3em}{-.5em}%
  {$\bullet$ \ \normalfont\itshape}}
\newtheorem{theorem}{Theorem}[section]
\newtheorem{prop}[theorem]{Proposition}
\newtheorem{lem}[theorem]{Lemma}
\theoremstyle{definition}
\newtheorem{notation}[theorem]{Notation}
\theoremstyle{remark}
\newtheorem{remark}[theorem]{Remark}
\definecolor{gr}{rgb}   {0.,   0.69,   0.23 }
\definecolor{bl}{rgb}   {0.,   0.5,   1. }
\definecolor{mg}{rgb}   {0.85,  0.,    0.85}
\definecolor{yl}{rgb}   {0.8,  0.7,   0.}
\definecolor{webred}{rgb}{0.75,0,0}
\definecolor{webgreen}{rgb}{0,0.75,0}
\newcommand{\la}{\lambda}
\newcommand{\eps}{\varepsilon}
\newcommand{\Om}{\Omega}
\newcommand{\dr}{\partial}
\newcommand{\N}{\mathbb{N}}
\newcommand{\R}{\mathbb{R}}
\newcommand{\BO}{\mathsf{BO}}
\newcommand{\lef}{\mathsf{lef}}
\newcommand{\ri}{\mathsf{rig}}
\newcommand{\ess}{\mathsf{ess}}
\newcommand{\dis}{\mathsf{dis}}
\newcommand{\app}{\mathsf{app}}
\newcommand{\dist}{\mathrm{dist}}
\newcommand{\inff}{\mathop{\operatorname{\vphantom{p}inf}}}
\newcommand{\toy}{\mathsf{toy}}
\newcommand{\Tri}{\mathsf{Tri}}
\newcommand{\Rec}{\mathsf{Rec}}
\newcommand{\Gui}{\mathsf{Gui}}
\newcommand{\Hst}{\mathsf{Hst}}
\newcommand{\Stlef}{\mathsf{Hlef}}
\newcommand{\Stri}{\mathsf{Hrig}}
\newcommand{\sing}{\mathsf{sing}}
\newcommand{\Dir}{\mathsf{Dir}}
\newcommand{\Mix}{\mathsf{Mix}}
\newcommand{\D}{\mathcal{D}}
\renewcommand{\H}{\mathcal{H}}
\newcommand{\OO}{\mathcal{O}}
\renewcommand{\L}{\mathcal{L}}
\newcommand{\cN}{\mathcal{N}}
\newcommand{\di}{\displaystyle}
\newcommand{\A}{\mathsf{A}}
\newcommand{\Dom}{\mathsf{Dom}}
\newcommand\got[1]{{\bm{\mathfrak{#1}}}}
\newcommand\EN{\got{E}_{N_0}(h)}
\newcommand\gS{\mathfrak{S}}
\newcommand{\Id} {\mathrm{Id}}
\title[Plane waveguides with corners in the small angle limit]
{\large Plane waveguides with corners in the\\[1ex] small angle limit}
\author[Monique Dauge and Nicolas Raymond]{\small Monique Dauge and Nicolas Raymond}
\date{\today}
\address{Laboratoire IRMAR, UMR 6625 du CNRS,
Campus de Beaulieu
35042 Rennes cedex, France}
\email{monique.dauge@univ-rennes1.fr}
\urladdr{http://perso.univ-rennes1.fr/monique.dauge/}
\email{nicolas.raymond@univ-rennes1.fr}
\urladdr{http://perso.univ-rennes1.fr/nicolas.raymond/}
\keywords{Discrete spectrum, Semi-classical limit, Born-Oppenheimer approximation, Quasimode, Agmon estimates}
\subjclass{35}
\begin{document}
%
%
%
%
\ifx\figforTeXisloaded\relax \else\global\let\figforTeXisloaded=\relax\fi
\message{version 1.8.4}
\catcode`\@=11
\ifx\ctr@ln@m\undefined\else%
    \immediate\write16{*** Fig4TeX WARNING : \string\ctr@ln@m\space already defined.}\fi
\def\ctr@ln@m#1{\ifx#1\undefined\else%
    \immediate\write16{*** Fig4TeX WARNING : \string#1 already defined.}\fi}
\ctr@ln@m\ctr@ld@f
\def\ctr@ld@f#1#2{\ctr@ln@m#2#1#2}
\ctr@ld@f\def\ctr@ln@w#1#2{\ctr@ln@m#2\csname#1\endcsname#2}
{\catcode`\/=0 \catcode`/\=12 /ctr@ld@f/gdef/BS@{\}}
\ctr@ld@f\def\ctr@lcsn@m#1{\expandafter\ifx\csname#1\endcsname\relax\else%
    \immediate\write16{*** Fig4TeX WARNING : \BS@\expandafter\string#1\space already defined.}\fi}
\ctr@ld@f\edef\colonc@tcode{\the\catcode`\:}
\ctr@ld@f\edef\semicolonc@tcode{\the\catcode`\;}
\ctr@ld@f\def\t@stc@tcodech@nge{{\let\c@tcodech@nged=\z@%
    \ifnum\colonc@tcode=\the\catcode`\:\else\let\c@tcodech@nged=\@ne\fi%
    \ifnum\semicolonc@tcode=\the\catcode`\;\else\let\c@tcodech@nged=\@ne\fi%
    \ifx\c@tcodech@nged\@ne%
    \immediate\write16{}
    \immediate\write16{!!!=============================================================!!!}
    \immediate\write16{ Fig4TeX WARNING :}
    \immediate\write16{ The category code of some characters has been changed, which will}
    \immediate\write16{ result in an error (message "Runaway argument?").}
    \immediate\write16{ This probably comes from another package that changed the category}
    \immediate\write16{ code after Fig4TeX was loaded. If that proves to be exact, the}
    \immediate\write16{ solution is to exchange the loading commands on top of your file}
    \immediate\write16{ so that Fig4TeX is loaded last. For example, in LaTeX, we should}
    \immediate\write16{ say :}
    \immediate\write16{\BS@ usepackage[french]{babel}}
    \immediate\write16{\BS@ usepackage{fig4tex}}
    \immediate\write16{!!!=============================================================!!!}
    \immediate\write16{}
    \fi}}
\ctr@ld@f\def\FigforTeX{F\kern-.05em i\kern-.05em g\kern-.1em\raise-.14em\hbox{4}\kern-.19em\TeX}
\ctr@ln@w{newdimen}\epsil@n\epsil@n=0.00005pt
\ctr@ln@w{newdimen}\Cepsil@n\Cepsil@n=0.005pt
\ctr@ln@w{newdimen}\dcq@\dcq@=254pt
\ctr@ln@w{newdimen}\PI@\PI@=3.141592pt
\ctr@ln@w{newdimen}\DemiPI@deg\DemiPI@deg=90pt
\ctr@ln@w{newdimen}\PI@deg\PI@deg=180pt
\ctr@ln@w{newdimen}\DePI@deg\DePI@deg=360pt
\ctr@ld@f\chardef\t@n=10
\ctr@ld@f\chardef\c@nt=100
\ctr@ld@f\chardef\@lxxiv=74
\ctr@ld@f\chardef\@xci=91
\ctr@ld@f\mathchardef\@nMnCQn=9949
\ctr@ld@f\chardef\@vi=6
\ctr@ld@f\chardef\@xxx=30
\ctr@ld@f\chardef\@lvi=56
\ctr@ld@f\chardef\@@lxxi=71
\ctr@ld@f\chardef\@lxxxv=85
\ctr@ld@f\mathchardef\@@mmmmlxviii=4068
\ctr@ld@f\mathchardef\@ccclx=360
\ctr@ld@f\mathchardef\@dccxx=720
\ctr@ln@w{newcount}\p@rtent \ctr@ln@w{newcount}\f@ctech \ctr@ln@w{newcount}\result@tent
\ctr@ln@w{newdimen}\v@lmin \ctr@ln@w{newdimen}\v@lmax \ctr@ln@w{newdimen}\v@leur
\ctr@ln@w{newdimen}\result@t\ctr@ln@w{newdimen}\result@@t
\ctr@ln@w{newdimen}\mili@u \ctr@ln@w{newdimen}\c@rre \ctr@ln@w{newdimen}\delt@
\ctr@ld@f\def\degT@rd{0.017453 }  
\ctr@ld@f\def\rdT@deg{57.295779 } 
\ctr@ln@m\v@leurseule
{\catcode`p=12 \catcode`t=12 \gdef\v@leurseule#1pt{#1}}
\ctr@ld@f\def\repdecn@mb#1{\expandafter\v@leurseule\the#1\space}
\ctr@ld@f\def\arct@n#1(#2,#3){{\v@lmin=#2\v@lmax=#3%
    \maxim@m{\mili@u}{-\v@lmin}{\v@lmin}\maxim@m{\c@rre}{-\v@lmax}{\v@lmax}%
    \delt@=\mili@u\m@ech\mili@u%
    \ifdim\c@rre>\@nMnCQn\mili@u\divide\v@lmax\tw@\c@lATAN\v@leur(\z@,\v@lmax)
    \else%
    \maxim@m{\mili@u}{-\v@lmin}{\v@lmin}\maxim@m{\c@rre}{-\v@lmax}{\v@lmax}%
    \m@ech\c@rre%
    \ifdim\mili@u>\@nMnCQn\c@rre\divide\v@lmin\tw@
    \maxim@m{\mili@u}{-\v@lmin}{\v@lmin}\c@lATAN\v@leur(\mili@u,\z@)%
    \else\c@lATAN\v@leur(\delt@,\v@lmax)\fi\fi%
    \ifdim\v@lmin<\z@\v@leur=-\v@leur\ifdim\v@lmax<\z@\advance\v@leur-\PI@%
    \else\advance\v@leur\PI@\fi\fi%
    \global\result@t=\v@leur}#1=\result@t}
\ctr@ld@f\def\m@ech#1{\ifdim#1>1.646pt\divide\mili@u\t@n\divide\c@rre\t@n\m@ech#1\fi}
\ctr@ld@f\def\c@lATAN#1(#2,#3){{\v@lmin=#2\v@lmax=#3\v@leur=\z@\delt@=\tw@ pt%
    \un@iter{0.785398}{\v@lmax<}%
    \un@iter{0.463648}{\v@lmax<}%
    \un@iter{0.244979}{\v@lmax<}%
    \un@iter{0.124355}{\v@lmax<}%
    \un@iter{0.062419}{\v@lmax<}%
    \un@iter{0.031240}{\v@lmax<}%
    \un@iter{0.015624}{\v@lmax<}%
    \un@iter{0.007812}{\v@lmax<}%
    \un@iter{0.003906}{\v@lmax<}%
    \un@iter{0.001953}{\v@lmax<}%
    \un@iter{0.000976}{\v@lmax<}%
    \un@iter{0.000488}{\v@lmax<}%
    \un@iter{0.000244}{\v@lmax<}%
    \un@iter{0.000122}{\v@lmax<}%
    \un@iter{0.000061}{\v@lmax<}%
    \un@iter{0.000030}{\v@lmax<}%
    \un@iter{0.000015}{\v@lmax<}%
    \global\result@t=\v@leur}#1=\result@t}
\ctr@ld@f\def\un@iter#1#2{%
    \divide\delt@\tw@\edef\dpmn@{\repdecn@mb{\delt@}}%
    \mili@u=\v@lmin%
    \ifdim#2\z@%
      \advance\v@lmin-\dpmn@\v@lmax\advance\v@lmax\dpmn@\mili@u%
      \advance\v@leur-#1pt%
    \else%
      \advance\v@lmin\dpmn@\v@lmax\advance\v@lmax-\dpmn@\mili@u%
      \advance\v@leur#1pt%
    \fi}
\ctr@ld@f\def\c@ssin#1#2#3{\expandafter\ifx\csname COS@\number#3\endcsname\relax\c@lCS{#3pt}%
    \expandafter\xdef\csname COS@\number#3\endcsname{\repdecn@mb\result@t}%
    \expandafter\xdef\csname SIN@\number#3\endcsname{\repdecn@mb\result@@t}\fi%
    \edef#1{\csname COS@\number#3\endcsname}\edef#2{\csname SIN@\number#3\endcsname}}
\ctr@ld@f\def\c@lCS#1{{\mili@u=#1\p@rtent=\@ne%
    \relax\ifdim\mili@u<\z@\red@ng<-\else\red@ng>+\fi\f@ctech=\p@rtent%
    \relax\ifdim\mili@u<\z@\mili@u=-\mili@u\f@ctech=-\f@ctech\fi\c@@lCS}}
\ctr@ld@f\def\c@@lCS{\v@lmin=\mili@u\c@rre=-\mili@u\advance\c@rre\DemiPI@deg\v@lmax=\c@rre%
    \mili@u\@@lxxi\mili@u\divide\mili@u\@@mmmmlxviii%
    \edef\v@larg{\repdecn@mb{\mili@u}}\mili@u=-\v@larg\mili@u%
    \edef\v@lmxde{\repdecn@mb{\mili@u}}%
    \c@rre\@@lxxi\c@rre\divide\c@rre\@@mmmmlxviii%
    \edef\v@largC{\repdecn@mb{\c@rre}}\c@rre=-\v@largC\c@rre%
    \edef\v@lmxdeC{\repdecn@mb{\c@rre}}%
    \fctc@s\mili@u\v@lmin\global\result@t\p@rtent\v@leur%
    \let\t@mp=\v@larg\let\v@larg=\v@largC\let\v@largC=\t@mp%
    \let\t@mp=\v@lmxde\let\v@lmxde=\v@lmxdeC\let\v@lmxdeC=\t@mp%
    \fctc@s\c@rre\v@lmax\global\result@@t\f@ctech\v@leur}
\ctr@ld@f\def\fctc@s#1#2{\v@leur=#1\relax\ifdim#2<\@lxxxv\p@\cosser@h\else\sinser@t\fi}
\ctr@ld@f\def\cosser@h{\advance\v@leur\@lvi\p@\divide\v@leur\@lvi%
    \v@leur=\v@lmxde\v@leur\advance\v@leur\@xxx\p@%
    \v@leur=\v@lmxde\v@leur\advance\v@leur\@ccclx\p@%
    \v@leur=\v@lmxde\v@leur\advance\v@leur\@dccxx\p@\divide\v@leur\@dccxx}
\ctr@ld@f\def\sinser@t{\v@leur=\v@lmxdeC\p@\advance\v@leur\@vi\p@%
    \v@leur=\v@largC\v@leur\divide\v@leur\@vi}
\ctr@ld@f\def\red@ng#1#2{\relax\ifdim\mili@u#1#2\DemiPI@deg\advance\mili@u#2-\PI@deg%
    \p@rtent=-\p@rtent\red@ng#1#2\fi}
\ctr@ld@f\def\pr@c@lCS#1#2#3{\ctr@lcsn@m{COS@\number#3 }%
    \expandafter\xdef\csname COS@\number#3\endcsname{#1}%
    \expandafter\xdef\csname SIN@\number#3\endcsname{#2}}
\pr@c@lCS{1}{0}{0}
\pr@c@lCS{0.7071}{0.7071}{45}\pr@c@lCS{0.7071}{-0.7071}{-45}
\pr@c@lCS{0}{1}{90}          \pr@c@lCS{0}{-1}{-90}
\pr@c@lCS{-1}{0}{180}        \pr@c@lCS{-1}{0}{-180}
\pr@c@lCS{0}{-1}{270}        \pr@c@lCS{0}{1}{-270}
\ctr@ld@f\def\invers@#1#2{{\v@leur=#2\maxim@m{\v@lmax}{-\v@leur}{\v@leur}%
    \f@ctech=\@ne\m@inv@rs%
    \multiply\v@leur\f@ctech\edef\v@lv@leur{\repdecn@mb{\v@leur}}%
    \p@rtentiere{\p@rtent}{\v@leur}\v@lmin=\p@\divide\v@lmin\p@rtent%
    \inv@rs@\multiply\v@lmax\f@ctech\global\result@t=\v@lmax}#1=\result@t}
\ctr@ld@f\def\m@inv@rs{\ifdim\v@lmax<\p@\multiply\v@lmax\t@n\multiply\f@ctech\t@n\m@inv@rs\fi}
\ctr@ld@f\def\inv@rs@{\v@lmax=-\v@lmin\v@lmax=\v@lv@leur\v@lmax%
    \advance\v@lmax\tw@ pt\v@lmax=\repdecn@mb{\v@lmin}\v@lmax%
    \delt@=\v@lmax\advance\delt@-\v@lmin\ifdim\delt@<\z@\delt@=-\delt@\fi%
    \ifdim\delt@>\epsil@n\v@lmin=\v@lmax\inv@rs@\fi}
\ctr@ld@f\def\minim@m#1#2#3{\relax\ifdim#2<#3#1=#2\else#1=#3\fi}
\ctr@ld@f\def\maxim@m#1#2#3{\relax\ifdim#2>#3#1=#2\else#1=#3\fi}
\ctr@ld@f\def\p@rtentiere#1#2{#1=#2\divide#1by65536 }
\ctr@ld@f\def\r@undint#1#2{{\v@leur=#2\divide\v@leur\t@n\p@rtentiere{\p@rtent}{\v@leur}%
    \v@leur=\p@rtent pt\global\result@t=\t@n\v@leur}#1=\result@t}
\ctr@ld@f\def\sqrt@#1#2{{\v@leur=#2%
    \minim@m{\v@lmin}{\p@}{\v@leur}\maxim@m{\v@lmax}{\p@}{\v@leur}%
    \f@ctech=\@ne\m@sqrt@\sqrt@@%
    \mili@u=\v@lmin\advance\mili@u\v@lmax\divide\mili@u\tw@\multiply\mili@u\f@ctech%
    \global\result@t=\mili@u}#1=\result@t}
\ctr@ld@f\def\m@sqrt@{\ifdim\v@leur>\dcq@\divide\v@leur\c@nt\v@lmax=\v@leur%
    \multiply\f@ctech\t@n\m@sqrt@\fi}
\ctr@ld@f\def\sqrt@@{\mili@u=\v@lmin\advance\mili@u\v@lmax\divide\mili@u\tw@%
    \c@rre=\repdecn@mb{\mili@u}\mili@u%
    \ifdim\c@rre<\v@leur\v@lmin=\mili@u\else\v@lmax=\mili@u\fi%
    \delt@=\v@lmax\advance\delt@-\v@lmin\ifdim\delt@>\epsil@n\sqrt@@\fi}
\ctr@ld@f\def\extrairelepremi@r#1\de#2{\expandafter\lepremi@r#2@#1#2}
\ctr@ld@f\def\lepremi@r#1,#2@#3#4{\def#3{#1}\def#4{#2}\ignorespaces}
\ctr@ld@f\def\@cfor#1:=#2\do#3{%
  \edef\@fortemp{#2}%
  \ifx\@fortemp\empty\else\@cforloop#2,\@nil,\@nil\@@#1{#3}\fi}
\ctr@ln@m\@nextwhile
\ctr@ld@f\def\@cforloop#1,#2\@@#3#4{%
  \def#3{#1}%
  \ifx#3\Fig@nnil\let\@nextwhile=\Fig@fornoop\else#4\relax\let\@nextwhile=\@cforloop\fi%
  \@nextwhile#2\@@#3{#4}}

\ctr@ld@f\def\@ecfor#1:=#2\do#3{%
  \def\@@cfor{\@cfor#1:=}%
  \edef\@@@cfor{#2}%
  \expandafter\@@cfor\@@@cfor\do{#3}}
\ctr@ld@f\def\Fig@nnil{\@nil}
\ctr@ld@f\def\Fig@fornoop#1\@@#2#3{}
\ctr@ln@m\list@@rg
\ctr@ld@f\def\trtlis@rg#1#2{\def\list@@rg{#1}%
    \@ecfor\p@rv@l:=\list@@rg\do{\expandafter#2\p@rv@l|}}
\ctr@ln@w{newbox}\b@xvisu
\ctr@ln@w{newtoks}\let@xte
\ctr@ln@w{newif}\ifitis@K
\ctr@ln@w{newcount}\s@mme
\ctr@ln@w{newcount}\l@mbd@un \ctr@ln@w{newcount}\l@mbd@de
\ctr@ln@w{newcount}\superc@ntr@l\superc@ntr@l=\@ne        
\ctr@ln@w{newcount}\typec@ntr@l\typec@ntr@l=\superc@ntr@l 
\ctr@ln@w{newdimen}\v@lX  \ctr@ln@w{newdimen}\v@lY  \ctr@ln@w{newdimen}\v@lZ
\ctr@ln@w{newdimen}\v@lXa \ctr@ln@w{newdimen}\v@lYa \ctr@ln@w{newdimen}\v@lZa
\ctr@ln@w{newdimen}\unit@\unit@=\p@ 
\ctr@ld@f\def\unit@util{pt}
\ctr@ld@f\def\ptT@ptps{0.996264 }
\ctr@ld@f\def\ptpsT@pt{1.00375 }
\ctr@ld@f\def\ptT@unit@{1} 
\ctr@ld@f\def\setunit@#1{\def\unit@util{#1}\setunit@@#1:\invers@{\result@t}{\unit@}%
    \edef\ptT@unit@{\repdecn@mb\result@t}}
\ctr@ld@f\def\setunit@@#1#2:{\ifcat#1a\unit@=\@ne#1#2\else\unit@=#1#2\fi}
\ctr@ld@f\def\d@fm@cdim#1#2{{\v@leur=#2\v@leur=\ptT@unit@\v@leur\xdef#1{\repdecn@mb\v@leur}}}
\ctr@ln@w{newif}\ifBdingB@x\BdingB@xtrue
\ctr@ln@w{newdimen}\c@@rdXmin \ctr@ln@w{newdimen}\c@@rdYmin  
\ctr@ln@w{newdimen}\c@@rdXmax \ctr@ln@w{newdimen}\c@@rdYmax
\ctr@ld@f\def\b@undb@x#1#2{\ifBdingB@x%
    \relax\ifdim#1<\c@@rdXmin\global\c@@rdXmin=#1\fi%
    \relax\ifdim#2<\c@@rdYmin\global\c@@rdYmin=#2\fi%
    \relax\ifdim#1>\c@@rdXmax\global\c@@rdXmax=#1\fi%
    \relax\ifdim#2>\c@@rdYmax\global\c@@rdYmax=#2\fi\fi}
\ctr@ld@f\def\b@undb@xP#1{{\Figg@tXY{#1}\b@undb@x{\v@lX}{\v@lY}}}
\ctr@ld@f\def\ellBB@x#1;#2,#3(#4,#5,#6){{\s@uvc@ntr@l\et@tellBB@x%
    \setc@ntr@l{2}\figptell-2::#1;#2,#3(#4,#6)\b@undb@xP{-2}%
    \figptell-2::#1;#2,#3(#5,#6)\b@undb@xP{-2}%
    \c@ssin{\C@}{\S@}{#6}\v@lmin=\C@ pt\v@lmax=\S@ pt%
    \mili@u=#3\v@lmin\delt@=#2\v@lmax\arct@n\v@leur(\delt@,\mili@u)%
    \mili@u=-#3\v@lmax\delt@=#2\v@lmin\arct@n\c@rre(\delt@,\mili@u)%
    \v@leur=\rdT@deg\v@leur\advance\v@leur-\DePI@deg%
    \c@rre=\rdT@deg\c@rre\advance\c@rre-\DePI@deg%
    \v@lmin=#4pt\v@lmax=#5pt%
    \loop\ifdim\v@leur<\v@lmax\ifdim\v@leur>\v@lmin%
    \edef\@ngle{\repdecn@mb\v@leur}\figptell-2::#1;#2,#3(\@ngle,#6)%
    \b@undb@xP{-2}\fi\advance\v@leur\PI@deg\repeat%
    \loop\ifdim\c@rre<\v@lmax\ifdim\c@rre>\v@lmin%
    \edef\@ngle{\repdecn@mb\c@rre}\figptell-2::#1;#2,#3(\@ngle,#6)%
    \b@undb@xP{-2}\fi\advance\c@rre\PI@deg\repeat%
    \resetc@ntr@l\et@tellBB@x}\ignorespaces}
\ctr@ld@f\def\initb@undb@x{\c@@rdXmin=\maxdimen\c@@rdYmin=\maxdimen%
    \c@@rdXmax=-\maxdimen\c@@rdYmax=-\maxdimen}
\ctr@ld@f\def\c@ntr@lnum#1{%
    \relax\ifnum\typec@ntr@l=\@ne%
    \ifnum#1<\z@%
    \immediate\write16{*** Forbidden point number (#1). Abort.}\end\fi\fi%
    \set@bjc@de{#1}}
\ctr@ln@m\objc@de
\ctr@ld@f\def\set@bjc@de#1{\edef\objc@de{@BJ\ifnum#1<\z@ M\romannumeral-#1\else\romannumeral#1\fi}}
\s@mme=\m@ne\loop\ifnum\s@mme>-19
  \set@bjc@de{\s@mme}\ctr@lcsn@m\objc@de\ctr@lcsn@m{\objc@de T}
\advance\s@mme\m@ne\repeat
\s@mme=\@ne\loop\ifnum\s@mme<6
  \set@bjc@de{\s@mme}\ctr@lcsn@m\objc@de\ctr@lcsn@m{\objc@de T}
\advance\s@mme\@ne\repeat
\ctr@ld@f\def\setc@ntr@l#1{\ifnum\superc@ntr@l>#1\typec@ntr@l=\superc@ntr@l%
    \else\typec@ntr@l=#1\fi}
\ctr@ld@f\def\resetc@ntr@l#1{\global\superc@ntr@l=#1\setc@ntr@l{#1}}
\ctr@ld@f\def\s@uvc@ntr@l#1{\edef#1{\the\superc@ntr@l}}
\ctr@ln@m\c@lproscal
\ctr@ld@f\def\c@lproscalDD#1[#2,#3]{{\Figg@tXY{#2}%
    \edef\Xu@{\repdecn@mb{\v@lX}}\edef\Yu@{\repdecn@mb{\v@lY}}\Figg@tXY{#3}%
    \global\result@t=\Xu@\v@lX\global\advance\result@t\Yu@\v@lY}#1=\result@t}
\ctr@ld@f\def\c@lproscalTD#1[#2,#3]{{\Figg@tXY{#2}\edef\Xu@{\repdecn@mb{\v@lX}}%
    \edef\Yu@{\repdecn@mb{\v@lY}}\edef\Zu@{\repdecn@mb{\v@lZ}}%
    \Figg@tXY{#3}\global\result@t=\Xu@\v@lX\global\advance\result@t\Yu@\v@lY%
    \global\advance\result@t\Zu@\v@lZ}#1=\result@t}
\ctr@ld@f\def\c@lprovec#1{%
    \det@rmC\v@lZa(\v@lX,\v@lY,\v@lmin,\v@lmax)%
    \det@rmC\v@lXa(\v@lY,\v@lZ,\v@lmax,\v@leur)%
    \det@rmC\v@lYa(\v@lZ,\v@lX,\v@leur,\v@lmin)%
    \Figv@ctCreg#1(\v@lXa,\v@lYa,\v@lZa)}
\ctr@ld@f\def\det@rm#1[#2,#3]{{\Figg@tXY{#2}\Figg@tXYa{#3}%
    \delt@=\repdecn@mb{\v@lX}\v@lYa\advance\delt@-\repdecn@mb{\v@lY}\v@lXa%
    \global\result@t=\delt@}#1=\result@t}
\ctr@ld@f\def\det@rmC#1(#2,#3,#4,#5){{\global\result@t=\repdecn@mb{#2}#5%
    \global\advance\result@t-\repdecn@mb{#3}#4}#1=\result@t}
\ctr@ld@f\def\getredf@ctDD#1(#2,#3){{\maxim@m{\v@lXa}{-#2}{#2}\maxim@m{\v@lYa}{-#3}{#3}%
    \maxim@m{\v@lXa}{\v@lXa}{\v@lYa}
    \ifdim\v@lXa>\@xci pt\divide\v@lXa\@xci%
    \p@rtentiere{\p@rtent}{\v@lXa}\advance\p@rtent\@ne\else\p@rtent=\@ne\fi%
    \global\result@tent=\p@rtent}#1=\result@tent\ignorespaces}
\ctr@ld@f\def\getredf@ctTD#1(#2,#3,#4){{\maxim@m{\v@lXa}{-#2}{#2}\maxim@m{\v@lYa}{-#3}{#3}%
    \maxim@m{\v@lZa}{-#4}{#4}\maxim@m{\v@lXa}{\v@lXa}{\v@lYa}%
    \maxim@m{\v@lXa}{\v@lXa}{\v@lZa}
    \ifdim\v@lXa>\@lxxiv pt\divide\v@lXa\@lxxiv%
    \p@rtentiere{\p@rtent}{\v@lXa}\advance\p@rtent\@ne\else\p@rtent=\@ne\fi%
    \global\result@tent=\p@rtent}#1=\result@tent\ignorespaces}
\ctr@ld@f\def\FigptintercircB@zDD#1:#2:#3,#4[#5,#6,#7,#8]{{\s@uvc@ntr@l\et@tfigptintercircB@zDD%
    \setc@ntr@l{2}\figvectPDD-1[#5,#8]\Figg@tXY{-1}\getredf@ctDD\f@ctech(\v@lX,\v@lY)%
    \mili@u=#4\unit@\divide\mili@u\f@ctech\c@rre=\repdecn@mb{\mili@u}\mili@u%
    \figptBezierDD-5::#3[#5,#6,#7,#8]%
    \v@lmin=#3\p@\v@lmax=\v@lmin\advance\v@lmax0.1\p@%
    \loop\edef\T@{\repdecn@mb{\v@lmax}}\figptBezierDD-2::\T@[#5,#6,#7,#8]%
    \figvectPDD-1[-5,-2]\n@rmeucCDD{\delt@}{-1}\ifdim\delt@<\c@rre\v@lmin=\v@lmax%
    \advance\v@lmax0.1\p@\repeat%
    \loop\mili@u=\v@lmin\advance\mili@u\v@lmax%
    \divide\mili@u\tw@\edef\T@{\repdecn@mb{\mili@u}}\figptBezierDD-2::\T@[#5,#6,#7,#8]%
    \figvectPDD-1[-5,-2]\n@rmeucCDD{\delt@}{-1}\ifdim\delt@>\c@rre\v@lmax=\mili@u%
    \else\v@lmin=\mili@u\fi\v@leur=\v@lmax\advance\v@leur-\v@lmin%
    \ifdim\v@leur>\epsil@n\repeat\figptcopyDD#1:#2/-2/%
    \resetc@ntr@l\et@tfigptintercircB@zDD}\ignorespaces}
\ctr@ln@m\figptinterlines
\ctr@ld@f\def\inters@cDD#1:#2[#3,#4;#5,#6]{{\s@uvc@ntr@l\et@tinters@cDD%
    \setc@ntr@l{2}\vecunit@{-1}{#4}\vecunit@{-2}{#6}%
    \Figg@tXY{-1}\setc@ntr@l{1}\Figg@tXYa{#3}%
    \edef\A@{\repdecn@mb{\v@lX}}\edef\B@{\repdecn@mb{\v@lY}}%
    \v@lmin=\B@\v@lXa\advance\v@lmin-\A@\v@lYa%
    \Figg@tXYa{#5}\setc@ntr@l{2}\Figg@tXY{-2}%
    \edef\C@{\repdecn@mb{\v@lX}}\edef\D@{\repdecn@mb{\v@lY}}%
    \v@lmax=\D@\v@lXa\advance\v@lmax-\C@\v@lYa%
    \delt@=\A@\v@lY\advance\delt@-\B@\v@lX%
    \invers@{\v@leur}{\delt@}\edef\v@ldelta{\repdecn@mb{\v@leur}}%
    \v@lXa=\A@\v@lmax\advance\v@lXa-\C@\v@lmin%
    \v@lYa=\B@\v@lmax\advance\v@lYa-\D@\v@lmin%
    \v@lXa=\v@ldelta\v@lXa\v@lYa=\v@ldelta\v@lYa%
    \setc@ntr@l{1}\Figp@intregDD#1:{#2}(\v@lXa,\v@lYa)%
    \resetc@ntr@l\et@tinters@cDD}\ignorespaces}
\ctr@ld@f\def\inters@cTD#1:#2[#3,#4;#5,#6]{{\s@uvc@ntr@l\et@tinters@cTD%
    \setc@ntr@l{2}\figvectNVTD-1[#4,#6]\figvectNVTD-2[#6,-1]\figvectPTD-1[#3,#5]%
    \r@pPSTD\v@leur[-2,-1,#4]\edef\v@lcoef{\repdecn@mb{\v@leur}}%
    \figpttraTD#1:{#2}=#3/\v@lcoef,#4/\resetc@ntr@l\et@tinters@cTD}\ignorespaces}
\ctr@ld@f\def\r@pPSTD#1[#2,#3,#4]{{\Figg@tXY{#2}\edef\Xu@{\repdecn@mb{\v@lX}}%
    \edef\Yu@{\repdecn@mb{\v@lY}}\edef\Zu@{\repdecn@mb{\v@lZ}}%
    \Figg@tXY{#3}\v@lmin=\Xu@\v@lX\advance\v@lmin\Yu@\v@lY\advance\v@lmin\Zu@\v@lZ%
    \Figg@tXY{#4}\v@lmax=\Xu@\v@lX\advance\v@lmax\Yu@\v@lY\advance\v@lmax\Zu@\v@lZ%
    \invers@{\v@leur}{\v@lmax}\global\result@t=\repdecn@mb{\v@leur}\v@lmin}%
    #1=\result@t}
\ctr@ln@m\n@rminf
\ctr@ld@f\def\n@rminfDD#1#2{{\Figg@tXY{#2}\maxim@m{\v@lX}{\v@lX}{-\v@lX}%
    \maxim@m{\v@lY}{\v@lY}{-\v@lY}\maxim@m{\global\result@t}{\v@lX}{\v@lY}}%
    #1=\result@t}
\ctr@ld@f\def\n@rminfTD#1#2{{\Figg@tXY{#2}\maxim@m{\v@lX}{\v@lX}{-\v@lX}%
    \maxim@m{\v@lY}{\v@lY}{-\v@lY}\maxim@m{\v@lZ}{\v@lZ}{-\v@lZ}%
    \maxim@m{\v@lX}{\v@lX}{\v@lY}\maxim@m{\global\result@t}{\v@lX}{\v@lZ}}%
    #1=\result@t}
\ctr@ld@f\def\n@rmeucCDD#1#2{\Figg@tXY{#2}\divide\v@lX\f@ctech\divide\v@lY\f@ctech%
    #1=\repdecn@mb{\v@lX}\v@lX\v@lX=\repdecn@mb{\v@lY}\v@lY\advance#1\v@lX}
\ctr@ld@f\def\n@rmeucCTD#1#2{\Figg@tXY{#2}%
    \divide\v@lX\f@ctech\divide\v@lY\f@ctech\divide\v@lZ\f@ctech%
    #1=\repdecn@mb{\v@lX}\v@lX\v@lX=\repdecn@mb{\v@lY}\v@lY\advance#1\v@lX%
    \v@lX=\repdecn@mb{\v@lZ}\v@lZ\advance#1\v@lX}
\ctr@ln@m\n@rmeucSV
\ctr@ld@f\def\n@rmeucSVDD#1#2{{\Figg@tXY{#2}%
    \v@lXa=\repdecn@mb{\v@lX}\v@lX\v@lYa=\repdecn@mb{\v@lY}\v@lY%
    \advance\v@lXa\v@lYa\sqrt@{\global\result@t}{\v@lXa}}#1=\result@t}
\ctr@ld@f\def\n@rmeucSVTD#1#2{{\Figg@tXY{#2}\v@lXa=\repdecn@mb{\v@lX}\v@lX%
    \v@lYa=\repdecn@mb{\v@lY}\v@lY\v@lZa=\repdecn@mb{\v@lZ}\v@lZ%
    \advance\v@lXa\v@lYa\advance\v@lXa\v@lZa\sqrt@{\global\result@t}{\v@lXa}}#1=\result@t}
\ctr@ln@m\n@rmeuc
\ctr@ld@f\def\n@rmeucDD#1#2{{\Figg@tXY{#2}\getredf@ctDD\f@ctech(\v@lX,\v@lY)%
    \divide\v@lX\f@ctech\divide\v@lY\f@ctech%
    \v@lXa=\repdecn@mb{\v@lX}\v@lX\v@lYa=\repdecn@mb{\v@lY}\v@lY%
    \advance\v@lXa\v@lYa\sqrt@{\global\result@t}{\v@lXa}%
    \global\multiply\result@t\f@ctech}#1=\result@t}
\ctr@ld@f\def\n@rmeucTD#1#2{{\Figg@tXY{#2}\getredf@ctTD\f@ctech(\v@lX,\v@lY,\v@lZ)%
    \divide\v@lX\f@ctech\divide\v@lY\f@ctech\divide\v@lZ\f@ctech%
    \v@lXa=\repdecn@mb{\v@lX}\v@lX%
    \v@lYa=\repdecn@mb{\v@lY}\v@lY\v@lZa=\repdecn@mb{\v@lZ}\v@lZ%
    \advance\v@lXa\v@lYa\advance\v@lXa\v@lZa\sqrt@{\global\result@t}{\v@lXa}%
    \global\multiply\result@t\f@ctech}#1=\result@t}
\ctr@ln@m\vecunit@
\ctr@ld@f\def\vecunit@DD#1#2{{\Figg@tXY{#2}\getredf@ctDD\f@ctech(\v@lX,\v@lY)%
    \divide\v@lX\f@ctech\divide\v@lY\f@ctech%
    \Figv@ctCreg#1(\v@lX,\v@lY)\n@rmeucSV{\v@lYa}{#1}%
    \invers@{\v@lXa}{\v@lYa}\edef\v@lv@lXa{\repdecn@mb{\v@lXa}}%
    \v@lX=\v@lv@lXa\v@lX\v@lY=\v@lv@lXa\v@lY%
    \Figv@ctCreg#1(\v@lX,\v@lY)\multiply\v@lYa\f@ctech\global\result@t=\v@lYa}}
\ctr@ld@f\def\vecunit@TD#1#2{{\Figg@tXY{#2}\getredf@ctTD\f@ctech(\v@lX,\v@lY,\v@lZ)%
    \divide\v@lX\f@ctech\divide\v@lY\f@ctech\divide\v@lZ\f@ctech%
    \Figv@ctCreg#1(\v@lX,\v@lY,\v@lZ)\n@rmeucSV{\v@lYa}{#1}%
    \invers@{\v@lXa}{\v@lYa}\edef\v@lv@lXa{\repdecn@mb{\v@lXa}}%
    \v@lX=\v@lv@lXa\v@lX\v@lY=\v@lv@lXa\v@lY\v@lZ=\v@lv@lXa\v@lZ%
    \Figv@ctCreg#1(\v@lX,\v@lY,\v@lZ)\multiply\v@lYa\f@ctech\global\result@t=\v@lYa}}
\ctr@ld@f\def\vecunitC@TD[#1,#2]{\Figg@tXYa{#1}\Figg@tXY{#2}%
    \advance\v@lX-\v@lXa\advance\v@lY-\v@lYa\advance\v@lZ-\v@lZa\c@lvecunitTD}
\ctr@ld@f\def\vecunitCV@TD#1{\Figg@tXY{#1}\c@lvecunitTD}
\ctr@ld@f\def\c@lvecunitTD{\getredf@ctTD\f@ctech(\v@lX,\v@lY,\v@lZ)%
    \divide\v@lX\f@ctech\divide\v@lY\f@ctech\divide\v@lZ\f@ctech%
    \v@lXa=\repdecn@mb{\v@lX}\v@lX%
    \v@lYa=\repdecn@mb{\v@lY}\v@lY\v@lZa=\repdecn@mb{\v@lZ}\v@lZ%
    \advance\v@lXa\v@lYa\advance\v@lXa\v@lZa\sqrt@{\v@lYa}{\v@lXa}%
    \invers@{\v@lXa}{\v@lYa}\edef\v@lv@lXa{\repdecn@mb{\v@lXa}}%
    \v@lX=\v@lv@lXa\v@lX\v@lY=\v@lv@lXa\v@lY\v@lZ=\v@lv@lXa\v@lZ}
\ctr@ln@m\figgetangle
\ctr@ld@f\def\figgetangleDD#1[#2,#3,#4]{\ifps@cri{\s@uvc@ntr@l\et@tfiggetangleDD\setc@ntr@l{2}%
    \figvectPDD-1[#2,#3]\figvectPDD-2[#2,#4]\vecunit@{-1}{-1}%
    \c@lproscalDD\delt@[-2,-1]\figvectNVDD-1[-1]\c@lproscalDD\v@leur[-2,-1]%
    \arct@n\v@lmax(\delt@,\v@leur)\v@lmax=\rdT@deg\v@lmax%
    \ifdim\v@lmax<\z@\advance\v@lmax\DePI@deg\fi\xdef#1{\repdecn@mb{\v@lmax}}%
    \resetc@ntr@l\et@tfiggetangleDD}\ignorespaces\fi}
\ctr@ld@f\def\figgetangleTD#1[#2,#3,#4,#5]{\ifps@cri{\s@uvc@ntr@l\et@tfiggetangleTD\setc@ntr@l{2}%
    \figvectPTD-1[#2,#3]\figvectPTD-2[#2,#5]\figvectNVTD-3[-1,-2]%
    \figvectPTD-2[#2,#4]\figvectNVTD-4[-3,-1]%
    \vecunit@{-1}{-1}\c@lproscalTD\delt@[-2,-1]\c@lproscalTD\v@leur[-2,-4]%
    \arct@n\v@lmax(\delt@,\v@leur)\v@lmax=\rdT@deg\v@lmax%
    \ifdim\v@lmax<\z@\advance\v@lmax\DePI@deg\fi\xdef#1{\repdecn@mb{\v@lmax}}%
    \resetc@ntr@l\et@tfiggetangleTD}\ignorespaces\fi}    
\ctr@ld@f\def\figgetdist#1[#2,#3]{\ifps@cri{\s@uvc@ntr@l\et@tfiggetdist\setc@ntr@l{2}%
    \figvectP-1[#2,#3]\n@rmeuc{\v@lX}{-1}\v@lX=\ptT@unit@\v@lX\xdef#1{\repdecn@mb{\v@lX}}%
    \resetc@ntr@l\et@tfiggetdist}\ignorespaces\fi}
\ctr@ld@f\def\Figg@tT#1{\c@ntr@lnum{#1}%
    {\expandafter\expandafter\expandafter\extr@ctT\csname\objc@de\endcsname:%
     \ifnum\B@@ltxt=\z@\ptn@me{#1}\else\csname\objc@de T\endcsname\fi}}
\ctr@ld@f\def\extr@ctT#1,#2,#3/#4:{\def\B@@ltxt{#3}}
\ctr@ld@f\def\Figg@tXY#1{\c@ntr@lnum{#1}%
    \expandafter\expandafter\expandafter\extr@ctC\csname\objc@de\endcsname:}
\ctr@ln@m\extr@ctC
\ctr@ld@f\def\extr@ctCDD#1/#2,#3,#4:{\v@lX=#2\v@lY=#3}
\ctr@ld@f\def\extr@ctCTD#1/#2,#3,#4:{\v@lX=#2\v@lY=#3\v@lZ=#4}
\ctr@ld@f\def\Figg@tXYa#1{\c@ntr@lnum{#1}%
    \expandafter\expandafter\expandafter\extr@ctCa\csname\objc@de\endcsname:}
\ctr@ln@m\extr@ctCa
\ctr@ld@f\def\extr@ctCaDD#1/#2,#3,#4:{\v@lXa=#2\v@lYa=#3}
\ctr@ld@f\def\extr@ctCaTD#1/#2,#3,#4:{\v@lXa=#2\v@lYa=#3\v@lZa=#4}
\ctr@ln@m\t@xt@
\ctr@ld@f\def\figinit#1{\t@stc@tcodech@nge\initpr@lim\Figinit@#1,:\initpss@ttings\ignorespaces}
\ctr@ld@f\def\Figinit@#1,#2:{\setunit@{#1}\def\t@xt@{#2}\ifx\t@xt@\empty\else\Figinit@@#2:\fi}
\ctr@ld@f\def\Figinit@@#1#2:{\if#12 \else\Figs@tproj{#1}\initTD@\fi}
\ctr@ln@w{newif}\ifTr@isDim
\ctr@ld@f\def\UnD@fined{UNDEFINED}
\ctr@ld@f\def\ifundefined#1{\expandafter\ifx\csname#1\endcsname\relax}
\ctr@ln@m\@utoFN
\ctr@ln@m\@utoFInDone
\ctr@ln@m\disob@unit
\ctr@ld@f\def\initpr@lim{\initb@undb@x\figsetmark{}\figsetptname{$A_{##1}$}\def\Sc@leFact{1}%
    \initDD@\figsetroundcoord{yes}\ps@critrue\expandafter\setupd@te\defaultupdate:%
    \edef\disob@unit{\UnD@fined}\edef\t@rgetpt{\UnD@fined}\gdef\@utoFInDone{1}\gdef\@utoFN{0}}
\ctr@ld@f\def\initDD@{\Tr@isDimfalse%
    \ifPDFm@ke%
     \let\Ps@rcerc=\Ps@rcercBz%
     \let\Ps@rell=\Ps@rellBz%
    \fi
    \let\c@lDCUn=\c@lDCUnDD%
    \let\c@lDCDeux=\c@lDCDeuxDD%
    \let\c@ldefproj=\relax%
    \let\c@lproscal=\c@lproscalDD%
    \let\c@lprojSP=\relax%
    \let\extr@ctC=\extr@ctCDD%
    \let\extr@ctCa=\extr@ctCaDD%
    \let\extr@ctCF=\extr@ctCFDD%
    \let\Figp@intreg=\Figp@intregDD%
    \let\Figpts@xes=\Figpts@xesDD%
    \let\n@rmeucSV=\n@rmeucSVDD\let\n@rmeuc=\n@rmeucDD\let\n@rminf=\n@rminfDD%
    \let\pr@dMatV=\pr@dMatVDD%
    \let\ps@xes=\ps@xesDD%
    \let\vecunit@=\vecunit@DD%
    \let\figcoord=\figcoordDD%
    \let\figgetangle=\figgetangleDD%
    \let\figpt=\figptDD%
    \let\figptBezier=\figptBezierDD%
    \let\figptbary=\figptbaryDD%
    \let\figptcirc=\figptcircDD%
    \let\figptcircumcenter=\figptcircumcenterDD%
    \let\figptcopy=\figptcopyDD%
    \let\figptcurvcenter=\figptcurvcenterDD%
    \let\figptell=\figptellDD%
    \let\figptendnormal=\figptendnormalDD%
    \let\figptinterlineplane=\figptinterlineplaneDD%
    \let\figptinterlines=\inters@cDD%
    \let\figptorthocenter=\figptorthocenterDD%
    \let\figptorthoprojline=\figptorthoprojlineDD%
    \let\figptorthoprojplane=\figptorthoprojplaneDD%
    \let\figptrot=\figptrotDD%
    \let\figptscontrol=\figptscontrolDD%
    \let\figptsintercirc=\figptsintercircDD%
    \let\figptsinterlinell=\figptsinterlinellDD%
    \let\figptsorthoprojline=\figptsorthoprojlineDD%
    \let\figptorthoprojplane=\figptorthoprojplaneDD%
    \let\figptsrot=\figptsrotDD%
    \let\figptssym=\figptssymDD%
    \let\figptstra=\figptstraDD%
    \let\figptsym=\figptsymDD%
    \let\figpttraC=\figpttraCDD%
    \let\figpttra=\figpttraDD%
    \let\figptvisilimSL=\figptvisilimSLDD%
    \let\figsetobdist=\figsetobdistDD%
    \let\figsettarget=\figsettargetDD%
    \let\figsetview=\figsetviewDD%
    \let\figvectDBezier=\figvectDBezierDD%
    \let\figvectN=\figvectNDD%
    \let\figvectNV=\figvectNVDD%
    \let\figvectP=\figvectPDD%
    \let\figvectU=\figvectUDD%
    \let\psarccircP=\psarccircPDD%
    \let\psarccirc=\psarccircDD%
    \let\psarcell=\psarcellDD%
    \let\psarcellPA=\psarcellPADD%
    \let\psarrowBezier=\psarrowBezierDD%
    \let\psarrowcircP=\psarrowcircPDD%
    \let\psarrowcirc=\psarrowcircDD%
    \let\psarrowhead=\psarrowheadDD%
    \let\psarrow=\psarrowDD%
    \let\psBezier=\psBezierDD%
    \let\pscirc=\pscircDD%
    \let\pscurve=\pscurveDD%
    \let\psnormal=\psnormalDD%
    }
\ctr@ld@f\def\initTD@{\Tr@isDimtrue\initb@undb@xTD\newt@rgetptfalse\newdis@bfalse%
    \let\c@lDCUn=\c@lDCUnTD%
    \let\c@lDCDeux=\c@lDCDeuxTD%
    \let\c@ldefproj=\c@ldefprojTD%
    \let\c@lproscal=\c@lproscalTD%
    \let\extr@ctC=\extr@ctCTD%
    \let\extr@ctCa=\extr@ctCaTD%
    \let\extr@ctCF=\extr@ctCFTD%
    \let\Figp@intreg=\Figp@intregTD%
    \let\Figpts@xes=\Figpts@xesTD%
    \let\n@rmeucSV=\n@rmeucSVTD\let\n@rmeuc=\n@rmeucTD\let\n@rminf=\n@rminfTD%
    \let\pr@dMatV=\pr@dMatVTD%
    \let\ps@xes=\ps@xesTD%
    \let\vecunit@=\vecunit@TD%
    \let\figcoord=\figcoordTD%
    \let\figgetangle=\figgetangleTD%
    \let\figpt=\figptTD%
    \let\figptBezier=\figptBezierTD%
    \let\figptbary=\figptbaryTD%
    \let\figptcirc=\figptcircTD%
    \let\figptcircumcenter=\figptcircumcenterTD%
    \let\figptcopy=\figptcopyTD%
    \let\figptcurvcenter=\figptcurvcenterTD%
    \let\figptinterlineplane=\figptinterlineplaneTD%
    \let\figptinterlines=\inters@cTD%
    \let\figptorthocenter=\figptorthocenterTD%
    \let\figptorthoprojline=\figptorthoprojlineTD%
    \let\figptorthoprojplane=\figptorthoprojplaneTD%
    \let\figptrot=\figptrotTD%
    \let\figptscontrol=\figptscontrolTD%
    \let\figptsintercirc=\figptsintercircTD%
    \let\figptsorthoprojline=\figptsorthoprojlineTD%
    \let\figptsorthoprojplane=\figptsorthoprojplaneTD%
    \let\figptsrot=\figptsrotTD%
    \let\figptssym=\figptssymTD%
    \let\figptstra=\figptstraTD%
    \let\figptsym=\figptsymTD%
    \let\figpttraC=\figpttraCTD%
    \let\figpttra=\figpttraTD%
    \let\figptvisilimSL=\figptvisilimSLTD%
    \let\figsetobdist=\figsetobdistTD%
    \let\figsettarget=\figsettargetTD%
    \let\figsetview=\figsetviewTD%
    \let\figvectDBezier=\figvectDBezierTD%
    \let\figvectN=\figvectNTD%
    \let\figvectNV=\figvectNVTD%
    \let\figvectP=\figvectPTD%
    \let\figvectU=\figvectUTD%
    \let\psarccircP=\psarccircPTD%
    \let\psarccirc=\psarccircTD%
    \let\psarcell=\psarcellTD%
    \let\psarcellPA=\psarcellPATD%
    \let\psarrowBezier=\psarrowBezierTD%
    \let\psarrowcircP=\psarrowcircPTD%
    \let\psarrowcirc=\psarrowcircTD%
    \let\psarrowhead=\psarrowheadTD%
    \let\psarrow=\psarrowTD%
    \let\psBezier=\psBezierTD%
    \let\pscirc=\pscircTD%
    \let\pscurve=\pscurveTD%
    }
\ctr@ld@f\def\un@v@ilable#1{\immediate\write16{*** The macro #1 is not available in the current context.}}
\ctr@ld@f\def\figinsert#1{{\def\t@xt@{#1}\relax%
    \ifx\t@xt@\empty\ifnum\@utoFInDone>\z@\Figinsert@\DefGIfilen@me,:\fi%
    \else\expandafter\FiginsertNu@#1 :\fi}\ignorespaces}
\ctr@ld@f\def\FiginsertNu@#1 #2:{\def\t@xt@{#1}\relax\ifx\t@xt@\empty\def\t@xt@{#2}%
    \ifx\t@xt@\empty\ifnum\@utoFInDone>\z@\Figinsert@\DefGIfilen@me,:\fi%
    \else\FiginsertNu@#2:\fi\else\expandafter\FiginsertNd@#1 #2:\fi}
\ctr@ld@f\def\FiginsertNd@#1#2:{\ifcat#1a\Figinsert@#1#2,:\else%
    \ifnum\@utoFInDone>\z@\Figinsert@\DefGIfilen@me,#1#2,:\fi\fi}
\ctr@ln@m\Sc@leFact
\ctr@ld@f\def\Figinsert@#1,#2:{\def\t@xt@{#2}\ifx\t@xt@\empty\xdef\Sc@leFact{1}\else%
    \X@rgdeux@#2\xdef\Sc@leFact{\@rgdeux}\fi%
    \Figdisc@rdLTS{#1}{\t@xt@}\@psfgetbb{\t@xt@}%
    \v@lX=\@psfllx\p@\v@lX=\ptpsT@pt\v@lX\v@lX=\Sc@leFact\v@lX%
    \v@lY=\@psflly\p@\v@lY=\ptpsT@pt\v@lY\v@lY=\Sc@leFact\v@lY%
    \b@undb@x{\v@lX}{\v@lY}%
    \v@lX=\@psfurx\p@\v@lX=\ptpsT@pt\v@lX\v@lX=\Sc@leFact\v@lX%
    \v@lY=\@psfury\p@\v@lY=\ptpsT@pt\v@lY\v@lY=\Sc@leFact\v@lY%
    \b@undb@x{\v@lX}{\v@lY}%
    \ifPDFm@ke\Figinclud@PDF{\t@xt@}{\Sc@leFact}\else%
    \v@lX=\c@nt pt\v@lX=\Sc@leFact\v@lX\edef\F@ct{\repdecn@mb{\v@lX}}%
    \ifx\TeXturesonMacOSltX\special{postscriptfile #1 vscale=\F@ct\space hscale=\F@ct}%
    \else\includegraphics{#1}\fi\fi%
    \message{[\t@xt@]}\ignorespaces}
\ctr@ld@f\def\Figdisc@rdLTS#1#2{\expandafter\Figdisc@rdLTS@#1 :#2}
\ctr@ld@f\def\Figdisc@rdLTS@#1 #2:#3{\def#3{#1}\relax\ifx#3\empty\expandafter\Figdisc@rdLTS@#2:#3\fi}
\ctr@ld@f\def\figinsertE#1{\FiginsertE@#1,:\ignorespaces}
\ctr@ld@f\def\FiginsertE@#1,#2:{{\def\t@xt@{#2}\ifx\t@xt@\empty\xdef\Sc@leFact{1}\else%
    \X@rgdeux@#2\xdef\Sc@leFact{\@rgdeux}\fi%
    \Figdisc@rdLTS{#1}{\t@xt@}\pdfximage{\t@xt@}%
    \setbox\Gb@x=\hbox{\pdfrefximage\pdflastximage}%
    \v@lX=\z@\v@lY=-\Sc@leFact\dp\Gb@x\b@undb@x{\v@lX}{\v@lY}%
    \advance\v@lX\Sc@leFact\wd\Gb@x\advance\v@lY\Sc@leFact\dp\Gb@x%
    \advance\v@lY\Sc@leFact\ht\Gb@x\b@undb@x{\v@lX}{\v@lY}%
    \v@lX=\Sc@leFact\wd\Gb@x\pdfximage width \v@lX {\t@xt@}%
    \rlap{\pdfrefximage\pdflastximage}\message{[\t@xt@]}}\ignorespaces}
\ctr@ld@f\def\X@rgdeux@#1,{\edef\@rgdeux{#1}}
\ctr@ln@m\figpt
\ctr@ld@f\def\figptDD#1:#2(#3,#4){\ifps@cri\c@ntr@lnum{#1}%
    {\v@lX=#3\unit@\v@lY=#4\unit@\Fig@dmpt{#2}{\z@}}\ignorespaces\fi}
\ctr@ld@f\def\Fig@dmpt#1#2{\def\t@xt@{#1}\ifx\t@xt@\empty\def\B@@ltxt{\z@}%
    \else\expandafter\gdef\csname\objc@de T\endcsname{#1}\def\B@@ltxt{\@ne}\fi%
    \expandafter\xdef\csname\objc@de\endcsname{\ifitis@vect@r\C@dCl@svect%
    \else\C@dCl@spt\fi,\z@,\B@@ltxt/\the\v@lX,\the\v@lY,#2}}
\ctr@ld@f\def\C@dCl@spt{P}
\ctr@ld@f\def\C@dCl@svect{V}
\ctr@ln@m\c@@rdYZ
\ctr@ln@m\c@@rdY
\ctr@ld@f\def\figptTD#1:#2(#3,#4){\ifps@cri\c@ntr@lnum{#1}%
    \def\c@@rdYZ{#4,0,0}\extrairelepremi@r\c@@rdY\de\c@@rdYZ%
    \extrairelepremi@r\c@@rdZ\de\c@@rdYZ%
    {\v@lX=#3\unit@\v@lY=\c@@rdY\unit@\v@lZ=\c@@rdZ\unit@\Fig@dmpt{#2}{\the\v@lZ}%
    \b@undb@xTD{\v@lX}{\v@lY}{\v@lZ}}\ignorespaces\fi}
\ctr@ln@m\Figp@intreg
\ctr@ld@f\def\Figp@intregDD#1:#2(#3,#4){\c@ntr@lnum{#1}%
    {\result@t=#4\v@lX=#3\v@lY=\result@t\Fig@dmpt{#2}{\z@}}\ignorespaces}
\ctr@ld@f\def\Figp@intregTD#1:#2(#3,#4){\c@ntr@lnum{#1}%
    \def\c@@rdYZ{#4,\z@,\z@}\extrairelepremi@r\c@@rdY\de\c@@rdYZ%
    \extrairelepremi@r\c@@rdZ\de\c@@rdYZ%
    {\v@lX=#3\v@lY=\c@@rdY\v@lZ=\c@@rdZ\Fig@dmpt{#2}{\the\v@lZ}%
    \b@undb@xTD{\v@lX}{\v@lY}{\v@lZ}}\ignorespaces}
\ctr@ln@m\figptBezier
\ctr@ld@f\def\figptBezierDD#1:#2:#3[#4,#5,#6,#7]{\ifps@cri{\s@uvc@ntr@l\et@tfigptBezierDD%
    \FigptBezier@#3[#4,#5,#6,#7]\Figp@intregDD#1:{#2}(\v@lX,\v@lY)%
    \resetc@ntr@l\et@tfigptBezierDD}\ignorespaces\fi}
\ctr@ld@f\def\figptBezierTD#1:#2:#3[#4,#5,#6,#7]{\ifps@cri{\s@uvc@ntr@l\et@tfigptBezierTD%
    \FigptBezier@#3[#4,#5,#6,#7]\Figp@intregTD#1:{#2}(\v@lX,\v@lY,\v@lZ)%
    \resetc@ntr@l\et@tfigptBezierTD}\ignorespaces\fi}
\ctr@ld@f\def\FigptBezier@#1[#2,#3,#4,#5]{\setc@ntr@l{2}%
    \edef\T@{#1}\v@leur=\p@\advance\v@leur-#1pt\edef\UNmT@{\repdecn@mb{\v@leur}}%
    \figptcopy-4:/#2/\figptcopy-3:/#3/\figptcopy-2:/#4/\figptcopy-1:/#5/%
    \l@mbd@un=-4 \l@mbd@de=-\thr@@\p@rtent=\m@ne\c@lDecast%
    \l@mbd@un=-4 \l@mbd@de=-\thr@@\p@rtent=-\tw@\c@lDecast%
    \l@mbd@un=-4 \l@mbd@de=-\thr@@\p@rtent=-\thr@@\c@lDecast\Figg@tXY{-4}}
\ctr@ln@m\c@lDCUn
\ctr@ld@f\def\c@lDCUnDD#1#2{\Figg@tXY{#1}\v@lX=\UNmT@\v@lX\v@lY=\UNmT@\v@lY%
    \Figg@tXYa{#2}\advance\v@lX\T@\v@lXa\advance\v@lY\T@\v@lYa%
    \Figp@intregDD#1:(\v@lX,\v@lY)}
\ctr@ld@f\def\c@lDCUnTD#1#2{\Figg@tXY{#1}\v@lX=\UNmT@\v@lX\v@lY=\UNmT@\v@lY\v@lZ=\UNmT@\v@lZ%
    \Figg@tXYa{#2}\advance\v@lX\T@\v@lXa\advance\v@lY\T@\v@lYa\advance\v@lZ\T@\v@lZa%
    \Figp@intregTD#1:(\v@lX,\v@lY,\v@lZ)}
\ctr@ld@f\def\c@lDecast{\relax\ifnum\l@mbd@un<\p@rtent\c@lDCUn{\l@mbd@un}{\l@mbd@de}%
    \advance\l@mbd@un\@ne\advance\l@mbd@de\@ne\c@lDecast\fi}
\ctr@ld@f\def\figptmap#1:#2=#3/#4/#5/{\ifps@cri{\s@uvc@ntr@l\et@tfigptmap%
    \setc@ntr@l{2}\figvectP-1[#4,#3]\Figg@tXY{-1}%
    \pr@dMatV/#5/\figpttra#1:{#2}=#4/1,-1/%
    \resetc@ntr@l\et@tfigptmap}\ignorespaces\fi}
\ctr@ln@m\pr@dMatV
\ctr@ld@f\def\pr@dMatVDD/#1,#2;#3,#4/{\v@lXa=#1\v@lX\advance\v@lXa#2\v@lY%
    \v@lYa=#3\v@lX\advance\v@lYa#4\v@lY\Figv@ctCreg-1(\v@lXa,\v@lYa)}
\ctr@ld@f\def\pr@dMatVTD/#1,#2,#3;#4,#5,#6;#7,#8,#9/{%
    \v@lXa=#1\v@lX\advance\v@lXa#2\v@lY\advance\v@lXa#3\v@lZ%
    \v@lYa=#4\v@lX\advance\v@lYa#5\v@lY\advance\v@lYa#6\v@lZ%
    \v@lZa=#7\v@lX\advance\v@lZa#8\v@lY\advance\v@lZa#9\v@lZ%
    \Figv@ctCreg-1(\v@lXa,\v@lYa,\v@lZa)}
\ctr@ln@m\figptbary
\ctr@ld@f\def\figptbaryDD#1:#2[#3;#4]{\ifps@cri{\edef\list@num{#3}\extrairelepremi@r\p@int\de\list@num%
    \s@mme=\z@\@ecfor\c@ef:=#4\do{\advance\s@mme\c@ef}%
    \edef\listec@ef{#4,0}\extrairelepremi@r\c@ef\de\listec@ef%
    \Figg@tXY{\p@int}\divide\v@lX\s@mme\divide\v@lY\s@mme%
    \multiply\v@lX\c@ef\multiply\v@lY\c@ef%
    \@ecfor\p@int:=\list@num\do{\extrairelepremi@r\c@ef\de\listec@ef%
           \Figg@tXYa{\p@int}\divide\v@lXa\s@mme\divide\v@lYa\s@mme%
           \multiply\v@lXa\c@ef\multiply\v@lYa\c@ef%
           \advance\v@lX\v@lXa\advance\v@lY\v@lYa}%
    \Figp@intregDD#1:{#2}(\v@lX,\v@lY)}\ignorespaces\fi}
\ctr@ld@f\def\figptbaryTD#1:#2[#3;#4]{\ifps@cri{\edef\list@num{#3}\extrairelepremi@r\p@int\de\list@num%
    \s@mme=\z@\@ecfor\c@ef:=#4\do{\advance\s@mme\c@ef}%
    \edef\listec@ef{#4,0}\extrairelepremi@r\c@ef\de\listec@ef%
    \Figg@tXY{\p@int}\divide\v@lX\s@mme\divide\v@lY\s@mme\divide\v@lZ\s@mme%
    \multiply\v@lX\c@ef\multiply\v@lY\c@ef\multiply\v@lZ\c@ef%
    \@ecfor\p@int:=\list@num\do{\extrairelepremi@r\c@ef\de\listec@ef%
           \Figg@tXYa{\p@int}\divide\v@lXa\s@mme\divide\v@lYa\s@mme\divide\v@lZa\s@mme%
           \multiply\v@lXa\c@ef\multiply\v@lYa\c@ef\multiply\v@lZa\c@ef%
           \advance\v@lX\v@lXa\advance\v@lY\v@lYa\advance\v@lZ\v@lZa}%
    \Figp@intregTD#1:{#2}(\v@lX,\v@lY,\v@lZ)}\ignorespaces\fi}
\ctr@ld@f\def\figptbaryR#1:#2[#3;#4]{\ifps@cri{%
    \v@leur=\z@\@ecfor\c@ef:=#4\do{\maxim@m{\v@lmax}{\c@ef pt}{-\c@ef pt}%
    \ifdim\v@lmax>\v@leur\v@leur=\v@lmax\fi}%
    \ifdim\v@leur<\p@\f@ctech=\@M\else\ifdim\v@leur<\t@n\p@\f@ctech=\@m\else%
    \ifdim\v@leur<\c@nt\p@\f@ctech=\c@nt\else\ifdim\v@leur<\@m\p@\f@ctech=\t@n\else%
    \f@ctech=\@ne\fi\fi\fi\fi%
    \def\listec@ef{0}%
    \@ecfor\c@ef:=#4\do{\sc@lec@nvRI{\c@ef pt}\edef\listec@ef{\listec@ef,\the\s@mme}}%
    \extrairelepremi@r\c@ef\de\listec@ef\figptbary#1:#2[#3;\listec@ef]}\ignorespaces\fi}
\ctr@ld@f\def\sc@lec@nvRI#1{\v@leur=#1\p@rtentiere{\s@mme}{\v@leur}\advance\v@leur-\s@mme\p@%
    \multiply\v@leur\f@ctech\p@rtentiere{\p@rtent}{\v@leur}%
    \multiply\s@mme\f@ctech\advance\s@mme\p@rtent}
\ctr@ln@m\figptcirc
\ctr@ld@f\def\figptcircDD#1:#2:#3;#4(#5){\ifps@cri{\s@uvc@ntr@l\et@tfigptcircDD%
    \c@lptellDD#1:{#2}:#3;#4,#4(#5)\resetc@ntr@l\et@tfigptcircDD}\ignorespaces\fi}
\ctr@ld@f\def\figptcircTD#1:#2:#3,#4,#5;#6(#7){\ifps@cri{\s@uvc@ntr@l\et@tfigptcircTD%
    \setc@ntr@l{2}\c@lExtAxes#3,#4,#5(#6)\figptellP#1:{#2}:#3,-4,-5(#7)%
    \resetc@ntr@l\et@tfigptcircTD}\ignorespaces\fi}
\ctr@ln@m\figptcircumcenter
\ctr@ld@f\def\figptcircumcenterDD#1:#2[#3,#4,#5]{\ifps@cri{\s@uvc@ntr@l\et@tfigptcircumcenterDD%
    \setc@ntr@l{2}\figvectNDD-5[#3,#4]\figptbaryDD-3:[#3,#4;1,1]%
                  \figvectNDD-6[#4,#5]\figptbaryDD-4:[#4,#5;1,1]%
    \resetc@ntr@l{2}\inters@cDD#1:{#2}[-3,-5;-4,-6]%
    \resetc@ntr@l\et@tfigptcircumcenterDD}\ignorespaces\fi}
\ctr@ld@f\def\figptcircumcenterTD#1:#2[#3,#4,#5]{\ifps@cri{\s@uvc@ntr@l\et@tfigptcircumcenterTD%
    \setc@ntr@l{2}\figvectNTD-1[#3,#4,#5]%
    \figvectPTD-3[#3,#4]\figvectNVTD-5[-1,-3]\figptbaryTD-3:[#3,#4;1,1]%
    \figvectPTD-4[#4,#5]\figvectNVTD-6[-1,-4]\figptbaryTD-4:[#4,#5;1,1]%
    \resetc@ntr@l{2}\inters@cTD#1:{#2}[-3,-5;-4,-6]%
    \resetc@ntr@l\et@tfigptcircumcenterTD}\ignorespaces\fi}
\ctr@ln@m\figptcopy
\ctr@ld@f\def\figptcopyDD#1:#2/#3/{\ifps@cri{\Figg@tXY{#3}%
    \Figp@intregDD#1:{#2}(\v@lX,\v@lY)}\ignorespaces\fi}
\ctr@ld@f\def\figptcopyTD#1:#2/#3/{\ifps@cri{\Figg@tXY{#3}%
    \Figp@intregTD#1:{#2}(\v@lX,\v@lY,\v@lZ)}\ignorespaces\fi}
\ctr@ln@m\figptcurvcenter
\ctr@ld@f\def\figptcurvcenterDD#1:#2:#3[#4,#5,#6,#7]{\ifps@cri{\s@uvc@ntr@l\et@tfigptcurvcenterDD%
    \setc@ntr@l{2}\c@lcurvradDD#3[#4,#5,#6,#7]\edef\Sprim@{\repdecn@mb{\result@t}}%
    \figptBezierDD-1::#3[#4,#5,#6,#7]\figpttraDD#1:{#2}=-1/\Sprim@,-5/%
    \resetc@ntr@l\et@tfigptcurvcenterDD}\ignorespaces\fi}
\ctr@ld@f\def\figptcurvcenterTD#1:#2:#3[#4,#5,#6,#7]{\ifps@cri{\s@uvc@ntr@l\et@tfigptcurvcenterTD%
    \setc@ntr@l{2}\figvectDBezierTD -5:1,#3[#4,#5,#6,#7]%
    \figvectDBezierTD -6:2,#3[#4,#5,#6,#7]\vecunit@TD{-5}{-5}%
    \edef\Sprim@{\repdecn@mb{\result@t}}\figvectNVTD-1[-6,-5]%
    \figvectNVTD-5[-5,-1]\c@lproscalTD\v@leur[-6,-5]%
    \invers@{\v@leur}{\v@leur}\v@leur=\Sprim@\v@leur\v@leur=\Sprim@\v@leur%
    \figptBezierTD-1::#3[#4,#5,#6,#7]\edef\Sprim@{\repdecn@mb{\v@leur}}%
    \figpttraTD#1:{#2}=-1/\Sprim@,-5/\resetc@ntr@l\et@tfigptcurvcenterTD}\ignorespaces\fi}
\ctr@ld@f\def\c@lcurvradDD#1[#2,#3,#4,#5]{{\figvectDBezierDD -5:1,#1[#2,#3,#4,#5]%
    \figvectDBezierDD -6:2,#1[#2,#3,#4,#5]\vecunit@DD{-5}{-5}%
    \edef\Sprim@{\repdecn@mb{\result@t}}\figvectNVDD-5[-5]\c@lproscalDD\v@leur[-6,-5]%
    \invers@{\v@leur}{\v@leur}\v@leur=\Sprim@\v@leur\v@leur=\Sprim@\v@leur%
    \global\result@t=\v@leur}}
\ctr@ln@m\figptell
\ctr@ld@f\def\figptellDD#1:#2:#3;#4,#5(#6,#7){\ifps@cri{\s@uvc@ntr@l\et@tfigptell%
    \c@lptellDD#1::#3;#4,#5(#6)\figptrotDD#1:{#2}=#1/#3,#7/%
    \resetc@ntr@l\et@tfigptell}\ignorespaces\fi}
\ctr@ld@f\def\c@lptellDD#1:#2:#3;#4,#5(#6){\c@ssin{\C@}{\S@}{#6}\v@lmin=\C@ pt\v@lmax=\S@ pt%
    \v@lmin=#4\v@lmin\v@lmax=#5\v@lmax%
    \edef\Xc@mp{\repdecn@mb{\v@lmin}}\edef\Yc@mp{\repdecn@mb{\v@lmax}}%
    \setc@ntr@l{2}\figvectC-1(\Xc@mp,\Yc@mp)\figpttraDD#1:{#2}=#3/1,-1/}
\ctr@ld@f\def\figptellP#1:#2:#3,#4,#5(#6){\ifps@cri{\s@uvc@ntr@l\et@tfigptellP%
    \setc@ntr@l{2}\figvectP-1[#3,#4]\figvectP-2[#3,#5]%
    \v@leur=#6pt\c@lptellP{#3}{-1}{-2}\figptcopy#1:{#2}/-3/%
    \resetc@ntr@l\et@tfigptellP}\ignorespaces\fi}
\ctr@ln@m\@ngle
\ctr@ld@f\def\c@lptellP#1#2#3{\edef\@ngle{\repdecn@mb\v@leur}\c@ssin{\C@}{\S@}{\@ngle}%
    \figpttra-3:=#1/\C@,#2/\figpttra-3:=-3/\S@,#3/}
\ctr@ln@m\figptendnormal
\ctr@ld@f\def\figptendnormalDD#1:#2:#3,#4[#5,#6]{\ifps@cri{\s@uvc@ntr@l\et@tfigptendnormal%
    \Figg@tXYa{#5}\Figg@tXY{#6}%
    \advance\v@lX-\v@lXa\advance\v@lY-\v@lYa%
    \setc@ntr@l{2}\Figv@ctCreg-1(\v@lX,\v@lY)\vecunit@{-1}{-1}\Figg@tXY{-1}%
    \delt@=#3\unit@\maxim@m{\delt@}{\delt@}{-\delt@}\edef\l@ngueur{\repdecn@mb{\delt@}}%
    \v@lX=\l@ngueur\v@lX\v@lY=\l@ngueur\v@lY%
    \delt@=\p@\advance\delt@-#4pt\edef\l@ngueur{\repdecn@mb{\delt@}}%
    \figptbaryR-1:[#5,#6;#4,\l@ngueur]\Figg@tXYa{-1}%
    \advance\v@lXa\v@lY\advance\v@lYa-\v@lX%
    \setc@ntr@l{1}\Figp@intregDD#1:{#2}(\v@lXa,\v@lYa)\resetc@ntr@l\et@tfigptendnormal}%
    \ignorespaces\fi}
\ctr@ld@f\def\figptexcenter#1:#2[#3,#4,#5]{\ifps@cri{\let@xte={-}%
    \Figptexinsc@nter#1:#2[#3,#4,#5]}\ignorespaces\fi}
\ctr@ld@f\def\figptincenter#1:#2[#3,#4,#5]{\ifps@cri{\let@xte={}%
    \Figptexinsc@nter#1:#2[#3,#4,#5]}\ignorespaces\fi}
\ctr@ld@f\let\figptinscribedcenter=\figptincenter
\ctr@ld@f\def\Figptexinsc@nter#1:#2[#3,#4,#5]{%
    \figgetdist\LA@[#4,#5]\figgetdist\LB@[#3,#5]\figgetdist\LC@[#3,#4]%
    \figptbaryR#1:{#2}[#3,#4,#5;\the\let@xte\LA@,\LB@,\LC@]}
\ctr@ln@m\figptinterlineplane
\ctr@ld@f\def\figptinterlineplaneDD{\un@v@ilable{figptinterlineplane}}
\ctr@ld@f\def\figptinterlineplaneTD#1:#2[#3,#4;#5,#6]{\ifps@cri{\s@uvc@ntr@l\et@tfigptinterlineplane%
    \setc@ntr@l{2}\figvectPTD-1[#3,#5]\vecunit@TD{-2}{#6}%
    \r@pPSTD\v@leur[-2,-1,#4]\edef\v@lcoef{\repdecn@mb{\v@leur}}%
    \figpttraTD#1:{#2}=#3/\v@lcoef,#4/\resetc@ntr@l\et@tfigptinterlineplane}\ignorespaces\fi}
\ctr@ln@m\figptorthocenter
\ctr@ld@f\def\figptorthocenterDD#1:#2[#3,#4,#5]{\ifps@cri{\s@uvc@ntr@l\et@tfigptorthocenterDD%
    \setc@ntr@l{2}\figvectNDD-3[#3,#4]\figvectNDD-4[#4,#5]%
    \resetc@ntr@l{2}\inters@cDD#1:{#2}[#5,-3;#3,-4]%
    \resetc@ntr@l\et@tfigptorthocenterDD}\ignorespaces\fi}
\ctr@ld@f\def\figptorthocenterTD#1:#2[#3,#4,#5]{\ifps@cri{\s@uvc@ntr@l\et@tfigptorthocenterTD%
    \setc@ntr@l{2}\figvectNTD-1[#3,#4,#5]%
    \figvectPTD-2[#3,#4]\figvectNVTD-3[-1,-2]%
    \figvectPTD-2[#4,#5]\figvectNVTD-4[-1,-2]%
    \resetc@ntr@l{2}\inters@cTD#1:{#2}[#5,-3;#3,-4]%
    \resetc@ntr@l\et@tfigptorthocenterTD}\ignorespaces\fi}
\ctr@ln@m\figptorthoprojline
\ctr@ld@f\def\figptorthoprojlineDD#1:#2=#3/#4,#5/{\ifps@cri{\s@uvc@ntr@l\et@tfigptorthoprojlineDD%
    \setc@ntr@l{2}\figvectPDD-3[#4,#5]\figvectNVDD-4[-3]\resetc@ntr@l{2}%
    \inters@cDD#1:{#2}[#3,-4;#4,-3]\resetc@ntr@l\et@tfigptorthoprojlineDD}\ignorespaces\fi}
\ctr@ld@f\def\figptorthoprojlineTD#1:#2=#3/#4,#5/{\ifps@cri{\s@uvc@ntr@l\et@tfigptorthoprojlineTD%
    \setc@ntr@l{2}\figvectPTD-1[#4,#3]\figvectPTD-2[#4,#5]\vecunit@TD{-2}{-2}%
    \c@lproscalTD\v@leur[-1,-2]\edef\v@lcoef{\repdecn@mb{\v@leur}}%
    \figpttraTD#1:{#2}=#4/\v@lcoef,-2/\resetc@ntr@l\et@tfigptorthoprojlineTD}\ignorespaces\fi}
\ctr@ln@m\figptorthoprojplane
\ctr@ld@f\def\figptorthoprojplaneDD{\un@v@ilable{figptorthoprojplane}}
\ctr@ld@f\def\figptorthoprojplaneTD#1:#2=#3/#4,#5/{\ifps@cri{\s@uvc@ntr@l\et@tfigptorthoprojplane%
    \setc@ntr@l{2}\figvectPTD-1[#3,#4]\vecunit@TD{-2}{#5}%
    \c@lproscalTD\v@leur[-1,-2]\edef\v@lcoef{\repdecn@mb{\v@leur}}%
    \figpttraTD#1:{#2}=#3/\v@lcoef,-2/\resetc@ntr@l\et@tfigptorthoprojplane}\ignorespaces\fi}
\ctr@ld@f\def\figpthom#1:#2=#3/#4,#5/{\ifps@cri{\s@uvc@ntr@l\et@tfigpthom%
    \setc@ntr@l{2}\figvectP-1[#4,#3]\figpttra#1:{#2}=#4/#5,-1/%
    \resetc@ntr@l\et@tfigpthom}\ignorespaces\fi}
\ctr@ln@m\figptrot
\ctr@ld@f\def\figptrotDD#1:#2=#3/#4,#5/{\ifps@cri{\s@uvc@ntr@l\et@tfigptrotDD%
    \c@ssin{\C@}{\S@}{#5}\setc@ntr@l{2}\figvectPDD-1[#4,#3]\Figg@tXY{-1}%
    \v@lXa=\C@\v@lX\advance\v@lXa-\S@\v@lY%
    \v@lYa=\S@\v@lX\advance\v@lYa\C@\v@lY%
    \Figv@ctCreg-1(\v@lXa,\v@lYa)\figpttraDD#1:{#2}=#4/1,-1/%
    \resetc@ntr@l\et@tfigptrotDD}\ignorespaces\fi}
\ctr@ld@f\def\figptrotTD#1:#2=#3/#4,#5,#6/{\ifps@cri{\s@uvc@ntr@l\et@tfigptrotTD%
    \c@ssin{\C@}{\S@}{#5}%
    \setc@ntr@l{2}\figptorthoprojplaneTD-3:=#4/#3,#6/\figvectPTD-2[-3,#3]%
    \n@rmeucTD\v@leur{-2}\ifdim\v@leur<\Cepsil@n\Figg@tXYa{#3}\else%
    \edef\v@lcoef{\repdecn@mb{\v@leur}}\figvectNVTD-1[#6,-2]%
    \Figg@tXYa{-1}\v@lXa=\v@lcoef\v@lXa\v@lYa=\v@lcoef\v@lYa\v@lZa=\v@lcoef\v@lZa%
    \v@lXa=\S@\v@lXa\v@lYa=\S@\v@lYa\v@lZa=\S@\v@lZa\Figg@tXY{-2}%
    \advance\v@lXa\C@\v@lX\advance\v@lYa\C@\v@lY\advance\v@lZa\C@\v@lZ%
    \Figg@tXY{-3}\advance\v@lXa\v@lX\advance\v@lYa\v@lY\advance\v@lZa\v@lZ\fi%
    \Figp@intregTD#1:{#2}(\v@lXa,\v@lYa,\v@lZa)\resetc@ntr@l\et@tfigptrotTD}\ignorespaces\fi}
\ctr@ln@m\figptsym
\ctr@ld@f\def\figptsymDD#1:#2=#3/#4,#5/{\ifps@cri{\s@uvc@ntr@l\et@tfigptsymDD%
    \resetc@ntr@l{2}\figptorthoprojlineDD-5:=#3/#4,#5/\figvectPDD-2[#3,-5]%
    \figpttraDD#1:{#2}=#3/2,-2/\resetc@ntr@l\et@tfigptsymDD}\ignorespaces\fi}
\ctr@ld@f\def\figptsymTD#1:#2=#3/#4,#5/{\ifps@cri{\s@uvc@ntr@l\et@tfigptsymTD%
    \resetc@ntr@l{2}\figptorthoprojplaneTD-3:=#3/#4,#5/\figvectPTD-2[#3,-3]%
    \figpttraTD#1:{#2}=#3/2,-2/\resetc@ntr@l\et@tfigptsymTD}\ignorespaces\fi}
\ctr@ln@m\figpttra
\ctr@ld@f\def\figpttraDD#1:#2=#3/#4,#5/{\ifps@cri{\Figg@tXYa{#5}\v@lXa=#4\v@lXa\v@lYa=#4\v@lYa%
    \Figg@tXY{#3}\advance\v@lX\v@lXa\advance\v@lY\v@lYa%
    \Figp@intregDD#1:{#2}(\v@lX,\v@lY)}\ignorespaces\fi}
\ctr@ld@f\def\figpttraTD#1:#2=#3/#4,#5/{\ifps@cri{\Figg@tXYa{#5}\v@lXa=#4\v@lXa\v@lYa=#4\v@lYa%
    \v@lZa=#4\v@lZa\Figg@tXY{#3}\advance\v@lX\v@lXa\advance\v@lY\v@lYa%
    \advance\v@lZ\v@lZa\Figp@intregTD#1:{#2}(\v@lX,\v@lY,\v@lZ)}\ignorespaces\fi}
\ctr@ln@m\figpttraC
\ctr@ld@f\def\figpttraCDD#1:#2=#3/#4,#5/{\ifps@cri{\v@lXa=#4\unit@\v@lYa=#5\unit@%
    \Figg@tXY{#3}\advance\v@lX\v@lXa\advance\v@lY\v@lYa%
    \Figp@intregDD#1:{#2}(\v@lX,\v@lY)}\ignorespaces\fi}
\ctr@ld@f\def\figpttraCTD#1:#2=#3/#4,#5,#6/{\ifps@cri{\v@lXa=#4\unit@\v@lYa=#5\unit@\v@lZa=#6\unit@%
    \Figg@tXY{#3}\advance\v@lX\v@lXa\advance\v@lY\v@lYa\advance\v@lZ\v@lZa%
    \Figp@intregTD#1:{#2}(\v@lX,\v@lY,\v@lZ)}\ignorespaces\fi}
\ctr@ld@f\def\figptsaxes#1:#2(#3){\ifps@cri{\an@lys@xes#3,:\ifx\t@xt@\empty%
    \ifTr@isDim\Figpts@xes#1:#2(0,#3,0,#3,0,#3)\else\Figpts@xes#1:#2(0,#3,0,#3)\fi%
    \else\Figpts@xes#1:#2(#3)\fi}\ignorespaces\fi}
\ctr@ln@m\Figpts@xes
\ctr@ld@f\def\Figpts@xesDD#1:#2(#3,#4,#5,#6){%
    \s@mme=#1\figpttraC\the\s@mme:$x$=#2/#4,0/%
    \advance\s@mme\@ne\figpttraC\the\s@mme:$y$=#2/0,#6/}
\ctr@ld@f\def\Figpts@xesTD#1:#2(#3,#4,#5,#6,#7,#8){%
    \s@mme=#1\figpttraC\the\s@mme:$x$=#2/#4,0,0/%
    \advance\s@mme\@ne\figpttraC\the\s@mme:$y$=#2/0,#6,0/%
    \advance\s@mme\@ne\figpttraC\the\s@mme:$z$=#2/0,0,#8/}
\ctr@ld@f\def\figptsmap#1=#2/#3/#4/{\ifps@cri{\s@uvc@ntr@l\et@tfigptsmap%
    \setc@ntr@l{2}\def\list@num{#2}\s@mme=#1%
    \@ecfor\p@int:=\list@num\do{\figvectP-1[#3,\p@int]\Figg@tXY{-1}%
    \pr@dMatV/#4/\figpttra\the\s@mme:=#3/1,-1/\advance\s@mme\@ne}%
    \resetc@ntr@l\et@tfigptsmap}\ignorespaces\fi}
\ctr@ln@m\figptscontrol
\ctr@ld@f\def\figptscontrolDD#1[#2,#3,#4,#5]{\ifps@cri{\s@uvc@ntr@l\et@tfigptscontrolDD\setc@ntr@l{2}%
    \v@lX=\z@\v@lY=\z@\Figtr@nptDD{-5}{#2}\Figtr@nptDD{2}{#5}%
    \divide\v@lX\@vi\divide\v@lY\@vi%
    \Figtr@nptDD{3}{#3}\Figtr@nptDD{-1.5}{#4}\Figp@intregDD-1:(\v@lX,\v@lY)%
    \v@lX=\z@\v@lY=\z@\Figtr@nptDD{2}{#2}\Figtr@nptDD{-5}{#5}%
    \divide\v@lX\@vi\divide\v@lY\@vi\Figtr@nptDD{-1.5}{#3}\Figtr@nptDD{3}{#4}%
    \s@mme=#1\advance\s@mme\@ne\Figp@intregDD\the\s@mme:(\v@lX,\v@lY)%
    \figptcopyDD#1:/-1/\resetc@ntr@l\et@tfigptscontrolDD}\ignorespaces\fi}
\ctr@ld@f\def\figptscontrolTD#1[#2,#3,#4,#5]{\ifps@cri{\s@uvc@ntr@l\et@tfigptscontrolTD\setc@ntr@l{2}%
    \v@lX=\z@\v@lY=\z@\v@lZ=\z@\Figtr@nptTD{-5}{#2}\Figtr@nptTD{2}{#5}%
    \divide\v@lX\@vi\divide\v@lY\@vi\divide\v@lZ\@vi%
    \Figtr@nptTD{3}{#3}\Figtr@nptTD{-1.5}{#4}\Figp@intregTD-1:(\v@lX,\v@lY,\v@lZ)%
    \v@lX=\z@\v@lY=\z@\v@lZ=\z@\Figtr@nptTD{2}{#2}\Figtr@nptTD{-5}{#5}%
    \divide\v@lX\@vi\divide\v@lY\@vi\divide\v@lZ\@vi\Figtr@nptTD{-1.5}{#3}\Figtr@nptTD{3}{#4}%
    \s@mme=#1\advance\s@mme\@ne\Figp@intregTD\the\s@mme:(\v@lX,\v@lY,\v@lZ)%
    \figptcopyTD#1:/-1/\resetc@ntr@l\et@tfigptscontrolTD}\ignorespaces\fi}
\ctr@ld@f\def\Figtr@nptDD#1#2{\Figg@tXYa{#2}\v@lXa=#1\v@lXa\v@lYa=#1\v@lYa%
    \advance\v@lX\v@lXa\advance\v@lY\v@lYa}
\ctr@ld@f\def\Figtr@nptTD#1#2{\Figg@tXYa{#2}\v@lXa=#1\v@lXa\v@lYa=#1\v@lYa\v@lZa=#1\v@lZa%
    \advance\v@lX\v@lXa\advance\v@lY\v@lYa\advance\v@lZ\v@lZa}
\ctr@ld@f\def\figptscontrolcurve#1,#2[#3]{\ifps@cri{\s@uvc@ntr@l\et@tfigptscontrolcurve%
    \def\list@num{#3}\extrairelepremi@r\Ak@\de\list@num%
    \extrairelepremi@r\Ai@\de\list@num\extrairelepremi@r\Aj@\de\list@num%
    \s@mme=#1\figptcopy\the\s@mme:/\Ai@/%
    \setc@ntr@l{2}\figvectP -1[\Ak@,\Aj@]%
    \@ecfor\Ak@:=\list@num\do{\advance\s@mme\@ne\figpttra\the\s@mme:=\Ai@/\curv@roundness,-1/%
       \figvectP -1[\Ai@,\Ak@]\advance\s@mme\@ne\figpttra\the\s@mme:=\Aj@/-\curv@roundness,-1/%
       \advance\s@mme\@ne\figptcopy\the\s@mme:/\Aj@/%
       \edef\Ai@{\Aj@}\edef\Aj@{\Ak@}}\advance\s@mme-#1\divide\s@mme\thr@@%
       \xdef#2{\the\s@mme}%
    \resetc@ntr@l\et@tfigptscontrolcurve}\ignorespaces\fi}
\ctr@ln@m\figptsintercirc
\ctr@ld@f\def\figptsintercircDD#1[#2,#3;#4,#5]{\ifps@cri{\s@uvc@ntr@l\et@tfigptsintercircDD%
    \setc@ntr@l{2}\let\c@lNVintc=\c@lNVintcDD\Figptsintercirc@#1[#2,#3;#4,#5]%
    \resetc@ntr@l\et@tfigptsintercircDD}\ignorespaces\fi}
\ctr@ld@f\def\figptsintercircTD#1[#2,#3;#4,#5;#6]{\ifps@cri{\s@uvc@ntr@l\et@tfigptsintercircTD%
    \setc@ntr@l{2}\let\c@lNVintc=\c@lNVintcTD\vecunitC@TD[#2,#6]%
    \Figv@ctCreg-3(\v@lX,\v@lY,\v@lZ)\Figptsintercirc@#1[#2,#3;#4,#5]%
    \resetc@ntr@l\et@tfigptsintercircTD}\ignorespaces\fi}
\ctr@ld@f\def\Figptsintercirc@#1[#2,#3;#4,#5]{\figvectP-1[#2,#4]%
    \vecunit@{-1}{-1}\delt@=\result@t\f@ctech=\result@tent%
    \s@mme=#1\advance\s@mme\@ne\figptcopy#1:/#2/\figptcopy\the\s@mme:/#4/%
    \ifdim\delt@=\z@\else%
    \v@lmin=#3\unit@\v@lmax=#5\unit@\v@leur=\v@lmin\advance\v@leur\v@lmax%
    \ifdim\v@leur>\delt@%
    \v@leur=\v@lmin\advance\v@leur-\v@lmax\maxim@m{\v@leur}{\v@leur}{-\v@leur}%
    \ifdim\v@leur<\delt@%
    \divide\v@lmin\f@ctech\divide\v@lmax\f@ctech\divide\delt@\f@ctech%
    \v@lmin=\repdecn@mb{\v@lmin}\v@lmin\v@lmax=\repdecn@mb{\v@lmax}\v@lmax%
    \invers@{\v@leur}{\delt@}\advance\v@lmax-\v@lmin%
    \v@lmax=-\repdecn@mb{\v@leur}\v@lmax\advance\delt@\v@lmax\delt@=.5\delt@%
    \v@lmax=\delt@\multiply\v@lmax\f@ctech%
    \edef\t@ille{\repdecn@mb{\v@lmax}}\figpttra-2:=#2/\t@ille,-1/%
    \delt@=\repdecn@mb{\delt@}\delt@\advance\v@lmin-\delt@%
    \sqrt@{\v@leur}{\v@lmin}\multiply\v@leur\f@ctech\edef\t@ille{\repdecn@mb{\v@leur}}%
    \c@lNVintc\figpttra#1:=-2/-\t@ille,-1/\figpttra\the\s@mme:=-2/\t@ille,-1/\fi\fi\fi}
\ctr@ld@f\def\c@lNVintcDD{\Figg@tXY{-1}\Figv@ctCreg-1(-\v@lY,\v@lX)} 
\ctr@ld@f\def\c@lNVintcTD{{\Figg@tXY{-3}\v@lmin=\v@lX\v@lmax=\v@lY\v@leur=\v@lZ%
    \Figg@tXY{-1}\c@lprovec{-3}\vecunit@{-3}{-3}
    \Figg@tXY{-1}\v@lmin=\v@lX\v@lmax=\v@lY%
    \v@leur=\v@lZ\Figg@tXY{-3}\c@lprovec{-1}}} 
\ctr@ln@m\figptsinterlinell
\ctr@ld@f\def\figptsinterlinellDD#1[#2,#3,#4,#5;#6,#7]{\ifps@cri{\s@uvc@ntr@l\et@tfigptsinterlinellDD%
    \figptcopy#1:/#6/\s@mme=#1\advance\s@mme\@ne\figptcopy\the\s@mme:/#7/%
    \v@lmin=#3\unit@\v@lmax=#4\unit@
    \setc@ntr@l{2}\figptbaryDD-4:[#6,#7;1,1]\figptsrotDD-3=-4,#7/#2,-#5/
    \Figg@tXY{-3}\Figg@tXYa{#2}\advance\v@lX-\v@lXa\advance\v@lY-\v@lYa
    \figvectP-1[-3,-2]\Figg@tXYa{-1}\figvectP-3[-4,#7]\Figptsint@rLE{#1}
    \resetc@ntr@l\et@tfigptsinterlinellDD}\ignorespaces\fi}
\ctr@ld@f\def\figptsinterlinellP#1[#2,#3,#4;#5,#6]{\ifps@cri{\s@uvc@ntr@l\et@tfigptsinterlinellP%
    \figptcopy#1:/#5/\s@mme=#1\advance\s@mme\@ne\figptcopy\the\s@mme:/#6/\setc@ntr@l{2}%
    \figvectP-1[#2,#3]\vecunit@{-1}{-1}\v@lmin=\result@t
    \figvectP-2[#2,#4]\vecunit@{-2}{-2}\v@lmax=\result@t
    \figptbary-4:[#5,#6;1,1]
    \figvectP-3[#2,-4]\c@lproscal\v@lX[-3,-1]\c@lproscal\v@lY[-3,-2]
    \figvectP-3[-4,#6]\c@lproscal\v@lXa[-3,-1]\c@lproscal\v@lYa[-3,-2]
    \Figptsint@rLE{#1}\resetc@ntr@l\et@tfigptsinterlinellP}\ignorespaces\fi}
\ctr@ld@f\def\Figptsint@rLE#1{%
    \getredf@ctDD\f@ctech(\v@lmin,\v@lmax)%
    \getredf@ctDD\p@rtent(\v@lX,\v@lY)\ifnum\p@rtent>\f@ctech\f@ctech=\p@rtent\fi%
    \getredf@ctDD\p@rtent(\v@lXa,\v@lYa)\ifnum\p@rtent>\f@ctech\f@ctech=\p@rtent\fi%
    \divide\v@lmin\f@ctech\divide\v@lmax\f@ctech\divide\v@lX\f@ctech\divide\v@lY\f@ctech%
    \divide\v@lXa\f@ctech\divide\v@lYa\f@ctech%
    \c@rre=\repdecn@mb\v@lXa\v@lmax\mili@u=\repdecn@mb\v@lYa\v@lmin%
    \getredf@ctDD\f@ctech(\c@rre,\mili@u)%
    \c@rre=\repdecn@mb\v@lX\v@lmax\mili@u=\repdecn@mb\v@lY\v@lmin%
    \getredf@ctDD\p@rtent(\c@rre,\mili@u)\ifnum\p@rtent>\f@ctech\f@ctech=\p@rtent\fi%
    \divide\v@lmin\f@ctech\divide\v@lmax\f@ctech\divide\v@lX\f@ctech\divide\v@lY\f@ctech%
    \divide\v@lXa\f@ctech\divide\v@lYa\f@ctech%
    \v@lmin=\repdecn@mb{\v@lmin}\v@lmin\v@lmax=\repdecn@mb{\v@lmax}\v@lmax%
    \edef\G@xde{\repdecn@mb\v@lmin}\edef\P@xde{\repdecn@mb\v@lmax}%
    \c@rre=-\v@lmax\v@leur=\repdecn@mb\v@lY\v@lY\advance\c@rre\v@leur\c@rre=\G@xde\c@rre%
    \v@leur=\repdecn@mb\v@lX\v@lX\v@leur=\P@xde\v@leur\advance\c@rre\v@leur
    \v@lmin=\repdecn@mb\v@lYa\v@lmin\v@lmax=\repdecn@mb\v@lXa\v@lmax%
    \mili@u=\repdecn@mb\v@lX\v@lmax\advance\mili@u\repdecn@mb\v@lY\v@lmin
    \v@lmax=\repdecn@mb\v@lXa\v@lmax\advance\v@lmax\repdecn@mb\v@lYa\v@lmin
    \ifdim\v@lmax>\epsil@n%
    \maxim@m{\v@leur}{\c@rre}{-\c@rre}\maxim@m{\v@lmin}{\mili@u}{-\mili@u}%
    \maxim@m{\v@leur}{\v@leur}{\v@lmin}\maxim@m{\v@lmin}{\v@lmax}{-\v@lmax}%
    \maxim@m{\v@leur}{\v@leur}{\v@lmin}\p@rtentiere{\p@rtent}{\v@leur}\advance\p@rtent\@ne%
    \divide\c@rre\p@rtent\divide\mili@u\p@rtent\divide\v@lmax\p@rtent%
    \delt@=\repdecn@mb{\mili@u}\mili@u\v@leur=\repdecn@mb{\v@lmax}\c@rre%
    \advance\delt@-\v@leur\ifdim\delt@<\z@\else\sqrt@\delt@\delt@%
    \invers@\v@lmax\v@lmax\edef\Uns@rAp{\repdecn@mb\v@lmax}%
    \v@leur=-\mili@u\advance\v@leur-\delt@\v@leur=\Uns@rAp\v@leur%
    \edef\t@ille{\repdecn@mb\v@leur}\figpttra#1:=-4/\t@ille,-3/\s@mme=#1\advance\s@mme\@ne%
    \v@leur=-\mili@u\advance\v@leur\delt@\v@leur=\Uns@rAp\v@leur%
    \edef\t@ille{\repdecn@mb\v@leur}\figpttra\the\s@mme:=-4/\t@ille,-3/\fi\fi}
\ctr@ln@m\figptsorthoprojline
\ctr@ld@f\def\figptsorthoprojlineDD#1=#2/#3,#4/{\ifps@cri{\s@uvc@ntr@l\et@tfigptsorthoprojlineDD%
    \setc@ntr@l{2}\figvectPDD-3[#3,#4]\figvectNVDD-4[-3]\resetc@ntr@l{2}%
    \def\list@num{#2}\s@mme=#1\@ecfor\p@int:=\list@num\do{%
    \inters@cDD\the\s@mme:[\p@int,-4;#3,-3]\advance\s@mme\@ne}%
    \resetc@ntr@l\et@tfigptsorthoprojlineDD}\ignorespaces\fi}
\ctr@ld@f\def\figptsorthoprojlineTD#1=#2/#3,#4/{\ifps@cri{\s@uvc@ntr@l\et@tfigptsorthoprojlineTD%
    \setc@ntr@l{2}\figvectPTD-2[#3,#4]\vecunit@TD{-2}{-2}%
    \def\list@num{#2}\s@mme=#1\@ecfor\p@int:=\list@num\do{%
    \figvectPTD-1[#3,\p@int]\c@lproscalTD\v@leur[-1,-2]%
    \edef\v@lcoef{\repdecn@mb{\v@leur}}\figpttraTD\the\s@mme:=#3/\v@lcoef,-2/%
    \advance\s@mme\@ne}\resetc@ntr@l\et@tfigptsorthoprojlineTD}\ignorespaces\fi}
\ctr@ln@m\figptsorthoprojplane
\ctr@ld@f\def\figptsorthoprojplaneDD{\un@v@ilable{figptsorthoprojplane}}
\ctr@ld@f\def\figptsorthoprojplaneTD#1=#2/#3,#4/{\ifps@cri{\s@uvc@ntr@l\et@tfigptsorthoprojplane%
    \setc@ntr@l{2}\vecunit@TD{-2}{#4}%
    \def\list@num{#2}\s@mme=#1\@ecfor\p@int:=\list@num\do{\figvectPTD-1[\p@int,#3]%
    \c@lproscalTD\v@leur[-1,-2]\edef\v@lcoef{\repdecn@mb{\v@leur}}%
    \figpttraTD\the\s@mme:=\p@int/\v@lcoef,-2/\advance\s@mme\@ne}%
    \resetc@ntr@l\et@tfigptsorthoprojplane}\ignorespaces\fi}
\ctr@ld@f\def\figptshom#1=#2/#3,#4/{\ifps@cri{\s@uvc@ntr@l\et@tfigptshom%
    \setc@ntr@l{2}\def\list@num{#2}\s@mme=#1%
    \@ecfor\p@int:=\list@num\do{\figvectP-1[#3,\p@int]%
    \figpttra\the\s@mme:=#3/#4,-1/\advance\s@mme\@ne}%
    \resetc@ntr@l\et@tfigptshom}\ignorespaces\fi}
\ctr@ln@m\figptsrot
\ctr@ld@f\def\figptsrotDD#1=#2/#3,#4/{\ifps@cri{\s@uvc@ntr@l\et@tfigptsrotDD%
    \c@ssin{\C@}{\S@}{#4}\setc@ntr@l{2}\def\list@num{#2}\s@mme=#1%
    \@ecfor\p@int:=\list@num\do{\figvectPDD-1[#3,\p@int]\Figg@tXY{-1}%
    \v@lXa=\C@\v@lX\advance\v@lXa-\S@\v@lY%
    \v@lYa=\S@\v@lX\advance\v@lYa\C@\v@lY%
    \Figv@ctCreg-1(\v@lXa,\v@lYa)\figpttraDD\the\s@mme:=#3/1,-1/\advance\s@mme\@ne}%
    \resetc@ntr@l\et@tfigptsrotDD}\ignorespaces\fi}
\ctr@ld@f\def\figptsrotTD#1=#2/#3,#4,#5/{\ifps@cri{\s@uvc@ntr@l\et@tfigptsrotTD%
    \c@ssin{\C@}{\S@}{#4}%
    \setc@ntr@l{2}\def\list@num{#2}\s@mme=#1%
    \@ecfor\p@int:=\list@num\do{\figptorthoprojplaneTD-3:=#3/\p@int,#5/%
    \figvectPTD-2[-3,\p@int]%
    \figvectNVTD-1[#5,-2]\n@rmeucTD\v@leur{-2}\edef\v@lcoef{\repdecn@mb{\v@leur}}%
    \Figg@tXYa{-1}\v@lXa=\v@lcoef\v@lXa\v@lYa=\v@lcoef\v@lYa\v@lZa=\v@lcoef\v@lZa%
    \v@lXa=\S@\v@lXa\v@lYa=\S@\v@lYa\v@lZa=\S@\v@lZa\Figg@tXY{-2}%
    \advance\v@lXa\C@\v@lX\advance\v@lYa\C@\v@lY\advance\v@lZa\C@\v@lZ%
    \Figg@tXY{-3}\advance\v@lXa\v@lX\advance\v@lYa\v@lY\advance\v@lZa\v@lZ%
    \Figp@intregTD\the\s@mme:(\v@lXa,\v@lYa,\v@lZa)\advance\s@mme\@ne}%
    \resetc@ntr@l\et@tfigptsrotTD}\ignorespaces\fi}
\ctr@ln@m\figptssym
\ctr@ld@f\def\figptssymDD#1=#2/#3,#4/{\ifps@cri{\s@uvc@ntr@l\et@tfigptssymDD%
    \setc@ntr@l{2}\figvectPDD-3[#3,#4]\Figg@tXY{-3}\Figv@ctCreg-4(-\v@lY,\v@lX)%
    \resetc@ntr@l{2}\def\list@num{#2}\s@mme=#1%
    \@ecfor\p@int:=\list@num\do{\inters@cDD-5:[#3,-3;\p@int,-4]\figvectPDD-2[\p@int,-5]%
    \figpttraDD\the\s@mme:=\p@int/2,-2/\advance\s@mme\@ne}%
    \resetc@ntr@l\et@tfigptssymDD}\ignorespaces\fi}
\ctr@ld@f\def\figptssymTD#1=#2/#3,#4/{\ifps@cri{\s@uvc@ntr@l\et@tfigptssymTD%
    \setc@ntr@l{2}\vecunit@TD{-2}{#4}\def\list@num{#2}\s@mme=#1%
    \@ecfor\p@int:=\list@num\do{\figvectPTD-1[\p@int,#3]%
    \c@lproscalTD\v@leur[-1,-2]\v@leur=2\v@leur\edef\v@lcoef{\repdecn@mb{\v@leur}}%
    \figpttraTD\the\s@mme:=\p@int/\v@lcoef,-2/\advance\s@mme\@ne}%
    \resetc@ntr@l\et@tfigptssymTD}\ignorespaces\fi}
\ctr@ln@m\figptstra
\ctr@ld@f\def\figptstraDD#1=#2/#3,#4/{\ifps@cri{\Figg@tXYa{#4}\v@lXa=#3\v@lXa\v@lYa=#3\v@lYa%
    \def\list@num{#2}\s@mme=#1\@ecfor\p@int:=\list@num\do{\Figg@tXY{\p@int}%
    \advance\v@lX\v@lXa\advance\v@lY\v@lYa%
    \Figp@intregDD\the\s@mme:(\v@lX,\v@lY)\advance\s@mme\@ne}}\ignorespaces\fi}
\ctr@ld@f\def\figptstraTD#1=#2/#3,#4/{\ifps@cri{\Figg@tXYa{#4}\v@lXa=#3\v@lXa\v@lYa=#3\v@lYa%
    \v@lZa=#3\v@lZa\def\list@num{#2}\s@mme=#1\@ecfor\p@int:=\list@num\do{\Figg@tXY{\p@int}%
    \advance\v@lX\v@lXa\advance\v@lY\v@lYa\advance\v@lZ\v@lZa%
    \Figp@intregTD\the\s@mme:(\v@lX,\v@lY,\v@lZ)\advance\s@mme\@ne}}\ignorespaces\fi}
\ctr@ln@m\figptvisilimSL
\ctr@ld@f\def\figptvisilimSLDD{\un@v@ilable{figptvisilimSL}}
\ctr@ld@f\def\figptvisilimSLTD#1:#2[#3,#4;#5,#6]{\ifps@cri{\s@uvc@ntr@l\et@tfigptvisilimSLTD%
    \setc@ntr@l{2}\figvectP-1[#3,#4]\n@rminf{\delt@}{-1}%
    \ifcase\curr@ntproj\v@lX=\cxa@\p@\v@lY=-\p@\v@lZ=\cxb@\p@
    \Figv@ctCreg-2(\v@lX,\v@lY,\v@lZ)\figvectP-3[#5,#6]\figvectNV-1[-2,-3]%
    \or\figvectP-1[#5,#6]\vecunitCV@TD{-1}\v@lmin=\v@lX\v@lmax=\v@lY
    \v@leur=\v@lZ\v@lX=\cza@\p@\v@lY=\czb@\p@\v@lZ=\czc@\p@\c@lprovec{-1}%
    \or\c@ley@pt{-2}\figvectN-1[#5,#6,-2]\fi
    \edef\Ai@{#3}\edef\Aj@{#4}\figvectP-2[#5,\Ai@]\c@lproscal\v@leur[-1,-2]%
    \ifdim\v@leur>\z@\p@rtent=\@ne\else\p@rtent=\m@ne\fi%
    \figvectP-2[#5,\Aj@]\c@lproscal\v@leur[-1,-2]%
    \ifdim\p@rtent\v@leur>\z@\figptcopy#1:#2/#3/%
    \message{*** \BS@ figptvisilimSL: points are on the same side.}\else%
    \figptcopy-3:/#3/\figptcopy-4:/#4/%
    \loop\figptbary-5:[-3,-4;1,1]\figvectP-2[#5,-5]\c@lproscal\v@leur[-1,-2]%
    \ifdim\p@rtent\v@leur>\z@\figptcopy-3:/-5/\else\figptcopy-4:/-5/\fi%
    \divide\delt@\tw@\ifdim\delt@>\epsil@n\repeat%
    \figptbary#1:#2[-3,-4;1,1]\fi\resetc@ntr@l\et@tfigptvisilimSLTD}\ignorespaces\fi}
\ctr@ld@f\def\c@ley@pt#1{\t@stp@r\ifitis@K\v@lX=\cza@\p@\v@lY=\czb@\p@\v@lZ=\czc@\p@%
    \Figv@ctCreg-1(\v@lX,\v@lY,\v@lZ)\Figp@intreg-2:(\wd\Bt@rget,\ht\Bt@rget,\dp\Bt@rget)%
    \figpttra#1:=-2/-\disob@intern,-1/\else\end\fi}
\ctr@ld@f\def\t@stp@r{\itis@Ktrue\ifnewt@rgetpt\else\itis@Kfalse%
    \message{*** \BS@ figptvisilimXX: target point undefined.}\fi\ifnewdis@b\else%
    \itis@Kfalse\message{*** \BS@ figptvisilimXX: observation distance undefined.}\fi%
    \ifitis@K\else\message{*** This macro must be called after \BS@ psbeginfig or after
    having set the missing parameter(s) with \BS@ figset proj()}\fi}
\ctr@ld@f\def\figscan#1(#2,#3){{\s@uvc@ntr@l\et@tfigscan\@psfgetbb{#1}\if@psfbbfound\else%
    \def\@psfllx{0}\def\@psflly{20}\def\@psfurx{540}\def\@psfury{640}\fi\figscan@{#2}{#3}%
    \resetc@ntr@l\et@tfigscan}\ignorespaces}
\ctr@ld@f\def\figscan@#1#2{%
    \unit@=\@ne bp\setc@ntr@l{2}\figsetmark{}%
    \def\minst@p{20pt}%
    \v@lX=\@psfllx\p@\v@lX=\Sc@leFact\v@lX\r@undint\v@lX\v@lX%
    \v@lY=\@psflly\p@\v@lY=\Sc@leFact\v@lY\ifdim\v@lY>\z@\r@undint\v@lY\v@lY\fi%
    \delt@=\@psfury\p@\delt@=\Sc@leFact\delt@%
    \advance\delt@-\v@lY\v@lXa=\@psfurx\p@\v@lXa=\Sc@leFact\v@lXa\v@leur=\minst@p%
    \edef\valv@lY{\repdecn@mb{\v@lY}}\edef\LgTr@it{\the\delt@}%
    \loop\ifdim\v@lX<\v@lXa\edef\valv@lX{\repdecn@mb{\v@lX}}%
    \figptDD -1:(\valv@lX,\valv@lY)\figwriten -1:\hbox{\vrule height\LgTr@it}(0)%
    \ifdim\v@leur<\minst@p\else\figsetmark{\raise-8bp\hbox{$\scriptscriptstyle\triangle$}}%
    \figwrites -1:\@ffichnb{0}{\valv@lX}(6)\v@leur=\z@\figsetmark{}\fi%
    \advance\v@leur#1pt\advance\v@lX#1pt\repeat%
    \def\minst@p{10pt}%
    \v@lX=\@psfllx\p@\v@lX=\Sc@leFact\v@lX\ifdim\v@lX>\z@\r@undint\v@lX\v@lX\fi%
    \v@lY=\@psflly\p@\v@lY=\Sc@leFact\v@lY\r@undint\v@lY\v@lY%
    \delt@=\@psfurx\p@\delt@=\Sc@leFact\delt@%
    \advance\delt@-\v@lX\v@lYa=\@psfury\p@\v@lYa=\Sc@leFact\v@lYa\v@leur=\minst@p%
    \edef\valv@lX{\repdecn@mb{\v@lX}}\edef\LgTr@it{\the\delt@}%
    \loop\ifdim\v@lY<\v@lYa\edef\valv@lY{\repdecn@mb{\v@lY}}%
    \figptDD -1:(\valv@lX,\valv@lY)\figwritee -1:\vbox{\hrule width\LgTr@it}(0)%
    \ifdim\v@leur<\minst@p\else\figsetmark{$\triangleright$\kern4bp}%
    \figwritew -1:\@ffichnb{0}{\valv@lY}(6)\v@leur=\z@\figsetmark{}\fi%
    \advance\v@leur#2pt\advance\v@lY#2pt\repeat}
\ctr@ld@f\let\figscanI=\figscan
\ctr@ld@f\def\figscan@E#1(#2,#3){{\s@uvc@ntr@l\et@tfigscan@E%
    \Figdisc@rdLTS{#1}{\t@xt@}\pdfximage{\t@xt@}%
    \setbox\Gb@x=\hbox{\pdfrefximage\pdflastximage}%
    \edef\@psfllx{0}\v@lY=-\dp\Gb@x\edef\@psflly{\repdecn@mb{\v@lY}}%
    \edef\@psfurx{\repdecn@mb{\wd\Gb@x}}%
    \v@lY=\dp\Gb@x\advance\v@lY\ht\Gb@x\edef\@psfury{\repdecn@mb{\v@lY}}%
    \figscan@{#2}{#3}\resetc@ntr@l\et@tfigscan@E}\ignorespaces}
\ctr@ld@f\def\figshowpts[#1,#2]{{\figsetmark{$\bullet$}\figsetptname{\bf ##1}%
    \p@rtent=#2\relax\ifnum\p@rtent<\z@\p@rtent=\z@\fi%
    \s@mme=#1\relax\ifnum\s@mme<\z@\s@mme=\z@\fi%
    \loop\ifnum\s@mme<\p@rtent\pt@rvect{\s@mme}%
    \ifitis@K\figwriten{\the\s@mme}:(4pt)\fi\advance\s@mme\@ne\repeat%
    \pt@rvect{\s@mme}\ifitis@K\figwriten{\the\s@mme}:(4pt)\fi}\ignorespaces}
\ctr@ld@f\def\pt@rvect#1{\set@bjc@de{#1}%
    \expandafter\expandafter\expandafter\inqpt@rvec\csname\objc@de\endcsname:}
\ctr@ld@f\def\inqpt@rvec#1#2:{\if#1\C@dCl@spt\itis@Ktrue\else\itis@Kfalse\fi}
\ctr@ld@f\def\figshowsettings{{%
    \immediate\write16{====================================================================}%
    \immediate\write16{ Current settings about:}%
    \immediate\write16{ --- GENERAL ---}%
    \immediate\write16{Scale factor and Unit = \unit@util\space (\the\unit@)
     \space -> \BS@ figinit{ScaleFactorUnit}}%
    \immediate\write16{Update mode = \ifpsupdatem@de yes\else no\fi
     \space-> \BS@ psset(update=yes/no) or \BS@ pssetdefault(update=yes/no)}%
    \immediate\write16{ --- PRINTING ---}%
    \immediate\write16{Implicit point name = \ptn@me{i} \space-> \BS@ figsetptname{Name}}%
    \immediate\write16{Point marker = \the\c@nsymb \space -> \BS@ figsetmark{Mark}}%
    \immediate\write16{Print rounded coordinates = \ifr@undcoord yes\else no\fi
     \space-> \BS@ figsetroundcoord{yes/no}}%
    \immediate\write16{ --- GRAPHICAL (general) ---}%
    \immediate\write16{First-level (or primary) settings:}%
    \immediate\write16{ Color = \curr@ntcolor \space-> \BS@ psset(color=ColorDefinition)}%
    \immediate\write16{ Filling mode = \iffillm@de yes\else no\fi
     \space-> \BS@ psset(fillmode=yes/no)}%
    \immediate\write16{ Line join = \curr@ntjoin \space-> \BS@ psset(join=miter/round/bevel)}%
    \immediate\write16{ Line style = \curr@ntdash \space-> \BS@ psset(dash=Index/Pattern)}%
    \immediate\write16{ Line width = \curr@ntwidth
     \space-> \BS@ psset(width=real in PostScript units)}%
    \immediate\write16{Second-level (or secondary) settings:}%
    \immediate\write16{ Color = \sec@ndcolor \space-> \BS@ psset second(color=ColorDefinition)}%
    \immediate\write16{ Line style = \curr@ntseconddash
     \space-> \BS@ psset second(dash=Index/Pattern)}%
    \immediate\write16{ Line width = \curr@ntsecondwidth
     \space-> \BS@ psset second(width=real in PostScript units)}%
    \immediate\write16{Third-level (or ternary) settings:}%
    \immediate\write16{ Color = \th@rdcolor \space-> \BS@ psset third(color=ColorDefinition)}%
    \immediate\write16{ --- GRAPHICAL (specific) ---}%
    \immediate\write16{Arrow-head:}%
    \immediate\write16{ (half-)Angle = \@rrowheadangle
     \space-> \BS@ psset arrowhead(angle=real in degrees)}%
    \immediate\write16{ Filling mode = \if@rrowhfill yes\else no\fi
     \space-> \BS@ psset arrowhead(fillmode=yes/no)}%
    \immediate\write16{ "Outside" = \if@rrowhout yes\else no\fi
     \space-> \BS@ psset arrowhead(out=yes/no)}%
    \immediate\write16{ Length = \@rrowheadlength
     \if@rrowratio\space(not active)\else\space(active)\fi
     \space-> \BS@ psset arrowhead(length=real in user coord.)}%
    \immediate\write16{ Ratio = \@rrowheadratio
     \if@rrowratio\space(active)\else\space(not active)\fi
     \space-> \BS@ psset arrowhead(ratio=real in [0,1])}%
    \immediate\write16{Curve: Roundness = \curv@roundness
     \space-> \BS@ psset curve(roundness=real in [0,0.5])}%
    \immediate\write16{Mesh: Diagonal = \c@ntrolmesh
     \space-> \BS@ psset mesh(diag=integer in {-1,0,1})}%
    \immediate\write16{Flow chart:}%
    \immediate\write16{ Arrow position = \@rrowp@s
     \space-> \BS@ psset flowchart(arrowposition=real in [0,1])}%
    \immediate\write16{ Arrow reference point = \ifcase\@rrowr@fpt start\else end\fi
     \space-> \BS@ psset flowchart(arrowrefpt = start/end)}%
    \immediate\write16{ Line type = \ifcase\fclin@typ@ curve\else polygon\fi
     \space-> \BS@ psset flowchart(line=polygon/curve)}%
    \immediate\write16{ Padding = (\Xp@dd, \Yp@dd)
     \space-> \BS@ psset flowchart(padding = real in user coord.)}%
    \immediate\write16{\space\space\space\space(or
     \BS@ psset flowchart(xpadding=real, ypadding=real) )}%
    \immediate\write16{ Radius = \fclin@r@d
     \space-> \BS@ psset flowchart(radius=positive real in user coord.)}%
    \immediate\write16{ Shape = \fcsh@pe
     \space-> \BS@ psset flowchart(shape = rectangle, ellipse or lozenge)}%
    \immediate\write16{ Thickness = \thickn@ss
     \space-> \BS@ psset flowchart(thickness = real in user coord.)}%
    \ifTr@isDim%
    \immediate\write16{ --- 3D to 2D PROJECTION ---}%
    \immediate\write16{Projection : \typ@proj \space-> \BS@ figinit{ScaleFactorUnit, ProjType}}%
    \immediate\write16{Longitude (psi) = \v@lPsi \space-> \BS@ figset proj(psi=real in degrees)}%
    \ifcase\curr@ntproj\immediate\write16{Depth coeff. (Lambda)
     \space = \v@lTheta \space-> \BS@ figset proj(lambda=real in [0,1])}%
    \else\immediate\write16{Latitude (theta)
     \space = \v@lTheta \space-> \BS@ figset proj(theta=real in degrees)}%
    \fi%
    \ifnum\curr@ntproj=\tw@%
    \immediate\write16{Observation distance = \disob@unit
     \space-> \BS@ figset proj(dist=real in user coord.)}%
    \immediate\write16{Target point = \t@rgetpt \space-> \BS@ figset proj(targetpt=pt number)}%
     \v@lX=\ptT@unit@\wd\Bt@rget\v@lY=\ptT@unit@\ht\Bt@rget\v@lZ=\ptT@unit@\dp\Bt@rget%
    \immediate\write16{ Its coordinates are
     (\repdecn@mb{\v@lX}, \repdecn@mb{\v@lY}, \repdecn@mb{\v@lZ})}%
    \fi%
    \fi%
    \immediate\write16{====================================================================}%
    \ignorespaces}}
\ctr@ln@w{newif}\ifitis@vect@r
\ctr@ld@f\def\figvectC#1(#2,#3){{\itis@vect@rtrue\figpt#1:(#2,#3)}\ignorespaces}
\ctr@ld@f\def\Figv@ctCreg#1(#2,#3){{\itis@vect@rtrue\Figp@intreg#1:(#2,#3)}\ignorespaces}
\ctr@ln@m\figvectDBezier
\ctr@ld@f\def\figvectDBezierDD#1:#2,#3[#4,#5,#6,#7]{\ifps@cri{\s@uvc@ntr@l\et@tfigvectDBezierDD%
    \FigvectDBezier@#2,#3[#4,#5,#6,#7]\v@lX=\c@ef\v@lX\v@lY=\c@ef\v@lY%
    \Figv@ctCreg#1(\v@lX,\v@lY)\resetc@ntr@l\et@tfigvectDBezierDD}\ignorespaces\fi}
\ctr@ld@f\def\figvectDBezierTD#1:#2,#3[#4,#5,#6,#7]{\ifps@cri{\s@uvc@ntr@l\et@tfigvectDBezierTD%
    \FigvectDBezier@#2,#3[#4,#5,#6,#7]\v@lX=\c@ef\v@lX\v@lY=\c@ef\v@lY\v@lZ=\c@ef\v@lZ%
    \Figv@ctCreg#1(\v@lX,\v@lY,\v@lZ)\resetc@ntr@l\et@tfigvectDBezierTD}\ignorespaces\fi}
\ctr@ld@f\def\FigvectDBezier@#1,#2[#3,#4,#5,#6]{\setc@ntr@l{2}%
    \edef\T@{#2}\v@leur=\p@\advance\v@leur-#2pt\edef\UNmT@{\repdecn@mb{\v@leur}}%
    \ifnum#1=\tw@\def\c@ef{6}\else\def\c@ef{3}\fi%
    \figptcopy-4:/#3/\figptcopy-3:/#4/\figptcopy-2:/#5/\figptcopy-1:/#6/%
    \l@mbd@un=-4 \l@mbd@de=-\thr@@\p@rtent=\m@ne\c@lDecast%
    \ifnum#1=\tw@\c@lDCDeux{-4}{-3}\c@lDCDeux{-3}{-2}\c@lDCDeux{-4}{-3}\else%
    \l@mbd@un=-4 \l@mbd@de=-\thr@@\p@rtent=-\tw@\c@lDecast%
    \c@lDCDeux{-4}{-3}\fi\Figg@tXY{-4}}
\ctr@ln@m\c@lDCDeux
\ctr@ld@f\def\c@lDCDeuxDD#1#2{\Figg@tXY{#2}\Figg@tXYa{#1}%
    \advance\v@lX-\v@lXa\advance\v@lY-\v@lYa\Figp@intregDD#1:(\v@lX,\v@lY)}
\ctr@ld@f\def\c@lDCDeuxTD#1#2{\Figg@tXY{#2}\Figg@tXYa{#1}\advance\v@lX-\v@lXa%
    \advance\v@lY-\v@lYa\advance\v@lZ-\v@lZa\Figp@intregTD#1:(\v@lX,\v@lY,\v@lZ)}
\ctr@ln@m\figvectN
\ctr@ld@f\def\figvectNDD#1[#2,#3]{\ifps@cri{\Figg@tXYa{#2}\Figg@tXY{#3}%
    \advance\v@lX-\v@lXa\advance\v@lY-\v@lYa%
    \Figv@ctCreg#1(-\v@lY,\v@lX)}\ignorespaces\fi}
\ctr@ld@f\def\figvectNTD#1[#2,#3,#4]{\ifps@cri{\vecunitC@TD[#2,#4]\v@lmin=\v@lX\v@lmax=\v@lY%
    \v@leur=\v@lZ\vecunitC@TD[#2,#3]\c@lprovec{#1}}\ignorespaces\fi}
\ctr@ln@m\figvectNV
\ctr@ld@f\def\figvectNVDD#1[#2]{\ifps@cri{\Figg@tXY{#2}\Figv@ctCreg#1(-\v@lY,\v@lX)}\ignorespaces\fi}
\ctr@ld@f\def\figvectNVTD#1[#2,#3]{\ifps@cri{\vecunitCV@TD{#3}\v@lmin=\v@lX\v@lmax=\v@lY%
    \v@leur=\v@lZ\vecunitCV@TD{#2}\c@lprovec{#1}}\ignorespaces\fi}
\ctr@ln@m\figvectP
\ctr@ld@f\def\figvectPDD#1[#2,#3]{\ifps@cri{\Figg@tXYa{#2}\Figg@tXY{#3}%
    \advance\v@lX-\v@lXa\advance\v@lY-\v@lYa%
    \Figv@ctCreg#1(\v@lX,\v@lY)}\ignorespaces\fi}
\ctr@ld@f\def\figvectPTD#1[#2,#3]{\ifps@cri{\Figg@tXYa{#2}\Figg@tXY{#3}%
    \advance\v@lX-\v@lXa\advance\v@lY-\v@lYa\advance\v@lZ-\v@lZa%
    \Figv@ctCreg#1(\v@lX,\v@lY,\v@lZ)}\ignorespaces\fi}
\ctr@ln@m\figvectU
\ctr@ld@f\def\figvectUDD#1[#2]{\ifps@cri{\n@rmeuc\v@leur{#2}\invers@\v@leur\v@leur%
    \delt@=\repdecn@mb{\v@leur}\unit@\edef\v@ldelt@{\repdecn@mb{\delt@}}%
    \Figg@tXY{#2}\v@lX=\v@ldelt@\v@lX\v@lY=\v@ldelt@\v@lY%
    \Figv@ctCreg#1(\v@lX,\v@lY)}\ignorespaces\fi}
\ctr@ld@f\def\figvectUTD#1[#2]{\ifps@cri{\n@rmeuc\v@leur{#2}\invers@\v@leur\v@leur%
    \delt@=\repdecn@mb{\v@leur}\unit@\edef\v@ldelt@{\repdecn@mb{\delt@}}%
    \Figg@tXY{#2}\v@lX=\v@ldelt@\v@lX\v@lY=\v@ldelt@\v@lY\v@lZ=\v@ldelt@\v@lZ%
    \Figv@ctCreg#1(\v@lX,\v@lY,\v@lZ)}\ignorespaces\fi}
\ctr@ld@f\def\figvisu#1#2#3{\c@ldefproj\initb@undb@x\xdef\figforTeXFigno{\figforTeXnextFigno}%
    \s@mme=\figforTeXnextFigno\advance\s@mme\@ne\xdef\figforTeXnextFigno{\number\s@mme}%
    \setbox\b@xvisu=\hbox{\ifnum\@utoFN>\z@\figinsert{}\gdef\@utoFInDone{0}\fi\ignorespaces#3}%
    \gdef\@utoFInDone{1}\gdef\@utoFN{0}%
    \v@lXa=-\c@@rdYmin\v@lYa=\c@@rdYmax\advance\v@lYa-\c@@rdYmin%
    \v@lX=\c@@rdXmax\advance\v@lX-\c@@rdXmin%
    \setbox#1=\hbox{#2}\v@lY=-\v@lX\maxim@m{\v@lX}{\v@lX}{\wd#1}%
    \advance\v@lY\v@lX\divide\v@lY\tw@\advance\v@lY-\c@@rdXmin%
    \setbox#1=\vbox{\parindent0mm\hsize=\v@lX\vskip\v@lYa%
    \rlap{\hskip\v@lY\smash{\raise\v@lXa\box\b@xvisu}}%
    \def\t@xt@{#2}\ifx\t@xt@\empty\else\medskip\centerline{#2}\fi}\wd#1=\v@lX}
\ctr@ld@f\def\figDecrementFigno{{\xdef\figforTeXnextFigno{\figforTeXFigno}%
    \s@mme=\figforTeXFigno\advance\s@mme\m@ne\xdef\figforTeXFigno{\number\s@mme}}}
\ctr@ln@w{newbox}\Bt@rget\setbox\Bt@rget=\null
\ctr@ln@w{newbox}\BminTD@\setbox\BminTD@=\null
\ctr@ln@w{newbox}\BmaxTD@\setbox\BmaxTD@=\null
\ctr@ln@w{newif}\ifnewt@rgetpt\ctr@ln@w{newif}\ifnewdis@b
\ctr@ld@f\def\b@undb@xTD#1#2#3{%
    \relax\ifdim#1<\wd\BminTD@\global\wd\BminTD@=#1\fi%
    \relax\ifdim#2<\ht\BminTD@\global\ht\BminTD@=#2\fi%
    \relax\ifdim#3<\dp\BminTD@\global\dp\BminTD@=#3\fi%
    \relax\ifdim#1>\wd\BmaxTD@\global\wd\BmaxTD@=#1\fi%
    \relax\ifdim#2>\ht\BmaxTD@\global\ht\BmaxTD@=#2\fi%
    \relax\ifdim#3>\dp\BmaxTD@\global\dp\BmaxTD@=#3\fi}
\ctr@ld@f\def\c@ldefdisob{{\ifdim\wd\BminTD@<\maxdimen\v@leur=\wd\BmaxTD@\advance\v@leur-\wd\BminTD@%
    \delt@=\ht\BmaxTD@\advance\delt@-\ht\BminTD@\maxim@m{\v@leur}{\v@leur}{\delt@}%
    \delt@=\dp\BmaxTD@\advance\delt@-\dp\BminTD@\maxim@m{\v@leur}{\v@leur}{\delt@}%
    \v@leur=5\v@leur\else\v@leur=800pt\fi\c@ldefdisob@{\v@leur}}}
\ctr@ln@m\disob@intern
\ctr@ln@m\disob@
\ctr@ln@m\divf@ctproj
\ctr@ld@f\def\c@ldefdisob@#1{{\v@leur=#1\ifdim\v@leur<\p@\v@leur=800pt\fi%
    \xdef\disob@intern{\repdecn@mb{\v@leur}}%
    \delt@=\ptT@unit@\v@leur\xdef\disob@unit{\repdecn@mb{\delt@}}%
    \f@ctech=\@ne\loop\ifdim\v@leur>\t@n pt\divide\v@leur\t@n\multiply\f@ctech\t@n\repeat%
    \xdef\disob@{\repdecn@mb{\v@leur}}\xdef\divf@ctproj{\the\f@ctech}}%
    \global\newdis@btrue}
\ctr@ln@m\t@rgetpt
\ctr@ld@f\def\c@ldeft@rgetpt{\newt@rgetpttrue\def\t@rgetpt{CenterBoundBox}{%
    \delt@=\wd\BmaxTD@\advance\delt@-\wd\BminTD@\divide\delt@\tw@%
    \v@leur=\wd\BminTD@\advance\v@leur\delt@\global\wd\Bt@rget=\v@leur%
    \delt@=\ht\BmaxTD@\advance\delt@-\ht\BminTD@\divide\delt@\tw@%
    \v@leur=\ht\BminTD@\advance\v@leur\delt@\global\ht\Bt@rget=\v@leur%
    \delt@=\dp\BmaxTD@\advance\delt@-\dp\BminTD@\divide\delt@\tw@%
    \v@leur=\dp\BminTD@\advance\v@leur\delt@\global\dp\Bt@rget=\v@leur}}
\ctr@ln@m\c@ldefproj
\ctr@ld@f\def\c@ldefprojTD{\ifnewt@rgetpt\else\c@ldeft@rgetpt\fi\ifnewdis@b\else\c@ldefdisob\fi}
\ctr@ld@f\def\c@lprojcav{
    \v@lZa=\cxa@\v@lY\advance\v@lX\v@lZa%
    \v@lZa=\cxb@\v@lY\v@lY=\v@lZ\advance\v@lY\v@lZa\ignorespaces}
\ctr@ln@m\v@lcoef
\ctr@ld@f\def\c@lprojrea{
    \advance\v@lX-\wd\Bt@rget\advance\v@lY-\ht\Bt@rget\advance\v@lZ-\dp\Bt@rget%
    \v@lZa=\cza@\v@lX\advance\v@lZa\czb@\v@lY\advance\v@lZa\czc@\v@lZ%
    \divide\v@lZa\divf@ctproj\advance\v@lZa\disob@ pt\invers@{\v@lZa}{\v@lZa}%
    \v@lZa=\disob@\v@lZa\edef\v@lcoef{\repdecn@mb{\v@lZa}}%
    \v@lXa=\cxa@\v@lX\advance\v@lXa\cxb@\v@lY\v@lXa=\v@lcoef\v@lXa%
    \v@lY=\cyb@\v@lY\advance\v@lY\cya@\v@lX\advance\v@lY\cyc@\v@lZ%
    \v@lY=\v@lcoef\v@lY\v@lX=\v@lXa\ignorespaces}
\ctr@ld@f\def\c@lprojort{
    \v@lXa=\cxa@\v@lX\advance\v@lXa\cxb@\v@lY%
    \v@lY=\cyb@\v@lY\advance\v@lY\cya@\v@lX\advance\v@lY\cyc@\v@lZ%
    \v@lX=\v@lXa\ignorespaces}
\ctr@ld@f\def\Figptpr@j#1:#2/#3/{{\Figg@tXY{#3}\superc@lprojSP%
    \Figp@intregDD#1:{#2}(\v@lX,\v@lY)}\ignorespaces}
\ctr@ln@m\figsetobdist
\ctr@ld@f\def\figsetobdistDD{\un@v@ilable{figsetobdist}}
\ctr@ld@f\def\figsetobdistTD(#1){{\ifcurr@ntPS%
    \immediate\write16{*** \BS@ figsetobdist is ignored inside a
     \BS@ psbeginfig-\BS@ psendfig block.}%
    \else\v@leur=#1\unit@\c@ldefdisob@{\v@leur}\fi}\ignorespaces}
\ctr@ln@m\c@lprojSP
\ctr@ln@m\curr@ntproj
\ctr@ln@m\typ@proj
\ctr@ln@m\superc@lprojSP
\ctr@ld@f\def\Figs@tproj#1{%
    \if#13 \d@faultproj\else\if#1c\d@faultproj%
    \else\if#1o\xdef\curr@ntproj{1}\xdef\typ@proj{orthogonal}%
         \figsetviewTD(\def@ultpsi,\def@ulttheta)%
         \global\let\c@lprojSP=\c@lprojort\global\let\superc@lprojSP=\c@lprojort%
    \else\if#1r\xdef\curr@ntproj{2}\xdef\typ@proj{realistic}%
         \figsetviewTD(\def@ultpsi,\def@ulttheta)%
         \global\let\c@lprojSP=\c@lprojrea\global\let\superc@lprojSP=\c@lprojrea%
    \else\d@faultproj\message{*** Unknown projection. Cavalier projection assumed.}%
    \fi\fi\fi\fi}
\ctr@ld@f\def\d@faultproj{\xdef\curr@ntproj{0}\xdef\typ@proj{cavalier}\figsetviewTD(\def@ultpsi,0.5)%
         \global\let\c@lprojSP=\c@lprojcav\global\let\superc@lprojSP=\c@lprojcav}
\ctr@ln@m\figsettarget
\ctr@ld@f\def\figsettargetDD{\un@v@ilable{figsettarget}}
\ctr@ld@f\def\figsettargetTD[#1]{{\ifcurr@ntPS%
    \immediate\write16{*** \BS@ figsettarget is ignored inside a
     \BS@ psbeginfig-\BS@ psendfig block.}%
    \else\global\newt@rgetpttrue\xdef\t@rgetpt{#1}\Figg@tXY{#1}\global\wd\Bt@rget=\v@lX%
    \global\ht\Bt@rget=\v@lY\global\dp\Bt@rget=\v@lZ\fi}\ignorespaces}
\ctr@ln@m\figsetview
\ctr@ld@f\def\figsetviewDD{\un@v@ilable{figsetview}}
\ctr@ld@f\def\figsetviewTD(#1){\ifcurr@ntPS%
     \immediate\write16{*** \BS@ figsetview is ignored inside a
     \BS@ psbeginfig-\BS@ psendfig block.}\else\Figsetview@#1,:\fi\ignorespaces}
\ctr@ld@f\def\Figsetview@#1,#2:{{\xdef\v@lPsi{#1}\def\t@xt@{#2}%
    \ifx\t@xt@\empty\def\@rgdeux{\v@lTheta}\else\X@rgdeux@#2\fi%
    \c@ssin{\costhet@}{\sinthet@}{#1}\v@lmin=\costhet@ pt\v@lmax=\sinthet@ pt%
    \ifcase\curr@ntproj%
    \v@leur=\@rgdeux\v@lmin\xdef\cxa@{\repdecn@mb{\v@leur}}%
    \v@leur=\@rgdeux\v@lmax\xdef\cxb@{\repdecn@mb{\v@leur}}\v@leur=\@rgdeux pt%
    \relax\ifdim\v@leur>\p@\message{*** Lambda too large ! See \BS@ figset proj() !}\fi%
    \else%
    \v@lmax=-\v@lmax\xdef\cxa@{\repdecn@mb{\v@lmax}}\xdef\cxb@{\costhet@}%
    \ifx\t@xt@\empty\edef\@rgdeux{\def@ulttheta}\fi\c@ssin{\C@}{\S@}{\@rgdeux}%
    \v@lmax=-\S@ pt%
    \v@leur=\v@lmax\v@leur=\costhet@\v@leur\xdef\cya@{\repdecn@mb{\v@leur}}%
    \v@leur=\v@lmax\v@leur=\sinthet@\v@leur\xdef\cyb@{\repdecn@mb{\v@leur}}%
    \xdef\cyc@{\C@}\v@lmin=-\C@ pt%
    \v@leur=\v@lmin\v@leur=\costhet@\v@leur\xdef\cza@{\repdecn@mb{\v@leur}}%
    \v@leur=\v@lmin\v@leur=\sinthet@\v@leur\xdef\czb@{\repdecn@mb{\v@leur}}%
    \xdef\czc@{\repdecn@mb{\v@lmax}}\fi%
    \xdef\v@lTheta{\@rgdeux}}}
\ctr@ld@f\def\def@ultpsi{40}
\ctr@ld@f\def\def@ulttheta{25}
\ctr@ln@m\l@debut
\ctr@ln@m\n@mref
\ctr@ld@f\def\figset#1(#2){\def\t@xt@{#1}\ifx\t@xt@\empty\trtlis@rg{#2}{\Figsetwr@te}
    \else\keln@mde#1|%
    \def\n@mref{pr}\ifx\l@debut\n@mref\ifcurr@ntPS
     \immediate\write16{*** \BS@ figset proj(...) is ignored inside a
     \BS@ psbeginfig-\BS@ psendfig block.}\else\trtlis@rg{#2}{\Figsetpr@j}\fi\else%
    \def\n@mref{wr}\ifx\l@debut\n@mref\trtlis@rg{#2}{\Figsetwr@te}\else
    \immediate\write16{*** Unknown keyword: \BS@ figset #1(...)}%
    \fi\fi\fi\ignorespaces}
\ctr@ld@f\def\Figsetpr@j#1=#2|{\keln@mtr#1|%
    \def\n@mref{dep}\ifx\l@debut\n@mref\Figsetd@p{#2}\else
    \def\n@mref{dis}\ifx\l@debut\n@mref%
     \ifnum\curr@ntproj=\tw@\figsetobdist(#2)\else\Figset@rr\fi\else
    \def\n@mref{lam}\ifx\l@debut\n@mref\Figsetd@p{#2}\else
    \def\n@mref{lat}\ifx\l@debut\n@mref\Figsetth@{#2}\else
    \def\n@mref{lon}\ifx\l@debut\n@mref\figsetview(#2)\else
    \def\n@mref{psi}\ifx\l@debut\n@mref\figsetview(#2)\else
    \def\n@mref{tar}\ifx\l@debut\n@mref%
     \ifnum\curr@ntproj=\tw@\figsettarget[#2]\else\Figset@rr\fi\else
    \def\n@mref{the}\ifx\l@debut\n@mref\Figsetth@{#2}\else
    \immediate\write16{*** Unknown attribute: \BS@ figset proj(..., #1=...).}%
    \fi\fi\fi\fi\fi\fi\fi\fi}
\ctr@ld@f\def\Figsetd@p#1{\ifnum\curr@ntproj=\z@\figsetview(\v@lPsi,#1)\else\Figset@rr\fi}
\ctr@ld@f\def\Figsetth@#1{\ifnum\curr@ntproj=\z@\Figset@rr\else\figsetview(\v@lPsi,#1)\fi}
\ctr@ld@f\def\Figset@rr{\message{*** \BS@ figset proj(): Attribute "\n@mref" ignored, incompatible
    with current projection}}
\ctr@ld@f\def\initb@undb@xTD{\wd\BminTD@=\maxdimen\ht\BminTD@=\maxdimen\dp\BminTD@=\maxdimen%
    \wd\BmaxTD@=-\maxdimen\ht\BmaxTD@=-\maxdimen\dp\BmaxTD@=-\maxdimen}
\ctr@ln@w{newbox}\Gb@x      
\ctr@ln@w{newbox}\Gb@xSC    
\ctr@ln@w{newtoks}\c@nsymb  
\ctr@ln@w{newif}\ifr@undcoord\ctr@ln@w{newif}\ifunitpr@sent
\ctr@ld@f\def\unssqrttw@{0.707106 }
\ctr@ld@f\def\figAst{\raise-1.15ex\hbox{$\ast$}}
\ctr@ld@f\def\figBullet{\raise-1.15ex\hbox{$\bullet$}}
\ctr@ld@f\def\figCirc{\raise-1.15ex\hbox{$\circ$}}
\ctr@ld@f\def\figDiamond{\raise-1.15ex\hbox{$\diamond$}}%
\ctr@ld@f\def\boxit#1#2{\leavevmode\hbox{\vrule\vbox{\hrule\vglue#1%
    \vtop{\hbox{\kern#1{#2}\kern#1}\vglue#1\hrule}}\vrule}}
\ctr@ld@f\def\centertext#1#2{\vbox{\hsize#1\parindent0cm%
    \leftskip=0pt plus 1fil\rightskip=0pt plus 1fil\parfillskip=0pt{#2}}}
\ctr@ld@f\def\lefttext#1#2{\vbox{\hsize#1\parindent0cm\rightskip=0pt plus 1fil#2}}
\ctr@ld@f\def\c@nterpt{\ignorespaces%
    \kern-.5\wd\Gb@xSC%
    \raise-.5\ht\Gb@xSC\rlap{\hbox{\raise.5\dp\Gb@xSC\hbox{\copy\Gb@xSC}}}%
    \kern .5\wd\Gb@xSC\ignorespaces}
\ctr@ld@f\def\b@undb@xSC#1#2{{\v@lXa=#1\v@lYa=#2%
    \v@leur=\ht\Gb@xSC\advance\v@leur\dp\Gb@xSC%
    \advance\v@lXa-.5\wd\Gb@xSC\advance\v@lYa-.5\v@leur\b@undb@x{\v@lXa}{\v@lYa}%
    \advance\v@lXa\wd\Gb@xSC\advance\v@lYa\v@leur\b@undb@x{\v@lXa}{\v@lYa}}}
\ctr@ln@m\Dist@n
\ctr@ln@m\l@suite
\ctr@ld@f\def\@keldist#1#2{\edef\Dist@n{#2}\y@tiunit{\Dist@n}%
    \ifunitpr@sent#1=\Dist@n\else#1=\Dist@n\unit@\fi}
\ctr@ld@f\def\y@tiunit#1{\unitpr@sentfalse\expandafter\y@tiunit@#1:}
\ctr@ld@f\def\y@tiunit@#1#2:{\ifcat#1a\unitpr@senttrue\else\def\l@suite{#2}%
    \ifx\l@suite\empty\else\y@tiunit@#2:\fi\fi}
\ctr@ln@m\figcoord
\ctr@ld@f\def\figcoordDD#1{{\v@lX=\ptT@unit@\v@lX\v@lY=\ptT@unit@\v@lY%
    \ifr@undcoord\ifcase#1\v@leur=0.5pt\or\v@leur=0.05pt\or\v@leur=0.005pt%
    \or\v@leur=0.0005pt\else\v@leur=\z@\fi%
    \ifdim\v@lX<\z@\advance\v@lX-\v@leur\else\advance\v@lX\v@leur\fi%
    \ifdim\v@lY<\z@\advance\v@lY-\v@leur\else\advance\v@lY\v@leur\fi\fi%
    (\@ffichnb{#1}{\repdecn@mb{\v@lX}},\ifmmode\else\thinspace\fi%
    \@ffichnb{#1}{\repdecn@mb{\v@lY}})}}
\ctr@ld@f\def\@ffichnb#1#2{{\def\@@ffich{\@ffich#1(}\edef\n@mbre{#2}%
    \expandafter\@@ffich\n@mbre)}}
\ctr@ld@f\def\@ffich#1(#2.#3){{#2\ifnum#1>\z@.\fi\def\dig@ts{#3}\s@mme=\z@%
    \loop\ifnum\s@mme<#1\expandafter\@ffichdec\dig@ts:\advance\s@mme\@ne\repeat}}
\ctr@ld@f\def\@ffichdec#1#2:{\relax#1\def\dig@ts{#20}}
\ctr@ld@f\def\figcoordTD#1{{\v@lX=\ptT@unit@\v@lX\v@lY=\ptT@unit@\v@lY\v@lZ=\ptT@unit@\v@lZ%
    \ifr@undcoord\ifcase#1\v@leur=0.5pt\or\v@leur=0.05pt\or\v@leur=0.005pt%
    \or\v@leur=0.0005pt\else\v@leur=\z@\fi%
    \ifdim\v@lX<\z@\advance\v@lX-\v@leur\else\advance\v@lX\v@leur\fi%
    \ifdim\v@lY<\z@\advance\v@lY-\v@leur\else\advance\v@lY\v@leur\fi%
    \ifdim\v@lZ<\z@\advance\v@lZ-\v@leur\else\advance\v@lZ\v@leur\fi\fi%
    (\@ffichnb{#1}{\repdecn@mb{\v@lX}},\ifmmode\else\thinspace\fi%
     \@ffichnb{#1}{\repdecn@mb{\v@lY}},\ifmmode\else\thinspace\fi%
     \@ffichnb{#1}{\repdecn@mb{\v@lZ}})}}
\ctr@ld@f\def\figsetroundcoord#1{\expandafter\Figsetr@undcoord#1:\ignorespaces}
\ctr@ld@f\def\Figsetr@undcoord#1#2:{\if#1n\r@undcoordfalse\else\r@undcoordtrue\fi}
\ctr@ld@f\def\Figsetwr@te#1=#2|{\keln@mun#1|%
    \def\n@mref{m}\ifx\l@debut\n@mref\figsetmark{#2}\else
    \immediate\write16{*** Unknown attribute: \BS@ figset (..., #1=...)}%
    \fi}
\ctr@ld@f\def\figsetmark#1{\c@nsymb={#1}\setbox\Gb@xSC=\hbox{\the\c@nsymb}\ignorespaces}
\ctr@ln@m\ptn@me
\ctr@ld@f\def\figsetptname#1{\def\ptn@me##1{#1}\ignorespaces}
\ctr@ld@f\def\FigWrit@L#1:#2(#3,#4){\ignorespaces\@keldist\v@leur{#3}\@keldist\delt@{#4}%
    \C@rp@r@m\def\list@num{#1}\@ecfor\p@int:=\list@num\do{\FigWrit@pt{\p@int}{#2}}}
\ctr@ld@f\def\FigWrit@pt#1#2{\FigWp@r@m{#1}{#2}\Vc@rrect\figWp@si%
    \ifdim\wd\Gb@xSC>\z@\b@undb@xSC{\v@lX}{\v@lY}\fi\figWBB@x}
\ctr@ld@f\def\FigWp@r@m#1#2{\Figg@tXY{#1}%
    \setbox\Gb@x=\hbox{\def\t@xt@{#2}\ifx\t@xt@\empty\Figg@tT{#1}\else#2\fi}\c@lprojSP}
\ctr@ld@f\let\Vc@rrect=\relax
\ctr@ld@f\let\C@rp@r@m=\relax
\ctr@ld@f\def\figwrite[#1]#2{{\ignorespaces\def\list@num{#1}\@ecfor\p@int:=\list@num\do{%
    \setbox\Gb@x=\hbox{\def\t@xt@{#2}\ifx\t@xt@\empty\Figg@tT{\p@int}\else#2\fi}%
    \Figwrit@{\p@int}}}\ignorespaces}
\ctr@ld@f\def\Figwrit@#1{\Figg@tXY{#1}\c@lprojSP%
    \rlap{\kern\v@lX\raise\v@lY\hbox{\unhcopy\Gb@x}}\v@leur=\v@lY%
    \advance\v@lY\ht\Gb@x\b@undb@x{\v@lX}{\v@lY}\advance\v@lX\wd\Gb@x%
    \v@lY=\v@leur\advance\v@lY-\dp\Gb@x\b@undb@x{\v@lX}{\v@lY}}
\ctr@ld@f\def\figwritec[#1]#2{{\ignorespaces\def\list@num{#1}%
    \@ecfor\p@int:=\list@num\do{\Figwrit@c{\p@int}{#2}}}\ignorespaces}
\ctr@ld@f\def\Figwrit@c#1#2{\FigWp@r@m{#1}{#2}%
    \rlap{\kern\v@lX\raise\v@lY\hbox{\rlap{\kern-.5\wd\Gb@x%
    \raise-.5\ht\Gb@x\hbox{\raise.5\dp\Gb@x\hbox{\unhcopy\Gb@x}}}}}%
    \v@leur=\ht\Gb@x\advance\v@leur\dp\Gb@x%
    \advance\v@lX-.5\wd\Gb@x\advance\v@lY-.5\v@leur\b@undb@x{\v@lX}{\v@lY}%
    \advance\v@lX\wd\Gb@x\advance\v@lY\v@leur\b@undb@x{\v@lX}{\v@lY}}
\ctr@ld@f\def\figwritep[#1]{{\ignorespaces\def\list@num{#1}\setbox\Gb@x=\hbox{\c@nterpt}%
    \@ecfor\p@int:=\list@num\do{\Figwrit@{\p@int}}}\ignorespaces}
\ctr@ld@f\def\figwritew#1:#2(#3){\figwritegcw#1:{#2}(#3,0pt)}
\ctr@ld@f\def\figwritee#1:#2(#3){\figwritegce#1:{#2}(#3,0pt)}
\ctr@ld@f\def\figwriten#1:#2(#3){{\def\Vc@rrect{\v@lZ=\v@leur\advance\v@lZ\dp\Gb@x}%
    \Figwrit@NS#1:{#2}(#3)}\ignorespaces}
\ctr@ld@f\def\figwrites#1:#2(#3){{\def\Vc@rrect{\v@lZ=-\v@leur\advance\v@lZ-\ht\Gb@x}%
    \Figwrit@NS#1:{#2}(#3)}\ignorespaces}
\ctr@ld@f\def\Figwrit@NS#1:#2(#3){\let\figWp@si=\FigWp@siNS\let\figWBB@x=\FigWBB@xNS%
    \FigWrit@L#1:{#2}(#3,0pt)}
\ctr@ld@f\def\FigWp@siNS{\rlap{\kern\v@lX\raise\v@lY\hbox{\rlap{\kern-.5\wd\Gb@x%
    \raise\v@lZ\hbox{\unhcopy\Gb@x}}\c@nterpt}}}
\ctr@ld@f\def\FigWBB@xNS{\advance\v@lY\v@lZ%
    \advance\v@lY-\dp\Gb@x\advance\v@lX-.5\wd\Gb@x\b@undb@x{\v@lX}{\v@lY}%
    \advance\v@lY\ht\Gb@x\advance\v@lY\dp\Gb@x%
    \advance\v@lX\wd\Gb@x\b@undb@x{\v@lX}{\v@lY}}
\ctr@ld@f\def\figwritenw#1:#2(#3){{\let\figWp@si=\FigWp@sigW\let\figWBB@x=\FigWBB@xgWE%
    \def\C@rp@r@m{\v@leur=\unssqrttw@\v@leur\delt@=\v@leur%
    \ifdim\delt@=\z@\delt@=\epsil@n\fi}\let@xte={-}\FigWrit@L#1:{#2}(#3,0pt)}\ignorespaces}
\ctr@ld@f\def\figwritesw#1:#2(#3){{\let\figWp@si=\FigWp@sigW\let\figWBB@x=\FigWBB@xgWE%
    \def\C@rp@r@m{\v@leur=\unssqrttw@\v@leur\delt@=-\v@leur%
    \ifdim\delt@=\z@\delt@=-\epsil@n\fi}\let@xte={-}\FigWrit@L#1:{#2}(#3,0pt)}\ignorespaces}
\ctr@ld@f\def\figwritene#1:#2(#3){{\let\figWp@si=\FigWp@sigE\let\figWBB@x=\FigWBB@xgWE%
    \def\C@rp@r@m{\v@leur=\unssqrttw@\v@leur\delt@=\v@leur%
    \ifdim\delt@=\z@\delt@=\epsil@n\fi}\let@xte={}\FigWrit@L#1:{#2}(#3,0pt)}\ignorespaces}
\ctr@ld@f\def\figwritese#1:#2(#3){{\let\figWp@si=\FigWp@sigE\let\figWBB@x=\FigWBB@xgWE%
    \def\C@rp@r@m{\v@leur=\unssqrttw@\v@leur\delt@=-\v@leur%
    \ifdim\delt@=\z@\delt@=-\epsil@n\fi}\let@xte={}\FigWrit@L#1:{#2}(#3,0pt)}\ignorespaces}
\ctr@ld@f\def\figwritegw#1:#2(#3,#4){{\let\figWp@si=\FigWp@sigW\let\figWBB@x=\FigWBB@xgWE%
    \let@xte={-}\FigWrit@L#1:{#2}(#3,#4)}\ignorespaces}
\ctr@ld@f\def\figwritege#1:#2(#3,#4){{\let\figWp@si=\FigWp@sigE\let\figWBB@x=\FigWBB@xgWE%
    \let@xte={}\FigWrit@L#1:{#2}(#3,#4)}\ignorespaces}
\ctr@ld@f\def\FigWp@sigW{\v@lXa=\z@\v@lYa=\ht\Gb@x\advance\v@lYa\dp\Gb@x%
    \ifdim\delt@>\z@\relax%
    \rlap{\kern\v@lX\raise\v@lY\hbox{\rlap{\kern-\wd\Gb@x\kern-\v@leur%
          \raise\delt@\hbox{\raise\dp\Gb@x\hbox{\unhcopy\Gb@x}}}\c@nterpt}}%
    \else\ifdim\delt@<\z@\relax\v@lYa=-\v@lYa%
    \rlap{\kern\v@lX\raise\v@lY\hbox{\rlap{\kern-\wd\Gb@x\kern-\v@leur%
          \raise\delt@\hbox{\raise-\ht\Gb@x\hbox{\unhcopy\Gb@x}}}\c@nterpt}}%
    \else\v@lXa=-.5\v@lYa%
    \rlap{\kern\v@lX\raise\v@lY\hbox{\rlap{\kern-\wd\Gb@x\kern-\v@leur%
          \raise-.5\ht\Gb@x\hbox{\raise.5\dp\Gb@x\hbox{\unhcopy\Gb@x}}}\c@nterpt}}%
    \fi\fi}
\ctr@ld@f\def\FigWp@sigE{\v@lXa=\z@\v@lYa=\ht\Gb@x\advance\v@lYa\dp\Gb@x%
    \ifdim\delt@>\z@\relax%
    \rlap{\kern\v@lX\raise\v@lY\hbox{\c@nterpt\kern\v@leur%
          \raise\delt@\hbox{\raise\dp\Gb@x\hbox{\unhcopy\Gb@x}}}}%
    \else\ifdim\delt@<\z@\relax\v@lYa=-\v@lYa%
    \rlap{\kern\v@lX\raise\v@lY\hbox{\c@nterpt\kern\v@leur%
          \raise\delt@\hbox{\raise-\ht\Gb@x\hbox{\unhcopy\Gb@x}}}}%
    \else\v@lXa=-.5\v@lYa%
    \rlap{\kern\v@lX\raise\v@lY\hbox{\c@nterpt\kern\v@leur%
          \raise-.5\ht\Gb@x\hbox{\raise.5\dp\Gb@x\hbox{\unhcopy\Gb@x}}}}%
    \fi\fi}
\ctr@ld@f\def\FigWBB@xgWE{\advance\v@lY\delt@%
    \advance\v@lX\the\let@xte\v@leur\advance\v@lY\v@lXa\b@undb@x{\v@lX}{\v@lY}%
    \advance\v@lX\the\let@xte\wd\Gb@x\advance\v@lY\v@lYa\b@undb@x{\v@lX}{\v@lY}}
\ctr@ld@f\def\figwritegcw#1:#2(#3,#4){{\let\figWp@si=\FigWp@sigcW\let\figWBB@x=\FigWBB@xgcWE%
    \let@xte={-}\FigWrit@L#1:{#2}(#3,#4)}\ignorespaces}
\ctr@ld@f\def\figwritegce#1:#2(#3,#4){{\let\figWp@si=\FigWp@sigcE\let\figWBB@x=\FigWBB@xgcWE%
    \let@xte={}\FigWrit@L#1:{#2}(#3,#4)}\ignorespaces}
\ctr@ld@f\def\FigWp@sigcW{\rlap{\kern\v@lX\raise\v@lY\hbox{\rlap{\kern-\wd\Gb@x\kern-\v@leur%
     \raise-.5\ht\Gb@x\hbox{\raise\delt@\hbox{\raise.5\dp\Gb@x\hbox{\unhcopy\Gb@x}}}}%
     \c@nterpt}}}
\ctr@ld@f\def\FigWp@sigcE{\rlap{\kern\v@lX\raise\v@lY\hbox{\c@nterpt\kern\v@leur%
    \raise-.5\ht\Gb@x\hbox{\raise\delt@\hbox{\raise.5\dp\Gb@x\hbox{\unhcopy\Gb@x}}}}}}
\ctr@ld@f\def\FigWBB@xgcWE{\v@lZ=\ht\Gb@x\advance\v@lZ\dp\Gb@x%
    \advance\v@lX\the\let@xte\v@leur\advance\v@lY\delt@\advance\v@lY.5\v@lZ%
    \b@undb@x{\v@lX}{\v@lY}%
    \advance\v@lX\the\let@xte\wd\Gb@x\advance\v@lY-\v@lZ\b@undb@x{\v@lX}{\v@lY}}
\ctr@ld@f\def\figwritebn#1:#2(#3){{\def\Vc@rrect{\v@lZ=\v@leur}\Figwrit@NS#1:{#2}(#3)}\ignorespaces}
\ctr@ld@f\def\figwritebs#1:#2(#3){{\def\Vc@rrect{\v@lZ=-\v@leur}\Figwrit@NS#1:{#2}(#3)}\ignorespaces}
\ctr@ld@f\def\figwritebw#1:#2(#3){{\let\figWp@si=\FigWp@sibW\let\figWBB@x=\FigWBB@xbWE%
    \let@xte={-}\FigWrit@L#1:{#2}(#3,0pt)}\ignorespaces}
\ctr@ld@f\def\figwritebe#1:#2(#3){{\let\figWp@si=\FigWp@sibE\let\figWBB@x=\FigWBB@xbWE%
    \let@xte={}\FigWrit@L#1:{#2}(#3,0pt)}\ignorespaces}
\ctr@ld@f\def\FigWp@sibW{\rlap{\kern\v@lX\raise\v@lY\hbox{\rlap{\kern-\wd\Gb@x\kern-\v@leur%
          \hbox{\unhcopy\Gb@x}}\c@nterpt}}}
\ctr@ld@f\def\FigWp@sibE{\rlap{\kern\v@lX\raise\v@lY\hbox{\c@nterpt\kern\v@leur%
          \hbox{\unhcopy\Gb@x}}}}
\ctr@ld@f\def\FigWBB@xbWE{\v@lZ=\ht\Gb@x\advance\v@lZ\dp\Gb@x%
    \advance\v@lX\the\let@xte\v@leur\advance\v@lY\ht\Gb@x\b@undb@x{\v@lX}{\v@lY}%
    \advance\v@lX\the\let@xte\wd\Gb@x\advance\v@lY-\v@lZ\b@undb@x{\v@lX}{\v@lY}}
\ctr@ln@w{newread}\frf@g  \ctr@ln@w{newwrite}\fwf@g
\ctr@ln@w{newif}\ifcurr@ntPS
\ctr@ln@w{newif}\ifps@cri
\ctr@ln@w{newif}\ifUse@llipse
\ctr@ln@w{newif}\ifpsdebugmode \psdebugmodefalse 
\ctr@ln@w{newif}\ifPDFm@ke
\ifx\pdfliteral\undefined\else\ifnum\pdfoutput>\z@\PDFm@ketrue\fi\fi
\ctr@ld@f\def\initPDF@rDVI{%
\ifPDFm@ke
 \let\figscan=\figscan@E
 \let\newGr@FN=\newGr@FNPDF
 \ctr@ld@f\def\c@mcurveto{c}
 \ctr@ld@f\def\c@mfill{f}
 \ctr@ld@f\def\c@mgsave{q}
 \ctr@ld@f\def\c@mgrestore{Q}
 \ctr@ld@f\def\c@mlineto{l}
 \ctr@ld@f\def\c@mmoveto{m}
 \ctr@ld@f\def\c@msetgray{g}     \ctr@ld@f\def\c@msetgrayStroke{G}
 \ctr@ld@f\def\c@msetcmykcolor{k}\ctr@ld@f\def\c@msetcmykcolorStroke{K}
 \ctr@ld@f\def\c@msetrgbcolor{rg}\ctr@ld@f\def\c@msetrgbcolorStroke{RG}
 \ctr@ld@f\def\d@fprimarC@lor{\curr@ntcolor\space\curr@ntcolorc@md%
               \space\curr@ntcolor\space\curr@ntcolorc@mdStroke}
 \ctr@ld@f\def\d@fsecondC@lor{\sec@ndcolor\space\sec@ndcolorc@md%
               \space\sec@ndcolor\space\sec@ndcolorc@mdStroke}
 \ctr@ld@f\def\d@fthirdC@lor{\th@rdcolor\space\th@rdcolorc@md%
              \space\th@rdcolor\space\th@rdcolorc@mdStroke}
 \ctr@ld@f\def\c@msetdash{d}
 \ctr@ld@f\def\c@msetlinejoin{j}
 \ctr@ld@f\def\c@msetlinewidth{w}
 \ctr@ld@f\def\f@gclosestroke{\immediate\write\fwf@g{s}}
 \ctr@ld@f\def\f@gfill{\immediate\write\fwf@g{\fillc@md}}
 \ctr@ld@f\def\f@gnewpath{}
 \ctr@ld@f\def\f@gstroke{\immediate\write\fwf@g{S}}
\else
 \let\figinsertE=\figinsert
 \let\newGr@FN=\newGr@FNDVI
 \ctr@ld@f\def\c@mcurveto{curveto}
 \ctr@ld@f\def\c@mfill{fill}
 \ctr@ld@f\def\c@mgsave{gsave}
 \ctr@ld@f\def\c@mgrestore{grestore}
 \ctr@ld@f\def\c@mlineto{lineto}
 \ctr@ld@f\def\c@mmoveto{moveto}
 \ctr@ld@f\def\c@msetgray{setgray}          \ctr@ld@f\def\c@msetgrayStroke{}
 \ctr@ld@f\def\c@msetcmykcolor{setcmykcolor}\ctr@ld@f\def\c@msetcmykcolorStroke{}
 \ctr@ld@f\def\c@msetrgbcolor{setrgbcolor}  \ctr@ld@f\def\c@msetrgbcolorStroke{}
 \ctr@ld@f\def\d@fprimarC@lor{\curr@ntcolor\space\curr@ntcolorc@md}
 \ctr@ld@f\def\d@fsecondC@lor{\sec@ndcolor\space\sec@ndcolorc@md}
 \ctr@ld@f\def\d@fthirdC@lor{\th@rdcolor\space\th@rdcolorc@md}
 \ctr@ld@f\def\c@msetdash{setdash}
 \ctr@ld@f\def\c@msetlinejoin{setlinejoin}
 \ctr@ld@f\def\c@msetlinewidth{setlinewidth}
 \ctr@ld@f\def\f@gclosestroke{\immediate\write\fwf@g{closepath\space stroke}}
 \ctr@ld@f\def\f@gfill{\immediate\write\fwf@g{\fillc@md}}
 \ctr@ld@f\def\f@gnewpath{\immediate\write\fwf@g{newpath}}
 \ctr@ld@f\def\f@gstroke{\immediate\write\fwf@g{stroke}}
\fi}
\ctr@ld@f\def\c@pypsfile#1#2{\c@pyfil@{\immediate\write#1}{#2}}
\ctr@ld@f\def\Figinclud@PDF#1#2{\openin\frf@g=#1\pdfliteral{q #2 0 0 #2 0 0 cm}%
    \c@pyfil@{\pdfliteral}{\frf@g}\pdfliteral{Q}\closein\frf@g}
\ctr@ln@w{newif}\ifmored@ta
\ctr@ln@m\bl@nkline
\ctr@ld@f\def\c@pyfil@#1#2{\def\bl@nkline{\par}{\catcode`\%=12
    \loop\ifeof#2\mored@tafalse\else\mored@tatrue\immediate\read#2 to\tr@c
    \ifx\tr@c\bl@nkline\else#1{\tr@c}\fi\fi\ifmored@ta\repeat}}
\ctr@ld@f\def\keln@mun#1#2|{\def\l@debut{#1}\def\l@suite{#2}}
\ctr@ld@f\def\keln@mde#1#2#3|{\def\l@debut{#1#2}\def\l@suite{#3}}
\ctr@ld@f\def\keln@mtr#1#2#3#4|{\def\l@debut{#1#2#3}\def\l@suite{#4}}
\ctr@ld@f\def\keln@mqu#1#2#3#4#5|{\def\l@debut{#1#2#3#4}\def\l@suite{#5}}
\ctr@ld@f\let\@psffilein=\frf@g 
\ctr@ln@w{newif}\if@psffileok    
\ctr@ln@w{newif}\if@psfbbfound   
\ctr@ln@w{newif}\if@psfverbose   
\@psfverbosetrue
\ctr@ln@m\@psfllx \ctr@ln@m\@psflly
\ctr@ln@m\@psfurx \ctr@ln@m\@psfury
\ctr@ln@m\resetcolonc@tcode
\ctr@ld@f\def\@psfgetbb#1{\global\@psfbbfoundfalse%
\global\def\@psfllx{0}\global\def\@psflly{0}%
\global\def\@psfurx{30}\global\def\@psfury{30}%
\openin\@psffilein=#1\relax
\ifeof\@psffilein\errmessage{I couldn't open #1, will ignore it}\else
   \edef\resetcolonc@tcode{\catcode`\noexpand\:\the\catcode`\:\relax}%
   {\@psffileoktrue \chardef\other=12
    \def\do##1{\catcode`##1=\other}\dospecials \catcode`\ =10 \resetcolonc@tcode
    \loop
       \read\@psffilein to \@psffileline
       \ifeof\@psffilein\@psffileokfalse\else
          \expandafter\@psfaux\@psffileline:. \\%
       \fi
   \if@psffileok\repeat
   \if@psfbbfound\else
    \if@psfverbose\message{No bounding box comment in #1; using defaults}\fi\fi
   }\closein\@psffilein\fi}%
\ctr@ln@m\@psfbblit
\ctr@ln@m\@psfpercent
{\catcode`\%=12 \global\let\@psfpercent=
\ctr@ln@m\@psfaux
\long\def\@psfaux#1#2:#3\\{\ifx#1\@psfpercent
   \def\testit{#2}\ifx\testit\@psfbblit
      \@psfgrab #3 . . . \\%
      \@psffileokfalse
      \global\@psfbbfoundtrue
   \fi\else\ifx#1\par\else\@psffileokfalse\fi\fi}%
\ctr@ld@f\def\@psfempty{}%
\ctr@ld@f\def\@psfgrab #1 #2 #3 #4 #5\\{%
\global\def\@psfllx{#1}\ifx\@psfllx\@psfempty
      \@psfgrab #2 #3 #4 #5 .\\\else
   \global\def\@psflly{#2}%
   \global\def\@psfurx{#3}\global\def\@psfury{#4}\fi}%
\ctr@ld@f\def\PSwrit@cmd#1#2#3{{\Figg@tXY{#1}\c@lprojSP\b@undb@x{\v@lX}{\v@lY}%
    \v@lX=\ptT@ptps\v@lX\v@lY=\ptT@ptps\v@lY%
    \immediate\write#3{\repdecn@mb{\v@lX}\space\repdecn@mb{\v@lY}\space#2}}}
\ctr@ld@f\def\PSwrit@cmdS#1#2#3#4#5{{\Figg@tXY{#1}\c@lprojSP\b@undb@x{\v@lX}{\v@lY}%
    \global\result@t=\v@lX\global\result@@t=\v@lY%
    \v@lX=\ptT@ptps\v@lX\v@lY=\ptT@ptps\v@lY%
    \immediate\write#3{\repdecn@mb{\v@lX}\space\repdecn@mb{\v@lY}\space#2}}%
    \edef#4{\the\result@t}\edef#5{\the\result@@t}}
\ctr@ld@f\def\psaltitude#1[#2,#3,#4]{{\ifcurr@ntPS\ifps@cri%
    \PSc@mment{psaltitude Square Dim=#1, Triangle=[#2 / #3,#4]}%
    \s@uvc@ntr@l\et@tpsaltitude\resetc@ntr@l{2}\figptorthoprojline-5:=#2/#3,#4/%
    \figvectP -1[#3,#4]\n@rminf{\v@leur}{-1}\vecunit@{-3}{-1}%
    \figvectP -1[-5,#3]\n@rminf{\v@lmin}{-1}\figvectP -2[-5,#4]\n@rminf{\v@lmax}{-2}%
    \ifdim\v@lmin<\v@lmax\s@mme=#3\else\v@lmax=\v@lmin\s@mme=#4\fi%
    \figvectP -4[-5,#2]\vecunit@{-4}{-4}\delt@=#1\unit@%
    \edef\t@ille{\repdecn@mb{\delt@}}\figpttra-1:=-5/\t@ille,-3/%
    \figptstra-3=-5,-1/\t@ille,-4/\psline[#2,-5]\s@uvdash{\typ@dash}%
    \pssetdash{\defaultdash}\psline[-1,-2,-3]\pssetdash{\typ@dash}%
    \ifdim\v@leur<\v@lmax\Pss@tsecondSt\psline[-5,\the\s@mme]\Psrest@reSt\fi%
    \PSc@mment{End psaltitude}\resetc@ntr@l\et@tpsaltitude\fi\fi}}
\ctr@ld@f\def\Ps@rcerc#1;#2(#3,#4){\ellBB@x#1;#2,#2(#3,#4,0)%
    \f@gnewpath{\delt@=#2\unit@\delt@=\ptT@ptps\delt@%
    \BdingB@xfalse%
    \PSwrit@cmd{#1}{\repdecn@mb{\delt@}\space #3\space #4\space arc}{\fwf@g}}}
\ctr@ln@m\psarccirc
\ctr@ld@f\def\psarccircDD#1;#2(#3,#4){\ifcurr@ntPS\ifps@cri%
    \PSc@mment{psarccircDD Center=#1 ; Radius=#2 (Ang1=#3, Ang2=#4)}%
    \iffillm@de\Ps@rcerc#1;#2(#3,#4)%
    \f@gfill%
    \else\Ps@rcerc#1;#2(#3,#4)\f@gstroke\fi%
    \PSc@mment{End psarccircDD}\fi\fi}
\ctr@ld@f\def\psarccircTD#1,#2,#3;#4(#5,#6){{\ifcurr@ntPS\ifps@cri\s@uvc@ntr@l\et@tpsarccircTD%
    \PSc@mment{psarccircTD Center=#1,P1=#2,P2=#3 ; Radius=#4 (Ang1=#5, Ang2=#6)}%
    \setc@ntr@l{2}\c@lExtAxes#1,#2,#3(#4)\psarcellPATD#1,-4,-5(#5,#6)%
    \PSc@mment{End psarccircTD}\resetc@ntr@l\et@tpsarccircTD\fi\fi}}
\ctr@ld@f\def\c@lExtAxes#1,#2,#3(#4){%
    \figvectPTD-5[#1,#2]\vecunit@{-5}{-5}\figvectNTD-4[#1,#2,#3]\vecunit@{-4}{-4}%
    \figvectNVTD-3[-4,-5]\delt@=#4\unit@\edef\r@yon{\repdecn@mb{\delt@}}%
    \figpttra-4:=#1/\r@yon,-5/\figpttra-5:=#1/\r@yon,-3/}
\ctr@ln@m\psarccircP
\ctr@ld@f\def\psarccircPDD#1;#2[#3,#4]{{\ifcurr@ntPS\ifps@cri\s@uvc@ntr@l\et@tpsarccircPDD%
    \PSc@mment{psarccircPDD Center=#1; Radius=#2, [P1=#3, P2=#4]}%
    \Ps@ngleparam#1;#2[#3,#4]\ifdim\v@lmin>\v@lmax\advance\v@lmax\DePI@deg\fi%
    \edef\@ngdeb{\repdecn@mb{\v@lmin}}\edef\@ngfin{\repdecn@mb{\v@lmax}}%
    \psarccirc#1;\r@dius(\@ngdeb,\@ngfin)%
    \PSc@mment{End psarccircPDD}\resetc@ntr@l\et@tpsarccircPDD\fi\fi}}
\ctr@ld@f\def\psarccircPTD#1;#2[#3,#4,#5]{{\ifcurr@ntPS\ifps@cri\s@uvc@ntr@l\et@tpsarccircPTD%
    \PSc@mment{psarccircPTD Center=#1; Radius=#2, [P1=#3, P2=#4, P3=#5]}%
    \setc@ntr@l{2}\c@lExtAxes#1,#3,#5(#2)\psarcellPP#1,-4,-5[#3,#4]%
    \PSc@mment{End psarccircPTD}\resetc@ntr@l\et@tpsarccircPTD\fi\fi}}
\ctr@ld@f\def\Ps@ngleparam#1;#2[#3,#4]{\setc@ntr@l{2}%
    \figvectPDD-1[#1,#3]\vecunit@{-1}{-1}\Figg@tXY{-1}\arct@n\v@lmin(\v@lX,\v@lY)%
    \figvectPDD-2[#1,#4]\vecunit@{-2}{-2}\Figg@tXY{-2}\arct@n\v@lmax(\v@lX,\v@lY)%
    \v@lmin=\rdT@deg\v@lmin\v@lmax=\rdT@deg\v@lmax%
    \v@leur=#2pt\maxim@m{\mili@u}{-\v@leur}{\v@leur}%
    \edef\r@dius{\repdecn@mb{\mili@u}}}
\ctr@ld@f\def\Ps@rcercBz#1;#2(#3,#4){\Ps@rellBz#1;#2,#2(#3,#4,0)}
\ctr@ld@f\def\Ps@rellBz#1;#2,#3(#4,#5,#6){%
    \ellBB@x#1;#2,#3(#4,#5,#6)\BdingB@xfalse%
    \c@lNbarcs{#4}{#5}\v@leur=#4pt\setc@ntr@l{2}\figptell-13::#1;#2,#3(#4,#6)%
    \f@gnewpath\PSwrit@cmd{-13}{\c@mmoveto}{\fwf@g}%
    \s@mme=\z@\bcl@rellBz#1;#2,#3(#6)\BdingB@xtrue}
\ctr@ld@f\def\bcl@rellBz#1;#2,#3(#4){\relax%
    \ifnum\s@mme<\p@rtent\advance\s@mme\@ne%
    \advance\v@leur\delt@\edef\@ngle{\repdecn@mb\v@leur}\figptell-14::#1;#2,#3(\@ngle,#4)%
    \advance\v@leur\delt@\edef\@ngle{\repdecn@mb\v@leur}\figptell-15::#1;#2,#3(\@ngle,#4)%
    \advance\v@leur\delt@\edef\@ngle{\repdecn@mb\v@leur}\figptell-16::#1;#2,#3(\@ngle,#4)%
    \figptscontrolDD-18[-13,-14,-15,-16]%
    \PSwrit@cmd{-18}{}{\fwf@g}\PSwrit@cmd{-17}{}{\fwf@g}%
    \PSwrit@cmd{-16}{\c@mcurveto}{\fwf@g}%
    \figptcopyDD-13:/-16/\bcl@rellBz#1;#2,#3(#4)\fi}
\ctr@ld@f\def\Ps@rell#1;#2,#3(#4,#5,#6){\ellBB@x#1;#2,#3(#4,#5,#6)%
    \f@gnewpath{\v@lmin=#2\unit@\v@lmin=\ptT@ptps\v@lmin%
    \v@lmax=#3\unit@\v@lmax=\ptT@ptps\v@lmax\BdingB@xfalse%
    \PSwrit@cmd{#1}%
    {#6\space\repdecn@mb{\v@lmin}\space\repdecn@mb{\v@lmax}\space #4\space #5\space ellipse}{\fwf@g}}%
    \global\Use@llipsetrue}
\ctr@ln@m\psarcell
\ctr@ld@f\def\psarcellDD#1;#2,#3(#4,#5,#6){{\ifcurr@ntPS\ifps@cri%
    \PSc@mment{psarcellDD Center=#1 ; XRad=#2, YRad=#3 (Ang1=#4, Ang2=#5, Inclination=#6)}%
    \iffillm@de\Ps@rell#1;#2,#3(#4,#5,#6)%
    \f@gfill%
    \else\Ps@rell#1;#2,#3(#4,#5,#6)\f@gstroke\fi%
    \PSc@mment{End psarcellDD}\fi\fi}}
\ctr@ld@f\def\psarcellTD#1;#2,#3(#4,#5,#6){{\ifcurr@ntPS\ifps@cri\s@uvc@ntr@l\et@tpsarcellTD%
    \PSc@mment{psarcellTD Center=#1 ; XRad=#2, YRad=#3 (Ang1=#4, Ang2=#5, Inclination=#6)}%
    \setc@ntr@l{2}\figpttraC -8:=#1/#2,0,0/\figpttraC -7:=#1/0,#3,0/%
    \figvectC -4(0,0,1)\figptsrot -8=-8,-7/#1,#6,-4/\psarcellPATD#1,-8,-7(#4,#5)%
    \PSc@mment{End psarcellTD}\resetc@ntr@l\et@tpsarcellTD\fi\fi}}
\ctr@ln@m\psarcellPA
\ctr@ld@f\def\psarcellPADD#1,#2,#3(#4,#5){{\ifcurr@ntPS\ifps@cri\s@uvc@ntr@l\et@tpsarcellPADD%
    \PSc@mment{psarcellPADD Center=#1,PtAxis1=#2,PtAxis2=#3 (Ang1=#4, Ang2=#5)}%
    \setc@ntr@l{2}\figvectPDD-1[#1,#2]\vecunit@DD{-1}{-1}\v@lX=\ptT@unit@\result@t%
    \edef\XR@d{\repdecn@mb{\v@lX}}\Figg@tXY{-1}\arct@n\v@lmin(\v@lX,\v@lY)%
    \v@lmin=\rdT@deg\v@lmin\edef\Inclin@{\repdecn@mb{\v@lmin}}%
    \figgetdist\YR@d[#1,#3]\psarcellDD#1;\XR@d,\YR@d(#4,#5,\Inclin@)%
    \PSc@mment{End psarcellPADD}\resetc@ntr@l\et@tpsarcellPADD\fi\fi}}
\ctr@ld@f\def\psarcellPATD#1,#2,#3(#4,#5){{\ifcurr@ntPS\ifps@cri\s@uvc@ntr@l\et@tpsarcellPATD%
    \PSc@mment{psarcellPATD Center=#1,PtAxis1=#2,PtAxis2=#3 (Ang1=#4, Ang2=#5)}%
    \iffillm@de\Ps@rellPATD#1,#2,#3(#4,#5)%
    \f@gfill%
    \else\Ps@rellPATD#1,#2,#3(#4,#5)\f@gstroke\fi%
    \PSc@mment{End psarcellPATD}\resetc@ntr@l\et@tpsarcellPATD\fi\fi}}
\ctr@ld@f\def\Ps@rellPATD#1,#2,#3(#4,#5){\let\c@lprojSP=\relax%
    \setc@ntr@l{2}\figvectPTD-1[#1,#2]\figvectPTD-2[#1,#3]\c@lNbarcs{#4}{#5}%
    \v@leur=#4pt\c@lptellP{#1}{-1}{-2}\Figptpr@j-5:/-3/%
    \f@gnewpath\PSwrit@cmdS{-5}{\c@mmoveto}{\fwf@g}{\X@un}{\Y@un}%
    \edef\C@nt@r{#1}\s@mme=\z@\bcl@rellPATD}
\ctr@ld@f\def\bcl@rellPATD{\relax%
    \ifnum\s@mme<\p@rtent\advance\s@mme\@ne%
    \advance\v@leur\delt@\c@lptellP{\C@nt@r}{-1}{-2}\Figptpr@j-4:/-3/%
    \advance\v@leur\delt@\c@lptellP{\C@nt@r}{-1}{-2}\Figptpr@j-6:/-3/%
    \advance\v@leur\delt@\c@lptellP{\C@nt@r}{-1}{-2}\Figptpr@j-3:/-3/%
    \v@lX=\z@\v@lY=\z@\Figtr@nptDD{-5}{-5}\Figtr@nptDD{2}{-3}%
    \divide\v@lX\@vi\divide\v@lY\@vi%
    \Figtr@nptDD{3}{-4}\Figtr@nptDD{-1.5}{-6}\v@lmin=\v@lX\v@lmax=\v@lY%
    \v@lX=\z@\v@lY=\z@\Figtr@nptDD{2}{-5}\Figtr@nptDD{-5}{-3}%
    \divide\v@lX\@vi\divide\v@lY\@vi\Figtr@nptDD{-1.5}{-4}\Figtr@nptDD{3}{-6}%
    \BdingB@xfalse%
    \Figp@intregDD-4:(\v@lmin,\v@lmax)\PSwrit@cmdS{-4}{}{\fwf@g}{\X@de}{\Y@de}%
    \Figp@intregDD-4:(\v@lX,\v@lY)\PSwrit@cmdS{-4}{}{\fwf@g}{\X@tr}{\Y@tr}%
    \BdingB@xtrue\PSwrit@cmdS{-3}{\c@mcurveto}{\fwf@g}{\X@qu}{\Y@qu}%
    \B@zierBB@x{1}{\Y@un}(\X@un,\X@de,\X@tr,\X@qu)%
    \B@zierBB@x{2}{\X@un}(\Y@un,\Y@de,\Y@tr,\Y@qu)%
    \edef\X@un{\X@qu}\edef\Y@un{\Y@qu}\figptcopyDD-5:/-3/\bcl@rellPATD\fi}
\ctr@ld@f\def\c@lNbarcs#1#2{%
    \delt@=#2pt\advance\delt@-#1pt\maxim@m{\v@lmax}{\delt@}{-\delt@}%
    \v@leur=\v@lmax\divide\v@leur45 \p@rtentiere{\p@rtent}{\v@leur}\advance\p@rtent\@ne%
    \s@mme=\p@rtent\multiply\s@mme\thr@@\divide\delt@\s@mme}
\ctr@ld@f\def\psarcellPP#1,#2,#3[#4,#5]{{\ifcurr@ntPS\ifps@cri\s@uvc@ntr@l\et@tpsarcellPP%
    \PSc@mment{psarcellPP Center=#1,PtAxis1=#2,PtAxis2=#3 [Point1=#4, Point2=#5]}%
    \setc@ntr@l{2}\figvectP-2[#1,#3]\vecunit@{-2}{-2}\v@lmin=\result@t%
    \invers@{\v@lmax}{\v@lmin}%
    \figvectP-1[#1,#2]\vecunit@{-1}{-1}\v@leur=\result@t%
    \v@leur=\repdecn@mb{\v@lmax}\v@leur\edef\AsB@{\repdecn@mb{\v@leur}}
    \c@lAngle{#1}{#4}{\v@lmin}\edef\@ngdeb{\repdecn@mb{\v@lmin}}%
    \c@lAngle{#1}{#5}{\v@lmax}\ifdim\v@lmin>\v@lmax\advance\v@lmax\DePI@deg\fi%
    \edef\@ngfin{\repdecn@mb{\v@lmax}}\psarcellPA#1,#2,#3(\@ngdeb,\@ngfin)%
    \PSc@mment{End psarcellPP}\resetc@ntr@l\et@tpsarcellPP\fi\fi}}
\ctr@ld@f\def\c@lAngle#1#2#3{\figvectP-3[#1,#2]%
    \c@lproscal\delt@[-3,-1]\c@lproscal\v@leur[-3,-2]%
    \v@leur=\AsB@\v@leur\arct@n#3(\delt@,\v@leur)#3=\rdT@deg#3}
\ctr@ln@w{newif}\if@rrowratio\@rrowratiotrue
\ctr@ln@w{newif}\if@rrowhfill
\ctr@ln@w{newif}\if@rrowhout
\ctr@ld@f\def\Psset@rrowhe@d#1=#2|{\keln@mun#1|%
    \def\n@mref{a}\ifx\l@debut\n@mref\pssetarrowheadangle{#2}\else
    \def\n@mref{f}\ifx\l@debut\n@mref\pssetarrowheadfill{#2}\else
    \def\n@mref{l}\ifx\l@debut\n@mref\pssetarrowheadlength{#2}\else
    \def\n@mref{o}\ifx\l@debut\n@mref\pssetarrowheadout{#2}\else
    \def\n@mref{r}\ifx\l@debut\n@mref\pssetarrowheadratio{#2}\else
    \immediate\write16{*** Unknown attribute: \BS@ psset arrowhead(..., #1=...)}%
    \fi\fi\fi\fi\fi}
\ctr@ln@m\@rrowheadangle
\ctr@ln@m\C@AHANG \ctr@ln@m\S@AHANG \ctr@ln@m\UNSS@N
\ctr@ld@f\def\pssetarrowheadangle#1{\edef\@rrowheadangle{#1}{\c@ssin{\C@}{\S@}{#1}%
    \xdef\C@AHANG{\C@}\xdef\S@AHANG{\S@}\v@lmax=\S@ pt%
    \invers@{\v@leur}{\v@lmax}\maxim@m{\v@leur}{\v@leur}{-\v@leur}%
    \xdef\UNSS@N{\the\v@leur}}}
\ctr@ld@f\def\pssetarrowheadfill#1{\expandafter\set@rrowhfill#1:}
\ctr@ld@f\def\set@rrowhfill#1#2:{\if#1n\@rrowhfillfalse\else\@rrowhfilltrue\fi}
\ctr@ld@f\def\pssetarrowheadout#1{\expandafter\set@rrowhout#1:}
\ctr@ld@f\def\set@rrowhout#1#2:{\if#1n\@rrowhoutfalse\else\@rrowhouttrue\fi}
\ctr@ln@m\@rrowheadlength
\ctr@ld@f\def\pssetarrowheadlength#1{\edef\@rrowheadlength{#1}\@rrowratiofalse}
\ctr@ln@m\@rrowheadratio
\ctr@ld@f\def\pssetarrowheadratio#1{\edef\@rrowheadratio{#1}\@rrowratiotrue}
\ctr@ln@m\defaultarrowheadlength
\ctr@ld@f\def\psresetarrowhead{%
    \pssetarrowheadangle{\defaultarrowheadangle}%
    \pssetarrowheadfill{\defaultarrowheadfill}%
    \pssetarrowheadout{\defaultarrowheadout}%
    \pssetarrowheadratio{\defaultarrowheadratio}%
    \d@fm@cdim\defaultarrowheadlength{\defaulth@rdahlength}
    \pssetarrowheadlength{\defaultarrowheadlength}}
\ctr@ld@f\def\defaultarrowheadratio{0.1}
\ctr@ld@f\def\defaultarrowheadangle{20}
\ctr@ld@f\def\defaultarrowheadfill{no}
\ctr@ld@f\def\defaultarrowheadout{no}
\ctr@ld@f\def\defaulth@rdahlength{8pt}
\ctr@ln@m\psarrow
\ctr@ld@f\def\psarrowDD[#1,#2]{{\ifcurr@ntPS\ifps@cri\s@uvc@ntr@l\et@tpsarrow%
    \PSc@mment{psarrowDD [Pt1,Pt2]=[#1,#2]}\pssetfillmode{no}%
    \psarrowheadDD[#1,#2]\setc@ntr@l{2}\psline[#1,-3]%
    \PSc@mment{End psarrowDD}\resetc@ntr@l\et@tpsarrow\fi\fi}}
\ctr@ld@f\def\psarrowTD[#1,#2]{{\ifcurr@ntPS\ifps@cri\s@uvc@ntr@l\et@tpsarrowTD%
    \PSc@mment{psarrowTD [Pt1,Pt2]=[#1,#2]}\resetc@ntr@l{2}%
    \Figptpr@j-5:/#1/\Figptpr@j-6:/#2/\let\c@lprojSP=\relax\psarrowDD[-5,-6]%
    \PSc@mment{End psarrowTD}\resetc@ntr@l\et@tpsarrowTD\fi\fi}}
\ctr@ln@m\psarrowhead
\ctr@ld@f\def\psarrowheadDD[#1,#2]{{\ifcurr@ntPS\ifps@cri\s@uvc@ntr@l\et@tpsarrowheadDD%
    \if@rrowhfill\def\@hangle{-\@rrowheadangle}\else\def\@hangle{\@rrowheadangle}\fi%
    \if@rrowratio%
    \if@rrowhout\def\@hratio{-\@rrowheadratio}\else\def\@hratio{\@rrowheadratio}\fi%
    \PSc@mment{psarrowheadDD Ratio=\@hratio, Angle=\@hangle, [Pt1,Pt2]=[#1,#2]}%
    \Ps@rrowhead\@hratio,\@hangle[#1,#2]%
    \else%
    \if@rrowhout\def\@hlength{-\@rrowheadlength}\else\def\@hlength{\@rrowheadlength}\fi%
    \PSc@mment{psarrowheadDD Length=\@hlength, Angle=\@hangle, [Pt1,Pt2]=[#1,#2]}%
    \Ps@rrowheadfd\@hlength,\@hangle[#1,#2]%
    \fi%
    \PSc@mment{End psarrowheadDD}\resetc@ntr@l\et@tpsarrowheadDD\fi\fi}}
\ctr@ld@f\def\psarrowheadTD[#1,#2]{{\ifcurr@ntPS\ifps@cri\s@uvc@ntr@l\et@tpsarrowheadTD%
    \PSc@mment{psarrowheadTD [Pt1,Pt2]=[#1,#2]}\resetc@ntr@l{2}%
    \Figptpr@j-5:/#1/\Figptpr@j-6:/#2/\let\c@lprojSP=\relax\psarrowheadDD[-5,-6]%
    \PSc@mment{End psarrowheadTD}\resetc@ntr@l\et@tpsarrowheadTD\fi\fi}}
\ctr@ld@f\def\Ps@rrowhead#1,#2[#3,#4]{\v@leur=#1\p@\maxim@m{\v@leur}{\v@leur}{-\v@leur}%
    \ifdim\v@leur>\Cepsil@n{
    \PSc@mment{ps@rrowhead Ratio=#1, Angle=#2, [Pt1,Pt2]=[#3,#4]}\v@leur=\UNSS@N%
    \v@leur=\curr@ntwidth\v@leur\v@leur=\ptpsT@pt\v@leur\delt@=.5\v@leur
    \setc@ntr@l{2}\figvectPDD-3[#4,#3]%
    \Figg@tXY{-3}\v@lX=#1\v@lX\v@lY=#1\v@lY\Figv@ctCreg-3(\v@lX,\v@lY)%
    \vecunit@{-4}{-3}\mili@u=\result@t%
    \ifdim#2pt>\z@\v@lXa=-\C@AHANG\delt@%
     \edef\c@ef{\repdecn@mb{\v@lXa}}\figpttraDD-3:=-3/\c@ef,-4/\fi%
    \edef\c@ef{\repdecn@mb{\delt@}}%
    \v@lXa=\mili@u\v@lXa=\C@AHANG\v@lXa%
    \v@lYa=\ptpsT@pt\p@\v@lYa=\curr@ntwidth\v@lYa\v@lYa=\sDcc@ngle\v@lYa%
    \advance\v@lXa-\v@lYa\gdef\sDcc@ngle{0}%
    \ifdim\v@lXa>\v@leur\edef\c@efendpt{\repdecn@mb{\v@leur}}%
    \else\edef\c@efendpt{\repdecn@mb{\v@lXa}}\fi%
    \Figg@tXY{-3}\v@lmin=\v@lX\v@lmax=\v@lY%
    \v@lXa=\C@AHANG\v@lmin\v@lYa=\S@AHANG\v@lmax\advance\v@lXa\v@lYa%
    \v@lYa=-\S@AHANG\v@lmin\v@lX=\C@AHANG\v@lmax\advance\v@lYa\v@lX%
    \setc@ntr@l{1}\Figg@tXY{#4}\advance\v@lX\v@lXa\advance\v@lY\v@lYa%
    \setc@ntr@l{2}\Figp@intregDD-2:(\v@lX,\v@lY)%
    \v@lXa=\C@AHANG\v@lmin\v@lYa=-\S@AHANG\v@lmax\advance\v@lXa\v@lYa%
    \v@lYa=\S@AHANG\v@lmin\v@lX=\C@AHANG\v@lmax\advance\v@lYa\v@lX%
    \setc@ntr@l{1}\Figg@tXY{#4}\advance\v@lX\v@lXa\advance\v@lY\v@lYa%
    \setc@ntr@l{2}\Figp@intregDD-1:(\v@lX,\v@lY)%
    \ifdim#2pt<\z@\fillm@detrue\psline[-2,#4,-1]
    \else\figptstraDD-3=#4,-2,-1/\c@ef,-4/\psline[-2,-3,-1]\fi
    \ifdim#1pt>\z@\figpttraDD-3:=#4/\c@efendpt,-4/\else\figptcopyDD-3:/#4/\fi%
    \PSc@mment{End ps@rrowhead}}\fi}
\ctr@ld@f\def\sDcc@ngle{0}
\ctr@ld@f\def\Ps@rrowheadfd#1,#2[#3,#4]{{%
    \PSc@mment{ps@rrowheadfd Length=#1, Angle=#2, [Pt1,Pt2]=[#3,#4]}%
    \setc@ntr@l{2}\figvectPDD-1[#3,#4]\n@rmeucDD{\v@leur}{-1}\v@leur=\ptT@unit@\v@leur%
    \invers@{\v@leur}{\v@leur}\v@leur=#1\v@leur\edef\R@tio{\repdecn@mb{\v@leur}}%
    \Ps@rrowhead\R@tio,#2[#3,#4]\PSc@mment{End ps@rrowheadfd}}}
\ctr@ln@m\psarrowBezier
\ctr@ld@f\def\psarrowBezierDD[#1,#2,#3,#4]{{\ifcurr@ntPS\ifps@cri\s@uvc@ntr@l\et@tpsarrowBezierDD%
    \PSc@mment{psarrowBezierDD Control points=#1,#2,#3,#4}\setc@ntr@l{2}%
    \if@rrowratio\c@larclengthDD\v@leur,10[#1,#2,#3,#4]\else\v@leur=\z@\fi%
    \Ps@rrowB@zDD\v@leur[#1,#2,#3,#4]%
    \PSc@mment{End psarrowBezierDD}\resetc@ntr@l\et@tpsarrowBezierDD\fi\fi}}
\ctr@ld@f\def\psarrowBezierTD[#1,#2,#3,#4]{{\ifcurr@ntPS\ifps@cri\s@uvc@ntr@l\et@tpsarrowBezierTD%
    \PSc@mment{psarrowBezierTD Control points=#1,#2,#3,#4}\resetc@ntr@l{2}%
    \Figptpr@j-7:/#1/\Figptpr@j-8:/#2/\Figptpr@j-9:/#3/\Figptpr@j-10:/#4/%
    \let\c@lprojSP=\relax\ifnum\curr@ntproj<\tw@\psarrowBezierDD[-7,-8,-9,-10]%
    \else\f@gnewpath\PSwrit@cmd{-7}{\c@mmoveto}{\fwf@g}%
    \if@rrowratio\c@larclengthDD\mili@u,10[-7,-8,-9,-10]\else\mili@u=\z@\fi%
    \p@rtent=\NBz@rcs\advance\p@rtent\m@ne\subB@zierTD\p@rtent[#1,#2,#3,#4]%
    \f@gstroke%
    \advance\v@lmin\p@rtent\delt@
    \v@leur=\v@lmin\advance\v@leur0.33333 \delt@\edef\unti@rs{\repdecn@mb{\v@leur}}%
    \v@leur=\v@lmin\advance\v@leur0.66666 \delt@\edef\deti@rs{\repdecn@mb{\v@leur}}%
    \figptcopyDD-8:/-10/\c@lsubBzarc\unti@rs,\deti@rs[#1,#2,#3,#4]%
    \figptcopyDD-8:/-4/\figptcopyDD-9:/-3/\Ps@rrowB@zDD\mili@u[-7,-8,-9,-10]\fi%
    \PSc@mment{End psarrowBezierTD}\resetc@ntr@l\et@tpsarrowBezierTD\fi\fi}}
\ctr@ld@f\def\c@larclengthDD#1,#2[#3,#4,#5,#6]{{\p@rtent=#2\figptcopyDD-5:/#3/%
    \delt@=\p@\divide\delt@\p@rtent\c@rre=\z@\v@leur=\z@\s@mme=\z@%
    \loop\ifnum\s@mme<\p@rtent\advance\s@mme\@ne\advance\v@leur\delt@%
    \edef\T@{\repdecn@mb{\v@leur}}\figptBezierDD-6::\T@[#3,#4,#5,#6]%
    \figvectPDD-1[-5,-6]\n@rmeucDD{\mili@u}{-1}\advance\c@rre\mili@u%
    \figptcopyDD-5:/-6/\repeat\global\result@t=\ptT@unit@\c@rre}#1=\result@t}
\ctr@ld@f\def\Ps@rrowB@zDD#1[#2,#3,#4,#5]{{\pssetfillmode{no}%
    \if@rrowratio\delt@=\@rrowheadratio#1\else\delt@=\@rrowheadlength pt\fi%
    \v@leur=\C@AHANG\delt@\edef\R@dius{\repdecn@mb{\v@leur}}%
    \FigptintercircB@zDD-5::0,\R@dius[#5,#4,#3,#2]%
    \pssetarrowheadlength{\repdecn@mb{\delt@}}\psarrowheadDD[-5,#5]%
    \let\n@rmeuc=\n@rmeucDD\figgetdist\R@dius[#5,-3]%
    \FigptintercircB@zDD-6::0,\R@dius[#5,#4,#3,#2]%
    \figptBezierDD-5::0.33333[#5,#4,#3,#2]\figptBezierDD-3::0.66666[#5,#4,#3,#2]%
    \figptscontrolDD-5[-6,-5,-3,#2]\psBezierDD1[-6,-5,-4,#2]}}
\ctr@ln@m\psarrowcirc
\ctr@ld@f\def\psarrowcircDD#1;#2(#3,#4){{\ifcurr@ntPS\ifps@cri\s@uvc@ntr@l\et@tpsarrowcircDD%
    \PSc@mment{psarrowcircDD Center=#1 ; Radius=#2 (Ang1=#3,Ang2=#4)}%
    \pssetfillmode{no}\Pscirc@rrowhead#1;#2(#3,#4)%
    \setc@ntr@l{2}\figvectPDD -4[#1,-3]\vecunit@{-4}{-4}%
    \Figg@tXY{-4}\arct@n\v@lmin(\v@lX,\v@lY)%
    \v@lmin=\rdT@deg\v@lmin\v@leur=#4pt\advance\v@leur-\v@lmin%
    \maxim@m{\v@leur}{\v@leur}{-\v@leur}%
    \ifdim\v@leur>\DemiPI@deg\relax\ifdim\v@lmin<#4pt\advance\v@lmin\DePI@deg%
    \else\advance\v@lmin-\DePI@deg\fi\fi\edef\ar@ngle{\repdecn@mb{\v@lmin}}%
    \ifdim#3pt<#4pt\psarccirc#1;#2(#3,\ar@ngle)\else\psarccirc#1;#2(\ar@ngle,#3)\fi%
    \PSc@mment{End psarrowcircDD}\resetc@ntr@l\et@tpsarrowcircDD\fi\fi}}
\ctr@ld@f\def\psarrowcircTD#1,#2,#3;#4(#5,#6){{\ifcurr@ntPS\ifps@cri\s@uvc@ntr@l\et@tpsarrowcircTD%
    \PSc@mment{psarrowcircTD Center=#1,P1=#2,P2=#3 ; Radius=#4 (Ang1=#5, Ang2=#6)}%
    \resetc@ntr@l{2}\c@lExtAxes#1,#2,#3(#4)\let\c@lprojSP=\relax%
    \figvectPTD-11[#1,-4]\figvectPTD-12[#1,-5]\c@lNbarcs{#5}{#6}%
    \if@rrowratio\v@lmax=\degT@rd\v@lmax\edef\D@lpha{\repdecn@mb{\v@lmax}}\fi%
    \advance\p@rtent\m@ne\mili@u=\z@%
    \v@leur=#5pt\c@lptellP{#1}{-11}{-12}\Figptpr@j-9:/-3/%
    \f@gnewpath\PSwrit@cmdS{-9}{\c@mmoveto}{\fwf@g}{\X@un}{\Y@un}%
    \edef\C@nt@r{#1}\s@mme=\z@\bcl@rcircTD\f@gstroke%
    \advance\v@leur\delt@\c@lptellP{#1}{-11}{-12}\Figptpr@j-5:/-3/%
    \advance\v@leur\delt@\c@lptellP{#1}{-11}{-12}\Figptpr@j-6:/-3/%
    \advance\v@leur\delt@\c@lptellP{#1}{-11}{-12}\Figptpr@j-10:/-3/%
    \figptscontrolDD-8[-9,-5,-6,-10]%
    \if@rrowratio\c@lcurvradDD0.5[-9,-8,-7,-10]\advance\mili@u\result@t%
    \maxim@m{\mili@u}{\mili@u}{-\mili@u}\mili@u=\ptT@unit@\mili@u%
    \mili@u=\D@lpha\mili@u\advance\p@rtent\@ne\divide\mili@u\p@rtent\fi%
    \Ps@rrowB@zDD\mili@u[-9,-8,-7,-10]%
    \PSc@mment{End psarrowcircTD}\resetc@ntr@l\et@tpsarrowcircTD\fi\fi}}
\ctr@ld@f\def\bcl@rcircTD{\relax%
    \ifnum\s@mme<\p@rtent\advance\s@mme\@ne%
    \advance\v@leur\delt@\c@lptellP{\C@nt@r}{-11}{-12}\Figptpr@j-5:/-3/%
    \advance\v@leur\delt@\c@lptellP{\C@nt@r}{-11}{-12}\Figptpr@j-6:/-3/%
    \advance\v@leur\delt@\c@lptellP{\C@nt@r}{-11}{-12}\Figptpr@j-10:/-3/%
    \figptscontrolDD-8[-9,-5,-6,-10]\BdingB@xfalse%
    \PSwrit@cmdS{-8}{}{\fwf@g}{\X@de}{\Y@de}\PSwrit@cmdS{-7}{}{\fwf@g}{\X@tr}{\Y@tr}%
    \BdingB@xtrue\PSwrit@cmdS{-10}{\c@mcurveto}{\fwf@g}{\X@qu}{\Y@qu}%
    \if@rrowratio\c@lcurvradDD0.5[-9,-8,-7,-10]\advance\mili@u\result@t\fi%
    \B@zierBB@x{1}{\Y@un}(\X@un,\X@de,\X@tr,\X@qu)%
    \B@zierBB@x{2}{\X@un}(\Y@un,\Y@de,\Y@tr,\Y@qu)%
    \edef\X@un{\X@qu}\edef\Y@un{\Y@qu}\figptcopyDD-9:/-10/\bcl@rcircTD\fi}
\ctr@ld@f\def\Pscirc@rrowhead#1;#2(#3,#4){{%
    \PSc@mment{pscirc@rrowhead Center=#1 ; Radius=#2 (Ang1=#3,Ang2=#4)}%
    \v@leur=#2\unit@\edef\s@glen{\repdecn@mb{\v@leur}}\v@lY=\z@\v@lX=\v@leur%
    \resetc@ntr@l{2}\Figv@ctCreg-3(\v@lX,\v@lY)\figpttraDD-5:=#1/1,-3/%
    \figptrotDD-5:=-5/#1,#4/%
    \figvectPDD-3[#1,-5]\Figg@tXY{-3}\v@leur=\v@lX%
    \ifdim#3pt<#4pt\v@lX=\v@lY\v@lY=-\v@leur\else\v@lX=-\v@lY\v@lY=\v@leur\fi%
    \Figv@ctCreg-3(\v@lX,\v@lY)\vecunit@{-3}{-3}%
    \if@rrowratio\v@leur=#4pt\advance\v@leur-#3pt\maxim@m{\mili@u}{-\v@leur}{\v@leur}%
    \mili@u=\degT@rd\mili@u\v@leur=\s@glen\mili@u\edef\s@glen{\repdecn@mb{\v@leur}}%
    \mili@u=#2\mili@u\mili@u=\@rrowheadratio\mili@u\else\mili@u=\@rrowheadlength pt\fi%
    \figpttraDD-6:=-5/\s@glen,-3/\v@leur=#2pt\v@leur=2\v@leur%
    \invers@{\v@leur}{\v@leur}\c@rre=\repdecn@mb{\v@leur}\mili@u
    \mili@u=\c@rre\mili@u=\repdecn@mb{\c@rre}\mili@u%
    \v@leur=\p@\advance\v@leur-\mili@u
    \invers@{\mili@u}{2\v@leur}\delt@=\c@rre\delt@=\repdecn@mb{\mili@u}\delt@%
    \xdef\sDcc@ngle{\repdecn@mb{\delt@}}
    \sqrt@{\mili@u}{\v@leur}\arct@n\v@leur(\mili@u,\c@rre)%
    \v@leur=\rdT@deg\v@leur
    \ifdim#3pt<#4pt\v@leur=-\v@leur\fi%
    \if@rrowhout\v@leur=-\v@leur\fi\edef\cor@ngle{\repdecn@mb{\v@leur}}%
    \figptrotDD-6:=-6/-5,\cor@ngle/\psarrowheadDD[-6,-5]%
    \PSc@mment{End pscirc@rrowhead}}}
\ctr@ln@m\psarrowcircP
\ctr@ld@f\def\psarrowcircPDD#1;#2[#3,#4]{{\ifcurr@ntPS\ifps@cri%
    \PSc@mment{psarrowcircPDD Center=#1; Radius=#2, [P1=#3,P2=#4]}%
    \s@uvc@ntr@l\et@tpsarrowcircPDD\Ps@ngleparam#1;#2[#3,#4]%
    \ifdim\v@leur>\z@\ifdim\v@lmin>\v@lmax\advance\v@lmax\DePI@deg\fi%
    \else\ifdim\v@lmin<\v@lmax\advance\v@lmin\DePI@deg\fi\fi%
    \edef\@ngdeb{\repdecn@mb{\v@lmin}}\edef\@ngfin{\repdecn@mb{\v@lmax}}%
    \psarrowcirc#1;\r@dius(\@ngdeb,\@ngfin)%
    \PSc@mment{End psarrowcircPDD}\resetc@ntr@l\et@tpsarrowcircPDD\fi\fi}}
\ctr@ld@f\def\psarrowcircPTD#1;#2[#3,#4,#5]{{\ifcurr@ntPS\ifps@cri\s@uvc@ntr@l\et@tpsarrowcircPTD%
    \PSc@mment{psarrowcircPTD Center=#1; Radius=#2, [P1=#3,P2=#4,P3=#5]}%
    \figgetangleTD\@ngfin[#1,#3,#4,#5]\v@leur=#2pt%
    \maxim@m{\mili@u}{-\v@leur}{\v@leur}\edef\r@dius{\repdecn@mb{\mili@u}}%
    \ifdim\v@leur<\z@\v@lmax=\@ngfin pt\advance\v@lmax-\DePI@deg%
    \edef\@ngfin{\repdecn@mb{\v@lmax}}\fi\psarrowcircTD#1,#3,#5;\r@dius(0,\@ngfin)%
    \PSc@mment{End psarrowcircPTD}\resetc@ntr@l\et@tpsarrowcircPTD\fi\fi}}
\ctr@ld@f\def\psaxes#1(#2){{\ifcurr@ntPS\ifps@cri\s@uvc@ntr@l\et@tpsaxes%
    \PSc@mment{psaxes Origin=#1 Range=(#2)}\an@lys@xes#2,:\resetc@ntr@l{2}%
    \ifx\t@xt@\empty\ifTr@isDim\ps@xes#1(0,#2,0,#2,0,#2)\else\ps@xes#1(0,#2,0,#2)\fi%
    \else\ps@xes#1(#2)\fi\PSc@mment{End psaxes}\resetc@ntr@l\et@tpsaxes\fi\fi}}
\ctr@ld@f\def\an@lys@xes#1,#2:{\def\t@xt@{#2}}
\ctr@ln@m\ps@xes
\ctr@ld@f\def\ps@xesDD#1(#2,#3,#4,#5){%
    \figpttraC-5:=#1/#2,0/\figpttraC-6:=#1/#3,0/\psarrowDD[-5,-6]%
    \figpttraC-5:=#1/0,#4/\figpttraC-6:=#1/0,#5/\psarrowDD[-5,-6]}
\ctr@ld@f\def\ps@xesTD#1(#2,#3,#4,#5,#6,#7){%
    \figpttraC-7:=#1/#2,0,0/\figpttraC-8:=#1/#3,0,0/\psarrowTD[-7,-8]%
    \figpttraC-7:=#1/0,#4,0/\figpttraC-8:=#1/0,#5,0/\psarrowTD[-7,-8]%
    \figpttraC-7:=#1/0,0,#6/\figpttraC-8:=#1/0,0,#7/\psarrowTD[-7,-8]}
\ctr@ln@m\newGr@FN
\ctr@ld@f\def\newGr@FNPDF#1{\s@mme=\Gr@FNb\advance\s@mme\@ne\xdef\Gr@FNb{\number\s@mme}}
\ctr@ld@f\def\newGr@FNDVI#1{\newGr@FNPDF{}\xdef#1{\jobname GI\Gr@FNb.anx}}
\ctr@ld@f\def\psbeginfig#1{\newGr@FN\DefGIfilen@me\gdef\@utoFN{0}%
    \def\t@xt@{#1}\relax\ifx\t@xt@\empty\psupdatem@detrue%
    \gdef\@utoFN{1}\Psb@ginfig\DefGIfilen@me\else\expandafter\Psb@ginfigNu@#1 :\fi}
\ctr@ld@f\def\Psb@ginfigNu@#1 #2:{\def\t@xt@{#1}\relax\ifx\t@xt@\empty\def\t@xt@{#2}%
    \ifx\t@xt@\empty\psupdatem@detrue\gdef\@utoFN{1}\Psb@ginfig\DefGIfilen@me%
    \else\Psb@ginfigNu@#2:\fi\else\Psb@ginfig{#1}\fi}
\ctr@ln@m\PSfilen@me \ctr@ln@m\auxfilen@me
\ctr@ld@f\def\Psb@ginfig#1{\ifcurr@ntPS\else%
    \edef\PSfilen@me{#1}\edef\auxfilen@me{\jobname.anx}%
    \ifpsupdatem@de\ps@critrue\else\openin\frf@g=\PSfilen@me\relax%
    \ifeof\frf@g\ps@critrue\else\ps@crifalse\fi\closein\frf@g\fi%
    \curr@ntPStrue\c@ldefproj\expandafter\setupd@te\defaultupdate:%
    \ifps@cri\initb@undb@x%
    \immediate\openout\fwf@g=\auxfilen@me\initpss@ttings\fi%
    \fi}
\ctr@ld@f\def\Gr@FNb{0}
\ctr@ld@f\def\figforTeXFileno{\Gr@FNb}
\ctr@ld@f\def\figforTeXFigno{0 }
\ctr@ld@f\def\figforTeXnextFigno{1 }
\ctr@ld@f\edef\DefGIfilen@me{\jobname GI.anx}
\ctr@ld@f\def\initpss@ttings{\psreset{arrowhead,curve,first,flowchart,mesh,second,third}%
    \Use@llipsefalse}
\ctr@ld@f\def\B@zierBB@x#1#2(#3,#4,#5,#6){{\c@rre=\t@n\epsil@n
    \v@lmax=#4\advance\v@lmax-#5\v@lmax=\thr@@\v@lmax\advance\v@lmax#6\advance\v@lmax-#3%
    \mili@u=#4\mili@u=-\tw@\mili@u\advance\mili@u#3\advance\mili@u#5%
    \v@lmin=#4\advance\v@lmin-#3\maxim@m{\v@leur}{-\v@lmax}{\v@lmax}%
    \maxim@m{\delt@}{-\mili@u}{\mili@u}\maxim@m{\v@leur}{\v@leur}{\delt@}%
    \maxim@m{\delt@}{-\v@lmin}{\v@lmin}\maxim@m{\v@leur}{\v@leur}{\delt@}%
    \ifdim\v@leur>\c@rre\invers@{\v@leur}{\v@leur}\edef\Uns@rM@x{\repdecn@mb{\v@leur}}%
    \v@lmax=\Uns@rM@x\v@lmax\mili@u=\Uns@rM@x\mili@u\v@lmin=\Uns@rM@x\v@lmin%
    \maxim@m{\v@leur}{-\v@lmax}{\v@lmax}\ifdim\v@leur<\c@rre%
    \maxim@m{\v@leur}{-\mili@u}{\mili@u}\ifdim\v@leur<\c@rre\else%
    \invers@{\mili@u}{\mili@u}\v@leur=-0.5\v@lmin%
    \v@leur=\repdecn@mb{\mili@u}\v@leur\m@jBBB@x{\v@leur}{#1}{#2}(#3,#4,#5,#6)\fi%
    \else\delt@=\repdecn@mb{\mili@u}\mili@u\v@leur=\repdecn@mb{\v@lmax}\v@lmin%
    \advance\delt@-\v@leur\ifdim\delt@<\z@\else\invers@{\v@lmax}{\v@lmax}%
    \edef\Uns@rAp{\repdecn@mb{\v@lmax}}\sqrt@{\delt@}{\delt@}%
    \v@leur=-\mili@u\advance\v@leur\delt@\v@leur=\Uns@rAp\v@leur%
    \m@jBBB@x{\v@leur}{#1}{#2}(#3,#4,#5,#6)%
    \v@leur=-\mili@u\advance\v@leur-\delt@\v@leur=\Uns@rAp\v@leur%
    \m@jBBB@x{\v@leur}{#1}{#2}(#3,#4,#5,#6)\fi\fi\fi}}
\ctr@ld@f\def\m@jBBB@x#1#2#3(#4,#5,#6,#7){{\relax\ifdim#1>\z@\ifdim#1<\p@%
    \edef\T@{\repdecn@mb{#1}}\v@lX=\p@\advance\v@lX-#1\edef\UNmT@{\repdecn@mb{\v@lX}}%
    \v@lX=#4\v@lY=#5\v@lZ=#6\v@lXa=#7\v@lX=\UNmT@\v@lX\advance\v@lX\T@\v@lY%
    \v@lY=\UNmT@\v@lY\advance\v@lY\T@\v@lZ\v@lZ=\UNmT@\v@lZ\advance\v@lZ\T@\v@lXa%
    \v@lX=\UNmT@\v@lX\advance\v@lX\T@\v@lY\v@lY=\UNmT@\v@lY\advance\v@lY\T@\v@lZ%
    \v@lX=\UNmT@\v@lX\advance\v@lX\T@\v@lY%
    \ifcase#2\or\v@lY=#3\or\v@lY=\v@lX\v@lX=#3\fi\b@undb@x{\v@lX}{\v@lY}\fi\fi}}
\ctr@ld@f\def\PsB@zier#1[#2]{{\f@gnewpath%
    \s@mme=\z@\def\list@num{#2,0}\extrairelepremi@r\p@int\de\list@num%
    \PSwrit@cmdS{\p@int}{\c@mmoveto}{\fwf@g}{\X@un}{\Y@un}\p@rtent=#1\bclB@zier}}
\ctr@ld@f\def\bclB@zier{\relax%
    \ifnum\s@mme<\p@rtent\advance\s@mme\@ne\BdingB@xfalse%
    \extrairelepremi@r\p@int\de\list@num\PSwrit@cmdS{\p@int}{}{\fwf@g}{\X@de}{\Y@de}%
    \extrairelepremi@r\p@int\de\list@num\PSwrit@cmdS{\p@int}{}{\fwf@g}{\X@tr}{\Y@tr}%
    \BdingB@xtrue%
    \extrairelepremi@r\p@int\de\list@num\PSwrit@cmdS{\p@int}{\c@mcurveto}{\fwf@g}{\X@qu}{\Y@qu}%
    \B@zierBB@x{1}{\Y@un}(\X@un,\X@de,\X@tr,\X@qu)%
    \B@zierBB@x{2}{\X@un}(\Y@un,\Y@de,\Y@tr,\Y@qu)%
    \edef\X@un{\X@qu}\edef\Y@un{\Y@qu}\bclB@zier\fi}
\ctr@ln@m\psBezier
\ctr@ld@f\def\psBezierDD#1[#2]{\ifcurr@ntPS\ifps@cri%
    \PSc@mment{psBezierDD N arcs=#1, Control points=#2}%
    \iffillm@de\PsB@zier#1[#2]%
    \f@gfill%
    \else\PsB@zier#1[#2]\f@gstroke\fi%
    \PSc@mment{End psBezierDD}\fi\fi}
\ctr@ln@m\et@tpsBezierTD
\ctr@ld@f\def\psBezierTD#1[#2]{\ifcurr@ntPS\ifps@cri\s@uvc@ntr@l\et@tpsBezierTD%
    \PSc@mment{psBezierTD N arcs=#1, Control points=#2}%
    \iffillm@de\PsB@zierTD#1[#2]%
    \f@gfill%
    \else\PsB@zierTD#1[#2]\f@gstroke\fi%
    \PSc@mment{End psBezierTD}\resetc@ntr@l\et@tpsBezierTD\fi\fi}
\ctr@ld@f\def\PsB@zierTD#1[#2]{\ifnum\curr@ntproj<\tw@\PsB@zier#1[#2]\else\PsB@zier@TD#1[#2]\fi}
\ctr@ld@f\def\PsB@zier@TD#1[#2]{{\f@gnewpath%
    \s@mme=\z@\def\list@num{#2,0}\extrairelepremi@r\p@int\de\list@num%
    \let\c@lprojSP=\relax\setc@ntr@l{2}\Figptpr@j-7:/\p@int/%
    \PSwrit@cmd{-7}{\c@mmoveto}{\fwf@g}%
    \loop\ifnum\s@mme<#1\advance\s@mme\@ne\extrairelepremi@r\p@intun\de\list@num%
    \extrairelepremi@r\p@intde\de\list@num\extrairelepremi@r\p@inttr\de\list@num%
    \subB@zierTD\NBz@rcs[\p@int,\p@intun,\p@intde,\p@inttr]\edef\p@int{\p@inttr}\repeat}}
\ctr@ld@f\def\subB@zierTD#1[#2,#3,#4,#5]{\delt@=\p@\divide\delt@\NBz@rcs\v@lmin=\z@%
    {\Figg@tXY{-7}\edef\X@un{\the\v@lX}\edef\Y@un{\the\v@lY}%
    \s@mme=\z@\loop\ifnum\s@mme<#1\advance\s@mme\@ne%
    \v@leur=\v@lmin\advance\v@leur0.33333 \delt@\edef\unti@rs{\repdecn@mb{\v@leur}}%
    \v@leur=\v@lmin\advance\v@leur0.66666 \delt@\edef\deti@rs{\repdecn@mb{\v@leur}}%
    \advance\v@lmin\delt@\edef\trti@rs{\repdecn@mb{\v@lmin}}%
    \figptBezierTD-8::\trti@rs[#2,#3,#4,#5]\Figptpr@j-8:/-8/%
    \c@lsubBzarc\unti@rs,\deti@rs[#2,#3,#4,#5]\BdingB@xfalse%
    \PSwrit@cmdS{-4}{}{\fwf@g}{\X@de}{\Y@de}\PSwrit@cmdS{-3}{}{\fwf@g}{\X@tr}{\Y@tr}%
    \BdingB@xtrue\PSwrit@cmdS{-8}{\c@mcurveto}{\fwf@g}{\X@qu}{\Y@qu}%
    \B@zierBB@x{1}{\Y@un}(\X@un,\X@de,\X@tr,\X@qu)%
    \B@zierBB@x{2}{\X@un}(\Y@un,\Y@de,\Y@tr,\Y@qu)%
    \edef\X@un{\X@qu}\edef\Y@un{\Y@qu}\figptcopyDD-7:/-8/\repeat}}
\ctr@ld@f\def\NBz@rcs{2}
\ctr@ld@f\def\c@lsubBzarc#1,#2[#3,#4,#5,#6]{\figptBezierTD-5::#1[#3,#4,#5,#6]%
    \figptBezierTD-6::#2[#3,#4,#5,#6]\Figptpr@j-4:/-5/\Figptpr@j-5:/-6/%
    \figptscontrolDD-4[-7,-4,-5,-8]}
\ctr@ln@m\pscirc
\ctr@ld@f\def\pscircDD#1(#2){\ifcurr@ntPS\ifps@cri\PSc@mment{pscircDD Center=#1 (Radius=#2)}%
    \psarccircDD#1;#2(0,360)\PSc@mment{End pscircDD}\fi\fi}
\ctr@ld@f\def\pscircTD#1,#2,#3(#4){\ifcurr@ntPS\ifps@cri%
    \PSc@mment{pscircTD Center=#1,P1=#2,P2=#3 (Radius=#4)}%
    \psarccircTD#1,#2,#3;#4(0,360)\PSc@mment{End pscircTD}\fi\fi}
\ctr@ln@m\p@urcent
{\catcode`\%=12\gdef\p@urcent{
\ctr@ld@f\def\PSc@mment#1{\ifpsdebugmode\immediate\write\fwf@g{\p@urcent\space#1}\fi}
\ctr@ln@m\acc@louv \ctr@ln@m\acc@lfer
{\catcode`\[=1\catcode`\{=12\gdef\acc@louv[{}}
{\catcode`\]=2\catcode`\}=12\gdef\acc@lfer{}]]
\ctr@ld@f\def\PSdict@{\ifUse@llipse%
    \immediate\write\fwf@g{/ellipsedict 9 dict def ellipsedict /mtrx matrix put}%
    \immediate\write\fwf@g{/ellipse \acc@louv ellipsedict begin}%
    \immediate\write\fwf@g{ /endangle exch def /startangle exch def}%
    \immediate\write\fwf@g{ /yrad exch def /xrad exch def}%
    \immediate\write\fwf@g{ /rotangle exch def /y exch def /x exch def}%
    \immediate\write\fwf@g{ /savematrix mtrx currentmatrix def}%
    \immediate\write\fwf@g{ x y translate rotangle rotate xrad yrad scale}%
    \immediate\write\fwf@g{ 0 0 1 startangle endangle arc}%
    \immediate\write\fwf@g{ savematrix setmatrix end\acc@lfer def}%
    \fi\PShe@der{EndProlog}}
\ctr@ld@f\def\Pssetc@rve#1=#2|{\keln@mun#1|%
    \def\n@mref{r}\ifx\l@debut\n@mref\pssetroundness{#2}\else
    \immediate\write16{*** Unknown attribute: \BS@ psset curve(..., #1=...)}%
    \fi}
\ctr@ln@m\curv@roundness
\ctr@ld@f\def\pssetroundness#1{\edef\curv@roundness{#1}}
\ctr@ld@f\def\defaultroundness{0.2} 
\ctr@ln@m\pscurve
\ctr@ld@f\def\pscurveDD[#1]{{\ifcurr@ntPS\ifps@cri\PSc@mment{pscurveDD Points=#1}%
    \s@uvc@ntr@l\et@tpscurveDD%
    \iffillm@de\Psc@rveDD\curv@roundness[#1]%
    \f@gfill%
    \else\Psc@rveDD\curv@roundness[#1]\f@gstroke\fi%
    \PSc@mment{End pscurveDD}\resetc@ntr@l\et@tpscurveDD\fi\fi}}
\ctr@ld@f\def\pscurveTD[#1]{{\ifcurr@ntPS\ifps@cri%
    \PSc@mment{pscurveTD Points=#1}\s@uvc@ntr@l\et@tpscurveTD\let\c@lprojSP=\relax%
    \iffillm@de\Psc@rveTD\curv@roundness[#1]%
    \f@gfill%
    \else\Psc@rveTD\curv@roundness[#1]\f@gstroke\fi%
    \PSc@mment{End pscurveTD}\resetc@ntr@l\et@tpscurveTD\fi\fi}}
\ctr@ld@f\def\Psc@rveDD#1[#2]{%
    \def\list@num{#2}\extrairelepremi@r\Ak@\de\list@num%
    \extrairelepremi@r\Ai@\de\list@num\extrairelepremi@r\Aj@\de\list@num%
    \f@gnewpath\PSwrit@cmdS{\Ai@}{\c@mmoveto}{\fwf@g}{\X@un}{\Y@un}%
    \setc@ntr@l{2}\figvectPDD -1[\Ak@,\Aj@]%
    \@ecfor\Ak@:=\list@num\do{\figpttraDD-2:=\Ai@/#1,-1/\BdingB@xfalse%
       \PSwrit@cmdS{-2}{}{\fwf@g}{\X@de}{\Y@de}%
       \figvectPDD -1[\Ai@,\Ak@]\figpttraDD-2:=\Aj@/-#1,-1/%
       \PSwrit@cmdS{-2}{}{\fwf@g}{\X@tr}{\Y@tr}\BdingB@xtrue%
       \PSwrit@cmdS{\Aj@}{\c@mcurveto}{\fwf@g}{\X@qu}{\Y@qu}%
       \B@zierBB@x{1}{\Y@un}(\X@un,\X@de,\X@tr,\X@qu)%
       \B@zierBB@x{2}{\X@un}(\Y@un,\Y@de,\Y@tr,\Y@qu)%
       \edef\X@un{\X@qu}\edef\Y@un{\Y@qu}\edef\Ai@{\Aj@}\edef\Aj@{\Ak@}}}
\ctr@ld@f\def\Psc@rveTD#1[#2]{\ifnum\curr@ntproj<\tw@\Psc@rvePPTD#1[#2]\else\Psc@rveCPTD#1[#2]\fi}
\ctr@ld@f\def\Psc@rvePPTD#1[#2]{\setc@ntr@l{2}%
    \def\list@num{#2}\extrairelepremi@r\Ak@\de\list@num\Figptpr@j-5:/\Ak@/%
    \extrairelepremi@r\Ai@\de\list@num\Figptpr@j-3:/\Ai@/%
    \extrairelepremi@r\Aj@\de\list@num\Figptpr@j-4:/\Aj@/%
    \f@gnewpath\PSwrit@cmdS{-3}{\c@mmoveto}{\fwf@g}{\X@un}{\Y@un}%
    \figvectPDD -1[-5,-4]%
    \@ecfor\Ak@:=\list@num\do{\Figptpr@j-5:/\Ak@/\figpttraDD-2:=-3/#1,-1/%
       \BdingB@xfalse\PSwrit@cmdS{-2}{}{\fwf@g}{\X@de}{\Y@de}%
       \figvectPDD -1[-3,-5]\figpttraDD-2:=-4/-#1,-1/%
       \PSwrit@cmdS{-2}{}{\fwf@g}{\X@tr}{\Y@tr}\BdingB@xtrue%
       \PSwrit@cmdS{-4}{\c@mcurveto}{\fwf@g}{\X@qu}{\Y@qu}%
       \B@zierBB@x{1}{\Y@un}(\X@un,\X@de,\X@tr,\X@qu)%
       \B@zierBB@x{2}{\X@un}(\Y@un,\Y@de,\Y@tr,\Y@qu)%
       \edef\X@un{\X@qu}\edef\Y@un{\Y@qu}\figptcopyDD-3:/-4/\figptcopyDD-4:/-5/}}
\ctr@ld@f\def\Psc@rveCPTD#1[#2]{\setc@ntr@l{2}%
    \def\list@num{#2}\extrairelepremi@r\Ak@\de\list@num%
    \extrairelepremi@r\Ai@\de\list@num\extrairelepremi@r\Aj@\de\list@num%
    \Figptpr@j-7:/\Ai@/%
    \f@gnewpath\PSwrit@cmd{-7}{\c@mmoveto}{\fwf@g}%
    \figvectPTD -9[\Ak@,\Aj@]%
    \@ecfor\Ak@:=\list@num\do{\figpttraTD-10:=\Ai@/#1,-9/%
       \figvectPTD -9[\Ai@,\Ak@]\figpttraTD-11:=\Aj@/-#1,-9/%
       \subB@zierTD\NBz@rcs[\Ai@,-10,-11,\Aj@]\edef\Ai@{\Aj@}\edef\Aj@{\Ak@}}}
\ctr@ld@f\def\psendfig{\ifcurr@ntPS\ifps@cri\immediate\closeout\fwf@g%
    \immediate\openout\fwf@g=\PSfilen@me\relax%
    \ifPDFm@ke\PSBdingB@x\else%
    \immediate\write\fwf@g{\p@urcent\string!PS-Adobe-2.0 EPSF-2.0}%
    \PShe@der{Creator\string: TeX (fig4tex.tex)}%
    \PShe@der{Title\string: \PSfilen@me}%
    \PShe@der{CreationDate\string: \the\day/\the\month/\the\year}%
    \PSBdingB@x%
    \PShe@der{EndComments}\PSdict@\fi%
    \immediate\write\fwf@g{\c@mgsave}%
    \openin\frf@g=\auxfilen@me\c@pypsfile\fwf@g\frf@g\closein\frf@g%
    \immediate\write\fwf@g{\c@mgrestore}%
    \PSc@mment{End of file.}\immediate\closeout\fwf@g%
    \immediate\openout\fwf@g=\auxfilen@me\immediate\closeout\fwf@g%
    \immediate\write16{File \PSfilen@me\space created.}\fi\fi\curr@ntPSfalse\ps@critrue}
\ctr@ld@f\def\PShe@der#1{\immediate\write\fwf@g{\p@urcent\p@urcent#1}}
\ctr@ld@f\def\PSBdingB@x{{\v@lX=\ptT@ptps\c@@rdXmin\v@lY=\ptT@ptps\c@@rdYmin%
     \v@lXa=\ptT@ptps\c@@rdXmax\v@lYa=\ptT@ptps\c@@rdYmax%
     \PShe@der{BoundingBox\string: \repdecn@mb{\v@lX}\space\repdecn@mb{\v@lY}%
     \space\repdecn@mb{\v@lXa}\space\repdecn@mb{\v@lYa}}}}
\ctr@ld@f\def\psfcconnect[#1]{{\ifcurr@ntPS\ifps@cri\PSc@mment{psfcconnect Points=#1}%
    \pssetfillmode{no}\s@uvc@ntr@l\et@tpsfcconnect\resetc@ntr@l{2}%
    \fcc@nnect@[#1]\resetc@ntr@l\et@tpsfcconnect\PSc@mment{End psfcconnect}\fi\fi}}
\ctr@ld@f\def\fcc@nnect@[#1]{\let\N@rm=\n@rmeucDD\def\list@num{#1}%
    \extrairelepremi@r\Ai@\de\list@num\edef\pr@m{\Ai@}\v@leur=\z@\p@rtent=\@ne\c@llgtot%
    \ifcase\fclin@typ@\edef\list@num{[\pr@m,#1,\Ai@}\expandafter\pscurve\list@num]%
    \else\ifdim\fclin@r@d\p@>\z@\Pslin@conge[#1]\else\psline[#1]\fi\fi%
    \v@leur=\@rrowp@s\v@leur\edef\list@num{#1,\Ai@,0}%
    \extrairelepremi@r\Ai@\de\list@num\mili@u=\epsil@n\c@llgpart%
    \advance\mili@u-\epsil@n\advance\mili@u-\delt@\advance\v@leur-\mili@u%
    \ifcase\fclin@typ@\invers@\mili@u\delt@%
    \ifnum\@rrowr@fpt>\z@\advance\delt@-\v@leur\v@leur=\delt@\fi%
    \v@leur=\repdecn@mb\v@leur\mili@u\edef\v@lt{\repdecn@mb\v@leur}%
    \extrairelepremi@r\Ak@\de\list@num%
    \figvectPDD-1[\pr@m,\Aj@]\figpttraDD-6:=\Ai@/\curv@roundness,-1/%
    \figvectPDD-1[\Ak@,\Ai@]\figpttraDD-7:=\Aj@/\curv@roundness,-1/%
    \delt@=\@rrowheadlength\p@\delt@=\C@AHANG\delt@\edef\R@dius{\repdecn@mb{\delt@}}%
    \ifcase\@rrowr@fpt%
    \FigptintercircB@zDD-8::\v@lt,\R@dius[\Ai@,-6,-7,\Aj@]\psarrowheadDD[-5,-8]\else%
    \FigptintercircB@zDD-8::\v@lt,\R@dius[\Aj@,-7,-6,\Ai@]\psarrowheadDD[-8,-5]\fi%
    \else\advance\delt@-\v@leur%
    \p@rtentiere{\p@rtent}{\delt@}\edef\C@efun{\the\p@rtent}%
    \p@rtentiere{\p@rtent}{\v@leur}\edef\C@efde{\the\p@rtent}%
    \figptbaryDD-5:[\Ai@,\Aj@;\C@efun,\C@efde]\ifcase\@rrowr@fpt%
    \delt@=\@rrowheadlength\unit@\delt@=\C@AHANG\delt@\edef\t@ille{\repdecn@mb{\delt@}}%
    \figvectPDD-2[\Ai@,\Aj@]\vecunit@{-2}{-2}\figpttraDD-5:=-5/\t@ille,-2/\fi%
    \psarrowheadDD[\Ai@,-5]\fi}
\ctr@ld@f\def\c@llgtot{\@ecfor\Aj@:=\list@num\do{\figvectP-1[\Ai@,\Aj@]\N@rm\delt@{-1}%
    \advance\v@leur\delt@\advance\p@rtent\@ne\edef\Ai@{\Aj@}}}
\ctr@ld@f\def\c@llgpart{\extrairelepremi@r\Aj@\de\list@num\figvectP-1[\Ai@,\Aj@]\N@rm\delt@{-1}%
    \advance\mili@u\delt@\ifdim\mili@u<\v@leur\edef\pr@m{\Ai@}\edef\Ai@{\Aj@}\c@llgpart\fi}
\ctr@ld@f\def\Pslin@conge[#1]{\ifnum\p@rtent>\tw@{\def\list@num{#1}%
    \extrairelepremi@r\Ai@\de\list@num\extrairelepremi@r\Aj@\de\list@num%
    \figptcopy-6:/\Ai@/\figvectP-3[\Ai@,\Aj@]\vecunit@{-3}{-3}\v@lmax=\result@t%
    \@ecfor\Ak@:=\list@num\do{\figvectP-4[\Aj@,\Ak@]\vecunit@{-4}{-4}%
    \minim@m\v@lmin\v@lmax\result@t\v@lmax=\result@t%
    \det@rm\delt@[-3,-4]\maxim@m\mili@u{\delt@}{-\delt@}\ifdim\mili@u>\Cepsil@n%
    \ifdim\delt@>\z@\figgetangleDD\Angl@[\Aj@,\Ak@,\Ai@]\else%
    \figgetangleDD\Angl@[\Aj@,\Ai@,\Ak@]\fi%
    \v@leur=\PI@deg\advance\v@leur-\Angl@\p@\divide\v@leur\tw@%
    \edef\Angl@{\repdecn@mb\v@leur}\c@ssin{\C@}{\S@}{\Angl@}\v@leur=\fclin@r@d\unit@%
    \v@leur=\S@\v@leur\mili@u=\C@\p@\invers@\mili@u\mili@u%
    \v@leur=\repdecn@mb{\mili@u}\v@leur%
    \minim@m\v@leur\v@leur\v@lmin\edef\t@ille{\repdecn@mb{\v@leur}}%
    \figpttra-5:=\Aj@/-\t@ille,-3/\psline[-6,-5]\figpttra-6:=\Aj@/\t@ille,-4/%
    \figvectNVDD-3[-3]\figvectNVDD-8[-4]\inters@cDD-7:[-5,-3;-6,-8]%
    \ifdim\delt@>\z@\psarccircP-7;\fclin@r@d[-5,-6]\else\psarccircP-7;\fclin@r@d[-6,-5]\fi%
    \else\psline[-6,\Aj@]\figptcopy-6:/\Aj@/\fi
    \edef\Ai@{\Aj@}\edef\Aj@{\Ak@}\figptcopy-3:/-4/}\psline[-6,\Aj@]}\else\psline[#1]\fi}
\ctr@ld@f\def\psfcnode[#1]#2{{\ifcurr@ntPS\ifps@cri\PSc@mment{psfcnode Points=#1}%
    \s@uvc@ntr@l\et@tpsfcnode\resetc@ntr@l{2}%
    \def\t@xt@{#2}\ifx\t@xt@\empty\def\g@tt@xt{\setbox\Gb@x=\hbox{\Figg@tT{\p@int}}}%
    \else\def\g@tt@xt{\setbox\Gb@x=\hbox{#2}}\fi%
    \v@lmin=\h@rdfcXp@dd\advance\v@lmin\Xp@dd\unit@\multiply\v@lmin\tw@%
    \v@lmax=\h@rdfcYp@dd\advance\v@lmax\Yp@dd\unit@\multiply\v@lmax\tw@%
    \Figv@ctCreg-8(\unit@,-\unit@)\def\list@num{#1}%
    \delt@=\curr@ntwidth bp\divide\delt@\tw@%
    \fcn@de\PSc@mment{End psfcnode}\resetc@ntr@l\et@tpsfcnode\fi\fi}}
\ctr@ld@f\def\d@butn@de{\g@tt@xt\v@lX=\wd\Gb@x%
    \v@lY=\ht\Gb@x\advance\v@lY\dp\Gb@x\advance\v@lX\v@lmin\advance\v@lY\v@lmax}
\ctr@ld@f\def\fcn@deE{%
    \@ecfor\p@int:=\list@num\do{\d@butn@de\v@lX=\unssqrttw@\v@lX\v@lY=\unssqrttw@\v@lY%
    \ifdim\thickn@ss\p@>\z@
    \v@lXa=\v@lX\advance\v@lXa\delt@\v@lXa=\ptT@unit@\v@lXa\edef\XR@d{\repdecn@mb\v@lXa}%
    \v@lYa=\v@lY\advance\v@lYa\delt@\v@lYa=\ptT@unit@\v@lYa\edef\YR@d{\repdecn@mb\v@lYa}%
    \arct@n\v@leur(\v@lXa,\v@lYa)\v@leur=\rdT@deg\v@leur\edef\@nglde{\repdecn@mb\v@leur}%
    {\c@lptellDD-2::\p@int;\XR@d,\YR@d(\@nglde)}
    \advance\v@leur-\PI@deg\edef\@nglun{\repdecn@mb\v@leur}%
    {\c@lptellDD-3::\p@int;\XR@d,\YR@d(\@nglun)}%
    \figptstra-6=-3,-2,\p@int/\thickn@ss,-8/\pssetfillmode{yes}\us@secondC@lor%
    \psline[-2,-3,-6,-5]\psarcell-4;\XR@d,\YR@d(\@nglun,\@nglde,0)\fi
    \v@lX=\ptT@unit@\v@lX\v@lY=\ptT@unit@\v@lY%
    \edef\XR@d{\repdecn@mb\v@lX}\edef\YR@d{\repdecn@mb\v@lY}%
    \pssetfillmode{yes}\us@thirdC@lor\psarcell\p@int;\XR@d,\YR@d(0,360,0)%
    \pssetfillmode{no}\us@primarC@lor\psarcell\p@int;\XR@d,\YR@d(0,360,0)}}
\ctr@ld@f\def\fcn@deL{\delt@=\ptT@unit@\delt@\edef\t@ille{\repdecn@mb\delt@}%
    \@ecfor\p@int:=\list@num\do{\Figg@tXYa{\p@int}\d@butn@de%
    \ifdim\v@lX>\v@lY\itis@Ktrue\else\itis@Kfalse\fi%
    \advance\v@lXa-\v@lX\Figp@intreg-1:(\v@lXa,\v@lYa)%
    \advance\v@lXa\v@lX\advance\v@lYa-\v@lY\Figp@intreg-2:(\v@lXa,\v@lYa)%
    \advance\v@lXa\v@lX\advance\v@lYa\v@lY\Figp@intreg-3:(\v@lXa,\v@lYa)%
    \advance\v@lXa-\v@lX\advance\v@lYa\v@lY\Figp@intreg-4:(\v@lXa,\v@lYa)%
    \ifdim\thickn@ss\p@>\z@\Figg@tXYa{\p@int}\pssetfillmode{yes}\us@secondC@lor
    \c@lpt@xt{-1}{-4}\c@lpt@xt@\v@lXa\v@lYa\v@lX\v@lY\c@rre\delt@%
    \Figp@intregDD-9:(\v@lZ,\v@lYa)\Figp@intregDD-11:(\v@lZa,\v@lYa)%
    \c@lpt@xt{-4}{-3}\c@lpt@xt@\v@lYa\v@lXa\v@lY\v@lX\delt@\c@rre%
    \Figp@intregDD-12:(\v@lXa,\v@lZ)\Figp@intregDD-10:(\v@lXa,\v@lZa)%
    \ifitis@K\figptstra-7=-9,-10,-11/\thickn@ss,-8/\psline[-9,-11,-5,-6,-7]\else%
    \figptstra-7=-10,-11,-12/\thickn@ss,-8/\psline[-10,-12,-5,-6,-7]\fi\fi
    \pssetfillmode{yes}\us@thirdC@lor\psline[-1,-2,-3,-4]%
    \pssetfillmode{no}\us@primarC@lor\psline[-1,-2,-3,-4,-1]}}
\ctr@ld@f\def\c@lpt@xt#1#2{\figvectN-7[#1,#2]\vecunit@{-7}{-7}\figpttra-5:=#1/\t@ille,-7/%
    \figvectP-7[#1,#2]\Figg@tXY{-7}\c@rre=\v@lX\delt@=\v@lY\Figg@tXY{-5}}
\ctr@ld@f\def\c@lpt@xt@#1#2#3#4#5#6{\v@lZ=#6\invers@{\v@lZ}{\v@lZ}\v@leur=\repdecn@mb{#5}\v@lZ%
    \v@lZ=#2\advance\v@lZ-#4\mili@u=\repdecn@mb{\v@leur}\v@lZ%
    \v@lZ=#3\advance\v@lZ\mili@u\v@lZa=-\v@lZ\advance\v@lZa\tw@#1}
\ctr@ld@f\def\fcn@deR{\@ecfor\p@int:=\list@num\do{\Figg@tXYa{\p@int}\d@butn@de%
    \advance\v@lXa-0.5\v@lX\advance\v@lYa-0.5\v@lY\Figp@intreg-1:(\v@lXa,\v@lYa)%
    \advance\v@lXa\v@lX\Figp@intreg-2:(\v@lXa,\v@lYa)%
    \advance\v@lYa\v@lY\Figp@intreg-3:(\v@lXa,\v@lYa)%
    \advance\v@lXa-\v@lX\Figp@intreg-4:(\v@lXa,\v@lYa)%
    \ifdim\thickn@ss\p@>\z@\pssetfillmode{yes}\us@secondC@lor
    \Figv@ctCreg-5(-\delt@,-\delt@)\figpttra-9:=-1/1,-5/%
    \Figv@ctCreg-5(\delt@,-\delt@)\figpttra-10:=-2/1,-5/%
    \Figv@ctCreg-5(\delt@,\delt@)\figpttra-11:=-3/1,-5/%
    \figptstra-7=-9,-10,-11/\thickn@ss,-8/\psline[-9,-11,-5,-6,-7]\fi
    \pssetfillmode{yes}\us@thirdC@lor\psline[-1,-2,-3,-4]%
    \pssetfillmode{no}\us@primarC@lor\psline[-1,-2,-3,-4,-1]}}
\ctr@ln@m\@rrowp@s
\ctr@ln@m\Xp@dd     \ctr@ln@m\Yp@dd
\ctr@ln@m\fclin@r@d \ctr@ln@m\thickn@ss
\ctr@ld@f\def\Pssetfl@wchart#1=#2|{\keln@mtr#1|%
    \def\n@mref{arr}\ifx\l@debut\n@mref\expandafter\keln@mtr\l@suite|%
     \def\n@mref{owp}\ifx\l@debut\n@mref\edef\@rrowp@s{#2}\else
     \def\n@mref{owr}\ifx\l@debut\n@mref\setfcr@fpt#2|\else
     \immediate\write16{*** Unknown attribute: \BS@ psset flowchart(..., #1=...)}%
     \fi\fi\else%
    \def\n@mref{lin}\ifx\l@debut\n@mref\setfccurv@#2|\else
    \def\n@mref{pad}\ifx\l@debut\n@mref\edef\Xp@dd{#2}\edef\Yp@dd{#2}\else
    \def\n@mref{rad}\ifx\l@debut\n@mref\edef\fclin@r@d{#2}\else
    \def\n@mref{sha}\ifx\l@debut\n@mref\setfcshap@#2|\else
    \def\n@mref{thi}\ifx\l@debut\n@mref\edef\thickn@ss{#2}\else
    \def\n@mref{xpa}\ifx\l@debut\n@mref\edef\Xp@dd{#2}\else
    \def\n@mref{ypa}\ifx\l@debut\n@mref\edef\Yp@dd{#2}\else
    \immediate\write16{*** Unknown attribute: \BS@ psset flowchart(..., #1=...)}%
    \fi\fi\fi\fi\fi\fi\fi\fi}
\ctr@ln@m\@rrowr@fpt \ctr@ln@m\fclin@typ@
\ctr@ld@f\def\setfcr@fpt#1#2|{\if#1e\def\@rrowr@fpt{1}\else\def\@rrowr@fpt{0}\fi}
\ctr@ld@f\def\setfccurv@#1#2|{\if#1c\def\fclin@typ@{0}\else\def\fclin@typ@{1}\fi}
\ctr@ln@m\h@rdfcXp@dd \ctr@ln@m\h@rdfcYp@dd
\ctr@ln@m\fcn@de \ctr@ln@m\fcsh@pe
\ctr@ld@f\def\setfcshap@#1#2|{%
    \if#1e\let\fcn@de=\fcn@deE\def\h@rdfcXp@dd{4pt}\def\h@rdfcYp@dd{4pt}%
     \edef\fcsh@pe{ellipse}\else%
    \if#1l\let\fcn@de=\fcn@deL\def\h@rdfcXp@dd{4pt}\def\h@rdfcYp@dd{4pt}%
     \edef\fcsh@pe{lozenge}\else%
          \let\fcn@de=\fcn@deR\def\h@rdfcXp@dd{6pt}\def\h@rdfcYp@dd{6pt}%
     \edef\fcsh@pe{rectangle}\fi\fi}
\ctr@ld@f\def\psline[#1]{{\ifcurr@ntPS\ifps@cri\PSc@mment{psline Points=#1}%
    \let\pslign@=\Pslign@P\Pslin@{#1}\PSc@mment{End psline}\fi\fi}}
\ctr@ld@f\def\pslineF#1{{\ifcurr@ntPS\ifps@cri\PSc@mment{pslineF Filename=#1}%
    \let\pslign@=\Pslign@F\Pslin@{#1}\PSc@mment{End pslineF}\fi\fi}}
\ctr@ld@f\def\pslineC(#1){{\ifcurr@ntPS\ifps@cri\PSc@mment{pslineC}%
    \let\pslign@=\Pslign@C\Pslin@{#1}\PSc@mment{End pslineC}\fi\fi}}
\ctr@ld@f\def\Pslin@#1{\iffillm@de\pslign@{#1}%
    \f@gfill%
    \else\pslign@{#1}\ifx\derp@int\premp@int%
    \f@gclosestroke%
    \else\f@gstroke\fi\fi}
\ctr@ld@f\def\Pslign@P#1{\def\list@num{#1}\extrairelepremi@r\p@int\de\list@num%
    \edef\premp@int{\p@int}\f@gnewpath%
    \PSwrit@cmd{\p@int}{\c@mmoveto}{\fwf@g}%
    \@ecfor\p@int:=\list@num\do{\PSwrit@cmd{\p@int}{\c@mlineto}{\fwf@g}%
    \edef\derp@int{\p@int}}}
\ctr@ld@f\def\Pslign@F#1{\s@uvc@ntr@l\et@tPslign@F\setc@ntr@l{2}\openin\frf@g=#1\relax%
    \ifeof\frf@g\message{*** File #1 not found !}\end\else%
    \read\frf@g to\tr@c\edef\premp@int{\tr@c}\expandafter\extr@ctCF\tr@c:%
    \f@gnewpath\PSwrit@cmd{-1}{\c@mmoveto}{\fwf@g}%
    \loop\read\frf@g to\tr@c\ifeof\frf@g\mored@tafalse\else\mored@tatrue\fi%
    \ifmored@ta\expandafter\extr@ctCF\tr@c:\PSwrit@cmd{-1}{\c@mlineto}{\fwf@g}%
    \edef\derp@int{\tr@c}\repeat\fi\closein\frf@g\resetc@ntr@l\et@tPslign@F}
\ctr@ln@m\extr@ctCF
\ctr@ld@f\def\extr@ctCFDD#1 #2:{\v@lX=#1\unit@\v@lY=#2\unit@\Figp@intregDD-1:(\v@lX,\v@lY)}
\ctr@ld@f\def\extr@ctCFTD#1 #2 #3:{\v@lX=#1\unit@\v@lY=#2\unit@\v@lZ=#3\unit@%
    \Figp@intregTD-1:(\v@lX,\v@lY,\v@lZ)}
\ctr@ld@f\def\Pslign@C#1{\s@uvc@ntr@l\et@tPslign@C\setc@ntr@l{2}%
    \def\list@num{#1}\extrairelepremi@r\p@int\de\list@num%
    \edef\premp@int{\p@int}\f@gnewpath%
    \expandafter\Pslign@C@\p@int:\PSwrit@cmd{-1}{\c@mmoveto}{\fwf@g}%
    \@ecfor\p@int:=\list@num\do{\expandafter\Pslign@C@\p@int:%
    \PSwrit@cmd{-1}{\c@mlineto}{\fwf@g}\edef\derp@int{\p@int}}%
    \resetc@ntr@l\et@tPslign@C}
\ctr@ld@f\def\Pslign@C@#1 #2:{{\def\t@xt@{#1}\ifx\t@xt@\empty\Pslign@C@#2:
    \else\extr@ctCF#1 #2:\fi}}
\ctr@ln@m\c@ntrolmesh
\ctr@ld@f\def\Pssetm@sh#1=#2|{\keln@mun#1|%
    \def\n@mref{d}\ifx\l@debut\n@mref\pssetmeshdiag{#2}\else
    \immediate\write16{*** Unknown attribute: \BS@ psset mesh(..., #1=...)}%
    \fi}
\ctr@ld@f\def\pssetmeshdiag#1{\edef\c@ntrolmesh{#1}}
\ctr@ld@f\def\defaultmeshdiag{0}    
\ctr@ld@f\def\psmesh#1,#2[#3,#4,#5,#6]{{\ifcurr@ntPS\ifps@cri%
    \PSc@mment{psmesh N1=#1, N2=#2, Quadrangle=[#3,#4,#5,#6]}%
    \s@uvc@ntr@l\et@tpsmesh\Pss@tsecondSt\setc@ntr@l{2}%
    \ifnum#1>\@ne\Psmeshp@rt#1[#3,#4,#5,#6]\fi%
    \ifnum#2>\@ne\Psmeshp@rt#2[#4,#5,#6,#3]\fi%
    \ifnum\c@ntrolmesh>\z@\Psmeshdi@g#1,#2[#3,#4,#5,#6]\fi%
    \ifnum\c@ntrolmesh<\z@\Psmeshdi@g#2,#1[#4,#5,#6,#3]\fi\Psrest@reSt%
    \psline[#3,#4,#5,#6,#3]\PSc@mment{End psmesh}\resetc@ntr@l\et@tpsmesh\fi\fi}}
\ctr@ld@f\def\Psmeshp@rt#1[#2,#3,#4,#5]{{\l@mbd@un=\@ne\l@mbd@de=#1\loop%
    \ifnum\l@mbd@un<#1\advance\l@mbd@de\m@ne\figptbary-1:[#2,#3;\l@mbd@de,\l@mbd@un]%
    \figptbary-2:[#5,#4;\l@mbd@de,\l@mbd@un]\psline[-1,-2]\advance\l@mbd@un\@ne\repeat}}
\ctr@ld@f\def\Psmeshdi@g#1,#2[#3,#4,#5,#6]{\figptcopy-2:/#3/\figptcopy-3:/#6/%
    \l@mbd@un=\z@\l@mbd@de=#1\loop\ifnum\l@mbd@un<#1%
    \advance\l@mbd@un\@ne\advance\l@mbd@de\m@ne\figptcopy-1:/-2/\figptcopy-4:/-3/%
    \figptbary-2:[#3,#4;\l@mbd@de,\l@mbd@un]%
    \figptbary-3:[#6,#5;\l@mbd@de,\l@mbd@un]\Psmeshdi@gp@rt#2[-1,-2,-3,-4]\repeat}
\ctr@ld@f\def\Psmeshdi@gp@rt#1[#2,#3,#4,#5]{{\l@mbd@un=\z@\l@mbd@de=#1\loop%
    \ifnum\l@mbd@un<#1\figptbary-5:[#2,#5;\l@mbd@de,\l@mbd@un]%
    \advance\l@mbd@de\m@ne\advance\l@mbd@un\@ne%
    \figptbary-6:[#3,#4;\l@mbd@de,\l@mbd@un]\psline[-5,-6]\repeat}}
\ctr@ln@m\psnormal
\ctr@ld@f\def\psnormalDD#1,#2[#3,#4]{{\ifcurr@ntPS\ifps@cri%
    \PSc@mment{psnormal Length=#1, Lambda=#2 [Pt1,Pt2]=[#3,#4]}%
    \s@uvc@ntr@l\et@tpsnormal\resetc@ntr@l{2}\figptendnormal-6::#1,#2[#3,#4]%
    \figptcopyDD-5:/-1/\psarrow[-5,-6]%
    \PSc@mment{End psnormal}\resetc@ntr@l\et@tpsnormal\fi\fi}}
\ctr@ld@f\def\psreset#1{\trtlis@rg{#1}{\Psreset@}}
\ctr@ld@f\def\Psreset@#1|{\keln@mde#1|%
    \def\n@mref{ar}\ifx\l@debut\n@mref\psresetarrowhead\else
    \def\n@mref{cu}\ifx\l@debut\n@mref\psset curve(roundness=\defaultroundness)\else
    \def\n@mref{fi}\ifx\l@debut\n@mref\psset (color=\defaultcolor,dash=\defaultdash,%
         fill=\defaultfill,join=\defaultjoin,width=\defaultwidth)\else
    \def\n@mref{fl}\ifx\l@debut\n@mref\psset flowchart(arrowp=\defaultfcarrowposition,%
	arrowr=\defaultfcarrowrefpt,line=\defaultfcline,xpadd=\defaultfcxpadding,%
	ypadd=\defaultfcypadding,radius=\defaultfcradius,shape=\defaultfcshape,%
	thick=\defaultfcthickness)\else
    \def\n@mref{me}\ifx\l@debut\n@mref\psset mesh(diag=\defaultmeshdiag)\else
    \def\n@mref{se}\ifx\l@debut\n@mref\psresetsecondsettings\else
    \def\n@mref{th}\ifx\l@debut\n@mref\psset third(color=\defaultthirdcolor)\else
    \immediate\write16{*** Unknown keyword #1 (\BS@ psreset).}%
    \fi\fi\fi\fi\fi\fi\fi}
\ctr@ld@f\def\psset#1(#2){\def\t@xt@{#1}\ifx\t@xt@\empty\trtlis@rg{#2}{\Pssetf@rst}
    \else\keln@mde#1|%
    \def\n@mref{ar}\ifx\l@debut\n@mref\trtlis@rg{#2}{\Psset@rrowhe@d}\else
    \def\n@mref{cu}\ifx\l@debut\n@mref\trtlis@rg{#2}{\Pssetc@rve}\else
    \def\n@mref{fi}\ifx\l@debut\n@mref\trtlis@rg{#2}{\Pssetf@rst}\else
    \def\n@mref{fl}\ifx\l@debut\n@mref\trtlis@rg{#2}{\Pssetfl@wchart}\else
    \def\n@mref{me}\ifx\l@debut\n@mref\trtlis@rg{#2}{\Pssetm@sh}\else
    \def\n@mref{se}\ifx\l@debut\n@mref\trtlis@rg{#2}{\Pssets@cond}\else
    \def\n@mref{th}\ifx\l@debut\n@mref\trtlis@rg{#2}{\Pssetth@rd}\else
    \immediate\write16{*** Unknown keyword: \BS@ psset #1(...)}%
    \fi\fi\fi\fi\fi\fi\fi\fi}
\ctr@ld@f\def\pssetdefault#1(#2){\ifcurr@ntPS\immediate\write16{*** \BS@ pssetdefault is ignored
    inside a \BS@ psbeginfig-\BS@ psendfig block.}%
    \immediate\write16{*** It must be called before \BS@ psbeginfig.}\else%
    \def\t@xt@{#1}\ifx\t@xt@\empty\trtlis@rg{#2}{\Pssd@f@rst}\else\keln@mde#1|%
    \def\n@mref{ar}\ifx\l@debut\n@mref\trtlis@rg{#2}{\Pssd@@rrowhe@d}\else
    \def\n@mref{cu}\ifx\l@debut\n@mref\trtlis@rg{#2}{\Pssd@c@rve}\else
    \def\n@mref{fi}\ifx\l@debut\n@mref\trtlis@rg{#2}{\Pssd@f@rst}\else
    \def\n@mref{fl}\ifx\l@debut\n@mref\trtlis@rg{#2}{\Pssd@fl@wchart}\else
    \def\n@mref{me}\ifx\l@debut\n@mref\trtlis@rg{#2}{\Pssd@m@sh}\else
    \def\n@mref{se}\ifx\l@debut\n@mref\trtlis@rg{#2}{\Pssd@s@cond}\else
    \def\n@mref{th}\ifx\l@debut\n@mref\trtlis@rg{#2}{\Pssd@th@rd}\else
    \immediate\write16{*** Unknown keyword: \BS@ pssetdefault #1(...)}%
    \fi\fi\fi\fi\fi\fi\fi\fi\initpss@ttings\fi}
\ctr@ld@f\def\Pssd@f@rst#1=#2|{\keln@mun#1|%
    \def\n@mref{c}\ifx\l@debut\n@mref\edef\defaultcolor{#2}\else
    \def\n@mref{d}\ifx\l@debut\n@mref\edef\defaultdash{#2}\else
    \def\n@mref{f}\ifx\l@debut\n@mref\edef\defaultfill{#2}\else
    \def\n@mref{j}\ifx\l@debut\n@mref\edef\defaultjoin{#2}\else
    \def\n@mref{u}\ifx\l@debut\n@mref\edef\defaultupdate{#2}\pssetupdate{#2}\else
    \def\n@mref{w}\ifx\l@debut\n@mref\edef\defaultwidth{#2}\else
    \immediate\write16{*** Unknown attribute: \BS@ pssetdefault (..., #1=...)}%
    \fi\fi\fi\fi\fi\fi}
\ctr@ld@f\def\Pssd@@rrowhe@d#1=#2|{\keln@mun#1|%
    \def\n@mref{a}\ifx\l@debut\n@mref\edef\defaultarrowheadangle{#2}\else
    \def\n@mref{f}\ifx\l@debut\n@mref\edef\defaultarrowheadangle{#2}\else
    \def\n@mref{l}\ifx\l@debut\n@mref\y@tiunit{#2}\ifunitpr@sent%
     \edef\defaulth@rdahlength{#2}\else\edef\defaulth@rdahlength{#2pt}%
     \message{*** \BS@ pssetdefault (..., #1=#2, ...) : unit is missing, pt is assumed.}%
     \fi\else
    \def\n@mref{o}\ifx\l@debut\n@mref\edef\defaultarrowheadout{#2}\else
    \def\n@mref{r}\ifx\l@debut\n@mref\edef\defaultarrowheadratio{#2}\else
    \immediate\write16{*** Unknown attribute: \BS@ pssetdefault arrowhead(..., #1=...)}%
    \fi\fi\fi\fi\fi}
\ctr@ld@f\def\Pssd@c@rve#1=#2|{\keln@mun#1|%
    \def\n@mref{r}\ifx\l@debut\n@mref\edef\defaultroundness{#2}\else%
    \immediate\write16{*** Unknown attribute: \BS@ pssetdefault curve(..., #1=...)}%
    \fi}
\ctr@ld@f\def\Pssd@fl@wchart#1=#2|{\keln@mtr#1|%
    \def\n@mref{arr}\ifx\l@debut\n@mref\expandafter\keln@mtr\l@suite|%
     \def\n@mref{owp}\ifx\l@debut\n@mref\edef\defaultfcarrowposition{#2}\else
     \def\n@mref{owr}\ifx\l@debut\n@mref\edef\defaultfcarrowrefpt{#2}\else
     \immediate\write16{*** Unknown attribute: \BS@ pssetdefault flowchart(..., #1=...)}%
     \fi\fi\else%
    \def\n@mref{lin}\ifx\l@debut\n@mref\edef\defaultfcline{#2}\else
    \def\n@mref{pad}\ifx\l@debut\n@mref\edef\defaultfcxpadding{#2}%
                    \edef\defaultfcypadding{#2}\else
    \def\n@mref{rad}\ifx\l@debut\n@mref\edef\defaultfcradius{#2}\else
    \def\n@mref{sha}\ifx\l@debut\n@mref\edef\defaultfcshape{#2}\else
    \def\n@mref{thi}\ifx\l@debut\n@mref\edef\defaultfcthickness{#2}\else
    \def\n@mref{xpa}\ifx\l@debut\n@mref\edef\defaultfcxpadding{#2}\else
    \def\n@mref{ypa}\ifx\l@debut\n@mref\edef\defaultfcypadding{#2}\else
    \immediate\write16{*** Unknown attribute: \BS@ pssetdefault flowchart(..., #1=...)}%
    \fi\fi\fi\fi\fi\fi\fi\fi}
\ctr@ld@f\def\defaultfcarrowposition{0.5}
\ctr@ld@f\def\defaultfcarrowrefpt{start}
\ctr@ld@f\def\defaultfcline{polygon}
\ctr@ld@f\def\defaultfcradius{0}
\ctr@ld@f\def\defaultfcshape{rectangle}
\ctr@ld@f\def\defaultfcthickness{0}
\ctr@ld@f\def\defaultfcxpadding{0}
\ctr@ld@f\def\defaultfcypadding{0}
\ctr@ld@f\def\Pssd@m@sh#1=#2|{\keln@mun#1|%
    \def\n@mref{d}\ifx\l@debut\n@mref\edef\defaultmeshdiag{#2}\else%
    \immediate\write16{*** Unknown attribute: \BS@ pssetdefault mesh(..., #1=...)}%
    \fi}
\ctr@ld@f\def\Pssd@s@cond#1=#2|{\keln@mun#1|%
    \def\n@mref{c}\ifx\l@debut\n@mref\edef\defaultsecondcolor{#2}\else%
    \def\n@mref{d}\ifx\l@debut\n@mref\edef\defaultseconddash{#2}\else%
    \def\n@mref{w}\ifx\l@debut\n@mref\edef\defaultsecondwidth{#2}\else%
    \immediate\write16{*** Unknown attribute: \BS@ pssetdefault second(..., #1=...)}%
    \fi\fi\fi}
\ctr@ld@f\def\Pssd@th@rd#1=#2|{\keln@mun#1|%
    \def\n@mref{c}\ifx\l@debut\n@mref\edef\defaultthirdcolor{#2}\else%
    \immediate\write16{*** Unknown attribute: \BS@ pssetdefault third(..., #1=...)}%
    \fi}
\ctr@ln@w{newif}\iffillm@de
\ctr@ld@f\def\pssetfillmode#1{\expandafter\setfillm@de#1:}
\ctr@ld@f\def\setfillm@de#1#2:{\if#1n\fillm@defalse\else\fillm@detrue\fi}
\ctr@ld@f\def\defaultfill{no}     
\ctr@ln@w{newif}\ifpsupdatem@de
\ctr@ld@f\def\pssetupdate#1{\ifcurr@ntPS\immediate\write16{*** \BS@ pssetupdate is ignored inside a
     \BS@ psbeginfig-\BS@ psendfig block.}%
    \immediate\write16{*** It must be called before \BS@ psbeginfig.}%
    \else\expandafter\setupd@te#1:\fi}
\ctr@ld@f\def\setupd@te#1#2:{\if#1n\psupdatem@defalse\else\psupdatem@detrue\fi}
\ctr@ld@f\def\defaultupdate{no}     
\ctr@ln@m\curr@ntcolor \ctr@ln@m\curr@ntcolorc@md
\ctr@ld@f\def\Pssetc@lor#1{\ifps@cri\result@tent=\@ne\expandafter\c@lnbV@l#1 :%
    \def\curr@ntcolor{}\def\curr@ntcolorc@md{}%
    \ifcase\result@tent\or\pssetgray{#1}\or\or\pssetrgb{#1}\or\pssetcmyk{#1}\fi\fi}
\ctr@ln@m\curr@ntcolorc@mdStroke
\ctr@ld@f\def\pssetcmyk#1{\ifps@cri\def\curr@ntcolor{#1}\def\curr@ntcolorc@md{\c@msetcmykcolor}%
    \def\curr@ntcolorc@mdStroke{\c@msetcmykcolorStroke}%
    \ifcurr@ntPS\PSc@mment{pssetcmyk Color=#1}\us@primarC@lor\fi\fi}
\ctr@ld@f\def\pssetrgb#1{\ifps@cri\def\curr@ntcolor{#1}\def\curr@ntcolorc@md{\c@msetrgbcolor}%
    \def\curr@ntcolorc@mdStroke{\c@msetrgbcolorStroke}%
    \ifcurr@ntPS\PSc@mment{pssetrgb Color=#1}\us@primarC@lor\fi\fi}
\ctr@ld@f\def\pssetgray#1{\ifps@cri\def\curr@ntcolor{#1}\def\curr@ntcolorc@md{\c@msetgray}%
    \def\curr@ntcolorc@mdStroke{\c@msetgrayStroke}%
    \ifcurr@ntPS\PSc@mment{pssetgray Gray level=#1}\us@primarC@lor\fi\fi}
\ctr@ln@m\fillc@md
\ctr@ld@f\def\us@primarC@lor{\immediate\write\fwf@g{\d@fprimarC@lor}%
    \let\fillc@md=\prfillc@md}
\ctr@ld@f\def\prfillc@md{\d@fprimarC@lor\space\c@mfill}
\ctr@ld@f\def\defaultcolor{0}       
\ctr@ld@f\def\c@lnbV@l#1 #2:{\def\t@xt@{#1}\relax\ifx\t@xt@\empty\c@lnbV@l#2:
    \else\c@lnbV@l@#1 #2:\fi}
\ctr@ld@f\def\c@lnbV@l@#1 #2:{\def\t@xt@{#2}\ifx\t@xt@\empty%
    \def\t@xt@{#1}\ifx\t@xt@\empty\advance\result@tent\m@ne\fi
    \else\advance\result@tent\@ne\c@lnbV@l@#2:\fi}
\ctr@ld@f\def\Blackcmyk{0 0 0 1}
\ctr@ld@f\def\Whitecmyk{0 0 0 0}
\ctr@ld@f\def\Cyancmyk{1 0 0 0}
\ctr@ld@f\def\Magentacmyk{0 1 0 0}
\ctr@ld@f\def\Yellowcmyk{0 0 1 0}
\ctr@ld@f\def\Redcmyk{0 1 1 0}
\ctr@ld@f\def\Greencmyk{1 0 1 0}
\ctr@ld@f\def\Bluecmyk{1 1 0 0}
\ctr@ld@f\def\Graycmyk{0 0 0 0.50}
\ctr@ld@f\def\BrickRedcmyk{0 0.89 0.94 0.28} 
\ctr@ld@f\def\Browncmyk{0 0.81 1 0.60} 
\ctr@ld@f\def\ForestGreencmyk{0.91 0 0.88 0.12} 
\ctr@ld@f\def\Goldenrodcmyk{ 0 0.10 0.84 0} 
\ctr@ld@f\def\Marooncmyk{0 0.87 0.68 0.32} 
\ctr@ld@f\def\Orangecmyk{0 0.61 0.87 0} 
\ctr@ld@f\def\Purplecmyk{0.45 0.86 0 0} 
\ctr@ld@f\def\RoyalBluecmyk{1. 0.50 0 0} 
\ctr@ld@f\def\Violetcmyk{0.79 0.88 0 0} 
\ctr@ld@f\def\Blackrgb{0 0 0}
\ctr@ld@f\def\Whitergb{1 1 1}
\ctr@ld@f\def\Redrgb{1 0 0}
\ctr@ld@f\def\Greenrgb{0 1 0}
\ctr@ld@f\def\Bluergb{0 0 1}
\ctr@ld@f\def\Cyanrgb{0 1 1}
\ctr@ld@f\def\Magentargb{1 0 1}
\ctr@ld@f\def\Yellowrgb{1 1 0}
\ctr@ld@f\def\Grayrgb{0.5 0.5 0.5}
\ctr@ld@f\def\Chocolatergb{0.824 0.412 0.118}
\ctr@ld@f\def\DarkGoldenrodrgb{0.722 0.525 0.043}
\ctr@ld@f\def\DarkOrangergb{1 0.549 0}
\ctr@ld@f\def\Firebrickrgb{0.698 0.133 0.133}
\ctr@ld@f\def\ForestGreenrgb{0.133 0.545 0.133}
\ctr@ld@f\def\Goldrgb{1 0.843 0}
\ctr@ld@f\def\HotPinkrgb{1 0.412 0.706}
\ctr@ld@f\def\Maroonrgb{0.690 0.188 0.376}
\ctr@ld@f\def\Pinkrgb{1 0.753 0.796}
\ctr@ld@f\def\RoyalBluergb{0.255 0.412 0.882}
\ctr@ld@f\def\Pssetf@rst#1=#2|{\keln@mun#1|%
    \def\n@mref{c}\ifx\l@debut\n@mref\Pssetc@lor{#2}\else
    \def\n@mref{d}\ifx\l@debut\n@mref\pssetdash{#2}\else
    \def\n@mref{f}\ifx\l@debut\n@mref\pssetfillmode{#2}\else
    \def\n@mref{j}\ifx\l@debut\n@mref\pssetjoin{#2}\else
    \def\n@mref{u}\ifx\l@debut\n@mref\pssetupdate{#2}\else
    \def\n@mref{w}\ifx\l@debut\n@mref\pssetwidth{#2}\else
    \immediate\write16{*** Unknown attribute: \BS@ psset (..., #1=...)}%
    \fi\fi\fi\fi\fi\fi}
\ctr@ln@m\curr@ntdash
\ctr@ld@f\def\s@uvdash#1{\edef#1{\curr@ntdash}}
\ctr@ld@f\def\defaultdash{1}        
\ctr@ld@f\def\pssetdash#1{\ifps@cri\edef\curr@ntdash{#1}\ifcurr@ntPS\expandafter\Pssetd@sh#1 :\fi\fi}
\ctr@ld@f\def\Pssetd@shI#1{\PSc@mment{pssetdash Index=#1}\ifcase#1%
    \or\immediate\write\fwf@g{[] 0 \c@msetdash}
    \or\immediate\write\fwf@g{[6 2] 0 \c@msetdash}
    \or\immediate\write\fwf@g{[4 2] 0 \c@msetdash}
    \or\immediate\write\fwf@g{[2 2] 0 \c@msetdash}
    \or\immediate\write\fwf@g{[1 2] 0 \c@msetdash}
    \or\immediate\write\fwf@g{[2 4] 0 \c@msetdash}
    \or\immediate\write\fwf@g{[3 5] 0 \c@msetdash}
    \or\immediate\write\fwf@g{[3 3] 0 \c@msetdash}
    \or\immediate\write\fwf@g{[3 5 1 5] 0 \c@msetdash}
    \or\immediate\write\fwf@g{[6 4 2 4] 0 \c@msetdash}
    \fi}
\ctr@ld@f\def\Pssetd@sh#1 #2:{{\def\t@xt@{#1}\ifx\t@xt@\empty\Pssetd@sh#2:
    \else\def\t@xt@{#2}\ifx\t@xt@\empty\Pssetd@shI{#1}\else\s@mme=\@ne\def\debutp@t{#1}%
    \an@lysd@sh#2:\ifodd\s@mme\edef\debutp@t{\debutp@t\space\finp@t}\def\finp@t{0}\fi%
    \PSc@mment{pssetdash Pattern=#1 #2}%
    \immediate\write\fwf@g{[\debutp@t] \finp@t\space\c@msetdash}\fi\fi}}
\ctr@ld@f\def\an@lysd@sh#1 #2:{\def\t@xt@{#2}\ifx\t@xt@\empty\def\finp@t{#1}\else%
    \edef\debutp@t{\debutp@t\space#1}\advance\s@mme\@ne\an@lysd@sh#2:\fi}
\ctr@ln@m\curr@ntwidth
\ctr@ld@f\def\s@uvwidth#1{\edef#1{\curr@ntwidth}}
\ctr@ld@f\def\defaultwidth{0.4}     
\ctr@ld@f\def\pssetwidth#1{\ifps@cri\edef\curr@ntwidth{#1}\ifcurr@ntPS%
    \PSc@mment{pssetwidth Width=#1}\immediate\write\fwf@g{#1 \c@msetlinewidth}\fi\fi}
\ctr@ln@m\curr@ntjoin
\ctr@ld@f\def\pssetjoin#1{\ifps@cri\edef\curr@ntjoin{#1}\ifcurr@ntPS\expandafter\Pssetj@in#1:\fi\fi}
\ctr@ld@f\def\Pssetj@in#1#2:{\PSc@mment{pssetjoin join=#1}%
    \if#1r\def\t@xt@{1}\else\if#1b\def\t@xt@{2}\else\def\t@xt@{0}\fi\fi%
    \immediate\write\fwf@g{\t@xt@\space\c@msetlinejoin}}
\ctr@ld@f\def\defaultjoin{miter}   
\ctr@ld@f\def\Pssets@cond#1=#2|{\keln@mun#1|%
    \def\n@mref{c}\ifx\l@debut\n@mref\Pssets@condcolor{#2}\else%
    \def\n@mref{d}\ifx\l@debut\n@mref\pssetseconddash{#2}\else%
    \def\n@mref{w}\ifx\l@debut\n@mref\pssetsecondwidth{#2}\else%
    \immediate\write16{*** Unknown attribute: \BS@ psset second(..., #1=...)}%
    \fi\fi\fi}
\ctr@ln@m\curr@ntseconddash
\ctr@ld@f\def\pssetseconddash#1{\edef\curr@ntseconddash{#1}}
\ctr@ld@f\def\defaultseconddash{4}  
\ctr@ln@m\curr@ntsecondwidth
\ctr@ld@f\def\pssetsecondwidth#1{\edef\curr@ntsecondwidth{#1}}
\ctr@ld@f\edef\defaultsecondwidth{\defaultwidth} 
\ctr@ld@f\def\psresetsecondsettings{%
    \pssetseconddash{\defaultseconddash}\pssetsecondwidth{\defaultsecondwidth}%
    \Pssets@condcolor{\defaultsecondcolor}}
\ctr@ln@m\sec@ndcolor \ctr@ln@m\sec@ndcolorc@md
\ctr@ld@f\def\Pssets@condcolor#1{\ifps@cri\result@tent=\@ne\expandafter\c@lnbV@l#1 :%
    \def\sec@ndcolor{}\def\sec@ndcolorc@md{}%
    \ifcase\result@tent\or\pssetsecondgray{#1}\or\or\pssetsecondrgb{#1}%
    \or\pssetsecondcmyk{#1}\fi\fi}
\ctr@ln@m\sec@ndcolorc@mdStroke
\ctr@ld@f\def\pssetsecondcmyk#1{\def\sec@ndcolor{#1}\def\sec@ndcolorc@md{\c@msetcmykcolor}%
    \def\sec@ndcolorc@mdStroke{\c@msetcmykcolorStroke}}
\ctr@ld@f\def\pssetsecondrgb#1{\def\sec@ndcolor{#1}\def\sec@ndcolorc@md{\c@msetrgbcolor}%
    \def\sec@ndcolorc@mdStroke{\c@msetrgbcolorStroke}}
\ctr@ld@f\def\pssetsecondgray#1{\def\sec@ndcolor{#1}\def\sec@ndcolorc@md{\c@msetgray}%
    \def\sec@ndcolorc@mdStroke{\c@msetgrayStroke}}
\ctr@ld@f\def\us@secondC@lor{\immediate\write\fwf@g{\d@fsecondC@lor}%
    \let\fillc@md=\sdfillc@md}
\ctr@ld@f\def\sdfillc@md{\d@fsecondC@lor\space\c@mfill}
\ctr@ld@f\edef\defaultsecondcolor{\defaultcolor} 
\ctr@ld@f\def\Pss@tsecondSt{%
    \s@uvdash{\typ@dash}\pssetdash{\curr@ntseconddash}%
    \s@uvwidth{\typ@width}\pssetwidth{\curr@ntsecondwidth}\us@secondC@lor}
\ctr@ld@f\def\Psrest@reSt{\pssetwidth{\typ@width}\pssetdash{\typ@dash}\us@primarC@lor}
\ctr@ld@f\def\Pssetth@rd#1=#2|{\keln@mun#1|%
    \def\n@mref{c}\ifx\l@debut\n@mref\Pssetth@rdcolor{#2}\else%
    \immediate\write16{*** Unknown attribute: \BS@ psset third(..., #1=...)}%
    \fi}
\ctr@ln@m\th@rdcolor \ctr@ln@m\th@rdcolorc@md
\ctr@ld@f\def\Pssetth@rdcolor#1{\ifps@cri\result@tent=\@ne\expandafter\c@lnbV@l#1 :%
    \def\th@rdcolor{}\def\th@rdcolorc@md{}%
    \ifcase\result@tent\or\Pssetth@rdgray{#1}\or\or\Pssetth@rdrgb{#1}%
    \or\Pssetth@rdcmyk{#1}\fi\fi}
\ctr@ln@m\th@rdcolorc@mdStroke
\ctr@ld@f\def\Pssetth@rdcmyk#1{\def\th@rdcolor{#1}\def\th@rdcolorc@md{\c@msetcmykcolor}%
    \def\th@rdcolorc@mdStroke{\c@msetcmykcolorStroke}}
\ctr@ld@f\def\Pssetth@rdrgb#1{\def\th@rdcolor{#1}\def\th@rdcolorc@md{\c@msetrgbcolor}%
    \def\th@rdcolorc@mdStroke{\c@msetrgbcolorStroke}}
\ctr@ld@f\def\Pssetth@rdgray#1{\def\th@rdcolor{#1}\def\th@rdcolorc@md{\c@msetgray}%
    \def\th@rdcolorc@mdStroke{\c@msetgrayStroke}}
\ctr@ld@f\def\us@thirdC@lor{\immediate\write\fwf@g{\d@fthirdC@lor}%
    \let\fillc@md=\thfillc@md}
\ctr@ld@f\def\thfillc@md{\d@fthirdC@lor\space\c@mfill}
\ctr@ld@f\def\defaultthirdcolor{1}  
\ctr@ld@f\def\pstrimesh#1[#2,#3,#4]{{\ifcurr@ntPS\ifps@cri%
    \PSc@mment{pstrimesh Type=#1, Triangle=[#2,#3,#4]}%
    \s@uvc@ntr@l\et@tpstrimesh\ifnum#1>\@ne\Pss@tsecondSt\setc@ntr@l{2}%
    \Pstrimeshp@rt#1[#2,#3,#4]\Pstrimeshp@rt#1[#3,#4,#2]%
    \Pstrimeshp@rt#1[#4,#2,#3]\Psrest@reSt\fi\psline[#2,#3,#4,#2]%
    \PSc@mment{End pstrimesh}\resetc@ntr@l\et@tpstrimesh\fi\fi}}
\ctr@ld@f\def\Pstrimeshp@rt#1[#2,#3,#4]{{\l@mbd@un=\@ne\l@mbd@de=#1\loop\ifnum\l@mbd@de>\@ne%
    \advance\l@mbd@de\m@ne\figptbary-1:[#2,#3;\l@mbd@de,\l@mbd@un]%
    \figptbary-2:[#2,#4;\l@mbd@de,\l@mbd@un]\psline[-1,-2]%
    \advance\l@mbd@un\@ne\repeat}}
\initpr@lim\initpss@ttings\initPDF@rDVI
\ctr@ln@w{newbox}\figBoxA
\ctr@ln@w{newbox}\figBoxB
\ctr@ln@w{newbox}\figBoxC
\catcode`\@=12

\pssetdefault(update=yes)
\newbox\figbox

\begin{abstract}
The plane waveguides with corners considered here are infinite V-shaped strips with constant thickness.
They are parametrized by their sole opening angle.
We study the eigenpairs of the Dirichlet Laplacian in such domains when this angle tends to $0$. We provide multi-scale asymptotics for eigenpairs associated with the lowest eigenvalues. For this, we investigate the eigenpairs of a one-dimensional model which can be viewed as their Born-Oppenheimer approximation. We also investigate the Dirichlet Laplacian on triangles with sharp angles. The eigenvalue asymptotics involve powers of the cube root of the angle, while the eigenvector asymptotics include simultaneously two scales in the triangular part, and one scale in the straight part of the guides.
\end{abstract}

\maketitle

\section{Introduction and main results}

\subsection{Motivations}
Quantum waveguides refer to meso- or nanoscale wires (or thin sheets) inside electronic devices. They can be modelled by one-electron Schr\"o\-dinger operators with potentials having high contrast in their values. In many situations, such Schr\"odinger operators can be approximated by a simple Laplace operator with Dirichlet conditions on the boundary of the wires \cite{Duclos95}. The presence of bound states is an undesirable effect which is nevertheless frequent and useful to predict. The same Laplace-Dirichlet problems arise for TE (transverse electric) modes in electromagnetic waveguides \cite{CLMM93}.

This is a well-known fact, from the papers \cite{ExSe89, Duclos95, Carron04, Duclos05}, that curvature makes discrete spectrum to appear in waveguides. Moreover the analysis of this spectrum can be accurately performed in the thin tube limit (in dimension $2$ and $3$, see \cite[Section 5]{Duclos95}). In fact, this asymptotical regime corresponds to a semiclassical limit so that the standard techniques of \cite{Hel88} could have been used to investigate that problem. 

Since curvature induces discrete spectrum, this is then a natural question to ask what happens in dimension $2$ when there is a corner (which corresponds to infinite curvature): Does discrete spectrum always exist in this case? This question is investigated with the $L$-shape waveguide in \cite{Exner89} where the existence of discrete spectrum is proved. For an arbitrary angle too, this existence is proved in \cite{ABGM91} and an asymptotic study of the ground energy is done when $\theta$ goes to $\frac{\pi}{2}$ (where $\theta$ is the semi-opening of the waveguide). Another question which arises is the estimate of the lowest eigenvalues in the regime $\theta\to 0$. This problem is analyzed in \cite{CLMM93} through matched asymptotic expansions and electromagnetic experiments. This is precisely the question we tackle in this paper: We are going to prove rigorously complete asymptotic expansions for the eigenpairs in plane waveguides with corner (also called ``broken strips") as $\theta$ tends to $0$. We have provided in \cite{DaLafRa11} numerical experiments by the finite element method for this situation too.

For the case of dimension 3, we can cite the paper \cite{ExTa10} which deals with the Dirichlet Laplacian in a conical layer. In this case, there is an infinite number of eigenvalues below the essential spectrum. The other initial motivation for the present investigation is our previous work \cite{BDPR} in which we study the Neumann realization on ${\R^2_{+}=\{(s,t)\in\R^2 : t>0\}}$ of the Schr\"odinger operator $-\dr_{s}^2-\dr_{t}^2+(t\cos\theta-s\sin\theta)^2$ in the regime $\theta\to 0$ (see also \cite{LuPan00a,HelMo02}). It turns out that the lowest eigenfunctions of this operator are concentrated near the cancellation line of the potential, which also enlighten the link between a confining electric potential and a strip with Dirichlet boundary conditions.

In our way towards the analysis of plane waveguides with corners, a natural step turns out to be the study of the Dirichlet problem on isosceles triangles with small angle. This subject is already dealt with in \cite[Theorem 1]{Fre07} where four-term asymptotics is proved for the first eigenvalue, whereas a three-term asymptotics for the second eigenvalue is provided in \cite[Section 2]{Fre07}. In fact the spectral analysis of triangles with small angles is not the sole way to succeed in the study of waveguides. Nevertheless, as  just mentioned, this problem has a particular interest on its own and permits to enlighten the presentation of the proofs.

Finally, in the same vein, we can mention the papers \cite{FriSolo08, FriSolo09} whose results provide two-term asymptotics for the thin rhombi and also \cite{BoFre09} which deals with a regular case (thin ellipse for instance), see also \cite{BoFre10}.

\subsection{The Dirichlet Laplacian on the broken guide}
Here we introduce the family of broken guides $\Omega_\theta$, parametrized by the angle $\theta$, and give basic properties of the spectrum of the positive Laplacian with Dirichlet condition in $\Omega_\theta$. Then we state our main result related to the behavior as $\theta\to0$ of the lowest eigenvalues of these operators.
\subsubsection{Basic properties}
Let us denote by $(x_1,x_2)$ the Cartesian coordinates of the plane and by ${\bf0}=(0,0)$ the origin. The positive Laplace operator is given by $-\dr_{1}^2-\dr_{2}^2$. The domains of interest are the ``broken waveguides'' which are infinite V-shaped open sets: For any angle $\theta\in\left(0,\frac{\pi}{2}\right)$ we introduce
\begin{equation}
\label{E:Omtheta}
   \Omega_{\theta}=\left\{(x_{1},x_{2})\in\mathbb{R}^2 : x_{1}\tan\theta<|x_{2}|<\left(x_{1}+\frac{\pi}{\sin\theta}\right)\tan\theta\right\}.
\end{equation}
Note that its width is independent from $\theta$, normalized to $\pi$, and $\theta$ represents the (half) opening of the V, see Fig.\ \ref{F:1}. The limit case where $\theta=\frac\pi2$ corresponds to the straight strip $(-\pi,0)\times\R$.
The aim of this paper is the investigation of the lowest eigenvalues of the {\em positive} Dirichlet Laplacian $\Delta^\Dir_{\Omega_{\theta}}$ in the small angle limit $\theta\to0$.

\begin{figure}[ht]
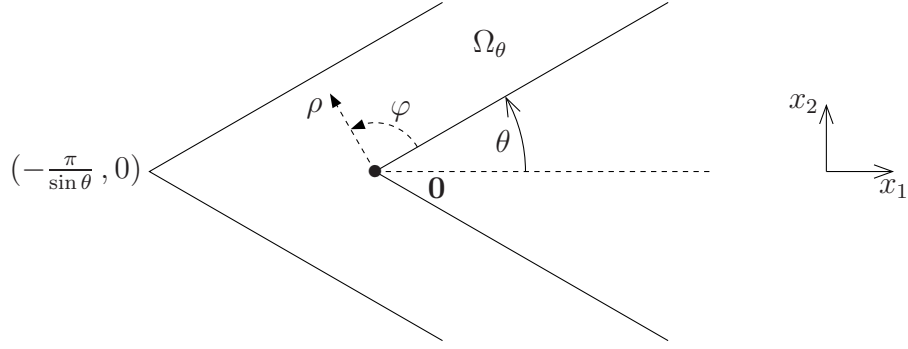

    \figinit{1mm}
    \figpt 1:(  0,  0)
    \figpt 2:( 45,  0)
    \figptrot 3: = 2 /1, 30/
    \figpt 4:(-30,  0)
    \figpt 5:(12, 0)
    \figptrot 6: = 5 /1, 120/
    \figvectP 101 [1,4]
    \figptstra 14 = 3/1, 101/
    \figptssym 23 = 3, 14 /1, 2/
    \figpt 10:(60,  0)

    \def\MyPSfile{F1.pf}
    \psbeginfig{\MyPSfile}
    \psaxes 10(9)
    \figptsaxes 11 : 10(9)
    \psline[23,1,3]
    \psline[24,4,14]
    \psarrowcirc 1 ; 20 (0,30)
    \psset arrowhead (fillmode=yes, length=2)
    \psset (dash=4)
    \psarrow[1,6]
    \psarrowcirc 1 ; 6.5 (30,120)
    \psline[1,2]
  \psendfig

    \figvisu{\figbox}{}{%
    \figinsert{\MyPSfile}
    \figwrites 11 :{$x_1$}(1)
    \figwritew 12 :{$x_2$}(1)
    \figwritegcw 4 :{$(-\frac{\pi}{\sin\theta}\,,0)$}(1,0)
    \figwritegce 1 :{$\Omega_{\theta}$}(13,17)
    \figwritegce 1 :{$\varphi$}(2,8)
    \figwritegce 1 :{$\theta$}(16,4)
    \figwritegcw 6 :{$\rho$}(1,-2)
    \figsetmark{$\bullet$}
    \figwritegce 1 :{${\bf 0}$}(7,-2.1)
    }
\centerline{\box\figbox}
\caption{The broken guide $\Omega_{\theta}$ (here $\theta=\frac\pi6$). Cartesian and polar coordinates.}
\label{F:1}
\end{figure}

The operator $\Delta^\Dir_{\Omega_{\theta}}$ is a positive unbounded self-adjoint operator with domain
\[
   \Dom(\Delta^\Dir_{\Omega_{\theta}}) = \{\psi\in H^1_0(\Omega_\theta):\quad
   \Delta\psi\in L^2(\Omega_\theta)\}.
\]
When $\theta=\frac\pi2$, we simply have $\Dom(\Delta^\Dir_{\Omega_{\theta}}) = H^2\cap H^1_0(\Omega_\theta)$.
In contrast, when $\theta\in\left(0,\frac{\pi}{2}\right)$, the boundary of $\Omega_\theta$ is not smooth, it is polygonal. The presence of the non-convex corner with vertex ${\bf0}$ is the reason for the space $\Dom(\Delta^\Dir_{\Omega_{\theta}})$ to be distinct from $H^2\cap H^1_0(\Omega_\theta)$. Nevertheless this domain can be precisely characterized as follows. Let us introduce polar coordinates $(\rho,\varphi)$ centered at the origin, with $\varphi=0$ coinciding with the upper part $x_2=x_1\tan\theta$ of the boundary of $\Omega_\theta$. Let $\chi$ be a smooth radial cutoff function with support in the region $x_{1}\tan\theta<|x_{2}|$ and $\chi\equiv1$ in a neighborhood of the origin. We introduce the explicit \emph{singular function}
\begin{equation}
\label{eq:sing}
   \psi^{\theta}_\sing(x_1,x_2) = \chi(\rho)\, \rho^{\pi/\omega} \sin\frac{\pi\varphi}{\omega},\quad
   \mbox{with}\quad \omega = 2(\pi-\theta).
\end{equation}
Then there holds, see the classical references \cite{Kondratev67,Grisvard85}:
\begin{equation}
\label{eq:dom}
   \Dom(\Delta^\Dir_{\Omega_{\theta}}) = \left(H^2\cap H^1_0(\Omega_\theta)\right) \oplus [\psi^\theta_\sing]
\end{equation}
where $[\psi^\theta_\sing]$ denotes the space generated by $\psi^\theta_\sing$.

We denote by $\mu_{\Gui,n}(\theta)$ its $n$-th Rayleigh quotient, $n\ge1$ (here $\|\cdot\|$ is the $L^2$ norm on $\Omega_\theta$):
\begin{equation*}
   \mu_{\Gui,n}(\theta) = \inff_{
   \psi_1,\ldots,\psi_j\ \mbox{\footnotesize independent in}\ 
   H^1_0(\Omega_\theta)} 
   \ \ \sup_{\psi\in\mathrm{span}\{\psi_1,\ldots,\psi_j\}}
   \frac{\|\nabla\psi\|^2} {\|\psi\|^2} .
\end{equation*}
We gather in the following statement several important preliminary properties for the spectrum of $\Delta^\Dir_{\Omega_{\theta}}$. All these results are proved in the literature. We briefly indicate hereafter what are the main arguments of the proofs, and where details can be found.

\begin{prop}
\label{P:ess}
{\em (i)} If $\theta=\frac\pi2$, $\Delta^{\Dir}_{\Om_{\theta}}$ has no discrete spectrum. Its essential spectrum is the closed  interval $[1,+\infty)$. 

{\em (ii)} For any $\theta\in(0,\frac\pi2)$, the essential spectrum of  $\Delta^{\Dir}_{\Om_{\theta}}$ coincides with $[1,+\infty)$. 

{\em (iii)} For any $\theta\in(0,\frac\pi2)$, the discrete spectrum of $\Delta^{\Dir}_{\Om_{\theta}}$ is {\em nonempty} and {\em finite}. In other words, $\Delta^{\Dir}_{\Om_{\theta}}$ has at least one eigenvalue below $1$, but a finite number of them.

{\em (iv)} For any $\theta\in(0,\frac\pi2)$ and any eigenvalue in the discrete spectrum of $\Delta^{\Dir}_{\Om_{\theta}}$, the associated eigenvectors $\psi$ are {\em even} with respect to the horizontal axis: $\psi(x_1,-x_2)=\psi(x_1,x_2)$.

{\em (v)} For any $n\ge1$, the function $\theta\mapsto\mu_{\Gui,n}(\theta)$ is continuous and non decreasing on $(0,\frac\pi2)$.

{\em (vi)} For any $n\ge1$ and $\theta_0$ such that $\mu_{\Gui,n}(\theta_0)<1$, the function $\theta\mapsto\mu_{\Gui,n}(\theta)$ is strictly increasing on $(0,\theta_0]$.
\end{prop}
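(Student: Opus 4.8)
The plan is to treat the six assertions in a natural order, recycling standard tools (Persson's lemma for the essential spectrum, a Dirichlet--Neumann bracketing for the existence of discrete spectrum, symmetry reduction, and monotonicity of Rayleigh quotients under domain rescaling). For (i), the strip $(-\pi,0)\times\R$ has Laplacian separating into $-\dr_{x_1}^2$ on $(-\pi,0)$ with Dirichlet conditions (spectrum $\{k^2:k\ge1\}$) and $-\dr_{x_2}^2$ on $\R$ (spectrum $[0,+\infty)$), so the total spectrum is $[1,+\infty)$ and is purely essential; I would record this and invoke it again in (ii). For (ii), I would apply Persson's characterization: $\inf\sig_{\ess}(\Delta^\Dir_{\Om_\theta})$ equals $\sup_{K}\inf\{\|\nabla\psi\|^2/\|\psi\|^2:\psi\in\mathcal C^\infty_c(\Om_\theta\setminus K)\}$ over compact $K$. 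Since far from the vertex $\Om_\theta$ looks, after a rotation, like two disjoint half-strips of width $\pi$, a comparison argument (Dirichlet conditions only increase the quotient, and the transverse section has first eigenvalue $1$) shows this infimum is $\ge1$; conversely, quasimodes supported far out and modelled on $\sin(x_1')\,e^{i\xi x_2'}$ with $\xi\to0$ show it is $\le1$, so $\inf\sigma_{\ess}=1$, and the half-strip structure gives that all of $[1,+\infty)$ is essential.

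For (iii), nonemptiness of the discrete spectrum is the one genuinely substantive point; I would construct an explicit test function $\psi\in H^1_0(\Om_\theta)$ with $\|\nabla\psi\|^2<\|\psi\|^2$, using the classical trick of perturbing the generalized eigenfunction of the essential-spectrum bottom. Concretely, take $\psi = u_0(x'_1)\,\chi(x'_2/R)$ in the rotated half-strip coordinates, where $u_0(x'_1)=\sin(x'_1)$ and $\chi$ is a plateau cutoff; this gives a quotient equal to $1 + O(R^{-2})$, and then I add a small multiple $\varepsilon v$ of a function $v$ supported near the non-convex corner, chosen so that the cross term $\int \nabla u_0\cdot\nabla v - u_0 v$ is strictly negative (this is where the reentrant corner, i.e.\ the ``extra room'' of the V, enters — the same mechanism as in \cite{ABGM91}). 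Optimizing $\varepsilon$ and then $R$ produces a quotient $<1$, hence by the min-max principle at least one eigenvalue below the essential spectrum. Finiteness follows because $\Om_\theta$, after bracketing at a large disc, decomposes into a bounded piece (finitely many eigenvalues below $1$) and an exterior piece whose Dirichlet Laplacian has spectrum in $[1,+\infty)$, so Dirichlet--Neumann bracketing bounds the counting function $N(1)$; I expect \emph{this}, rather than nonemptiness, to be the only place needing a little care in setting up the bracketing geometry correctly near the corner.

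For (iv), $\Om_\theta$ is invariant under $x_2\mapsto-x_2$, so $\Delta^\Dir_{\Om_\theta}=\Delta^{\Dir,+}\oplus\Delta^{\Dir,-}$ on the even/odd subspaces; an odd eigenfunction restricts to a Dirichlet eigenfunction on the half-guide $\Om_\theta\cap\{x_2>0\}$, which is a half-strip of width $\pi$ with an additional Dirichlet edge on $\{x_2=0\}$, whose Dirichlet Laplacian has spectrum $\subset[1,+\infty)$ — hence no eigenvalue below $1$ in the odd sector, so every discrete eigenvector is even. For (v), I would use that after the scaling $(x_1,x_2)\mapsto$ (coordinates straightening $\Om_\theta$ onto a fixed reference domain $\Om$) the operator becomes $\theta$-dependent with coefficients depending continuously (indeed analytically) on $\theta$; continuity and monotonicity of each $\mu_{\Gui,n}$ then follow from the min-max formula together with monotonicity of the associated quadratic forms in $\theta$ (enlarging the opening can only relax the constraint). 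Finally (vi): below the essential spectrum the eigenfunction is exponentially localized (an Agmon estimate, or simply the explicit decay in the half-strips), so the form derivative in $\theta$ is given by a genuine boundary/interior integral against a nonzero eigenfunction and does not vanish; strict positivity of $\dr_\theta\mu_{\Gui,n}$ on $(0,\theta_0]$ follows by the Feynman--Hellmann formula. The only real obstacle in the whole proposition is producing the sign of the perturbative correction in (iii) cleanly, and for that I would cite the construction already carried out in \cite{ABGM91}, adapting it to the normalization used here.
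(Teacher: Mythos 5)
Points (i)--(iv) are fine and in the same spirit as the paper's proof: separation of variables for (i), comparison with two half-strips for (ii), the \cite{ABGM91}-type test-function for nonemptiness and a bracketing argument for finiteness in (iii) (the paper instead cites \cite[\S5]{DaLafRa11}, which follows \cite{MoTr05}, but Dirichlet--Neumann bracketing with a straight cut far out along each arm does the same job), and symmetry reduction plus domain monotonicity of Dirichlet eigenvalues for (iv). The real problems are in (v) and (vi).

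For (v), the parenthetical ``enlarging the opening can only relax the constraint'' does not survive scrutiny. The domains $\Omega_\theta$ are not nested as $\theta$ varies (the two arms rotate in opposite directions), so there is no direct domain-inclusion monotonicity. After the change of variables $x=\sqrt2\,x_1\sin\theta$, $y=\sqrt2\,x_2\cos\theta$ onto the fixed reference domain $\Omega$, the Rayleigh quotient becomes
\[
R_\theta(\psi)=\frac{\int_\Omega\bigl(2\sin^2\!\theta\,|\partial_x\psi|^2+2\cos^2\!\theta\,|\partial_y\psi|^2\bigr)}{\int_\Omega|\psi|^2}
=\frac{\int_\Omega|\nabla\psi|^2}{\int_\Omega|\psi|^2}+\cos2\theta\,\frac{\int_\Omega(|\partial_y\psi|^2-|\partial_x\psi|^2)}{\int_\Omega|\psi|^2},
\]
and the two coefficients $2\sin^2\theta$, $2\cos^2\theta$ move in \emph{opposite} directions; monotonicity of the form in $\theta$, and hence of $\mu_{\Gui,n}(\theta)$, is not automatic. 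It requires an argument about the sign of $\int(|\partial_x\psi|^2-|\partial_y\psi|^2)$ for the relevant competitors, which the paper delegates to \cite[\S3]{DaLafRa11}; you should either reproduce that argument or cite it, since your stated heuristic is not a proof.

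For (vi), the same missing sign information reappears. The Feynman--Hellmann formula gives
\[
\mu'_{\Gui,n}(\theta)=2\sin2\theta\,\frac{\int_\Omega\bigl(|\partial_x\psi|^2-|\partial_y\psi|^2\bigr)}{\int_\Omega|\psi|^2},
\]
and you assert it ``does not vanish'' without proving the integrand has a definite sign or that the integral is nonzero; exponential localization of $\psi$ is irrelevant to that sign. The paper avoids this entirely: once $\mu_{\Gui,n}(\theta_0)<1$, points (ii) and (v) guarantee $\mu_{\Gui,n}$ is a genuine, simple-gap eigenvalue on all of $(0,\theta_0]$, hence (by the analytic dependence on $\theta$ of the rescaled operator) an \emph{analytic} function there; Theorem~\ref{spectrumguide'} shows it is not constant, and an analytic, non-decreasing, non-constant function on an interval cannot be locally constant, hence is strictly increasing. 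You should replace the Feynman--Hellmann route by this analyticity-plus-nonconstancy argument, or else supply the missing positivity of $\int(|\partial_x\psi|^2-|\partial_y\psi|^2)$.
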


\begin{proof}
(i) is a clear consequence of the separation of variables in $\Omega_{\pi/2}=(-\pi,0)\times\R$.

(ii) is a consequence of the fact that outside a compact set, $\Omega_\theta$ is the union of two strips isometric to $(0,+\infty)\times(0,\pi)$. 

(iii) The fact that there are eigenvalues below the essential spectrum is known since \cite{ABGM91}. See also in \cite[\S4]{DaLafRa11} another proof based on a more general argument developed in \cite{Duclos95, Carron04, Duclos05} for waveguides with curvature. The fact that there is only a finite number of such eigenvalues is proved in \cite[\S5]{DaLafRa11} using a similar method as \cite[Theorem 2.1]{MoTr05}.

(iv) Since the domain and the operator are invariant by the symmetry $x_2\mapsto -x_2$, the eigenvectors are even of odd with respect to the horizontal axis. An argument of monotonicity for Dirichlet eigenvalues excludes the odd eigenvectors, see \cite[\S2.2]{DaLafRa11} for details.

(v) The Rayleigh quotients are non-decreasing functions of $\theta$ as a consequence of the previous point and a suitable change of variable which transform the operator $-\Delta$ in a domain depending on $\theta$ into an operator depending on $\theta$ on a fixed domain, see \cite[\S3]{DaLafRa11} for details.

(vi) If $\mu_{\Gui,n}(\theta_0)<1$, by points (v) and (ii), $\mu_{\Gui,n}(\theta)$ is an eigenvalue for all $\theta\in(0,\theta_0]$. The same proof as in point (v) then shows that $\mu_{\Gui,n}(\theta)$ depend in an analytic way from $\theta$ in $(0,\theta_0]$. In addition, anticipating the result of Theorem \ref{spectrumguide'}, we find that the function $\theta\mapsto\mu_{\Gui,n}(\theta)$ is not constant so that we deduce from (v) that it is strictly increasing where it is analytic.
\end{proof}

\subsubsection{Statement of the main result}
One of the main results of this paper is a complete asymptotic expansion\footnote{
By the notation $\lambda(\theta)\underset{\theta\to 0}{\sim}\sum_{j\ge0}c_{j}\theta^{j\rho}$ (with a positive $\rho$)
we mean that for any positive integer $J$ we have the estimate \\
\centerline{
$|\lambda(\theta)-\sum_{0\le j \le J}c_j\theta^{j\rho}|\le C_J \,\theta^{(J+1)\rho}$ \ for $\theta$ small enough.}}  of the eigenvalues  $\mu_{\Gui,n}(\theta)$ in powers of $\theta^{1/3}$. To state this result, we need the following notation: For $n\ge1$, let $z_{\A}(n)$ be the $n$-th zero of the inverse Airy function $\A(x)=\mathsf{Ai}(-x)$.

\begin{theorem}\label{spectrumguide'}
For all $N_{0}$, there exists $\theta_{0}>0$, such that for all $\theta\in(0,\theta_{0}]$, $\Delta^{\Dir}_{\Omega_{\theta}}$  has at least $N_{0}$ eigenvalues. These eigenvalues admit the expansions:
$$
   \mu_{\Gui,n}(\theta)\underset{\theta\to 0}{\sim}
   \sum_{j\ge0}\gamma^\Delta_{j,n}\theta^{j/3}
   \quad
   \mbox{with} \ \ \gamma^\Delta_{0,n}=\frac{1}{4}, \ \ \gamma^\Delta_{1,n}=0,
   \ \ \mbox{and}\ \ \gamma^\Delta_{2,n}=2(4\pi\sqrt{2})^{-2/3}z_{\A}(n)
$$
and the term of order $j=3$ is not zero. The corresponding eigenvectors have multi-scale expansions (see Section \ref{S631} for details).
\end{theorem}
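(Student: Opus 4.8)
The plan is to reduce the two–dimensional problem to a one–dimensional semiclassical operator (its Born–Oppenheimer approximation), to produce the full expansion by quasimodes for the upper bounds, and to control the remainder by Agmon estimates together with a rigorous dimensional reduction for the lower bounds.

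\emph{Reduction and change of variables.} By Proposition~\ref{P:ess}(iv) one may restrict to functions even in $x_2$ and work on the half–guide $\Omega_\theta\cap\{x_2>0\}$ with a Neumann condition on the axis. This half–guide is the union of two half–strips (``arms'') isometric to $(0,+\infty)\times(0,\pi)$ and of the thin isosceles triangle $\{x_1<0\}\cap\Omega_\theta$, which has apex half–angle $\theta$, apex at $(-\pi/\sin\theta,0)$ and base on the line $\{x_1=0\}$. On the arms one rotates, so that $-\Delta$ becomes $-\partial_s^2-\partial_t^2$; on the triangle one uses fan coordinates $(\xi,\eta)$, with $\eta\in(-1,1)$ the normalized transverse variable and $\xi\in(0,1)$ the distance to the apex along the axis rescaled by $\pi/\sin\theta$. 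Gluing the two charts across $\{x_1=0\}$ produces a self–adjoint operator $\mathcal L_\theta$ on a $\theta$–independent domain, of the semiclassical form $h^2(-\partial_\xi^2)+Q_\theta(\xi)$, where $h$ is proportional to $\theta$ and, for each $\xi$, $Q_\theta(\xi)$ is a Dirichlet transverse operator on $(-1,1)$ with an expansion $Q_\theta(\xi)=Q^0_\theta(\xi)+hQ^1_\theta(\xi)+\dots$, the lower–order terms reflecting the curvature of the fan coordinates. The bottom of the transverse spectrum is the effective potential $W_\theta(\xi)=\tfrac14\cos^2\theta\,\xi^{-2}$ on the triangle and $W_\theta\simeq1$ on the arms; it attains its minimum $\tfrac14\cos^2\theta$ at the base $\xi=1$ with a \emph{linear} profile, $W_\theta(\xi)=\tfrac14\cos^2\theta+\tfrac12\cos^2\theta\,(1-\xi)+\dots$, which is the source of the cube–root scale and of the zeros of $\A$.

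\emph{The one–dimensional model and the upper bounds.} Freezing the transverse ground state $\varphi_\xi$ yields the effective operator $h^2(-\partial_\xi^2)+W_\theta(\xi)$ plus the scalar Born–Oppenheimer corrections, on the half–line, with the boundary condition at $\xi=1$ inherited from the matching with the arms. Because $W_\theta$ has a linear minimum at the endpoint $\xi=1$, the rescaling $1-\xi=(2h^2/\cos^2\theta)^{1/3}\sigma$ turns the principal part into $-\partial_\sigma^2+\sigma$ on $\sigma>0$ with Dirichlet at $\sigma=0$, whose $n$–th eigenvalue is $z_{\A}(n)$. An Airy–based WKB Ansatz then furnishes, to any order $J$, formal eigenpairs with eigenvalue $\tfrac14+\sum_{2\le j\le J}\gamma_j\theta^{j/3}+O(\theta^{(J+1)/3})$, $\gamma_2=2(4\pi\sqrt2)^{-2/3}z_{\A}(n)$; computing the coefficient of $\theta$, to which the sub–principal transverse term $Q^1_\theta$ and the fan curvature contribute, shows $\gamma_3\neq0$. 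Tensoring the truncated 1D quasimodes with $\varphi_\xi(\eta)$, adding a boundary corrector supported in the arms (an exponentially small profile, decaying at rate $\tfrac{\sqrt3}{2}+o(1)$ since the eigenvalue is near $\tfrac14$ while $W_\theta\simeq1$ there) and a corner layer near $(\xi,\eta)=(1,0)$ accommodating the reentrant–corner singularity of exponent $\simeq\tfrac12$, one obtains admissible test functions for $\mathcal L_\theta$. The min–max principle then gives $N_0$ Rayleigh quotients below $1$ for $\theta$ small — hence at least $N_0$ eigenvalues — together with the upper bounds $\mu_{\Gui,n}(\theta)\le\sum_{j\le J}\gamma_j\theta^{j/3}+C_J\theta^{(J+1)/3}$.

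\emph{Lower bounds and main obstacle.} For the matching lower bounds one first proves Agmon estimates showing that an eigenfunction with eigenvalue $\le\tfrac14+C\theta^{2/3}$ localizes, after the change of variables, in an $O(\theta^{-1/3})$–neighbourhood of $\xi=1$, with exponential decay into the arms and super–exponential decay towards $\xi=0$ where $W_\theta\to+\infty$. One then decomposes such an eigenfunction as $u_1(\xi)\varphi_\xi(\eta)+u^\perp$ and uses the transverse spectral gap (of order $1$) to control $u^\perp$ and the commutator terms, thereby reducing the eigenvalue problem, up to errors $O(\theta^{(J+1)/3})$, to the one–dimensional model analysed above. The hard part is this dimensional reduction in the gluing region $\{\xi\approx1\}$: there the product structure degenerates, the domain is not $H^2$ because of the singular function $\psi^\theta_\sing$, and $W_\theta$ is only Lipschitz, so the commutator estimates and the matching of the Airy boundary layer with the decaying arm profile must be performed to all orders — which is exactly where the multi–scale structure of the eigenvectors described in Section~\ref{S631} is needed.
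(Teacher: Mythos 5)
Your overall strategy matches the paper's — quasimodes for the upper bounds, Agmon estimates plus a transverse reduction for the lower bounds — and the scales you identify are correct. But the central mechanism of the quasimode construction is underspecified, and one key assertion is wrong. The reduced operator you posit, ``$h^2(-\partial_\xi^2)+Q_\theta(\xi)$ on a $\theta$-independent domain with a boundary condition at $\xi=1$ inherited from the arms,'' does not exist: in the paper's coordinates $(u,t)$ on the triangular part and $(u,\tau)$ on the arms, $\mathcal L_\Gui(h)$ carries first-order cross terms $h^2\partial_u\partial_t$ and $h^2\partial_u\partial_\tau$ (see \eqref{E:Llef}--\eqref{E:Lri}), and the transverse boundary conditions jump from Neumann--Dirichlet to Dirichlet--Dirichlet across $u=0$. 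The gluing is therefore a genuine two-dimensional transmission problem, not a 1D endpoint boundary condition, and the paper resolves it order by order through the Dirichlet-to-Neumann operators $T^\lef$, $T^\ri$ of Lemma \ref{lem-N0lefri}. This in turn forces the Ansatz on the triangular side to carry \emph{two} simultaneous scales, $s=h^{-2/3}u$ for the Airy profiles $\Psi_{\lef,j}$ and $\sigma=h^{-1}u$ for the interface layer $\Phi_{\lef,j}$; a single ``corner layer'' near $(\xi,\eta)=(1,0)$ plus an arm corrector does not make this structure explicit and cannot satisfy the transmission conditions \eqref{transD}--\eqref{trans} at each order.

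Your attribution of $\gamma_3\neq0$ to ``the sub-principal transverse term $Q^1_\theta$ and the fan curvature'' cannot be right: those same contributions appear in the Dirichlet triangle, where all odd-rank coefficients vanish through order eight (Theorem \ref{spectrumtriangle}). In the guide the nonzero coefficient of $h$ arises because the Neumann trace $g_0'(0)\neq0$ of the Airy profile, fed through $(T^\ri+T^\lef\Pi_1)^{-1}$ in the $h^{1/3}$ transmission step, produces $g_1(0)\neq0$, which then enters the solvability condition for $\gamma_3$ via Lemma \ref{lem-Ai}; the coupling with the arms, not the fan curvature, is the source of the odd term. Finally, for the lower bound you propose a direct Feshbach reduction on the glued domain; since the transverse ground mode changes discontinuously across $\xi=1$, the commutator estimates you would need in the transition region are delicate and left unexplained. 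The paper avoids this by first confining the eigenfunctions to an $h^{1-\varepsilon}$-enlarged triangle via Proposition \ref{6Agmon}, and then invoking Theorem \ref{spectrumtriangle}, whose own lower bound is where the projection argument is actually carried out. Your route may be workable, but as written it does not address the obstruction you yourself identify as ``the hard part.''
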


\subsection{Related questions}\label{related}
In the small angle limit the vertical line $x_1=0$ appears as a right barrier for eigenmodes, cf.\ the computations in \cite[\S8]{DaLafRa11}. In a first approach, this can be explained by a one-dimensional approximation in the spirit of the Born-Oppenheimer approximation: It is obtained by replacing $-\dr^2_{x_2}$ in the expression of $\Delta^\Dir_{\Omega_\theta}$ by its lowest eigenvalue on each slice of $\Omega_\theta$ at fixed $x_1$. The effective potential thus obtained has a triangular well at $x_1=0$ (on the left) and a barrier on the right. That is why it is quite natural to study first a similar 1D model operator (see Section \ref{3}). The main interest is to exhibit for such a simple situation how the zeros of the Airy function come into play and how two distinct scales are necessary to describe eigenmodes. Moreover, as a by-product of our proofs, it turns out that the first two terms in the eigenvalue asymptotics for $\Delta^\Dir_{\Omega_\theta}$ and its 1D approximation coincide.

\begin{figure}[ht]
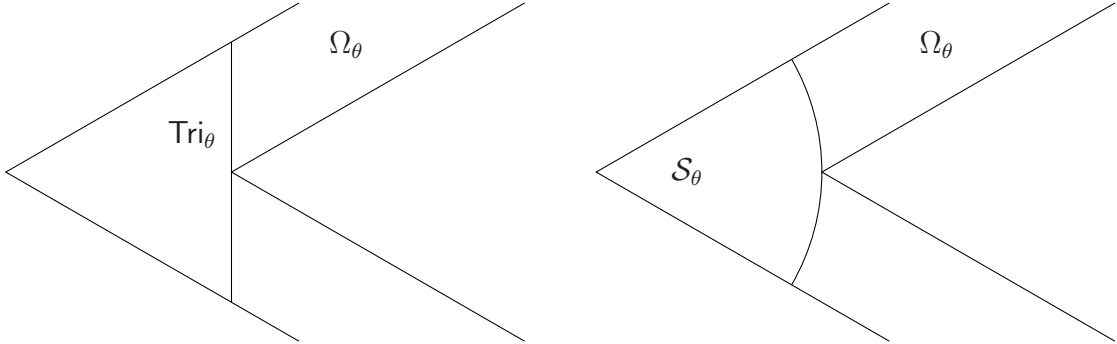

\begin{minipage}[c]{0.49\textwidth}
    \figinit{1mm}
    \figpt 101:(  0,  0)
    \figpt 102:( 45,  0)
    \figptrot 103: = 102 /101, 30/
    \figpt 104:(-30,  0)
    \figpt 105:(12, 0)
    \figpt 106:(0, 10)
    \figvectP 201 [101,104]
    \figptstra 114 = 103/1, 201/
    \figptssym 123 = 103, 114 /101, 102/
    \figvectP 206 [101,106]
    \figvectP 211 [104,114]
    \figvectP 221 [104,124]
    \figptinterlines 117:[101,206;104,211]
    \figptinterlines 127:[101,206;104,221]

    \def\MyPSfile{}
    \psbeginfig{}
    \psline[123,101,103]
    \psline[124,104,114]
    \psline[117,127]
  \psendfig

    \figvisu{\figbox}{}{%
    \figinsert{\MyPSfile}
    \figwritegce 101 :{$\Omega_{\theta}$}(13,17)
    \figwritegcw 101 :{$\Tri_{\theta}$}(2,5)
    }
\centerline{\box\figbox}
\end{minipage}
\begin{minipage}[c]{0.49\textwidth}
    \figinit{1mm}
    \figpt 101:(  0,  0)
    \figpt 102:( 45,  0)
    \figptrot 103: = 102 /101, 30/
    \figpt 104:(-30,  0)
    \figpt 105:(12, 0)
    \figpt 106:(0, 10)
    \figvectP 201 [101,104]
    \figptstra 114 = 103/1, 201/
    \figptssym 123 = 103, 114 /101, 102/
    \figvectP 206 [101,106]
    \figvectP 211 [104,114]
    \figvectP 221 [104,124]
    \figptinterlines 117:[101,206;104,211]
    \figptinterlines 127:[101,206;104,221]

    \def\MyPSfile{}
    \psbeginfig{}
    \psline[123,101,103]
    \psline[124,104,114]
    \psarccirc 104 ; 30 (-30,30)
  \psendfig

    \figvisu{\figbox}{}{%
    \figinsert{\MyPSfile}
    \figwritegce 104 :{$\mathcal{S}_\theta$}(10,0)
    \figwritegce 101 :{$\Omega_{\theta}$}(13,17)
    }
\centerline{\box\figbox}
\end{minipage}

\caption{Broken guide $\Omega_{\theta}$ with associated triangle $\Tri_\theta$ and sector $\mathcal{S}_\theta$.}
\label{F:1b}
\end{figure}

In the proof of Theorem \ref{spectrumguide'}, we will have to perform an accurate analysis of the spectral gap separating the eigenvalues. This gap, as stated in Theorem \ref{spectrumguide'}, is of order $\theta^{2/3}$ and is related to the difference between zeros of the Airy function (in other words the gap is determined as soon as we have proved a two-term expansion). In order to succeed in the investigation, we will have to estimate this gap by comparing with a simpler spectral problem. Here we have to choose between several possibilities: 
Either we could compare with the spectrum of the isosceles triangle $\Tri_\theta$ (with Dirichlet conditions), or we could compare with the spectrum of the sector $\mathcal{S}_\theta$ (with Dirichlet conditions), see Fig.~\ref{F:1b}. The case of the sector is well-known in the small angle limit (we find that the first two terms in the expansion of the eigenvalues coincide with that of $\mu_{\Gui,n}(\theta)$, see \cite{EL98, Fre07}). Nevertheless, we have preferred to analyze the problem of the triangle which is less known and which has an interest on its own (see \cite{Fre07}). In addition, as it will be seen in the analysis, the reduction to the triangle (through estimates of Agmon type) is slightly easier. A posteriori, the first two terms of the eigenvalues are the same as for the sector. Finally, the option to provide a full investigation of the triangle permits to divide difficulties inherent to each problem. This pedagogic perspective is also one of the motivations to study a 1D model operator which roughly describes the spectral behavior of the waveguide.

\subsection{Organization of the paper} 
In Section \ref{2}, we discuss the different reductions to simplified operators and introduce the main notation used in this paper. We state all our results related to eigenvalue asymptotics. In Section \ref{3} we investigate through a construction of quasimodes and an ODE analysis the one dimensional toy model $-\kappa^2\partial^2_z+W$ with the discontinuous triangular potential $W$ equal to $-z$ when $z\le0$ and $1$ when $z>0$. In Section \ref{4} we study a one dimensional approximation of the Dirichlet problem on a triangle with small angle. By Agmon estimates and a projection method, this leads in Section \ref{5} to results on triangles in the small angle limit.
Finally, in Section \ref{6}, we perform a construction of quasimodes adapted to  waveguides and introduce in particular Dirichlet-to-Neumann operators to solve a transmission problem; we complete the proof by comparing with the triangle case. We conclude our paper by discussing relations between the eigenvector asymptotics and the reentrant corner singularity. We also discuss the extension of our results on X-shaped waveguides (crossing straight wires).

\section{Reductions}\label{2}
This section is devoted to the introduction of reduced and simplified operators that we will consider throughout this paper. First we will use the symmetry of the waveguide to reduce the investigation to an half-guide. This first simplification makes a discontinuity in boundary conditions to appear at the origin $\bf 0$ (see Figure \ref{F:2}). In fact, as will be seen later, this jump in boundary conditions  traps the eigenfunctions, which are localized in the left part of the guide. Due to this localization, it makes sense to tackle the Dirichlet Laplacian on triangles $\Tri_\theta$. We also introduce a 1D approximation of Born-Oppenheimer type for the guides and the triangles. This helps to understand the concentration of eigenfunctions near the origin. Finally we state our results concerning eigenvalue asymptotics for all these model problems.

\subsection{Half-guide and triangles}

\subsubsection{The half-guide}
As a consequence of the parity properties of the eigenvectors of $\Delta^{\Dir}_{\Om_{\theta}}$, cf. point (iv) of Proposition \ref{P:ess}, we can reduce the spectral problem to the  half-guide
\begin{equation}
\label{E:Omthetaplus}
   \Omega_{\theta}^+=\left\{(x_{1},x_{2})\in \Omega_\theta : \ 
   x_{2}>0\right\}.
\end{equation}
We define the Dirichlet part of the boundary by
$\partial_\Dir\Omega^+_{\theta} = 
\partial\Omega_{\theta} \cap \partial\Omega^+_{\theta}$,
and the corresponding variational space (the form domain)
\[
   H^1_\Mix(\Omega^+_{\theta}) = \big\{\psi\in H^1(\Omega^+_{\theta}):\quad 
   \psi=0 \ \mbox{ on } \ \partial_\Dir\Omega^+_{\theta} \big\}.
\]
Then the new operator of interest, denoted by $\Delta^\Mix_{\Omega_{\theta}^+}$, is the Laplacian with mixed Dirichlet-Neumann conditions on $\Omega_{\theta}^+$. Its domain is:
\[
  \Dom(\Delta^\Mix_{\Omega_{\theta}^+}) =
  \big\{ \psi\in H^1_\Mix(\Omega_{\theta}^+) : \ \ \Delta\psi\in L^2(\Omega_{\theta}^+)
  \ \ \mbox{and}\ \  \partial_{2}\psi=0 \ \mbox{ on }\ x_{2}=0 \big\}.
\]
Then the operators $\Delta^{\Dir}_{\Om_{\theta}}$ and $\Delta^\Mix_{\Omega_{\theta}^+}$ have the same eigenvalues below $1$ and the eigenvectors of the latter are the restriction to $\Omega_{\theta}^+$ of the former.

\subsubsection{Rescaling of the half-guide}
In order to analyze the asymptotics $\theta\to0$, it is useful to rescale the integration domain and transfer the dependence on $\theta$ into the coefficients of the operator. For this reason, let us perform the following linear change of coordinates:
\begin{equation}
\label{E:xy}
   x=x_{1}\sqrt{2}\sin\theta, \quad y=x_{2}\sqrt{2}\cos\theta,
\end{equation}
which maps $\Omega^+_\theta$ onto $\Omega^+_{\pi/4}$ which will serve as reference domain, see Fig.\ \ref{F:2}. That is why we set for simplicity
\begin{equation}
\label{E:Omega}
   \Omega := \Omega^+_{\pi/4}\,, \ \ 
   \partial_\Dir\Omega = \partial_\Dir \Omega^+_{\pi/4}\,, \ \ \mbox{and}\ \ 
   H^1_\Mix(\Omega) = \big\{\psi\in H^1(\Omega): 
   \psi=0 \ \mbox{ on } \ \partial_\Dir\Omega \big\}.
\end{equation}

\begin{figure}[ht]
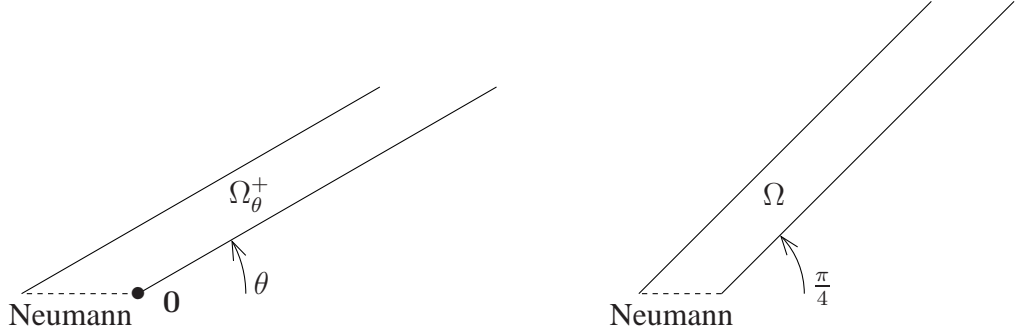

    \figinit{1.1mm}
    \figpt 1:(  0,  0)
    \figpt 2:( 50,  0)
    \figptrot 3: = 2 /1, 30/
    \figpt 4:(-14.1,  0)
    \figvectP 101 [1,4]
    \figptstra 14 = 3/1, 101/
    \figpt 31:(  0,  0)
    \figpt 32:( 50,  0)
    \figptrot 33: = 32 /31, 45/
    \figpt 34:(-10,  0)
    \figvectP 301 [31,34]
    \figptstra 44 = 33/1, 301/
    \figptstra 50 = 31, 33, 34, 44/-5, 101/

    \def\MyPSfile{}
    \psbeginfig{}
    \psline[1,3]
    \psline[4,14]
    \psline[50,51]
    \psline[52,53]
    \psarrowcirc 1 ; 13 (0,30)
    \psarrowcirc 50 ; 10 (0,45)
    \psset (dash=4)
    \psline[1,4]
    \psline[50,52]
  \psendfig

    \figvisu{\figbox}{}{%
    \figwritebe 1 :{$\theta$}(14)
    \figwritegce 1 :{$\Omega_{\theta}^+$}(11,12)
    \figwritebe 50 :{$\frac\pi4$}(11)
    \figwritegce 50 :{$\Omega$}(5,12)
    \figwritegce 4 :{Neumann}(-1.5,-2.5)
    \figwritegce 52 :{Neumann}(-3.5,-2.5)
    \figsetmark{$\bullet$}
    \figwritegce 1 :{${\bf 0}$}(3,-1.)
    }
\centerline{\box\figbox}
\caption{The half-guide $\Omega_{\theta}^+$ for $\theta=\frac\pi6$ and the reference domain $\Omega$.}
\label{F:2}
\end{figure}

Then, $\Delta^{\Mix}_{\Omega_{\theta}^+}$ is unitarily equivalent to the operator defined on $\Omega$ by:
\begin{equation}
\label{E:DGui}
 \D_\Gui(\theta) := -2\sin^2\!\theta\,\dr_{x}^2-2\cos^2\!\theta\,\dr_{y}^2,
\end{equation}
with Neumann condition on $y=0$ and Dirichlet everywhere else on the boundary of $\Om$. 
We let $h=\tan\theta$ ; after a division by $2\cos^2\theta$, we get the new operator:
\begin{equation}
\label{E:LGui}
   \L_{\Gui}(h) = -h^2\dr_{x}^2-\dr_{y}^2,
\end{equation}
with domain:
\[
  \Dom(\L_{\Gui}(h))=\big\{\psi\in H^1_\Mix(\Om) :  \ \ \L_{\Gui}(h)\psi\in L^2(\Om)
  \ \ \mbox{and}\ \  
  \dr_{y}\psi=0 \ \mbox{ on } \ y=0 \big\}.
\]

\subsubsection{The triangles}
\label{S133}
We will also need to introduce the triangular end of this waveguide:
\begin{equation}
\label{E:Tritheta}
   \Tri_{\theta}=\left\{(x_{1},x_{2})\in\Omega_\theta:\  x_1<0\right\}
\end{equation}
and the corresponding Dirichlet Laplacian denoted by $\Delta^{\Dir}_{\Tri_{\theta}}$.

Prior to the investigation of $\L_{\Gui}(h)$, we are to going to study $\L_{\Tri}(h)$ which denotes the same operator $-h^2\dr_{x}^2-\dr_{y}^2$ with Dirichlet conditions on the triangular end $\Tri$ of the model waveguide $\Omega_{\pi/4}$ 
\begin{equation}
\label{E:Tri}
   \Tri=\left\{(x,y)\in\R^2 : -\pi\sqrt{2}<x<0 \mbox{ and } |y|<x+\pi\sqrt{2} \right\} .
\end{equation}

\subsection{Born-Oppenheimer approximation and models}
As mentioned at the beginning of this section, we will use a projection method to analyze $\L_{\Gui}(h)$. This method is based on the original idea of Born and Oppenheimer (see \cite{BO27}) which was used to study the Hamiltonian of molecules (see \cite{CDS81, Martinez89, KMSW92}). By analogy with this situation, we can say that, in this paper, $x$ plays the role of the nuclei variables whereas $y$ plays the role of the electrons and where $h$ would represent a mass ratio. The variable $x$ is sometimes said to be the slow variable and $y$ the fast variable.
Therefore we will broaden the "molecular idea" to our waveguide situation.

\subsubsection{Schr\"{o}dinger operators in one dimension}
In the analysis of $\L_{\Tri}(h)$ and $\L_{\Gui}(h)$, we will see that its so-called Born-Oppenheimer approximation will play an important role:
\begin{subequations}
\label{HG}
\begin{equation}
\mathcal{H}_{\BO, \Gui}(h)=-h^2\dr_{x}^2+V(x),
\end{equation}
where 
\begin{equation}
V(x)=\left\{\begin{array}{cl}
\di\frac{\pi^2}{4(x+\pi\sqrt{2})^2}&\mbox{ when } x\in(-\pi\sqrt{2},0),\\[2.5ex]
\di\frac{1}{2}&\mbox{ when } x\geq 0.
\end{array}\right.\end{equation}
\end{subequations}

\begin{remark}
This \enquote{approximation} will be justified afterwards and will not be directly used in the investigation. Nevertheless it already gives a non trivial insight of some fine structures appearing in the analysis (such as the different scalings and the transmission problem between the left and right parts of the waveguide).
\end{remark}

This effective potential $V$ is obtained by replacing $-\dr^2_{y}$ in the expression of $\L_{\Gui}(h)$ by its lowest eigenvalue on each slice of $\Omega$ at fixed $x$.
When $h$ goes to zero, the behavior of the ground eigenpairs of $\mathcal{H}_{\BO, \Gui}(h)$ is driven by the structure of the potential near its minimum, attained at $x=0$: In a neighborhood of $x=0$, $V$ can be approximated by its left and right tangents, which provides the approximate potential $V_\app$ defined by 
\begin{equation}\label{Vapp}
V_\app(x)=\left\{\begin{array}{cl}
\di \frac18-\frac{1}{4\pi\sqrt{2}}\,x&\mbox{ when } x\in(-\pi\sqrt{2},0),\\[2.5ex]
\di\frac{1}{2}&\mbox{ when } x\geq 0.
\end{array}\right.
\end{equation}
After the change of variables $z=\sqrt{2}x/(3\pi)$ and the change of parameter $\kappa=4h/(3\pi\sqrt{3})$, we find the correspondence
\begin{equation}
\label{1toy}
   -h^2\dr_{x}^2+V_\app(x) \sim \frac38\mathcal{H}_{\toy}(\kappa)[z;\partial_z]+\frac18
\end{equation}
where the toy model operator $\mathcal{H}_{\toy}(\kappa)[z;\partial_z]$ is defined as:
\begin{equation}\label{toy}
   \mathcal{H}_{\toy}(\kappa)=-\kappa^2\dr_{z}^2+W(z)
   \quad\mbox{with}\quad
   W(z)=\left\{\begin{array}{cl}
  -z&\mbox{ when } z \le 0, \\
  1&\mbox{ when } z \geq 0.
\end{array}\right.
\end{equation}
This toy model invites us to recall the properties of the Airy operator.

\subsubsection{The Airy function and its zeros}
Let us recall the basic properties of the Airy operator, i.e.\ the Dirichlet realization on $L^2(\mathbb{R}_{-})$ of the operator $-\dr_{z}^2-z$. The electric potential tending to infinity when $z\to-\infty$, this positive operator has compact resolvent. Thus, its spectrum can be described as an increasing sequence of eigenvalues tending to $+\infty$. Let us use the traditional notation $\mathsf{Ai}$ for the Airy function. We recall that it satisfies: 
$$-\mathsf{Ai}''+z\mathsf{Ai}=0.$$
All along this paper, we will use $\A$ the reverse Airy function, i.e. $\A(z)=\mathsf{Ai}(-z)$. We recall that $\A$ does not vanish on $\mathbb{R}_{-}$, is exponentially decreasing when $z\to-\infty$ and that its zeros (which are simple) form an increasing sequence of positive numbers tending to $+\infty$.

\begin{notation}
 The $n$-th zero of $\A$ are denoted by $z_{\A}(n)$.
\end{notation}

\noindent If $(\la,\psi_\la)$ is an eigenpair of the Airy operator, we have
$-\psi_{\la}''-z\psi_{\la}=\la\psi_{\la}$, hence the equation $-\psi_{\la}''-(z+\la)\psi_{\la}=0$. We deduce that there exists a number $c(\la)$ so that:
$$\psi_{\la}(z)=c(\la)\A(z+\la).$$
With those remarks, we can see that the spectrum of the Airy operator is $\{z_{\A}(n),n\geq1\}$ and these eigenvalues are simple.

\subsubsection{Born-Oppenheimer approximation on the triangle} 
Finally, let us introduce the Dirichlet realization on $L^2((-\pi\sqrt{2},0))$ of:
\begin{equation}\label{HT}
\mathcal{H}_{\BO, \Tri}(h)=-h^2\dr_{x}^2+\frac{\pi^2}{4(x+\pi\sqrt{2})^2}\,.
\end{equation}
This operator is the Born-Oppenheimer \enquote{approximation} of the operator $\L_\Tri(h)$ on the triangle $\Tri$. The proof that it is actually an approximation will be done in Subsection \ref{Reduction-to-BO} through the Feshbach projection: Indeed the operator $\mathcal{H}_{\BO, \Tri}(h)$ has the same two-term eigenvalue asymptotics as the operator $\mathcal{L}_{\Tri}(h)$ on the triangle.

\subsection{Asymptotic expansions of eigenvalues}
We are now in position to state the results on eigenvalue expansion that we have proved in this paper.

\subsubsection{One-dimensional models} The lowest eigenvalues of the toy model \eqref{toy} admit analytic expansions with respect to $\kappa^{1/3}$ (when $\kappa$ is small enough):

\begin{theorem}\label{spectrumtoy}
For all $N_{0}\in\mathbb{N}$, there exists $\kappa_{0}>0$ such that, for $\kappa\in(0,\kappa_{0})$, there exists at least $N_{0}$ eigenvalues of $\mathcal{H}_{\toy}(\kappa)$ below $1$. Denoting by $\la_{\toy,n}(\kappa)$ the increasing sequence of these eigenvalues, we have the converging expansions for $1\leq n\leq N_{0}$ and $\kappa$ small enough:
$$
   \la_{\toy,n}(\kappa)=\kappa^{2/3}\sum_{j=0}^{+\infty}\alpha_{j,n}\kappa^{j/3}
   \quad
   \mbox{with first coefficient} \ \ \alpha_{0,n}=z_{\A}(n).
$$
The corresponding eigenvectors have expansions in powers of $\kappa^{1/3}$ with the scales $z/\kappa^{2/3}$ when $z<0$ and $z/h$ when $z>0$, see \eqref{E3:2b}-\eqref{E3:2c}.
\end{theorem}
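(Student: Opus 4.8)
The plan is to analyze $\mathcal{H}_{\toy}(\kappa) = -\kappa^2\partial_z^2 + W(z)$ directly as an ODE problem on $\mathbb{R}$, exploiting that $W$ is piecewise linear so the equation is explicitly solvable on each half-line. On $z<0$ the equation $-\kappa^2 u'' - z u = \lambda u$ becomes, after the natural rescaling $\zeta = \kappa^{-2/3}(z+\lambda)$, exactly the reverse Airy equation, so every $L^2(\mathbb{R}_-)$-admissible solution is proportional to $\A\bigl(\kappa^{-2/3}(z+\lambda)\bigr)$ up to the contribution decaying the wrong way; more precisely the solution space is two-dimensional (Airy $\mathsf{Ai}$ and $\mathsf{Bi}$ types) and one must keep the full solution since we are on a half-line with a matching condition at $0$, not a boundary condition. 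On $z>0$ the equation $-\kappa^2 u'' + u = \lambda u$ has the decaying solution $u(z) = e^{-\sqrt{1-\lambda}\,z/\kappa}$ for $\lambda<1$. The eigenvalue condition is then the $C^1$-matching at $z=0$: continuity of $u$ and of $u'$, which after eliminating constants becomes a single transcendental equation
\begin{equation*}
   \kappa^{1/3}\,\frac{\A'\bigl(\kappa^{-2/3}\lambda\bigr)}{\A\bigl(\kappa^{-2/3}\lambda\bigr)} \;=\; -\sqrt{1-\lambda}\,,
\end{equation*}
(modulo the precise bookkeeping of which Airy combination survives integrability at $-\infty$, which forces the $\mathsf{Ai}$ branch and gives exactly $\A$).

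Next I would study this equation in the regime $\kappa\to 0$. Writing $\lambda = \kappa^{2/3}\Lambda$, the left-hand side becomes $\A'(\Lambda)/\A(\Lambda)$ up to $O(\kappa^{2/3})$ corrections, while the right-hand side is $-\sqrt{1-\kappa^{2/3}\Lambda} = -1 + O(\kappa^{2/3})$. So to leading order the eigenvalue condition degenerates to $\A'(\Lambda)/\A(\Lambda) = $ something that blows up — i.e. $\A(\Lambda)\to 0$ — which pins $\Lambda$ near a zero $z_{\A}(n)$ of $\A$, giving $\alpha_{0,n} = z_{\A}(n)$. To get the full expansion I would apply the analytic implicit function theorem: set $\varepsilon = \kappa^{1/3}$, treat $\Lambda$ as the unknown near $z_{\A}(n)$, rewrite the matching equation in the form $F(\Lambda,\varepsilon) = 0$ where $F$ is analytic in both variables near $(z_{\A}(n),0)$ (here one divides through so that $F(\Lambda,0) = \A(\Lambda)\cdot(\text{nonvanishing})$, using simplicity of the zeros $z_{\A}(n)$ so that $\partial_\Lambda F \neq 0$ there), and conclude that $\Lambda = \Lambda_n(\varepsilon)$ is analytic in $\varepsilon$ near $0$. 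This yields $\lambda_{\toy,n}(\kappa) = \kappa^{2/3}\sum_{j\ge 0}\alpha_{j,n}\kappa^{j/3}$ as a convergent series. The count of eigenvalues below $1$ — at least $N_0$ for $\kappa$ small — follows either from the same analysis (each zero $z_{\A}(n)$ with $n\le N_0$ produces a branch with $\lambda < 1$ once $\kappa$ is small) or from a min-max comparison with the Airy operator on a bounded interval.

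For the eigenvector statement I would simply read off the eigenfunction from the explicit solution: on $z<0$ it is $c\,\A\bigl(\kappa^{-2/3}(z+\lambda)\bigr) = c\,\A\bigl(z/\kappa^{2/3} + \Lambda\bigr)$, exhibiting the scale $z/\kappa^{2/3}$, and on $z>0$ it is $c'\,e^{-\sqrt{1-\lambda}\,z/\kappa}$, exhibiting the scale $z/\kappa$ (equivalently $z/h$ up to the constant relating $\kappa$ and $h$); normalizing and expanding the constants $c,c'$ and $\lambda$ in powers of $\kappa^{1/3}$ gives the claimed form. One should also check that there is no $\mathsf{Bi}$-type contribution on $z<0$: integrability at $-\infty$ of the full solution forces it, since $\mathsf{Bi}$ grows. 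Alternatively, and perhaps more in the spirit of the rest of the paper, one constructs quasimodes by truncating these explicit expansions and then invokes the spectral theorem plus a lower bound on the spectral gap; I would present the direct ODE route as the main argument since it is both rigorous and transparent here.

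The main obstacle I anticipate is not any single hard estimate but the careful handling of the matching at the discontinuity $z=0$ of $W$ together with the interplay of the two scales: one must be scrupulous that the operator $\mathcal{H}_{\toy}(\kappa)$ is self-adjoint with the natural form domain $H^1(\mathbb{R})$ (so that eigenfunctions are automatically $C^1$ across $0$ despite the jump in $W$), that the relevant solution on $(-\infty,0)$ is genuinely the $\A$-branch and not a mixture, and that the function $F(\Lambda,\varepsilon)$ obtained after clearing denominators is jointly analytic with non-degenerate $\Lambda$-derivative at $(z_{\A}(n),0)$ — this last point is exactly where simplicity of the Airy zeros (recalled in the excerpt) is essential. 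Once these structural points are nailed down, the analytic implicit function theorem does the rest and the coefficients $\alpha_{j,n}$ are determined recursively by formal substitution.
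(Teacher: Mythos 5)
Your proposal follows essentially the same route as the paper: the paper's Proposition~\ref{toyfirstterm} sets up exactly the matching condition you derive (equation~\eqref{rel}, equivalent to your transcendental equation), and the final step of the paper's proof of Theorem~\ref{spectrumtoy} rewrites it in the form~\eqref{analyticdelta} and applies the analytic implicit function theorem after the same substitution $\delta=\kappa^{1/3}$, $\breve\lambda=\kappa^{-2/3}\lambda$. The explicit $\A$ on the left half-line and the exponential on the right, with $C^1$-matching at $0$, are likewise the paper's starting point.

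There is, however, a genuine gap that the paper treats carefully and you gloss over. Your argument that the matching condition ``pins $\Lambda$ near a zero of $\A$'' uses that $\kappa^{1/3}\A'(\Lambda)/\A(\Lambda)\approx -1$ forces $\A(\Lambda)$ small, but this only works if you already know $\Lambda=\kappa^{-2/3}\lambda$ stays in a compact set as $\kappa\to0$. For $\lambda\in(0,1)$, $\Lambda$ can a priori range up to $\kappa^{-2/3}$, and since $\A$ oscillates with decaying amplitude and increasingly dense zeros, the ``near a zero'' conclusion has no content without an a priori bound. The paper obtains $\lambda_{\toy,n}(\kappa)\le C(N_0)\kappa^{2/3}$ from the quasimode construction (Proposition~\ref{quasitoy}, estimate~\eqref{E3:8}) \emph{before} running the ODE argument; your passing mention of a min-max comparison with the Airy operator is indeed a valid substitute, but it is an essential ingredient, not an optional alternative for ``counting eigenvalues below $1$,'' and you should promote it to a load-bearing step. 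A second gap is the indexing: once you know the $n$-th eigenvalue is near \emph{some} zero $z_{\A}(p(n,\kappa))$, you still must show $p(n,\kappa)=n$. The paper does this with an orthogonality argument (two eigenfunctions concentrated near the same $z_{\A}(p)$ would have a nonvanishing $L^2$ inner product, contradicting orthogonality). Your implicit-function-theorem branches are distinct and monotone, which can be made to yield the same conclusion, but the proposal as written does not carry this out. Once these two points are filled in, the rest of your argument — including the remark that the form domain $H^1(\R)$ enforces $C^1$-matching despite the jump in $W$, and that integrability at $-\infty$ kills the $\mathsf{Bi}$-type branch — agrees with the paper's proof.
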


As already mentioned we will meet in our investigation the Born-Oppenheimer approximations of $\mathcal{L}_{\Tri}(h)$ and $\mathcal{L}_{\Gui}(h)$. In order to compare the different asymptotics, let us state the result about the eigenvalues of $\mathcal{H}_{\BO, \Tri}(h)$: \begin{equation}\label{EspectrumBOT}
   \la_{\BO, \Tri,n}(h)\underset{h\to 0}{\sim}\sum_{j\geq 0}\hat{\beta}_{j,n}h^{2j/3}
   \ \mbox{with}\ \hat{\beta}_{0,n}=\frac{1}{8} \ \mbox{and}\ 
   \hat{\beta}_{1,n}=(4\pi\sqrt{2})^{-2/3}z_{\A}(n),
\end{equation}
and about the eigenvalues of $\mathcal{H}_{\BO, \Gui}(h)$:
\begin{equation}\label{spectrumBOG}
   \la_{\BO, \Gui,n}(h)\underset{h\to 0}{\sim}\sum_{j\ge0}\hat\gamma_{j,n}h^{j/3}
   \ \mbox{with} \ \hat\gamma_{0,n}=\frac{1}{8}, \ \hat\gamma_{1,n}=0,
   \ \mbox{and}\ \hat\gamma_{2,n}=(4\pi\sqrt{2})^{-2/3}z_{\A}(n).
\end{equation}
Let us point out that this latter estimate will not be used to prove our main theorem (see Theorem \ref{spectrumguide}) but somehow reflects that $\mathcal{H}_{\BO, \Gui}(h)$ is an approximation of $\mathcal{L}_{\Gui}(h)$.

\subsubsection{Triangles} 
The lowest eigenvalues of the triangle $\Tri_\theta$ admit expansions at any order in powers of $\theta^{1/3}$. We first state the result for the scaled operator $\mathcal{L}_{\Tri}(h)$ introduced in \S\ref{S133}:

\begin{theorem}\label{spectrumtriangle}
The eigenvalues of $\mathcal{L}_{\Tri}(h)$, denoted by $\la_{\Tri,n}(h)$, admit the expansions:
$$
   \la_{\Tri,n}(h)\underset{h\to 0}{\sim}\sum_{j\ge0}\beta_{j,n}h^{j/3}
   \quad
   \mbox{with} \ \ \beta_{0,n}=\frac{1}{8}, \ \ \beta_{1,n}=0,
   \ \ \mbox{and}\ \ \beta_{2,n}=(4\pi\sqrt{2})^{-2/3}z_{\A}(n),
$$
the terms of odd rank being zero for $j\leq 8$.
The corresponding eigenvectors have expansions in powers of $h^{1/3}$ with the two scales $x/h^{2/3}$ and $x/h$, see \eqref{quasi-eigenfunction-Tri}.
\end{theorem}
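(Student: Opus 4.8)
The plan is to analyze $\mathcal{L}_{\Tri}(h)=-h^2\dr_x^2-\dr_y^2$ on the reference triangle $\Tri$ by a Born-Oppenheimer (Feshbach) reduction in the slow variable $x$, combined with a WKB/quasimode construction in the spirit of Section \ref{3}. The starting point is the fibered structure: on the slice $\{x\}\times(-\pi\sqrt 2-x,\;\pi\sqrt 2+x)$, the transverse Dirichlet operator $-\dr_y^2$ has lowest eigenvalue $V(x)=\pi^2/\big(4(x+\pi\sqrt 2)^2\big)$ with normalized eigenfunction $e_1(x;y)$ depending smoothly on $x$ away from the vertex. I would first set up the orthogonal projection $\Pi$ onto $e_1(x;\cdot)$ in each fiber and write $\mathcal{L}_{\Tri}(h)=\Pi\mathcal{L}_{\Tri}(h)\Pi + \text{(remainder)}$; the diagonal part $\Pi\mathcal{L}_{\Tri}(h)\Pi$ is, up to lower-order terms in $h$, the operator $\mathcal{H}_{\BO,\Tri}(h)=-h^2\dr_x^2+V(x)$ of \eqref{HT}, whose eigenvalues are \eqref{EspectrumBOT}. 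The off-diagonal coupling $(I-\Pi)\mathcal{L}_{\Tri}(h)\Pi$ is $O(h)$ in the appropriate sense because it only involves $-h^2\dr_x^2$ hitting the $x$-dependence of $e_1$, and the transverse spectral gap ($3V(x)$ versus $V(x)$, i.e.\ bounded below by a positive constant uniformly away from the vertex, and going to $+\infty$ near the vertex) lets one invert $(I-\Pi)(\mathcal{L}_{\Tri}(h)-\lambda)(I-\Pi)$ on the complementary subspace. This gives the two-term asymptotics $\beta_{0,n}=\tfrac18$, $\beta_{1,n}=0$, $\beta_{2,n}=(4\pi\sqrt2)^{-2/3}z_\A(n)$ — matching \eqref{EspectrumBOT} — with a remainder $O(h)$ relative to the $h^{2/3}$ scale, hence sharp enough at this order.

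To reach the \emph{full} expansion (and the statement that odd-rank terms vanish for $j\le 8$), I would instead construct quasimodes directly. Guided by the toy model \eqref{toy} and \eqref{1toy}, near the vertex $x=-\pi\sqrt2$ one expects the dominant balance to involve the rescaled variable; but the well of $V$ sits at $x=0$ (the right edge of $\Tri$, where $V$ jumps from $\tfrac18$ to the Neumann-free right part), so actually the relevant concentration is near $x=0$, with inner scale $x/h^{2/3}$ governed by the Airy operator (linearizing $V$ as in $V_\app$, \eqref{Vapp}) and a boundary layer of scale $x/h$ near $x=0$ to match the transition in the effective potential — exactly the two scales announced. Concretely: write an Ansatz $\psi\sim \sum_{j\ge0} h^{j/3}\big(u_j(x/h^{2/3})+\text{(corrector in }x/h)\big)\,e_1(x;y) + h^{j/3}(\text{transverse correctors})$, insert into $(\mathcal{L}_{\Tri}(h)-\lambda)\psi=0$ with $\lambda\sim\sum\beta_{j,n}h^{j/3}$, and solve the cascade of equations: the leading one is the Airy eigenvalue problem on $\R_-$ fixing $\beta_{2,n}$ via $z_\A(n)$, and each subsequent order is a solvable inhomogeneous Airy equation (Fredholm alternative against the Airy eigenfunction fixes $\beta_{j,n}$), together with transmission/matching conditions at $x=0$ handled by the boundary-layer terms. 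The vanishing of odd coefficients through order $8$ comes from a parity/symmetry feature of the successive right-hand sides analogous to what happens in the toy model analysis of Section \ref{3}; I would track this parity carefully rather than prove it in general. Then I would invoke the spectral theorem to convert the quasimode estimate into eigenvalue proximity, and use the spectral gap of order $h^{2/3}$ (from the already-established two-term expansion, since the $z_\A(n)$ are distinct) together with a dimension-counting argument to show that the constructed quasimodes capture \emph{all} low-lying eigenvalues, so the expansions are genuine asymptotic series for $\la_{\Tri,n}(h)$.

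The main obstacle I expect is the treatment of the transmission/matching at $x=0$ and the interaction with the non-smooth, jumping effective potential $V$ there (and, relatedly, the corner geometry of $\Tri$ at the two vertices $(0,\pm\pi\sqrt2)$, and the vertex at $(-\pi\sqrt2,0)$): the transverse eigenfunction $e_1(x;y)$ and its $x$-derivatives must be controlled uniformly, the Feshbach remainder estimates must be shown to survive down to a scale finer than any fixed power of $h^{1/3}$, and the boundary-layer correctors in the scale $x/h$ must be glued to the Airy-scale part without destroying the $H^1_0$ (or domain) membership. A secondary but still delicate point is proving the exact vanishing of the odd-order coefficients up to $j=8$, which requires an honest bookkeeping of the recursive structure rather than a soft argument. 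The Agmon-type localization estimates (announced as the tool for passing from the triangle to the guide in Section \ref{5}) will also be needed here to justify that the eigenfunctions are exponentially small away from a neighborhood of $x=0$, legitimizing the local Ansatz.
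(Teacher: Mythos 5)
Your overall strategy -- construct two-scale quasimodes, derive the Airy structure from a Born--Oppenheimer reduction, localize with Agmon estimates, and separate the eigenvalues via a lower bound -- is indeed the route the paper takes, and your identification of the two scales $x/h^{2/3}$ and $x/h$ is correct. But the justification you give for the scale $x/h$ rests on a misconception about the triangle: you say the boundary layer is needed ``to match the transition in the effective potential'' at $x=0$, where ``$V$ jumps from $\tfrac18$ to the Neumann-free right part''. That jump is a feature of the full guide $\Omega$, not of $\Tri$: in the triangle, $x=0$ is a plain Dirichlet edge and the Born--Oppenheimer potential $V(x)=\pi^2/(4(x+\pi\sqrt2)^2)$ does not jump. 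The true reason for the $x/h$ scale is two-dimensional: the 1D Born--Oppenheimer Ansatz (only $s=x/h^{2/3}$ in $\Psi_j$) eventually fails because the cross terms in the operator generate source terms that are \emph{not} orthogonal to the transverse mode $c_0$, so the $\Psi_j$ acquire transverse components $\Psi_j^\perp$ whose traces at $x=0$ violate the Dirichlet condition; the correctors $\Phi_j(\sigma,t)$ at scale $\sigma = x/h$ are exponentially decaying solutions on the half-strip that repair this trace (this is precisely the Saint-Venant-type Lemma \ref{lem-N0}). The paper makes this tractable by first mapping $\Tri$ to the fixed rectangle $\Rec$ via $u=x$, $t=y/(x+\pi\sqrt2)$ -- a step absent from your plan -- which freezes the transverse interval and lets the Fredholm machinery (Lemmas \ref{lem-N0}, \ref{lem-L0}, \ref{lem-Ai}) run order by order; working directly with the $x$-dependent transverse eigenfunction $e_1(x;y)$ as you propose would force you to reproduce these facts in a frame-dependent form.

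Two further points. Your attribution of the vanishing odd coefficients ($j\le 8$) to ``parity/symmetry of the right-hand sides, as in the toy model'' is not what is happening; there is no such symmetry in the toy model either. In the paper the mechanism is structural: on the Airy scale the operator expands only in powers of $h^{2/3}$ (see \eqref{E:Lj}), so odd-rank contributions can only enter through the boundary-layer terms $\Phi_j$, which vanish for $j\le 3$, first become nonzero at $j=4$, and first couple into a quasi-eigenvalue coefficient at $j=9$ via the trace $g_7(0)$ and the integral \eqref{neq0}. Proving the statement thus requires running the recursion explicitly through order $h^{9/3}$, not a soft parity argument. Finally, for the lower bound (the claim that the constructed quasimodes catch \emph{all} low-lying eigenvalues), ``dimension counting plus the $h^{2/3}$ gap'' is the right idea but needs a concrete implementation: the paper tests the 1D Born--Oppenheimer operator with $\Pi_0\hat\psi_n$, the projections of the true 2D eigenfunctions onto $c_0$, and controls the commutator of $\Pi_0$ with $\L_\Rec(h)$ through the weighted estimates of Lemmas \ref{Lpr}--\ref{L3} (which in turn rely on the Agmon estimate near the vertex $x=-\pi\sqrt2$, Proposition \ref{Agmon2'}). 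Your Feshbach remainder estimates would need to be carried out to precisely this level of detail, including the weight $(x+\pi\sqrt2)^{-k}$ degeneracy near the sharp vertex, which you flag as an obstacle but do not resolve.
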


In terms of the physical domain $\Tri_\theta$, we deduce immediately from the previous theorem that the eigenvalues of $\Delta^{\Dir}_{\Tri_{\theta}}$, denoted by $\mu_{\Tri,n}(\theta)$, admit the expansions:
$$
   \mu_{\Tri,n}(\theta)\underset{h\to 0}{\sim}
   \sum_{j\ge0}\beta^\Delta_{j,n}\theta^{j/3}
   \quad
   \mbox{with} \ \ \beta^\Delta_{0,n}=\frac{1}{4}, \ \ \beta^\Delta_{1,n}=0,
   \ \ \mbox{and}\ \ \beta^\Delta_{2,n}=2(4\pi\sqrt{2})^{-2/3}z_{\A}(n),
$$
the coefficients $\beta^\Delta_{j,n}$ having the same properties as the $\beta_{j,n}$.
Performing the dilatation:
$$\tilde{x}_{1}=\sin2\theta\, x_{1} \quad \tilde{x}_{2}=\sin2\theta\, x_{2},$$
we transform $\Tri_\theta$ into a new isosceles triangle with angle $\alpha=2\theta$ and two sides with length  $c=2\pi$. Let us denote by $\mu_{\widetilde{\Tri},n}(\alpha)$ its Dirichlet eigenvalues. It is easy to see that the eigenvalues satisfy the relation:
$$\mu_{\Tri,n}(\theta)=(\sin\alpha)^2\mu_{\widetilde{\Tri},n}(\alpha),$$
so that we find back the result of \cite[Theorem 1]{Fre07}.

\begin{remark}
As it will be seen in the proof, the existence of a non-zero coefficient $\beta_{9,n}$ at the order $9$ in the expansion of $\la_{\Tri,n}(h)$ reduces to the evaluation of an integral, see \eqref{neq0}. If $\beta_{9,n}\neq0$, there is a nonzero odd term after $\OO(\alpha^{2/3})$ in the asymptotics of $\mu_{\widetilde{\Tri},1}(\alpha)$.
\end{remark}

\subsubsection{Broken guides}
Finally, we state the approximation result for the eigenvalues of the scaled operator $\mathcal{L}_{\Gui}(h)$ introduced in \eqref{E:LGui}:

\begin{theorem}\label{spectrumguide}
For all $N_{0}$, there exists $h_{0}>0$, such that for $h\in(0,h_{0})$ the $N_{0}$ first eigenvalues of $\mathcal{L}_{\Gui}(h)$ exist. These eigenvalues, denoted by $\la_{\Gui,n}(h)$, admit the expansions:
$$
   \la_{\Gui,n}(h)\underset{h\to 0}{\sim}\sum_{j\ge0}\gamma_{j,n}h^{j/3}
   \quad
   \mbox{with} \ \ \gamma_{0,n}=\frac{1}{8}, \ \ \gamma_{1,n}=0,
   \ \ \mbox{and}\ \ \gamma_{2,n}=(4\pi\sqrt{2})^{-2/3}z_{\A}(n)
$$
and the term of order $h$ is not zero. The corresponding eigenvectors have expansions in powers of $h^{1/3}$ with the scale $x/h$ when $x>0$, and both scales $x/h^{2/3}$ and $x/h$ when $x<0$, see \eqref{QuasiLgui}.
\end{theorem}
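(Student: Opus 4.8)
The plan is to establish Theorem~\ref{spectrumguide} by a two-sided estimate: an \emph{upper bound} obtained from an explicit multi-scale quasimode construction, and a \emph{lower bound} obtained from Agmon estimates combined with a comparison with the triangle operator $\mathcal{L}_{\Tri}(h)$, whose asymptotics is given by Theorem~\ref{spectrumtriangle}. The existence of the $N_{0}$ eigenvalues and the full expansion will then follow from the control of the spectral gap that these bounds provide.

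\emph{Quasimodes and upper bounds.} For each $n\le N_{0}$ and each $J$ we construct an approximate eigenpair $(\mu^{(J)}_{n}(h),\Psi^{(J)}_{n}(h))$ with $\mu^{(J)}_{n}(h)=\sum_{0\le j\le J}\gamma_{j,n}h^{j/3}$ and $\|(\mathcal{L}_{\Gui}(h)-\mu^{(J)}_{n}(h))\Psi^{(J)}_{n}(h)\|\le C_{J}h^{(J+1)/3}\|\Psi^{(J)}_{n}(h)\|$, the family $(\Psi^{(J)}_{n}(h))_{n}$ being quasi-orthogonal. The ansatz for $\Psi^{(J)}_{n}(h)$ is the multi-scale object of \eqref{QuasiLgui}: in the triangular part $\{x<0\}$, a tensor series in the slow scale $x/h^{2/3}$ and the transverse variable, with leading term $f_{0,n}(x/h^{2/3})\,\varphi_{0}(x,y)$, where $\varphi_{0}(x,\cdot)$ is the first transverse mode on the slice of $\Omega$ at abscissa $x$ and, in the rescaled variable, $f_{0,n}=\A(\,\cdot\,+z_{\A}(n))$, dictated by the approximate potential \eqref{Vapp}, so that the asymptotically Dirichlet behaviour at $x=0$ brings in the zero $z_{\A}(n)$ exactly as in the toy model \eqref{toy} and in $\mathcal{H}_{\BO,\Tri}(h)$; plus boundary-layer corrections at the scale $x/h$ near the vertex; and, in the strip $\{x>0\}$, an evanescent profile at the scale $x/h$. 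These contributions are coupled at the interface $x=0$ by a transmission problem that we close using the Dirichlet-to-Neumann operators of the two half-strips; solving it order by order in $h^{1/3}$ produces the coefficients $\gamma_{j,n}$, and in particular the one of order $h$, whose non-vanishing reduces, as in the Remark following Theorem~\ref{spectrumtriangle}, to that of an explicit integral. Since the values $\gamma_{2,n}=(4\pi\sqrt2)^{-2/3}z_{\A}(n)$ are pairwise distinct and lie near $\tfrac18$, strictly below the bottom $\tfrac12(1+h^{2})$ of the essential spectrum (Proposition~\ref{P:ess}), the spectral theorem gives that $\mathcal{L}_{\Gui}(h)$ has at least $N_{0}$ eigenvalues for $h$ small and that $\la_{\Gui,n}(h)\le\sum_{0\le j\le J}\gamma_{j,n}h^{j/3}+C_{J}h^{(J+1)/3}$.

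\emph{Localisation and lower bound.} Let $(\la,\psi)$ be an eigenpair with $\la\le\tfrac18+Ch^{2/3}$. Slicing in $y$ and using that the effective potential $V$ of \eqref{HG} satisfies $V\ge\tfrac18$ and $V\equiv\tfrac12$ on $\{x>0\}$, Agmon-type estimates show that $\psi$ concentrates near the vertex $x=0$, decays exponentially at the scale $h$ in the strip, and is carried, up to a small remainder, by the first transverse mode (the next one being at a bounded distance). This localisation reduces the lower bound, up to an $o(h^{2/3})$ error, to a comparison of Rayleigh quotients with $\mathcal{L}_{\Tri}(h)$: after even reflection across $\{y=0\}$ and a small correction near the base restoring the Dirichlet condition at $x=0$, the leakage into the strip being controlled by the (positive, of size $h^{-1}$) right Dirichlet-to-Neumann operator, by techniques analogous to those of Section~\ref{5}. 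With Theorem~\ref{spectrumtriangle} and the upper bound, this yields
\[
  \la_{\Gui,n}(h)=\tfrac18+(4\pi\sqrt2)^{-2/3}z_{\A}(n)\,h^{2/3}+o(h^{2/3}),\qquad 1\le n\le N_{0}.
\]

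\emph{Spectral gap and conclusion.} This two-term asymptotics shows that $\la_{\Gui,n+1}(h)-\la_{\Gui,n}(h)$ is of exact order $h^{2/3}$ (distinct Airy zeros) and that the eigenvalues produced above are the $N_{0}$ lowest ones. Feeding this separation into the standard comparison between the quasimodes $\Psi^{(J)}_{n}(h)$ and the true low-lying spectral subspace --- projecting the quasimodes, estimating the Gram matrix, and using that off-diagonal couplings are controlled by the quasi-orthogonality --- upgrades the one-sided estimate to $|\la_{\Gui,n}(h)-\sum_{0\le j\le J}\gamma_{j,n}h^{j/3}|\le C_{J}h^{(J+1)/3}$ for every $J$, which is the announced expansion; the attendant resolvent estimates identify the eigenvectors with the quasimodes at every order, giving the structure \eqref{QuasiLgui}, and $\gamma_{3,n}\neq0$ by the construction above. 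The main obstacle is the matching at the vertex: one must glue the Airy-scale profile $x/h^{2/3}$ of the triangular well, the corner boundary layer $x/h$ (which must also absorb the weak reentrant-corner singularity $\rho^{\pi/\omega}$, $\omega=2(\pi-\theta)$), and the evanescent profile of the strip, consistently at each order of $h^{1/3}$ through the Dirichlet-to-Neumann transmission conditions --- it is this coupling that both separates $\mathcal{L}_{\Gui}(h)$ from $\mathcal{L}_{\Tri}(h)$ at order $h$ and underlies the lower bound.
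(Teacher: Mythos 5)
Your proposal follows the same overall strategy as the paper's proof: a multi-scale quasimode construction in the triangular part (slow scale $x/h^{2/3}$ plus boundary layer at scale $x/h$) glued to an evanescent profile in the strip through a Dirichlet-to-Neumann transmission problem on the interface $u=0$, together with Agmon estimates that confine the true eigenfunctions to an $\OO(h)$-thick neighbourhood of the triangular part, followed by a cutoff-and-dilation comparison with the triangle $\Tri$ and an appeal to Theorem~\ref{spectrumtriangle} to separate the eigenvalues and upgrade the two-term estimate to the full expansion.

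One small inaccuracy in the lower-bound paragraph: the control of the ``leakage into the strip'' is in fact achieved purely by the Agmon estimate of Proposition~\ref{6Agmon} combined with a cutoff at scale $h^{1-\varepsilon}$ (with $\varepsilon<\tfrac13$ so that the resulting dilation error $\OO(h^{1-\varepsilon})$ is $o(h^{2/3})$), not by invoking a coercivity of the right Dirichlet-to-Neumann operator; the DtN operators of \S\ref{6} serve only to close the transmission problem in the quasimode construction, not in the lower bound. This does not affect the validity of your argument, but the paper's localization step is more elementary than you describe. Otherwise your proposal is a faithful reconstruction of the paper's proof.
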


Deducing the eigenvalues in the waveguide $\Omega_\theta$ (Theorem \ref{spectrumguide'}) is an obvious consequence of this theorem.

\subsection{Notation and terminology} The $L^2$ norm will always be denoted by 
$\|\cdot\|$, in general without mention of the integration domain.
For a subset $S\subset\R$ and a point $p\in\R$, $\dist(S,p)$ is the distance between $S$ and $p$, i.e.\ $\inf_{s\in S}|s-p|$.

We denote by  $\gS(A)$ the spectrum of a self-adjoint operator $A$, by $\gS_\ess(A)$ its essential spectrum, and by $\gS_\dis(A)$ its discrete spectrum. An eigenmode (or eigenpair) of $A$ is a pair $(\lambda,\psi)$ with $\psi$ in the domain of $A$, such that $A\psi=\lambda\psi$; then $\lambda$ is the eigenvalue and $\psi$ the eigenvector. A quasimode for $A$ is a pair $(\tilde\lambda,\tilde\psi)$ such that $\|A \tilde\psi-\tilde\lambda \tilde\psi\|\le \varepsilon \|\tilde\psi\|$ with $\varepsilon$ small; $\lambda$ is the quasi-eigenvalue and $\psi$ the quasi-eigenvector. The spectral theorem implies that $\dist(\gS(A),\tilde\lambda)\le\varepsilon$.

\begin{table}[h]
\def\arraystretch{1.3}
\begin{tabular}{l|l|l|l}
Domain & Notation & Variables & Main operators \\
\hline
Scaled Triangle & $\Tri$ \ \eqref{E:Tri} &  $(x,y)$ \ \eqref{E:xy} 
   & $\L_\Tri(h)=-h^2\partial^2_x-\partial^2_y$ \\
Rectangle & $\Rec$ \ \eqref{E:Rec} & $(u,t)$ \ \eqref{Eut}& $\L_\Rec(h)$  \ \eqref{E:LRec}\\
Half-strip & $\Hst=\R_-\times(-1,1)$ & $(s,t)$ \ \eqref{E:ssigma2}
   & $\sum_{j} \L_{2j}h^{2j/3}$ \ \eqref{E:Lj} \\
   &  &  $(\sigma,t)$  \ \eqref{E:ssigma2}
   &  $\sum_{j} \cN_{3j}h^{j}$  \ \eqref{E:Nj} \\
\hline
Scaled half-guide   & $\Omega$ \ \eqref{E:Omega} & $(x,y)$  \ \eqref{E:xy} 
   & $\L_\Gui(h)=-h^2\partial^2_x-\partial^2_y$ \\
Left half-strip & $\Stlef=\R_-\times(0,1)$ &  $(s,t)$ 
   & $\sum_{j} \L_{2j}h^{2j/3}$ \ Notation \ref{6not}\\
   &  & $(\sigma,t)$ 
   & $\sum_{j} \cN^\lef_{3j}h^{j}$ \ Notation \ref{6not} \\
Right half-strip & $\Stri=\R_+\times(0,1)$ &  $(\sigma,\tau)$ \eqref{E:utau}
   & $\sum_{j} \cN^\ri_{3j}h^{j}$ \ Notation \ref{6not} \\
\hline
\end{tabular}
\bigskip
\caption{Main notation for domains, variables and operators.}
\end{table}

\section{Toy model in one dimension}
\label{3}
This subsection is devoted to the proof of Theorem \ref{spectrumtoy} devoted to the spectral asymptotics of the operator $\mathcal{H}_{\toy}(\kappa)$ defined in \eqref{toy}. This proof is divided into two steps. First, we construct quasimodes for $\mathcal{H}_{\toy}(\kappa)$, and second, we show that the lowest quasi-eigenvalues are the approximations of the lowest eigenvalues of $\mathcal{H}_{\toy}(\kappa)$ of the same rank.

\subsection{Construction of quasimodes}
In this section we prove in particular the following:

\begin{prop}\label{quasitoy}
For all $N_{0}\in\mathbb{N}^*$, there exists $\kappa_{0}>0$ and $C>0$ such that for $\kappa\in(0,\kappa_{0})$:
\begin{equation}
\label{E3:1}
   \dist \big(\gS_\dis(\mathcal{H}_{\toy}(\kappa)),\,
   \kappa^{2/3}z_{\A}(n) \big) \leq C\kappa, \quad n=1,\cdots N_0.
\end{equation}
\end{prop}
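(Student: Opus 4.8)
The plan is to construct an explicit quasimode for $\mathcal{H}_{\toy}(\kappa)$ for each $n=1,\dots,N_0$ by gluing together the natural candidate solutions on the two half-lines $z<0$ and $z>0$, and then to invoke the spectral theorem (as recalled in the \emph{Notation and terminology} subsection) to convert the smallness of the residual into the estimate \eqref{E3:1}. On $z<0$ the operator is $-\kappa^2\partial_z^2-z$, which after the rescaling $z\mapsto z/\kappa^{2/3}$ becomes (a shift of) the Airy operator $-\partial_Z^2-Z$; so the obvious building block for the part on $\R_-$ is $Z\mapsto\A\!\left(Z+\lambda\kappa^{-2/3}\right)$ where $\lambda=\kappa^{2/3}z_{\A}(n)$ is the target quasi-eigenvalue, i.e.\ $z\mapsto\A\!\left(\kappa^{-2/3}z + z_{\A}(n)\right)$. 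On $z>0$ the operator is $-\kappa^2\partial_z^2+1$; to have an $L^2$ solution of $(-\kappa^2\partial_z^2+1-\lambda)u=0$ that decays at $+\infty$ we take $u(z)=e^{-z\sqrt{1-\lambda}/\kappa}$, which (since $\lambda=O(\kappa^{2/3})$) decays on the scale $z\sim\kappa$, matching the scale announced in Theorem~\ref{spectrumtoy}.

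First I would set $\lambda=\kappa^{2/3}z_{\A}(n)$ and define
\[
   \tilde\psi(z)=\begin{cases}
   \A\!\left(\kappa^{-2/3}z + z_{\A}(n)\right), & z\le 0,\\[1ex]
   a\, e^{-z\sqrt{1-\lambda}/\kappa}, & z\ge 0,
   \end{cases}
\]
and then fix the constant $a$ (and, if needed, introduce a cutoff or a small correction near $z=0$) so as to make $\tilde\psi$ as regular as possible across the junction $z=0$. The point to check is the mismatch of $\tilde\psi$ and of $\kappa^2\tilde\psi'$ at $z=0$: by definition of $z_{\A}(n)$ we have $\A(z_{\A}(n))=0$, so the left piece already \emph{vanishes} at $z=0$; hence choosing $a=0$ would kill the value-jump but leave a derivative-jump, whereas a better choice is to keep a genuinely nonzero right piece and absorb the $O(1)$-in-rescaled-variables derivative mismatch. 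Concretely, since $\A'(z_{\A}(n))\ne 0$ (simple zeros), the left derivative $\kappa^2\tilde\psi'(0^-)=\kappa^2\cdot\kappa^{-2/3}\A'(z_{\A}(n))=\kappa^{4/3}\A'(z_{\A}(n))$ is $O(\kappa^{4/3})$, while $\kappa^2\tilde\psi'(0^+)=-a\kappa\sqrt{1-\lambda}$; taking $a=O(\kappa^{1/3})$ matches these, and then $\tilde\psi(0^-)=0$ vs $\tilde\psi(0^+)=a=O(\kappa^{1/3})$, a value-jump that is itself small. I would handle the two surviving jumps either by a smoothing argument (mollifying $\tilde\psi$ on a $z$-interval of length $\sim\kappa$ around $0$ and estimating the cost) or, cleaner, by testing against the domain and computing $\|(\mathcal{H}_{\toy}(\kappa)-\lambda)\tilde\psi\|$ as a sum of an interior $L^2$ contribution (which is exactly $0$ on each open half-line, by construction) plus boundary/jump terms at $z=0$ weighted appropriately; the total turns out to be $O(\kappa)\|\tilde\psi\|$ because $\|\tilde\psi\|$ is bounded below by a positive constant (the $\A$-tail alone contributes $O(\kappa^{1/3})$ to $\|\tilde\psi\|^2$ after unscaling, and the right exponential contributes $O(a^2\kappa)=O(\kappa^{5/3})$, so $\|\tilde\psi\|\sim\kappa^{1/6}$ and the residual relative to the norm is still $O(\kappa)$ after the bookkeeping — I would double-check these powers when writing the proof).

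The remaining step is bookkeeping: verify that the glued $\tilde\psi$ lies in the form domain and in the operator domain up to the controlled jump, normalize, and conclude via $\dist(\gS(\mathcal{H}_{\toy}(\kappa)),\lambda)\le \|(\mathcal{H}_{\toy}(\kappa)-\lambda)\tilde\psi\|/\|\tilde\psi\|\le C\kappa$. Since one gets such a quasimode for each $n=1,\dots,N_0$, and the target values $\kappa^{2/3}z_{\A}(n)$ are separated by gaps of order $\kappa^{2/3}\gg\kappa$, a standard orthogonality/almost-orthogonality argument (the $\A$-pieces with different shifts are exponentially close to orthogonal on the scale $\kappa^{-2/3}$) shows that these quasimodes detect $N_0$ distinct points of the spectrum below $1$, giving in particular the existence of $N_0$ eigenvalues. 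The main obstacle I anticipate is precisely the junction analysis at $z=0$: the potential $W$ is discontinuous, $\tilde\psi$ is only a broken (piecewise) function, and one must be careful that the two intrinsic scales $\kappa^{2/3}$ (left) and $\kappa$ (right) are reconciled with the right power of $\kappa$ in the error — getting the constant $C$ and the exponent $1$ (not $\kappa^{2/3}$ or $\kappa^{4/3}$) right is the delicate point, and it is exactly what forces the specific choice $a\sim\kappa^{1/3}$ and the appearance of the cube root of $\kappa$ in the final expansions.
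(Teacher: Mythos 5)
Your overall strategy --- exact solutions of the eigen-equation on each half-line, the Airy profile at scale $\kappa^{2/3}$ on the left and the exponential at scale $\kappa$ on the right, glued at $z=0$ and fed to the spectral theorem --- is exactly the paper's, and you correctly identify the junction as the delicate point. But your treatment of that junction does not close. With $a$ chosen to match the (weighted) derivatives, $a\sim\kappa^{1/3}$, your $\tilde\psi$ has a \emph{value} jump of size $|a|$ at $z=0$, so it is not in $H^1$, let alone in $\Dom(\mathcal{H}_{\toy}(\kappa))$; distributionally $(\mathcal{H}_{\toy}(\kappa)-\lambda)\tilde\psi$ contains the term $\kappa^2 a\,\delta_0'$, its $L^2$ norm is infinite, and the ``interior contribution plus jump terms'' version of the spectral estimate you invoke is not a theorem. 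The mollification variant fails quantitatively: smearing a jump of size $a\sim\kappa^{1/3}$ over a window of length $w$ produces a term of size $\kappa^2 a\,w^{-3/2}$ in $\|\kappa^2\tilde\psi''\|_{L^2}$, and since $\|\tilde\psi\|\sim\kappa^{1/3}$ (note $\|\tilde\psi\|^2\sim\kappa^{2/3}$, not $\kappa^{1/3}$ as you wrote), your choice $w\sim\kappa$ yields a relative residual $O(\kappa^{1/2})$ only; reaching $O(\kappa)$ forces $w\gtrsim\kappa^{2/3}$, a window which wipes out the right-hand exponential living at scale $\kappa$.

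The missing idea is a corrector \emph{on the left} carrying the nonzero trace $a$ at $z=0$, so that the glued function is continuous with continuous derivative and genuinely belongs to the domain. The cheapest repair of your construction: replace the left piece by $\A(\kappa^{-2/3}z+z_{\A}(n))+a\,\chi(z/\kappa^{2/3})$ with $\chi$ a cutoff equal to $1$ near $0$. The extra residual $(-\kappa^2\partial_z^2-z-\lambda)\bigl(a\chi(z/\kappa^{2/3})\bigr)$ consists of terms each of $L^2(dz)$ norm $O(\kappa^{1/3}\cdot\kappa^{2/3}\cdot\kappa^{1/3})=O(\kappa^{4/3})$, i.e.\ $O(\kappa)$ relative to $\|\tilde\psi\|$, and the cutoff scale $\kappa^{2/3}$ is forced (shorter makes $\chi''$ too large, longer makes $z\chi$ too large) --- this is where the reconciliation of the two scales actually happens. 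The paper instead realizes this corrector as a true profile $\Psi_{\lef,1}$ solving the inhomogeneous Airy problem with boundary value $-g_{(n)}'(0)$ (Lemma~\ref{lem-Ai0}), which costs nothing at this order but is what allows the construction to continue to all orders for Theorem~\ref{spectrumtoy}. Finally, your closing almost-orthogonality argument is not needed here: Proposition~\ref{quasitoy} only asks for the distance from the spectrum to the values $\kappa^{2/3}z_{\A}(n)$ (discreteness below $1$ is automatic since $\gS_\ess=[1,+\infty)$); identifying the \emph{$n$-th} eigenvalue with the $n$-th target is Proposition~\ref{toyfirstterm} and requires the separate ODE/transmission argument.
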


\begin{proof}
The basic tool for the proof is the construction of quasimodes and the application of the spectral theorem. Convenient quasimodes are given by power series in $\kappa^{1/3}$ of {\em profiles} at the scales
\begin{equation}
\label{E:ssigma1}
   s=\kappa^{-2/3}z\ \ \mbox{when} \ \  z\le0 \ \ \mbox{(left)}
   \quad\mbox{and}\quad \sigma=\kappa^{-1}z\ \ \mbox{when} \ \  z\ge0\ \ \mbox{(right)}.
\end{equation}
More precisely we look for quasi-eigenfunctions $\psi_{\kappa}$ in the form:
\begin{equation}
\label{E3:2}
   \psi_{\kappa}(z) \sim 
   \begin{cases}
   \ \sum_{j\ge0} \Psi_{\lef,j}(s)\, \kappa^{j/3}
   \ \ &\mbox{when} \ \  z\le0 \\[0.8ex]
   \ \sum_{j\ge0} \Phi_{\ri,j}(\sigma)\, \kappa^{j/3}
   \ \ &\mbox{when} \ \  z\ge0 \,,
   \end{cases}
\end{equation}
and quasi-eigenvalues in the form:
\begin{equation}
\label{E3:3}
   \alpha_\kappa\sim\kappa^{2/3}\sum_{j\ge0} \alpha_{j} \kappa^{j/3}
   \quad\mbox{as}\quad\kappa\to0.
\end{equation}
The continuity conditions at $z=0$ provide the formal identities:
\begin{equation}
\label{E3:4}
\begin{cases}
\begin{array}{ccc}
   \sum_{j\ge0} \Psi_{\lef,j}(0)\, \kappa^{j/3} & 
   = &\sum_{j\ge0} \Phi_{\ri,j}(0)\, \kappa^{j/3} 
   \\[0.8ex]
   \kappa^{-2/3}\sum_{j\ge0} \partial_s\Psi_{\lef,j}(0)\, \kappa^{j/3} & 
   = & \kappa^{-1}\sum_{j\ge0} \partial_\sigma\Phi_{\ri,j}(0)\, \kappa^{j/3},
\end{array}
\end{cases}
\end{equation}
and the formal eigen-equation is
\begin{equation}
\label{E3:5}
   -\kappa^2\psi''_\kappa(z) + W(z)\psi_\kappa(z) = \alpha_\kappa\psi_\kappa(z)
   \quad z\in\R.
\end{equation}

\paragraph{Determination of $\alpha_{0}$}
Collecting the terms in $\kappa^{2/3}$ in \eqref{E3:5} and using \eqref{E3:2}-\eqref{E3:4} we obtain:
\[
\begin{cases}
\begin{array}{cll}
   -\Phi''_{\ri,0}(\sigma)+\Phi_{\ri,0}(\sigma)=0 
   & \mbox{for }\ \sigma>0, 
   & \ \mbox{and} \quad \Phi_{\ri,0}'(0) = 0,
   \\[0.8ex]
   -\Psi_{\lef,0}''(s)-s\Psi_{\lef,0}(s)=\alpha_{0}\Psi_{\lef,0}(s) 
   & \mbox{for }\  s<0, 
   & \  \mbox{and}\quad \Psi_{\lef,0}(0)=\Phi_{\ri,0}(0).
\end{array}
\end{cases}
\]
We deduce first that $\Phi_{\ri,0}=0$ and thus $\Psi_{\lef,0}(0)=0$. This implies that $\alpha_0$ is a zero of the reverse Airy function $\A$. At this stage we can choose a positive integer $n$, take $\alpha_{0}=z_{\A}(n)$ and $\Psi_{\lef,0}$ as the corresponding normalized eigenfunction $g_{(n)}$.

\paragraph{Determination of $\alpha_{1}$}
Collecting the terms in $\kappa$, we get the equations:
\[
\begin{cases}
\begin{array}{cll}
   -\Phi''_{\ri,1}+\Phi_{\ri,1}=0 
   & \mbox{for } \sigma>0, 
   & \mbox{and }\ \Phi_{\ri,1}'(0) = \Psi_{\lef,0}'(0),
   \\[0.8ex]
   -\Psi_{\lef,1}''-s\Psi_{\lef,1}-\alpha_{0} \Psi_{\lef,1}
   =\alpha_{1}\Psi_{\lef,0} 
   & \mbox{for }  s<0, 
   & \mbox{and }\ \Psi_{\lef,1}(0)=\Phi_{\ri,1}(0).
\end{array}
\end{cases}
\]
We find first:
$$\Phi_{\ri,1}(\sigma)=-\Psi'_{\lef,0}(0)e^{-\sigma}.$$
Moreover we obtain the existence of a number $\alpha_1$ and of an exponentially decreasing $\Psi_{\lef,1}$ solution of the second equation with the help of the following lemma:
\begin{lem}\label{lem-Ai0}
Let $n\geq 1$. We denote by $g_{(n)}$ an eigenvector of the operator $-\dr_{s}^2-s$ associated with the eigenvalue $z_{\A}(n)$ and normalized in $L^2(\R_-)$. Let $f=f(s)$ be a real function with exponential decay and let $c\in\R$. Then there exists a unique $\alpha\in\R$ such that the problem:
$$\left(-\dr_{s}^2-s-z_{\A}(n)\right)g=f+\alpha g_{(n)}\ \ \mbox{in}\ \ \R_-, \mbox{ with } g(0)=c,$$
has a solution with exponential decay. There holds
\[
   \alpha = c\,g'_{(n)}(0) - \int_{-\infty}^0 f(s)\,g_{(n)}(s)\,ds.
\]
\end{lem}

\paragraph{Further terms}
A similar procedure can be reproduced at each step, providing the construction of $\Phi_{\ri,j}$, then $\alpha_j$ and $\Psi_{\lef,j}$, for any $j\ge2$.

\paragraph{Expressions for quasimodes} 
Relying on the previous iterative constructions we can set for all integer $J\ge0$
\begin{equation}
\label{E3:2b}
   \psi_{\kappa}^{[J]}(z) = 
   \begin{cases}\di
   \ \sum_{j=0}^{J+2} \Psi_{\lef,j}\Big(\frac{z}{\kappa^{2/3}}\Big)\, \kappa^{j/3}
   \ \ &\mbox{when} \ \  z\le0 \\[0.8ex] \di
   \ \sum_{j=0}^{J+2} \Phi_{\ri,j}\Big(\frac{z}{\kappa}\Big)\, \kappa^{j/3}
   + \Psi'_{\lef,J+2}(0)\,\kappa^{J/3}z\,\chi\Big(\frac{z}{\kappa}\Big)
   \ \ &\mbox{when} \ \  z\ge0 \,,
   \end{cases}
\end{equation}
where $\chi$ is a smooth cutoff function equal to $1$ near $0$. 
By construction, $\psi_{\kappa}^{[J]}$ and its first derivative are continuous in $z=0$. Moreover $\psi_{\kappa}^{[J]}$  is exponentially decreasing as $z\to\pm\infty$.
Therefore it belongs to the domain of $\mathcal{H}_{\toy}(\kappa)$.
With this remark and taking the error introduced by $\chi$ into account, we get for all $\kappa_0>0$:
\begin{equation}
\label{E3:2c}
   \Big\|\Big(\mathcal{H}_{\toy}(\kappa)-
   \kappa^{2/3}\big(z_\A(n)+\sum_{j=1}^{J+2}\alpha_j\kappa^{J/3}\big)\Big)
   \psi_{\kappa}^{[J]}\Big\|\leq 
   C({J,n,\kappa_0})\,\kappa^{1+J/3},\quad \forall\kappa\le\kappa_0.
\end{equation}
Hence
$$\left\|\left(\mathcal{H}_{\toy}(\kappa)-\kappa^{2/3}z_\A(n)\right)\psi_{\kappa}\right\|\leq C(n,\kappa_0)\,\kappa,\quad \forall\kappa\le\kappa_0,$$
and the spectral theorem applies. In particular, for $\kappa$ small enough, the discrete spectrum of $\mathcal{H}_{\toy}(\kappa)$ is not empty since $\gS_{\ess}(\mathcal{H}_{\toy}(\kappa))=[1,+\infty)$.
\end{proof}

\begin{remark}
We have proved in fact more than Proposition \ref{quasitoy}. The expression \eqref{E3:2b} of quasimodes and corresponding estimates \eqref{E3:2c} will provide an asymptotic expansion for the eigenvectors of $\mathcal{H}_{\toy}(\kappa)$, once one knows Proposition \ref{toyfirstterm} below.
\end{remark}

\subsection{Localization of the lowest eigenvalues}
We now want to refine Proposition \ref{quasitoy} by proving that the $\la_{\toy,n}(\kappa)$ are power series with respect to $\kappa^{1/3}$ and whose coefficients are given by (\ref{E3:3}). 
We begin to prove the following proposition:

\begin{prop}\label{toyfirstterm}
For all $N_{0}\in\mathbb{N}^*$, there exists $\kappa_{0}>0$ and $C>0$ such that for $\kappa\in(0,\kappa_{0})$:
\begin{equation}
\label{E3:1b}
   |\la_{\toy,n}(\kappa)-\kappa^{2/3}z_{\A}(n)| \leq C\kappa, \quad n=1,\cdots N_0.
\end{equation}
\end{prop}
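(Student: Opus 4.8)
The plan is to combine the quasimode construction of Proposition \ref{quasitoy} (which gives the upper bound side) with a lower bound on the $n$-th eigenvalue obtained from the min-max principle applied to a model operator whose spectrum we know exactly, namely the Airy operator. Proposition \ref{quasitoy} already shows $\dist(\gS_\dis(\mathcal{H}_{\toy}(\kappa)),\kappa^{2/3}z_\A(n))\le C\kappa$; the missing ingredient is to make sure that the eigenvalue which is within $C\kappa$ of $\kappa^{2/3}z_\A(n)$ is indeed $\la_{\toy,n}(\kappa)$, i.e.\ to rule out collisions or mislabelling of eigenvalues. Since the $z_\A(n)$ are distinct and $\kappa^{2/3}|z_\A(n+1)-z_\A(n)|$ is much larger than $C\kappa$ for small $\kappa$, the only real task is to show that $\mathcal{H}_{\toy}(\kappa)$ has \emph{at least} $n$ eigenvalues below $\kappa^{2/3}z_\A(n)+C\kappa$ and \emph{at most} $n$ eigenvalues below $\kappa^{2/3}z_\A(n)-C\kappa$, in a window below the essential spectrum $[1,+\infty)$.

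First I would get the upper bound on $\la_{\toy,n}(\kappa)$: using the $N_0$ linearly independent quasimodes $\psi_\kappa^{[0]}$ built in Proposition \ref{quasitoy} for ranks $1,\dots,n$ (after, if necessary, a Gram--Schmidt argument exploiting that for small $\kappa$ the profiles $g_{(1)},\dots,g_{(n)}$ are $L^2$-orthogonal at leading order, so the quasimodes are ``almost orthogonal''), the min-max principle gives $\la_{\toy,n}(\kappa)\le \kappa^{2/3}z_\A(n)+C\kappa$. Then I would establish the matching lower bound. For $z\le 0$ one has $W(z)=-z$ and for $z\ge0$ one has $W(z)=1\ge -z$ is false, so instead I bound $W(z)\ge -z$ on all of $\R$: indeed $W(z)=1\ge -z$ holds exactly when $z\ge-1$, which fails for $z<-1$, so this crude bound is not available. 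The cleaner route is: on $\R_-$ we have $-\kappa^2\partial_z^2+W(z)=-\kappa^2\partial_z^2-z$, and after the rescaling $s=\kappa^{-2/3}z$ this is $\kappa^{2/3}(-\partial_s^2-s)$, the Airy operator, whose Dirichlet eigenvalues on $\R_-$ are exactly $\kappa^{2/3}z_\A(k)$. Comparing $\mathcal{H}_{\toy}(\kappa)$ with its Dirichlet realization on $\R_-$ (bracketing: imposing an extra Dirichlet condition at $z=0$ only raises eigenvalues) would \emph{overshoot}; so instead I would compare with the Neumann realization on $\R_-$ plus a contribution from $\R_+$. The correct decomposition is $\R=\R_-\cup\R_+$ with a Neumann condition at $z=0$ on each side: $\mathcal{H}_{\toy}(\kappa)\ge \mathcal{H}_{\toy}^{\mathrm{Neu},-}(\kappa)\oplus\mathcal{H}_{\toy}^{\mathrm{Neu},+}(\kappa)$. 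The right piece is $-\kappa^2\partial_z^2+1$ with Neumann condition, whose spectrum is $[1,+\infty)$, irrelevant below $1$. The left piece is the Neumann--Airy operator $\kappa^{2/3}(-\partial_s^2-s)$ on $\R_-$; its $n$-th eigenvalue is $\kappa^{2/3}$ times the $n$-th Neumann eigenvalue of $-\partial_s^2-s$, which is $\kappa^{2/3}\,z_{\A'}(n)$ where $z_{\A'}(n)$ is the $n$-th zero of $\A'$. Since $z_{\A'}(n)<z_\A(n)$, this Neumann bound gives $\la_{\toy,n}(\kappa)\ge \kappa^{2/3}z_{\A'}(n)$, which is the right order $\kappa^{2/3}$ but has the wrong constant. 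To upgrade from $z_{\A'}(n)$ to $z_\A(n)$ one needs to exploit that the true eigenfunction is exponentially small near $z=0$ (it lives where $W$ is small, i.e.\ near the bottom of the triangular well): an Agmon/IMS localization estimate shows the eigenfunction of $\mathcal{H}_{\toy}(\kappa)$ at eigenvalue $\sim\kappa^{2/3}z_\A(n)$ is $\OO(\kappa^\infty)$ (or at least $\OO(\kappa^{N})$) at $z=0$, so the Neumann and Dirichlet conditions there are interchangeable up to an error $o(\kappa)$, pinching the eigenvalue between $\kappa^{2/3}z_{\A'}(n)+o(\kappa)$ below and $\kappa^{2/3}z_\A(n)+C\kappa$ above — but this still does not close the gap to order $\kappa$.

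The main obstacle, therefore, is the lower bound with the \emph{sharp} constant $z_\A(n)$ to within $\OO(\kappa)$, not merely $\OO(\kappa^{2/3})$. I expect the authors to resolve this not by a crude bracketing but by a \emph{projection/Feshbach argument}: decompose any $u$ in the form domain as $u=\Pi u+(1-\Pi)u$ where $\Pi$ projects (on the left part) onto the span of the first $N_0$ Airy eigenfunctions $g_{(k)}$ rescaled, show that the ``transverse'' part $(1-\Pi)u$ carries energy at least $\kappa^{2/3}z_\A(N_0+1)\|(1-\Pi)u\|^2$, estimate the cross-terms coming from the cutoff near $z=0$ and from the matching with the exponentially-small right part $\sim e^{-\sigma}=e^{-z/\kappa}$, and bound the resulting $N_0\times N_0$ matrix. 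The coupling to the right half-line contributes a boundary term at $z=0$ of size $\kappa\cdot|u'(0)|\cdot|u(0)|$, which is exactly the source of the $\OO(\kappa)$ remainder — consistent with the statement. So concretely I would: (1) prove the upper bound via $n$ quasimodes and min-max; (2) prove the essential spectrum is $[1,+\infty)$ so that all these eigenvalues are genuine discrete eigenvalues; (3) prove the lower bound by a Feshbach reduction to the $N_0$-dimensional space spanned by rescaled Airy eigenfunctions on $\R_-$, controlling the remainder by an explicit computation of the boundary term at $z=0$ and of the norm of $(1-\Pi)$ applied to test functions, with the $e^{-z/\kappa}$ decay on the right half-line keeping that contribution $\OO(\kappa)$; (4) finally, use that $\kappa^{2/3}|z_\A(n+1)-z_\A(n)|\gg C\kappa$ for small $\kappa$ to conclude that the eigenvalue trapped in the interval $[\kappa^{2/3}z_\A(n)-C\kappa,\ \kappa^{2/3}z_\A(n)+C\kappa]$ is precisely the $n$-th one, $\la_{\toy,n}(\kappa)$.
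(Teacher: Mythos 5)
Your proposal does not follow the paper's route, and as written it leaves the crucial lower bound at order $\OO(\kappa)$ as a sketch rather than a proof. The paper's argument is far more direct and exploits the explicit solvability of the toy model on each half-line: for $z<0$ the eigenvalue equation $-\kappa^2\psi''-z\psi=\la\psi$ forces the decaying solution $\psi(z)=c\,\A(\kappa^{-2/3}z+\kappa^{-2/3}\la)$, and for $z>0$ the equation $-\kappa^2\psi''=(\la-1)\psi$ forces $\psi(z)=d\,e^{-\kappa^{-1}z\sqrt{1-\la}}$. Matching $\psi$ and $\psi'$ at $z=0$ yields a single transcendental equation,
\begin{equation*}
\A(\kappa^{-2/3}\la)=-\frac{\kappa^{1/3}}{\sqrt{1-\la}}\,\A'(\kappa^{-2/3}\la),
\end{equation*}
from which $|\A(\kappa^{-2/3}\la_n)|\le C\kappa^{1/3}$. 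Since $\kappa^{-2/3}\la_n$ stays bounded (by Proposition \ref{quasitoy}) and the zeros of $\A$ are simple and isolated, this immediately pins $\kappa^{-2/3}\la_n$ to within $C\kappa^{1/3}$ of some $z_\A(p)$ — i.e.\ $|\la_n-\kappa^{2/3}z_\A(p)|\le C\kappa$ with \emph{no} Feshbach reduction, no transverse gap estimate, and no control of boundary cross-terms at $z=0$ (they are encoded exactly in the transcendental equation). The remaining labelling issue $p=n$ is then settled by a short contradiction argument using orthogonality of the explicit eigenfunctions, not by the spectral-gap argument you sketch (though that argument is also workable once one has the localization).

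Two concrete issues with your text. First, a small arithmetic slip: you claim $W(z)\ge -z$ fails for $z<-1$, but for $z\le 0$ one has $W(z)=-z$ identically (equality), and for $z>0$, $-z<0<1=W(z)$, so the inequality $W\ge -z$ holds on all of $\R$; the real reason this comparison is useless is that $-\kappa^2\partial_z^2-z$ on $L^2(\R)$ has purely continuous spectrum equal to $\R$, so it gives no lower bound. Second, and more seriously, your step (3) — the Feshbach/projection argument producing the sharp lower bound with constant $z_\A(n)$ and error $\OO(\kappa)$ rather than $\OO(\kappa^{2/3})$ — is only announced, not executed. You correctly identify that Neumann bracketing yields the wrong constant $z_{\A'}(n)$ and that the $\OO(\kappa)$ remainder should come from a boundary term at $z=0$, but turning this into a proof would require controlling the off-diagonal coupling between the Airy modes and the right half-line with the precise power of $\kappa$, which is the hard part. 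The paper avoids all of this precisely because the model is one-dimensional with piecewise-explicit potential: the ODE can be solved exactly on each side. When you move to the triangle and waveguide (Sections \ref{4}--\ref{6}), where no such explicit solution exists, the paper \emph{does} switch to a projection strategy close in spirit to the one you propose (Feshbach reduction to the Born--Oppenheimer operator, Agmon localization, min-max on $\Pi_0\mathcal{E}_{N_0}$); but for Proposition \ref{toyfirstterm} itself the intended proof is the elementary ODE matching.
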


\begin{proof}
Let $N_{0}\in\mathbb{N}^*$.
As a consequence of Proposition \ref{quasitoy}, we have in particular that, for all $\kappa\in(0,\kappa_{0})$, the first $N_{0}$ eigenvalues $\la_{\toy,n}(\kappa)$ (denoted by $\lambda_n$ for shortness) exist and  satisfy:
\begin{equation}
\label{E3:8}
   |\lambda_n |\leq C(N_{0})\,\kappa^{2/3},
   \quad\kappa\in(0,\kappa_{0}),\quad n=1,\cdots N_0.
\end{equation}
Let us denote by $\psi_{n}$ an eigenfunction associated with $\la_{n}$ so that $\langle \psi_{n},\psi_{m}\rangle=0$ if $n\neq m$. For $z<0$ we have:
$$-\kappa^2\psi_n''-z\psi_n=\la_n\psi_n.$$
Thus, there exists a coefficient $c_n(\kappa)$ such that:
\begin{equation}
\label{E:toylef}
   \psi_n(z)=c_n(\kappa)\A(\kappa^{-2/3}z+\kappa^{-2/3}\la_n),\quad z<0.
\end{equation}
For $z>0$ we have the equation $-\kappa^2\psi_n''=\la_n\psi_n$, hence the existence of $d_n(\kappa)$ such that:
\begin{equation}
\label{E:toyrig}
   \psi_n(z)=d_n(\kappa)e^{-\kappa^{-1}z\sqrt{1-\la_n}},\quad z>0.
\end{equation}
The transmission conditions at $z=0$ imply:
$$c_n(\kappa)\A(\kappa^{-2/3}\la_n)=d_n(\kappa),\quad c_n(\kappa)\kappa^{1/3}\A'(\kappa^{-2/3}\la_n)=-d_n(\kappa)\sqrt{1-\la_n}.$$
This implies:
\begin{equation}\label{rel}
\A(\kappa^{-2/3}\la_n)=-\frac{\kappa^{1/3}}{\sqrt{1-\la_n}}\,\A'(\kappa^{-2/3}\la_n).
\end{equation}
We infer:
$$|\A(\kappa^{-2/3}\la_n)|\leq C(N_{0})\,\kappa^{1/3}.$$
Since $\kappa^{-2/3}\la_n$ is bounded, see \eqref{E3:8}, and the zeros of the Airy function being isolated and simple, we deduce that for all $n\in\{1,\cdots,N_{0}\}$, there exists $p=p(n,\kappa)$ such that:
$$|\kappa^{-2/3}\la_n-z_{\A}(p)|\leq C(N_{0})\kappa^{1/3}.$$
Note that $p$ is bounded too.
It remains to prove that $p=n$ for $\kappa$ small enough.
In view of Proposition \ref{quasitoy}, it suffices now to prove than if $\kappa$ is small enough and $n\neq m$ (with $n$, $m\le N_0$), the integers $p(n,\kappa)$ and $p(m,\kappa)$ are distinct. Let us prove this by contradiction. Since the considered sets of integers $n$, $m$ and $p$ are finite, the negation of what we want to prove can be written as
\[
   \exists\, m,n,p\in\N, \quad\forall\kappa_1>0,
   \quad\exists \kappa\in(0,\kappa_1)\quad\mbox{such that}\quad
   p(m,\kappa)=p(n,\kappa)=p.
\]
The eigenfunctions can be taken in the form:
\begin{align*}
   \psi_j(z)=
   \begin{cases}
   \ \A(\kappa^{-2/3}z+\kappa^{-2/3}\la_j) 
   \ \ &\mbox{when} \ \  z\le0 \\[0.8ex]
   \ \A(\kappa^{-2/3}\la_j)\, e^{-\kappa^{-1}z\sqrt{1-\la_j}}
   \ \ &\mbox{when} \ \  z\ge0 \,,
   \end{cases}\quad \mbox{for}\quad j=m,n,
\end{align*}
and we have
$$\langle\psi_{n},\psi_{m}\rangle=\int_{z<0} \,\A(\kappa^{-2/3}z+\kappa^{-2/3}\la_{n}) \A(\kappa^{-2/3}z+\kappa^{-2/3}\la_{m}) \,dz+\OO(\kappa^{5/3})=0.$$
A rescaling leads to:
$$\left|\int_{z<0} \A(z+\kappa^{-2/3}\la_{n})\, \A(z+\kappa^{-2/3}\la_{m}) \,dz\right|\leq C(N_{0})\kappa.$$
By assumption, $\kappa^{-2/3}\la_{n}=z_{\A}(p)+\OO(\kappa^{1/3})$ and $\kappa^{-2/3}\la_{m}=z_{\A}(p)+\OO(\kappa^{1/3})$.
For $j=n,m$, $\A$ being Lipschitz on $(-\infty,M]$ for all $M$, there exists $D(N_{0})>0$ such that for all $z<0$:
$$|\A(z+\kappa^{-2/3}\la_j) - \A(z+z_{\A}(p))|\leq D(N_{0})\kappa^{1/3},
\quad \mbox{for}\quad j=m,n,$$
so that:
$$\left|\int_{z<0} \A(z+\kappa^{-2/3}\la_{n}) \A(z+\kappa^{-2/3}\la_{m}) \,dz-
\int_{z<0} \A^2(z+z_A(p)) \,dz\right|\leq \tilde{D}(N_{0})\kappa^{1/3}.$$
We deduce:
$$
   \forall\kappa_1>0,
   \quad\exists \kappa\in(0,\kappa_1)\quad\mbox{such that}\quad
   \left|\int_{z<0} \A^2(z+z_{\A}(p)) \,dz\right|\leq \tilde{D}(N_{0})\kappa^{1/3}
$$
which leads to a contradiction and ends the proof of Proposition \ref{toyfirstterm}.
\end{proof}

\subsection{Proof of Theorem  \ref{spectrumtoy}}
Let us observe that Proposition \ref{toyfirstterm} allows to separate the first $N_0$ eigenvalues when $\kappa<\kappa_0$. 
Let us write $\delta=\kappa^{1/3}$. We let:
$$\breve{\la}_{n}(\delta):=\delta^{-2}\la_{\toy,n}(\delta^3),$$
so that $\breve{\la}_{n}(\delta)$ is uniformly bounded for $n=1,\ldots,N_0$ and $\delta<\kappa_0^{1/3}$.

\begin{figure}[ht]
\begin{center}
\includegraphics[keepaspectratio=true,width=8.5cm]{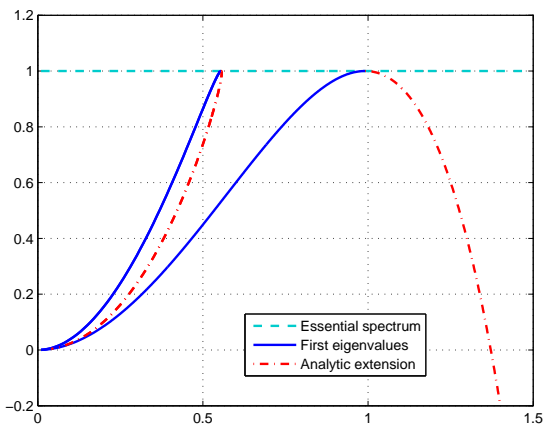}
\caption{The first two eigenvalues $\lambda_{\toy,1}$ and $\lambda_{\toy,2}$ as functions of $\delta=\kappa^{1/3}$. 
\label{Fmat1}}
\end{center}
\end{figure}

\begin{figure}[ht]
\begin{center}\hskip-4.em
\includegraphics[keepaspectratio=true,width=8.5cm]{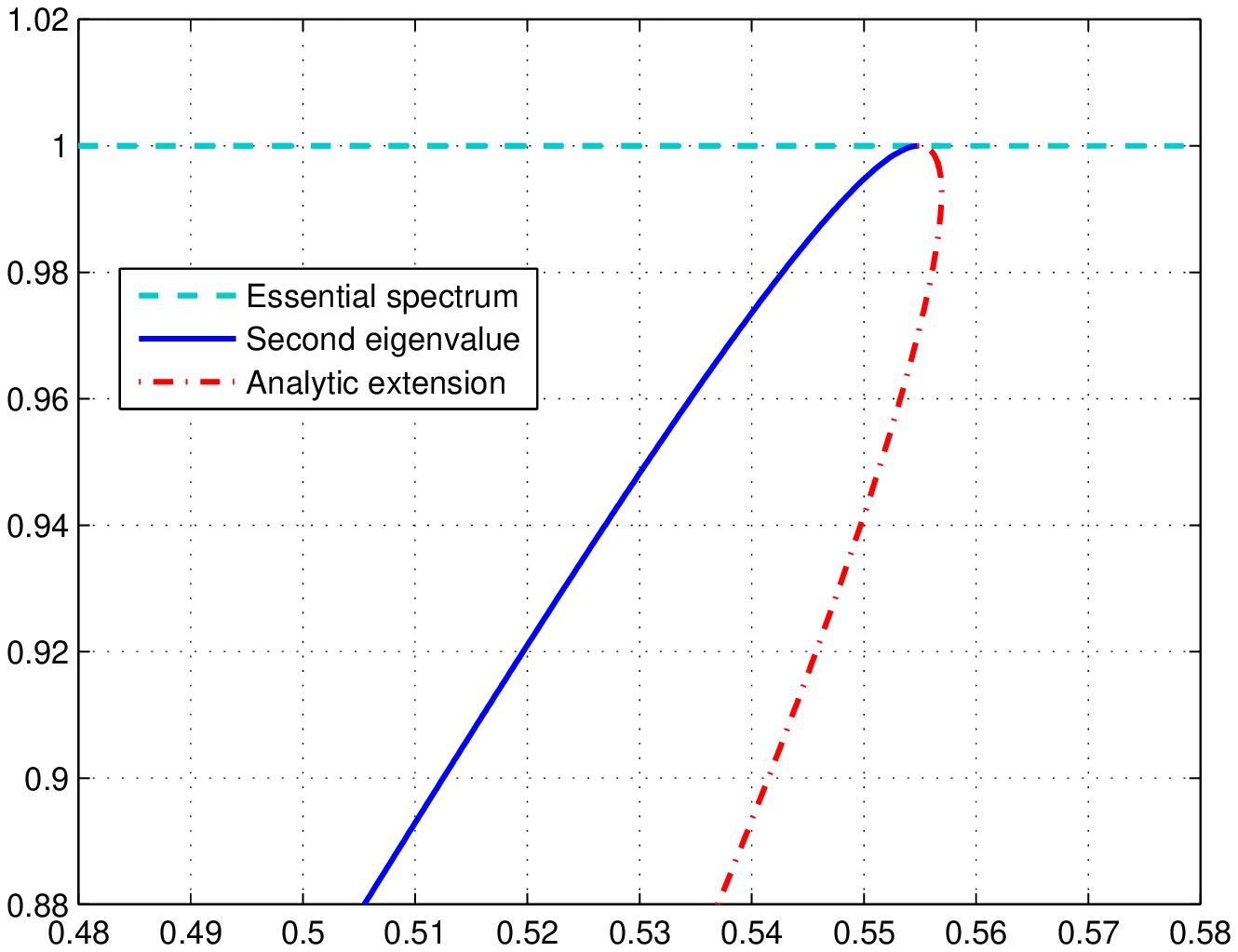} \hskip-1.7em
\includegraphics[keepaspectratio=true,width=8.5cm]{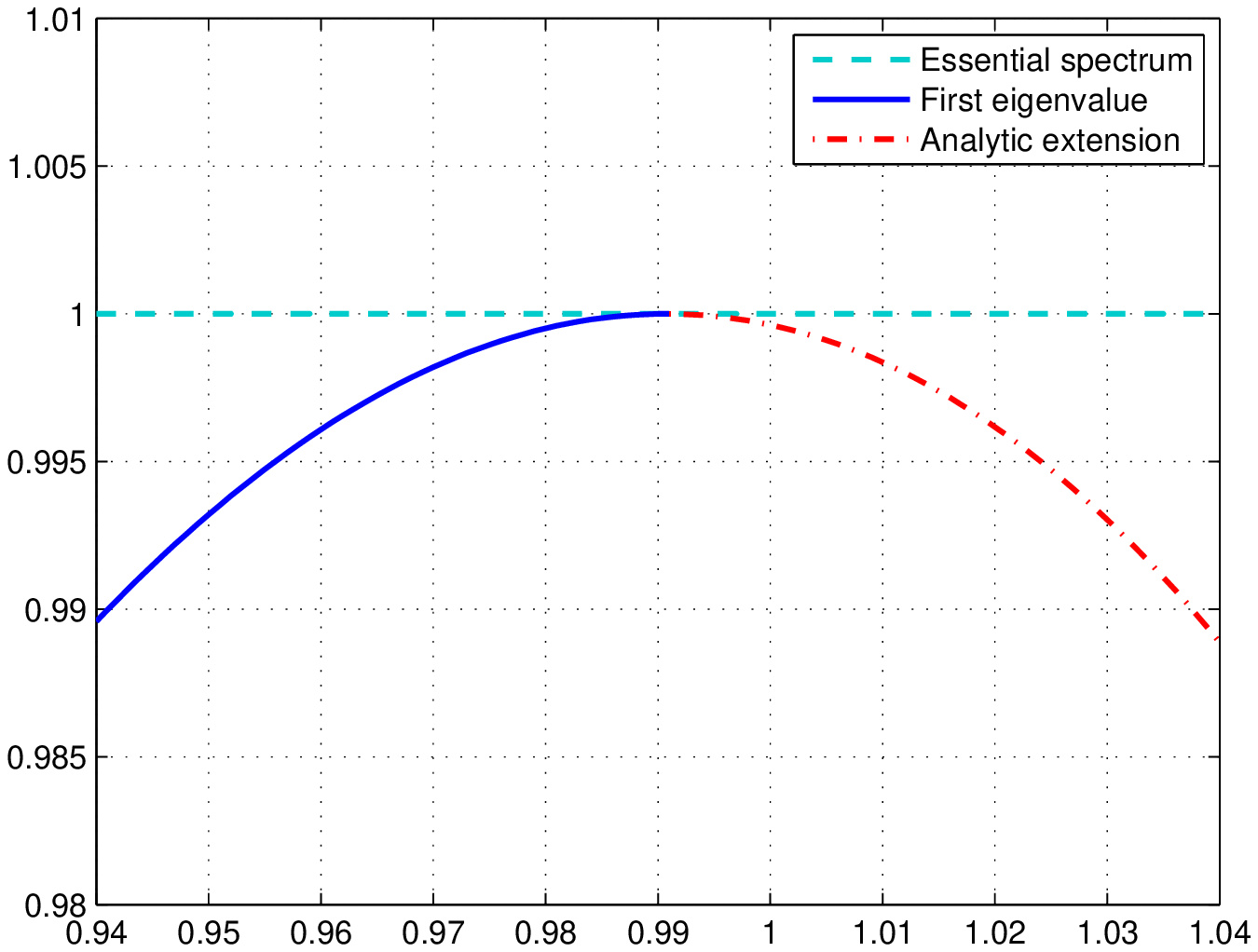}
\caption{The eigenvalues $\lambda_{\toy,1}$ (left) and $\lambda_{\toy,2}$ (right) as functions of $\delta=\kappa^{1/3}$, zoom near the bottom of the essential spectrum. 
\label{Fmat2}}
\end{center}
\end{figure}

\noindent
We rewrite \eqref{rel} in the form:
\begin{equation}\label{analyticdelta}
\A(\breve{\la}_{n}(\delta))=-\frac{\delta}{\sqrt{1-\delta^{2}\breve{\la}_{n}(\delta)}}\,\A'(\breve{\la}_{n}(\delta)).
\end{equation}
We know that $\A$ is analytic and, using again the simplicity of its zeros, we can apply the analytic implicit function theorem near $\delta=0$ and for all $n\in\{1,\cdots,N_{0}\}$, which, together with \eqref{E3:2b}-\eqref{E3:2c} and Proposition \ref{toyfirstterm}, ends the proof of Theorem \ref{spectrumtoy}.

\begin{remark}
From \eqref{analyticdelta}, we can deduce that the $\breve{\la}_{n}(\delta)$ are solutions of  the analytic equation:
\begin{equation}
\label{E:analyticdelta2}
(1-\delta^{2}\breve{\la})\A(\breve{\la})^2-\delta^2\,\A'(\breve{\la})^2=0
\end{equation}
This equation provides an analytic extension of the functions $\delta\mapsto\breve{\la}_{n}(\delta)$, hence of $\lambda_{\toy,n}=\delta^2\breve{\la}_{n}(\delta)$, in the sense of analytic curves. We represent in Figures \ref{Fmat1} and \ref{Fmat2} the first two eigenvalues and their analytic extensions. Taking the continuity and monotonicity of the eigenvalues with respect to $\delta$ into account, we can see that any branch which starts by $\delta\mapsto\lambda(\delta)=\delta^2 z_\A +  \OO(\delta^3)$ represents an eigenvalue while $\lambda(\delta)$ is less that $1$. Beyond $1$, the Rayleigh quotient stays $\equiv1$, but the curve $\lambda(\delta)$ has an analytic extension as a continuation of a branch of roots of  the equation \eqref{E:analyticdelta2}.
\end{remark}

\section{Born-Oppenheimer approximation for the triangle}
\label{4}
This section is devoted to the analysis of $\H_{\BO, \Tri}(h)$ defined in \eqref{HT}. We are going to prove:

\begin{theorem}\label{spectrumBOT}
The eigenvalues of $\mathcal{H}_{\BO, \Tri}(h)$, denoted by $\la_{\BO, \Tri,n}(h)$, admit the expansions:
$$\la_{\BO, \Tri,n}(h)\underset{h\to 0}{\sim}\sum_{j\geq 0}\hat{\beta}_{j,n}h^{2j/3},
\quad\mbox{with}\quad
\hat{\beta}_{0,n}=\frac{1}{8} \ \ \mbox{and}\ \ \hat{\beta}_{1,n}=(4\pi\sqrt{2})^{-2/3}z_{\A}(n).
$$
\end{theorem}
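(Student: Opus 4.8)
The plan is to reduce $\mathcal{H}_{\BO,\Tri}(h) = -h^2\dr_x^2 + \frac{\pi^2}{4(x+\pi\sqrt2)^2}$ on $L^2((-\pi\sqrt2,0))$ with Dirichlet conditions to the toy model analyzed in Theorem \ref{spectrumtoy}, up to the needed order. First I would move the singularity to the origin by the translation $w = x+\pi\sqrt2 \in (0,\pi\sqrt2)$, so the operator becomes $-h^2\dr_w^2 + \frac{\pi^2}{4w^2}$ with a Dirichlet condition at $w=0$ (the \emph{soft} wall, where $w^{1/2}$-type behavior is automatic from the inverse-square potential) and at $w=\pi\sqrt2$ (the \emph{hard} wall at the sharp tip, which corresponds to $x=0$). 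The potential $\frac{\pi^2}{4w^2}$ is decreasing, with its minimum value $\frac18$ attained at the right endpoint $w=\pi\sqrt2$; hence the ground states concentrate near that endpoint, which is exactly the picture of the toy model where the well sits at the interface $z=0$. I would expand $\frac{\pi^2}{4w^2}$ around $w=\pi\sqrt2$: writing $w=\pi\sqrt2 + v$ with $v<0$, one gets $\frac18 - \frac{1}{4\pi\sqrt2}v + \OO(v^2)$, which is precisely $V_\app$ from \eqref{Vapp} restricted to the left side. This is the linearization that, after the change of variables $z=\sqrt2 v/(3\pi)$ (matching \eqref{1toy}) and the parameter change $\kappa = 4h/(3\pi\sqrt3)$, produces $\frac38\mathcal{H}_\toy(\kappa) + \frac18$ — except that the Dirichlet condition at $v=0$ (i.e.\ $x=0$) replaces the smooth right-hand potential barrier $W(z)=1$ of the toy model.

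The key steps, in order, are: (1) Construct quasimodes for $\mathcal{H}_{\BO,\Tri}(h)$ directly, mimicking the construction in the proof of Proposition \ref{quasitoy}. I introduce the slow/fast scales $s = h^{-2/3}v$ near the tip and handle the region away from the tip by a decaying boundary layer; the leading profile solves the reverse-Airy equation $-\Psi_0'' - s\Psi_0 = \alpha_0\Psi_0$ on $s<0$ with $\Psi_0(0)=0$, forcing $\alpha_0 = z_\A(n)$. Carrying the expansion to the next order reproduces, after rescaling, $\la_{\BO,\Tri,n}(h) = \frac18 + \frac38\kappa^{2/3}z_\A(n) + \OO(\kappa^{4/3})$; since $\kappa^{2/3} = (4/(3\pi\sqrt3))^{2/3} h^{2/3}$ and $\frac38 \cdot (4/(3\pi\sqrt3))^{2/3} = (4\pi\sqrt2)^{-2/3}$ (a routine arithmetic check), this gives $\hat\beta_{0,n} = \frac18$, $\hat\beta_{1,n} = (4\pi\sqrt2)^{-2/3}z_\A(n)$. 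The only genuinely new ingredient relative to Section \ref{3} is that the quadratic and higher remainder terms of $\frac{\pi^2}{4w^2}$ near $w=\pi\sqrt2$, which are $\OO(h^{4/3})$ at the relevant scale $v\sim h^{2/3}$, must be carried along — but they only affect coefficients beyond $\hat\beta_{1,n}$, so the two-term statement is unaffected. (2) Prove the matching lower bound, i.e.\ that the actual eigenvalues are well-separated and lie within $\OO(h^{4/3})$ of the quasi-eigenvalues: this follows from the spectral theorem applied to the quasimodes together with a standard separation argument — either the Rayleigh-quotient/min-max projection onto the span of the $N_0$ quasimodes, or an Agmon-type localization near the tip $w=\pi\sqrt2$ showing the eigenfunctions decay on the scale $h^{2/3}$ away from it, reducing the problem to a small neighborhood where the linearized potential dominates. (3) Upgrade to a full asymptotic expansion in powers of $h^{2/3}$ by iterating the quasimode construction to all orders and invoking the spectral-gap separation, exactly as Theorem \ref{spectrumtoy} was deduced from Propositions \ref{quasitoy} and \ref{toyfirstterm}.

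The main obstacle I anticipate is handling the inverse-square singularity at $w=0$ cleanly. Unlike the toy model, where the left-hand potential $W(z)=-z$ is smooth and the Airy operator lives on all of $\R_-$, here the Born-Oppenheimer potential blows up at the \emph{far} endpoint $w=0$. One must check that the hard Dirichlet wall at $w=0$ is spectrally irrelevant to any finite order — concretely, that replacing the interval $(0,\pi\sqrt2)$ by a half-line at the linearized level introduces only exponentially small errors because the leading eigenfunctions $\A(s+z_\A(n))$ are exponentially small as $s\to-\infty$, i.e.\ far from the tip. This is morally clear (the well is at the tip, the singular end is deep in the classically forbidden region), but writing it rigorously requires either an Agmon estimate for $\mathcal{H}_{\BO,\Tri}(h)$ controlling the mass near $w=0$, or a careful comparison of the Dirichlet realization on $(0,\pi\sqrt2)$ with the one on $(-\infty,\pi\sqrt2)$. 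A secondary, purely bookkeeping difficulty is keeping track of which remainder terms of the Taylor expansion of $\frac{\pi^2}{4w^2}$ contribute at which order in $h^{2/3}$; this is tedious but not conceptually hard, and it does not enter the two explicitly claimed coefficients. Once these are in place, the argument is a near-verbatim transcription of the toy-model analysis with the smooth barrier $W=1$ replaced by a hard wall, and the conclusion follows.
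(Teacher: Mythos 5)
Your proposal follows essentially the same route as the paper: a direct quasimode construction at the single scale $s=h^{-2/3}x$ with an Airy leading profile (Proposition \ref{quasibotri}), Agmon estimates to control the mass near the singular endpoint $x=-\pi\sqrt2$ (Proposition \ref{Agmon2}), and a comparison/min-max argument for the lower bound (Section \ref{BO-triangle-end}); the iteration to all orders in $h^{2/3}$ is likewise the paper's strategy. Two small imprecisions are worth flagging: the ``reduction to the toy model'' is really only an analogy here, since the Dirichlet wall at $x=0$ eliminates the $\sigma=u/h$ boundary layer entirely (so there is no ``slow/fast'' pair; the construction is strictly simpler than for $\mathcal{H}_{\toy}$, as the paper notes), and your first alternative for the lower bound --- min-max over the span of quasimodes --- only yields an upper bound; what is actually needed is your second alternative plus a comparison, namely cutting the \emph{true} eigenfunctions off near $x=-\pi\sqrt2$ via the Agmon estimate and using the convexity of $V$ to bound $V$ below by its tangent line at $0$, which is precisely how the paper invokes the min-max principle for the Airy operator on the space $\chi E_{N_0}$.
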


Again, the proof is essentially organized in two steps. The first step is the construction of quasimodes which proves that quasi-eigenvalues are close to true eigenvalues. The second step uses Agmon type exponential localization for true eigenvectors to prove that true eigenvalues are close to quasi-eigenvalues.

\subsection{Quasimodes}
In this subsection, we construct quasimodes and prove the proposition:
\begin{prop}\label{quasibotri}
For all $N_{0}\in\mathbb{N}^*$, there exists $h_{0}>0$ and $C>0$ such that for $h\in(0,h_{0})$:
\begin{equation}
   \dist \Big(\gS_\dis(\mathcal{H}_{\BO,\Tri}(h)),\,
   \frac{1}{8}+h^{2/3}(4\pi\sqrt{2})^{-2/3} z_{\A}(n) \Big) \leq Ch^{4/3}, \quad n=1,\cdots N_0.
\end{equation}
\end{prop}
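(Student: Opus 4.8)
The plan is to mimic the proof of Proposition \ref{quasitoy}, but now on the bounded interval $(-\pi\sqrt2,0)$ with the genuine potential $\frac{\pi^2}{4(x+\pi\sqrt2)^2}$ rather than the toy potential $W$. First I would recenter and rescale. Setting $u=x+\pi\sqrt2\in(0,\pi\sqrt2)$ moves the singularity of the potential to $u=0$ and the well-bottom to $u=\pi\sqrt2$; the operator becomes $-h^2\dr_u^2+\frac{\pi^2}{4u^2}$ with Dirichlet conditions at $u=0$ and $u=\pi\sqrt2$. The minimum of $V$ on this interval is $\frac18$, attained at the right endpoint $u=\pi\sqrt2$, and near that endpoint $V$ is smooth, with $V(\pi\sqrt2)=\frac18$ and $V'(\pi\sqrt2)=-\frac{1}{4\pi\sqrt2}\neq0$ — exactly the data that produced the Airy operator in \eqref{Vapp}--\eqref{toy}. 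So I expect the eigenfunctions to concentrate near $u=\pi\sqrt2$ at the scale dictated by the linearized (Airy) problem, namely $h^{2/3}$.

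Concretely, I would introduce the boundary-layer variable $s=(\pi\sqrt2-u)/(\text{scale})$; matching $h^2\dr_u^2$ against the linear term $V'(\pi\sqrt2)\,(u-\pi\sqrt2)$ forces the scale $h^{2/3}$ up to the explicit constant coming from $|V'(\pi\sqrt2)|=(4\pi\sqrt2)^{-1}$, which is precisely what makes the factor $(4\pi\sqrt2)^{-2/3}$ appear. After subtracting the constant $\frac18$ and rescaling, the leading operator is $-\dr_s^2-s$ on $\R_-$ (the Airy operator), whose eigenvalues are $z_\A(n)$; this yields $\hat\beta_{0,n}=\frac18$ and $\hat\beta_{1,n}=(4\pi\sqrt2)^{-2/3}z_\A(n)$. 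Then I would set up a formal power series in $h^{1/3}$ (or really $h^{2/3}$, since the expansion is in $h^{2/3}$ as stated) for both the quasi-eigenfunction $\Psi_h(s)=\sum_j\Psi_j(s)h^{2j/3}$ and the quasi-eigenvalue $\lambda_h=\frac18+h^{2/3}\sum_j\hat\beta_{j}h^{2j/3}$, insert into the eigen-equation, collect powers of $h^{2/3}$, and solve the resulting triangular hierarchy. At order $j$ the equation reads $(-\dr_s^2-s-z_\A(n))\Psi_j = F_j + \hat\beta_j\,g_{(n)}$ where $F_j$ gathers lower-order terms (contributions of the curvature of $V$ beyond its tangent, plus the Dirichlet condition at the left endpoint $u=0$, which is exponentially far away); solvability in the space of exponentially decaying functions fixes $\hat\beta_j$ via the Fredholm alternative, exactly as in Lemma \ref{lem-Ai0} with $c=0$ (there is no transmission here, only the far Dirichlet condition handled by a cutoff). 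Truncating at order $J$ and reassembling with a cutoff near $u=\pi\sqrt2$ produces a function in $\Dom(\mathcal H_{\BO,\Tri}(h))$ satisfying $\|(\mathcal H_{\BO,\Tri}(h)-\lambda_h^{[J]})\Psi_h^{[J]}\|\le C h^{(J+?)/3}\|\Psi_h^{[J]}\|$; taking $J$ large enough and invoking the spectral theorem gives the claimed $Ch^{4/3}$ bound for the $n$-th quasi-eigenvalue, and as in the toy-model proof the $N_0$ values are mutually separated so they hit $N_0$ distinct elements of the discrete spectrum below $\frac18+\OO(h^{2/3})<1$.

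The main obstacle — mild compared to the waveguide case, but still the crux — is controlling the singular term $\frac{\pi^2}{4u^2}$ near $u=0$ when justifying that the truncated quasimode really lies in the operator domain and that the remainder is small: one must check that the exponentially small tail of the Airy profiles, transplanted to the original variable, together with the cutoff, does not interact badly with the $u^{-2}$ singularity, and that the Dirichlet condition at $u=0$ (rather than the Neumann/transmission conditions of the toy model) can be imposed at no cost up to exponentially small errors. The standard device is a smooth cutoff $\chi(u/\text{const})$ equal to $1$ in a neighborhood of $u=\pi\sqrt2$ and vanishing before $u$ gets close to $0$, so that the genuine potential is only ever seen on a region where it is smooth and bounded; the error committed is $\OO(e^{-c/h^{?}})$ by the exponential decay of $g_{(n)}$ and its derivatives, hence absorbed. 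Beyond that, the computation is the routine Airy-perturbation bookkeeping already illustrated in Section \ref{3}, so I would present it tersely and refer back to Lemma \ref{lem-Ai0} for the solvability condition.
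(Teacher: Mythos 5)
Your proposal is correct and follows essentially the same route as the paper: Taylor-expand $V$ at its minimum $x=0$, set up a boundary-layer Ansatz at the scale $h^{2/3}$ that produces the Airy operator at leading order, solve the resulting hierarchy via the Fredholm alternative (Lemma~\ref{lem-Ai0}, or rather its rescaled version Lemma~\ref{lem-Ai}), impose the far Dirichlet condition by a left cutoff $\chi^{\lef}$ (which simultaneously removes the $u^{-2}$ singularity, so the worry you raise is handled at no cost), and conclude by the spectral theorem. The only deviations are cosmetic: you recenter to $u=x+\pi\sqrt2$ rather than working directly with $x$, and there is a harmless sign slip — with $s=(\pi\sqrt2-u)/h^{2/3}$ your variable ranges over $\R_+$, so the model operator is $-\partial_s^2+s$ on $\R_+$ with Dirichlet at $0$, unitarily equivalent to the paper's $-\partial_s^2-s$ on $\R_-$.
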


\begin{proof}
The proper scale in $x$ is $h^{2/3}$ as can be seen by approximating the potential in $x=0$ by its tangent and recognizing the Airy operator. Thus, we will construct quasimodes  $\psi_h$ as functions of $s=h^{-2/3}x$: We look for quasimodes $(\lambda_h,\psi_h)$ in the form of series
$$
   \la_h\sim\sum_{j\geq 0}\hat\beta_{j}h^{2j/3}
   \quad\mbox{and}\quad
   \psi_h(x) \sim \sum_{j\geq 0}\Psi_{j}(s)h^{2j/3}
$$
in order to solve
$\H_{\BO, \Tri}(h)\psi_h=\la_h\psi_h$ in the sense of formal series.
A Taylor expansion at $x=0$ of the potential $V$ yields:
$$
   \H_{\BO, \Tri}(h) \sim -h^2\dr_{x}^2+\sum_{j\ge0} V_{j}x^j,
   \quad\mbox{with}\quad
   V_{0}=\frac{1}{8} \ \ \mbox{and}\ \ V_{1}=-\frac{1}{4\pi\sqrt{2}},
$$
which, in $s$ variable, becomes
\begin{equation}
\label{4E1}
   \H_{\BO, \Tri}(h) \sim \frac{1}{8} + h^{2/3}\big(-\partial^2_s+V_1s\big) + 
   \sum_{j\ge2}h^{2j/3} V_{j} s^j.
\end{equation}
The construction of the terms $\hat\beta_j$ and $\Psi_j$ is similar (even simpler) than for Proposition \ref{quasitoy}.

\noindent$\bullet$ \ The expansion \eqref{4E1} yields that $\hat\beta_0=\frac18$, and collecting the terms in $h^{2/3}$ and we obtain:
\begin{equation}
\label{4E1b}
   -\Psi''_{0}(s)-\frac{s}{4\pi\sqrt{2}}\,\Psi_{0}(s)=\hat\beta_1\Psi_{0}(s)
   \ \ \ \forall s<0 \quad\mbox{and}\quad \Psi_{0}(0)=0.
\end{equation}
Thus for any chosen positive integer $n$ we can take $\hat\beta_1=(4\pi\sqrt{2})^{-2/3}z_{\A}(n)$ together with
\begin{equation}
\label{E:Psi0}
   \Psi_{0}(s)=\A\big((4\pi\sqrt{2})^{-1/3}s+z_{\A}(n)\big).
\end{equation}

\noindent$\bullet$ \ Collecting the terms in $h^{4/3}$ we obtain
$$
   -\Psi''_{1}(s)+V_{1}s\Psi_{1}(s)-\hat\beta_1\Psi_{1}(s)=
   \hat\beta_2\Psi_{0}-V_{2}s^2\Psi_{0}
   \ \ \ \forall s<0 \quad\mbox{and}\quad \Psi_{1}(0)=0.
$$
The compatibility condition states that 
$\hat\beta_2\langle \Psi_{0},\Psi_{0}\rangle = V_{2}\langle s^2\Psi_{0},\Psi_{0}\rangle$.
This determines $\hat\beta_2$ and implies the existence of a unique solution $\Psi_{1}\in L^2(\R_-)$ such that $\langle{\Psi}_{1},\Psi_{0}\rangle=0$.

\noindent$\bullet$ \ This procedure can be continued at any order and determines $(\hat\beta_j,\Psi_{j})$ at each step. This construction depends on the choice of the integer $n$ and can be done for any positive integer $n$.

\noindent$\bullet$ \ To conclude, we consider a smooth non-negative cutoff function $\chi^\lef$ satisfying:
\begin{equation}\label{chi-lef}
\chi^\lef(x)=1\quad \mbox{ for } x\in\Big(-\frac{\pi}{\sqrt{2}},+\infty\Big)\quad\mbox{ and }\quad\chi^{\lef}(x)=0\quad \mbox{ for } x\leq-\pi,
\end{equation}
and introduce for any $J\geq 0$ the quasimode $(\beta_{h}^{[J]},\psi_{h}^{[J]})$ with:
\begin{equation}
\label{E:BOtriquasi}
   \beta_{h}^{[J]} = \sum_{j=0}^J \hat\beta_{j}h^{2j/3}\quad\mbox{and}\quad
   \psi_{h}^{[J]}(x)=\chi^\lef(x)\sum_{j=0}^{J}\Psi_{j}\Big(\frac{x}{h^{2/3}}\Big)h^{2j/3}.
\end{equation}
Thanks to this cut-off $\psi_{h}^{[J]}$ satisfies Dirichlet condition in $-\pi\sqrt{2}$, and in $0$ by construction. Using the exponential decay of $x\mapsto\Psi_{j}(h^{-2/3}x)$  and the definition of $\Psi_{j}$ and $\hat\beta_{j}$, we get for any $h_{0}>0$ the existence of $C(n,J,h_{0})>0$ such that:
\begin{equation}
\label{E:BOtriest}
   \Big\| \big(\mathcal{H}_{\BO, \Tri}(h)-
  \beta_{h}^{[J]} \big)\psi_{h}^{[J]} \Big\|\leq 
   C(n,J,h_{0})\, h^{2(J+1)/3},\quad\forall h\in(0,h_{0}).
\end{equation}
This proves the existence of quasimodes at any order and ends the proof of Proposition \ref{quasibotri}.
\end{proof}

\subsection{Agmon estimates}
In this subsection, we prove Agmon estimates (see \cite{Agmon82,Agmon85}) for the eigenfunctions of $\mathcal{H}_{\BO, \Tri}(h)$. The role of Agmon estimates is to replace an explicit knowledge of the solution at infinity like in \eqref{E:toylef}-\eqref{E:toyrig} by suboptimal exponential estimates.

Here we prove two kinds of estimates: near $x=0$ and near $x=-\pi\sqrt{2}$. In the analysis of triangles (cf. Section \ref{S:TriAgmon}), we will meet the same estimates.
Let us consider an eigenpair $(\la,\psi)$ of $\mathcal{H}_{\BO, \Tri}(h)$. The Agmon identity writes, for some Lipschitz function $\Phi$ to be determined:
\begin{equation}\label{Agmon1D}
\int_{-\pi\sqrt{2}}^{0}  h^2|\dr_{x} (e^{\Phi}\psi)|^2+V(x)|e^{\Phi}\psi|^2-h^2 |\Phi'e^{\Phi}|^2-\la  |(e^{\Phi}\psi)|^2\, dx=0.
\end{equation}
It is a consequence of Proposition \ref{quasibotri} that the lowest $N_0$ eigenvalues $\lambda$ of $\mathcal{H}_{\BO, \Tri}(h)$ satisfy:
\begin{equation}\label{small-e-v}
|\la-\tfrac{1}{8}|\leq \Gamma_0 \,h^{2/3},
\end{equation}
for some positive constant $\Gamma_0$ depending on $N_0$.

\subsubsection{Agmon estimates near $x=0$}
We use (\ref{Agmon1D}) and the convexity of $V$ to get the inequality:
$$\int_{-\pi\sqrt{2}}^{0}  h^2|\dr_{x} (e^{\Phi}\psi)|^2+\left(\frac{1}{8}-\frac{x}{4\pi\sqrt{2}}\right)|e^{\Phi}\psi|^2-h^2 |\Phi'e^{\Phi}|^2-\la  |(e^{\Phi}\psi)|^2\, dx\leq 0.$$
With (\ref{small-e-v}), we deduce:
$$\int_{-\pi\sqrt{2}}^{0}-\frac{x}{4\pi\sqrt{2}}|e^{\Phi}\psi|^2-h^2 |\Phi'e^{\Phi}|^2-Ch^{2/3}  |(e^{\Phi}\psi)|^2\, dx\leq 0.$$
This leads to the choice 
$$\Phi(x)=\eta \,h^{-1}|x|^{3/2},$$
for a number $\eta>0$ to be chosen small enough. We get:
$$
   \int_{-\pi\sqrt{2}}^{0}\left(\frac{|x|}{4\pi\sqrt{2}}
   -\frac94\eta^2|x|-Ch^{2/3}\right)|e^{\Phi}\psi|^2 \, dx\leq 0.$$
For $\eta$ small enough, we obtain the existence of $\tilde{\eta}>0$ such that:
$$\int_{-\pi\sqrt{2}}^{0}\left(\tilde{\eta}|x|-Ch^{2/3}\right)|e^{\Phi}\psi|^2 \, dx\leq 0.$$
Splitting the integral into the parts
$-\pi\sqrt{2}<x< -Dh^{2/3}$ (where $\Phi$ is unbounded) and
$-Dh^{2/3}<x<0$ (where $\Phi$ is bounded) 
with $\tilde{\eta}D-C=d>0$, we find:
\begin{align*}
   \int^{- Dh^{2/3}}_{-\pi\sqrt{2}} d\,h^{2/3}|e^{\Phi}\psi|^2 \, dx &
   \leq\int^{- Dh^{2/3}}_{-\pi\sqrt{2}}
   \left(\tilde{\eta}|x|-Ch^{2/3}\right)|e^{\Phi}\psi|^2 \, dx\\
   &\leq\int^0_{- Dh^{2/3}}\left(\tilde{\eta}|x|+Ch^{2/3}\right)|e^{\Phi}\psi|^2 \, dx
   \leq\tilde{C}h^{2/3}\int^0_{-Dh^{2/3}}|\psi|^2\,dx.
\end{align*}
We deduce the proposition:

\begin{prop}\label{Agmon1}
Let $\Gamma_0>0$. There exist $h_{0}>0$, $C_{0}>0$ and $\eta_{0}>0$ such that for $h\in(0,h_{0})$ and all eigenpair $(\la,\psi)$ of $\mathcal{H}_{\BO, \Tri}(h)$ satisfying $|\lambda-\frac18|\le\Gamma_0h^{2/3}$, we have:
$$
   \int_{-\pi\sqrt{2}}^0  e^{\eta_{0}h^{-1}|x|^{3/2}} 
   \Big(|\psi|^2 + |h^{2/3}\dr_{x}\psi|^2 \Big)\,dx\leq C_{0}\|\psi\|^2.
$$
\end{prop}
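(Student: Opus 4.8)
The plan is to derive Proposition \ref{Agmon1} by carefully turning the Agmon identity \eqref{Agmon1D} into a differential inequality of exponential type, exactly following the computation already sketched in the paragraph immediately preceding the statement, and then upgrading the bound on $\psi$ to a bound that also controls $h^{2/3}\partial_x\psi$. I would take as given that the eigenvalue $\lambda$ satisfies $|\lambda-\tfrac18|\le\Gamma_0 h^{2/3}$ (this is the hypothesis), and I would \emph{not} re-invoke Proposition \ref{quasibotri}; the constant $\Gamma_0$ is arbitrary but fixed, and all constants $C_0,\eta_0,h_0$ below are allowed to depend on it.

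First I would fix the weight $\Phi(x)=\eta h^{-1}|x|^{3/2}$ with $\eta>0$ to be chosen, note that $\Phi$ is Lipschitz on $(-\pi\sqrt2,0)$ with $|\Phi'(x)|^2=\tfrac94\eta^2 h^{-2}|x|$, and plug this into \eqref{Agmon1D}. Using the convexity of $V$ to bound $V(x)\ge\tfrac18-\tfrac{x}{4\pi\sqrt2}$ on $(-\pi\sqrt2,0)$ and using $|\lambda-\tfrac18|\le\Gamma_0 h^{2/3}$, the identity becomes the inequality
\[
   \int_{-\pi\sqrt2}^0\Big(\frac{|x|}{4\pi\sqrt2}-\frac94\eta^2|x|-\Gamma_0 h^{2/3}\Big)|e^{\Phi}\psi|^2\,dx
   \ \le\ h^2\int_{-\pi\sqrt2}^0|\partial_x(e^\Phi\psi)|^2\,dx - h^2\int|\Phi' e^\Phi\psi|^2 dx + \cdots,
\]
but the cleanest route is simply: drop the nonnegative kinetic term $h^2\int|\partial_x(e^\Phi\psi)|^2$ from the left side of the \emph{identity} written with $\ge$, which gives $\int\big(V-h^2|\Phi'|^2-\lambda\big)|e^\Phi\psi|^2\,dx\le 0$, then insert the lower bound on $V$ and the upper bound on $\lambda$. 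Choosing $\eta$ small enough that $\tfrac{1}{4\pi\sqrt2}-\tfrac94\eta^2=:2\tilde\eta>0$ yields $\int_{-\pi\sqrt2}^0(\tilde\eta|x|-Ch^{2/3})|e^\Phi\psi|^2\,dx\le 0$ with $C=\Gamma_0$. Then I split $(-\pi\sqrt2,0)$ at $x=-Dh^{2/3}$ with $D$ chosen so that $\tilde\eta D-C=:d>0$: on the far region $\tilde\eta|x|-Ch^{2/3}\ge d h^{2/3}$, while on the near region $|x|\le Dh^{2/3}$ we have $\Phi$ bounded (by $\eta D^{3/2}$) so $e^{2\Phi}\le\tilde C$, and $\tilde\eta|x|+Ch^{2/3}\le (\tilde\eta D+C)h^{2/3}$. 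Rearranging gives
\[
   d\,h^{2/3}\!\!\int_{-\pi\sqrt2}^{-Dh^{2/3}}\!\! e^{2\Phi}|\psi|^2\,dx
   \ \le\ \tilde C(\tilde\eta D+C)h^{2/3}\!\!\int_{-Dh^{2/3}}^{0}\!\!|\psi|^2\,dx
   \ \le\ C' h^{2/3}\|\psi\|^2,
\]
and adding back the trivially bounded integral $\int_{-Dh^{2/3}}^0 e^{2\Phi}|\psi|^2\le\tilde C\|\psi\|^2$ over the near region gives $\int_{-\pi\sqrt2}^0 e^{2\eta h^{-1}|x|^{3/2}}|\psi|^2\,dx\le C_0\|\psi\|^2$, which is the $\psi$-part of the claim with $\eta_0=2\eta$ (up to renaming).

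For the gradient term I would return to the Agmon \emph{identity} \eqref{Agmon1D} (now known to have a controlled right-hand side) and read off $h^2\int|\partial_x(e^\Phi\psi)|^2\,dx = \int(\lambda+h^2|\Phi'|^2-V)|e^\Phi\psi|^2\,dx$. Since $0\le V$ and $\lambda\le\tfrac18+\Gamma_0 h^{2/3}$ are bounded, and $h^2|\Phi'|^2=\tfrac94\eta^2|x|\le C''$ on the bounded interval, this gives $h^2\int|\partial_x(e^\Phi\psi)|^2\,dx\le C\int e^{2\Phi}|\psi|^2\,dx\le C C_0\|\psi\|^2$. Then $h^{2/3}e^\Phi\partial_x\psi = h^{-4/3}\big(h^2\partial_x(e^\Phi\psi)\big) - h^{2/3}\Phi' e^\Phi\psi$; the second piece is controlled by the already-established weighted $L^2$ bound on $\psi$ times $h^{2/3}|\Phi'|=\tfrac32\eta h^{-1/3}|x|^{1/2}$, which is $\le C h^{-1/3}$, giving an $h^{-2/3}$ loss — too lossy. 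So instead I would weight with a slightly smaller exponent: apply the whole argument with $\Phi_1=\eta_1 h^{-1}|x|^{3/2}$ for $\eta_1$ slightly larger than the target $\eta_0$, obtaining the weighted $L^2$ bound for $\psi$ with weight $e^{2\Phi_1}$ and the bound $h^2\int|\partial_x(e^{\Phi_1}\psi)|^2\le C\|\psi\|^2$; then for the final exponent $\eta_0<\eta_1$ write $e^{\eta_0 h^{-1}|x|^{3/2}}h^{2/3}\partial_x\psi = e^{(\eta_0-\eta_1)h^{-1}|x|^{3/2}}\big(h^{-4/3}\cdot h^2\partial_x(e^{\Phi_1}\psi) - h^{2/3}\Phi_1' e^{\Phi_1}\psi\big)$ and observe that the prefactor $e^{(\eta_0-\eta_1)h^{-1}|x|^{3/2}}h^{2/3}|\Phi_1'|$ is uniformly bounded in $x$ and $h$ because the exponential decay beats any negative power of $h$ times $|x|^{1/2}$ (a standard $\sup_{t\ge0} t^{1/2}e^{-ct/h}\le C\sqrt h$ estimate), so the $h^{-4/3}$ from the first term is the only loss and it is absorbed by... no — that term still carries $h^{-4/3}$. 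The correct fix, and the one I would write out, is: take the $L^2$ norm of $h^{2/3}e^{\eta_0\Phi}\partial_x\psi$, bound it by $h^{-4/3}\|e^{(\eta_0-\eta_1)\Phi}\cdot h^2\partial_x(e^{\Phi_1}\psi)\|$, and here crucially \emph{also} shrink inside: since we only need $h^2\int|\partial_x(e^{\Phi_1}\psi)|^2\le C h^{4/3}\int_{-Dh^{2/3}}^0|\psi|^2 + \ldots$ — i.e. redo the far/near split for the gradient identity itself, exactly as for $\psi$, so that the right-hand side already carries the factor $h^{4/3}\|\psi\|^2$ on the far region. This is the one genuinely delicate point: one must run the far/near splitting \emph{on the Agmon identity rewritten for the gradient}, not just quote an $L^2$ gradient bound, in order to recover the matching power of $h^{2/3}$ in front of $\partial_x\psi$. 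Everything else is bookkeeping with constants.

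The main obstacle, then, is not conceptual but this matching of powers of $h$ for the derivative term: a naive estimate loses an $h^{-2/3}$, and one must either interpolate between two exponents $\eta_0<\eta_1$ or, more cleanly, repeat the far/near cutting directly in the energy identity for $e^\Phi\psi$ so that the $h^{2/3}$ gain is built in on the region where $\Phi$ is large. Once that is handled, combining the weighted $L^2$ bound on $\psi$ with the weighted bound on $h^{2/3}\partial_x\psi$ and renaming constants ($\eta_0,C_0$) gives Proposition \ref{Agmon1} verbatim.
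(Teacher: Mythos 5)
Your weighted $L^2$ estimate on $\psi$ is correct and coincides with the paper's own argument: the Agmon identity \eqref{Agmon1D} with the nonnegative kinetic term dropped, the weight $\Phi=\eta h^{-1}|x|^{3/2}$, the convexity bound $V(x)\ge \frac18+\frac{|x|}{4\pi\sqrt2}$, and the splitting at $x=-Dh^{2/3}$. The paper stops there with ``we deduce the proposition'', so the derivative term is the one place where you had to supply an argument, and that is where your write-up has a genuine gap: the step you finally settle on is neither carried out nor correctly calibrated.

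The difficulty you wrestle with is an artifact of an unnecessarily crude estimate. Keeping the terms of \eqref{Agmon1D} together, for $\eta$ small enough one has the \emph{pointwise} bound
\[
   \lambda-V(x)+h^2|\Phi'(x)|^2\ \le\ \Gamma_0h^{2/3}-\Big(\tfrac{1}{4\pi\sqrt2}-\tfrac94\eta^2\Big)|x|
   \ \le\ \Gamma_0h^{2/3}-\tilde\eta\,|x|,
\]
so the identity gives in one stroke
\[
   h^2\!\int|\partial_x(e^{\Phi}\psi)|^2\,dx+\tilde\eta\!\int|x|\,e^{2\Phi}|\psi|^2\,dx
   \ \le\ \Gamma_0h^{2/3}\!\int e^{2\Phi}|\psi|^2\,dx\ \le\ Ch^{2/3}\|\psi\|^2,
\]
the last inequality being the $L^2$ bound already proved. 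Dividing the first term by $h^{2/3}$ yields $\|h^{2/3}\partial_x(e^\Phi\psi)\|^2\le C\|\psi\|^2$, and the second term controls the commutator, since $\|h^{2/3}\Phi'e^\Phi\psi\|^2=\tfrac94\eta^2h^{-2/3}\int|x|e^{2\Phi}|\psi|^2\le C\|\psi\|^2$; writing $e^\Phi h^{2/3}\partial_x\psi=h^{2/3}\partial_x(e^\Phi\psi)-h^{2/3}\Phi'e^\Phi\psi$ finishes the proof with a \emph{single} exponent, with no interpolation $\eta_0<\eta_1$ and no second far/near splitting. By contrast, your final prescription demands $h^2\int|\partial_x(e^{\Phi_1}\psi)|^2\le Ch^{4/3}\|\psi\|^2$. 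This is one power of $h^{2/3}$ stronger than what is needed (the conversion reads $\|h^{2/3}\partial_x(e^\Phi\psi)\|^2=h^{-2/3}\cdot h^2\|\partial_x(e^\Phi\psi)\|^2$, so $Ch^{2/3}\|\psi\|^2$ on the right suffices), and it is in general false: the eigenfunction concentrates at scale $|x|\sim h^{2/3}$, where $\lambda-V+h^2|\Phi'|^2$ is genuinely of order $h^{2/3}$, so the kinetic term is of order $h^{2/3}\|\psi\|^2$ exactly and not smaller. Moreover ``redo the far/near split on the Agmon identity for the gradient'' is not a well-defined step, since the identity is global in $x$ and the kinetic term cannot be localized to the far region without introducing new commutators. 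As written, the derivative half of the proposition is therefore not established; with the pointwise estimate above it follows in three lines from what you already have.
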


\subsubsection{Agmon estimates near $x=-\pi\sqrt{2}$}
We use again \eqref{Agmon1D} and \eqref{small-e-v}:
$$\int_{-\pi\sqrt{2}}^{0} h^2|\dr_{x} (e^{\Phi}\psi)|^2+\left(\frac{\pi^2}{4(x+\pi\sqrt{2})^2}-\frac{1}{8}\right)|e^{\Phi}\psi|^2-h^2 |\Phi' e^{\Phi}\psi|^2-Ch^{2/3}  |(e^{\Phi}\psi)|^2\, dx \leq 0.$$
We take:
$$\Phi(x)=-\rho h^{-1}\ln\big(D^{-1}(x+\pi\sqrt{2})\big),$$
where we choose $\rho\in(0,\frac\pi2)$ so that there holds:
$$
   \int_{-\pi\sqrt{2}}^{0}  \left(\Big(\frac{\pi^2}{4}-\rho^2\Big)(x+\pi\sqrt{2})^{-2}
   -\frac{1}{8}\right)|e^{\Phi}\psi|^2-Ch^{2/3}  |(e^{\Phi}\psi)|^2\, dx \leq 0,
$$
and $D>0$ large enough so that 
\[
   \Big(\frac{\pi^2}{4}-\rho^2\Big)D^2-\frac18>0.
\]
Then we split the integral into the parts 
$-\pi\sqrt{2}<x< -\pi\sqrt{2}+D$ (where $\Phi$ is unbounded) and
$-\pi\sqrt{2}+D<x<0$ (where $\Phi$ is bounded) and the same procedure as in the previous paragraph leads to the proposition:

\begin{prop}\label{Agmon2}
Let $\Gamma_0>0$ and $\rho_0\in(0,\frac\pi2)$. There exist $h_{0}>0$, $C_{0}>0$ such that for any $h\in(0,h_{0})$ and all eigenpair $(\la,\psi)$ of $\mathcal{H}_{\BO, \Tri}(h)$ satisfying $|\lambda-\frac18|\le\Gamma_0h^{2/3}$, we have:
$$
   \int_{-\pi\sqrt{2}}^0 (x+\pi\sqrt{2})^{-\rho_{0}/h}
   \Big( |\psi|^2 + |h\,\dr_{x}\psi|^2 \Big)\, dx\leq C_{0}\|\psi\|^2.$$
\end{prop}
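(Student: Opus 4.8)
The plan is to carry out the usual Agmon weighted-energy estimate with a weight adapted to the inverse-square singularity of the potential of $\mathcal{H}_{\BO,\Tri}(h)$ at the vertex $x=-\pi\sqrt{2}$, exploiting that near that point the Hardy-type term $\tfrac{\pi^2}{4}(x+\pi\sqrt{2})^{-2}$ overwhelms the eigenvalue, which by \eqref{small-e-v} stays within $\Gamma_0 h^{2/3}$ of $\tfrac18$. The starting point is the identity \eqref{Agmon1D} for an eigenpair $(\lambda,\psi)$. Since the weight we want blows up at $-\pi\sqrt{2}$, the clean way to make \eqref{Agmon1D} rigorous is to work first with the truncated weights $\Phi_R=\min(\Phi,R)$, which are bounded and Lipschitz, so that $e^{\Phi_R}\psi$ still lies in the form domain $H^1_0((-\pi\sqrt{2},0))$ and all boundary terms vanish by the Dirichlet condition; one then proves the estimate with constants independent of $R$ and lets $R\to+\infty$ by monotone convergence. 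In particular I would not need any a priori knowledge of the decay rate of $\psi$ at the vertex.

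For the weight I would take $\Phi(x)=-\rho\,h^{-1}\ln\bigl(D^{-1}(x+\pi\sqrt{2})\bigr)$ with a parameter $\rho\in(0,\tfrac\pi2)$ and a constant $D$ to be fixed. Then $h^2|\Phi'|^2=\rho^2(x+\pi\sqrt{2})^{-2}$, so the singular part of the potential is only partly cancelled: $V-h^2|\Phi'|^2=\bigl(\tfrac{\pi^2}{4}-\rho^2\bigr)(x+\pi\sqrt{2})^{-2}\ge0$, the non-negativity being precisely where the constraint $\rho<\tfrac\pi2$ enters. Discarding the non-negative term $\int h^2|\partial_x(e^{\Phi}\psi)|^2$ in \eqref{Agmon1D} and bounding $\lambda$ by $\tfrac18+\Gamma_0 h^{2/3}$, I reach
\[
   \int_{-\pi\sqrt{2}}^{0}\Bigl[\bigl(\tfrac{\pi^2}{4}-\rho^2\bigr)(x+\pi\sqrt{2})^{-2}-\tfrac18-Ch^{2/3}\Bigr]\,|e^{\Phi}\psi|^2\,dx\le 0 .
\]
Choosing $D$ so that the bracket is bounded below by a fixed positive constant on a fixed neighbourhood $(-\pi\sqrt{2},-\pi\sqrt{2}+\delta)$ of the vertex when $h$ is small, I split the integral at $x=-\pi\sqrt{2}+\delta$: the piece near the vertex controls $\int_{-\pi\sqrt{2}}^{-\pi\sqrt{2}+\delta}|e^{\Phi}\psi|^2$ in terms of the remaining piece, on which $e^{\Phi}$ is bounded by a constant and $|\psi|^2$ integrates to at most $\|\psi\|^2$. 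This gives $\int(x+\pi\sqrt{2})^{-2\rho/h}|\psi|^2\le C\|\psi\|^2$, hence the $L^2$ half of the claim once $\rho$ is picked with $2\rho\ge\rho_0$ (still $<\tfrac\pi2$, which is possible since $\rho_0<\tfrac\pi2$): indeed $(x+\pi\sqrt{2})^{-\rho_0/h}\le(x+\pi\sqrt{2})^{-2\rho/h}$ where $x+\pi\sqrt{2}<1$, and the weight is harmless where $x+\pi\sqrt{2}\ge1$ since $\pi\sqrt{2}>1$.

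For the derivative part I would keep the gradient term of \eqref{Agmon1D}: it yields $\int h^2|\partial_x(e^{\Phi}\psi)|^2\le\lambda\int|e^{\Phi}\psi|^2\le C\|\psi\|^2$ because $V-h^2|\Phi'|^2\ge0$ together with the $L^2$ bound just obtained. Then from $e^{\Phi}h\,\partial_x\psi=h\,\partial_x(e^{\Phi}\psi)-(h\Phi')\,e^{\Phi}\psi$ with $h\Phi'=-\rho(x+\pi\sqrt{2})^{-1}$, one is left to bound $\int(x+\pi\sqrt{2})^{-2}|e^{\Phi}\psi|^2$; this I would obtain by running the same weighted estimate once more with a slightly larger exponent $\tilde\rho\in(\rho,\tfrac\pi2)$ and absorbing the extra power $(x+\pi\sqrt{2})^{-2}$ into $(x+\pi\sqrt{2})^{-2\tilde\rho/h}$ for $h$ small on the vertex neighbourhood (away from it everything is bounded). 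Combining the two parts and re-reading the exponents gives the stated inequality. The core difficulty is bookkeeping rather than conceptual: justifying the weighted identity at the singular endpoint via the $R$-truncation, and keeping the chain of strict inequalities $\rho_0\le 2\rho<\pi$ together with the choices of $D$, $\delta$ and $\tilde\rho$ so that every constant stays uniform in $h$. As the authors point out, this scheme transfers almost verbatim to the genuine triangle $\Tri_\theta$ in Section~\ref{S:TriAgmon}, with $x+\pi\sqrt{2}$ replaced by the distance to the reentrant vertex.
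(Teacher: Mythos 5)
Your proposal follows the paper's proof essentially verbatim: the same logarithmic weight $\Phi(x)=-\rho\,h^{-1}\ln\bigl(D^{-1}(x+\pi\sqrt{2})\bigr)$, the same cancellation $V-h^2|\Phi'|^2=(\tfrac{\pi^2}{4}-\rho^2)(x+\pi\sqrt{2})^{-2}\ge 0$ under $\rho<\tfrac\pi2$, the same split of the integral near the vertex, and the same use of the a priori bound \eqref{small-e-v}. The extra bookkeeping you include (the $R$-truncation of the weight to justify the identity at the singular endpoint, the relation $2\rho\ge\rho_0$, and the explicit treatment of the derivative term via $e^{\Phi}h\partial_x\psi=h\partial_x(e^{\Phi}\psi)-h\Phi'e^{\Phi}\psi$) is exactly what the paper leaves implicit when it says \enquote{the same procedure as in the previous paragraph leads to the proposition}, so the proof is correct and matches the intended argument.
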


\subsection{Proof of Theorem \ref{spectrumBOT}}
\label{BO-triangle-end}
Let us fix $N_{0}$ and consider the $N_{0}$ first eigenvalues of $\mathcal{H}_{\BO, \Tri}(h)$ denoted by $\la_{n}=\la_{\BO,\Tri,n}(h)$. For each $n\in\{1,\cdots N_{0}\}$, we choose a normalized $\psi_{n}$ in the eigenspace of $\la_{n}$ so that $\langle\psi_{n},\psi_{m}\rangle=0$ for $n\neq m$.
Let us introduce the space:
$$\EN=\mathrm{span}(\psi_{1},\ldots,\psi_{N_0}).$$
We recall that, for $h$ small enough, (\ref{small-e-v}) holds. We can write:
$$\mathcal{H}_{\BO, \Tri}(h)\psi_{n}=\la_{n}\psi_{n}$$
so that (the $\psi_{n}$ are orthogonal in $L^2$ and for the quadratic form), for all $\psi\in\EN$:
$$Q_{\BO,\Tri,h}(\psi)\leq\la_{N_{0}}\|\psi\|^2.$$
For $\eps_{0}$ small enough we introduce a smooth cutoff function $\chi$ being $0$ for $|x+\pi\sqrt{2}|\leq\eps_{0}$ and $1$ for $|x+\pi\sqrt{2}|\geq 2\eps_{0}$. Proposition \ref{Agmon2} implies that:
$$Q_{\BO,\Tri,h}(\chi\psi)\leq(\la_{N_{0}}+\OO(h^{\infty}))\|\chi\psi\|^2.$$
Then, the convexity of the potential allows to write:
$$\left\langle\left(-h^2\dr_{x}^2-\frac{1}{4\pi\sqrt{2}}\,x+\frac{1}{8}\right)\chi\psi,\chi\psi\right\rangle\leq (\la_{N_{0}}+\OO(h^{\infty}))\|\chi\psi\|^2,$$
where we have used the convexity. 
The dimension  of $\chi\EN$ is $N_{0}$ so that, with the properties of the Airy operator and the mini-max principle, we get:
$$\frac{1}{8}+(4\pi\sqrt{2})^{-2/3}z_{\A}(N_{0})\leq\la_{N_{0}}+\OO(h^{\infty}).$$
This is true for all fixed $N_{0}$ and, combined with Proposition \ref{quasibotri}, provides the separation of the lowest eigenvalues of $\mathcal{H}_{\BO, \Tri}(h)$:
\[
  \Big|\la_{\BO,\Tri,n}(h) - \Big( \frac{1}{8}+h^{2/3}(4\pi\sqrt{2})^{-2/3} z_{\A}(n) \Big) \Big|
  \leq Ch^{4/3} .
\]
Finally, with \eqref{E:BOtriquasi}-\eqref{E:BOtriest}, we obtain Theorem \ref{spectrumBOT}.

\subsection{Born-Oppenheimer approximation for the waveguide}\label{S:4.4}
Let us end this section by an informal analysis of the spectrum of the operator $\H_{\BO, \Gui}(h)$ defined in \eqref{HG}. This investigation is not necessary in our way to prove Theorem \ref{spectrumguide}, but it already gives a flavor of the ideas to analyze $\L_{\Gui}(h)$. We can obtain the asymptotic expansions of the eigenvalues of $\H_{\BO, \Gui}(h)$ by combining the analysis for $\H_{\toy}(h)$ and for $\H_{\BO, \Tri}(h)$. Indeed we can perform a quasimode construction like for $\H_{\toy}(h)$ and $\H_{\BO, \Tri}(h)$ by solving a transmission problem between the negative half-axis and the positive half-axis. 
For that purpose, we establish the following Agmon type estimate which states that the eigenfunctions of  $\H_{\BO, \Gui}(h)$ do not penetrate in the region $x>0$ more than at the scale $h$.
\begin{prop}
\label{AgmonBO}
Let $(\la,\psi)$ be an eigenpair of $\H_{\BO, \Gui}(h)$ such that $|\la-\frac{1}{8}|\leq Ch^{2/3}$. There exist $\alpha>0$, $h_{0}>0$ and $C>0$ such that for all $h\in(0,h_{0})$, we have:
$$\int_{x\geq 0} e^{\alpha h^{-1}x}
\Big(|\psi|^2 + |h\dr_{x}\psi|^2\Big)\, dxdy\leq C\|\psi\|^2 .$$
\end{prop}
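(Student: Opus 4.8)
The plan is to derive an Agmon identity for $\mathcal{H}_{\BO,\Gui}(h)=-h^2\partial_x^2+V(x)$ on the positive half-axis, exploiting that $V(x)=\tfrac12$ there while $\la$ is close to $\tfrac18$, so that $V-\la$ is bounded below by a fixed positive constant ($\approx\tfrac38$) on $x>0$. Concretely, for a Lipschitz weight $\Phi$ supported in a neighbourhood of $\{x\ge0\}$ one writes the standard identity
\begin{equation*}
   \int_{\R} h^2|\partial_x(e^{\Phi}\psi)|^2 + (V-\la)|e^{\Phi}\psi|^2 - h^2|\Phi' e^{\Phi}\psi|^2\, dx = 0 ,
\end{equation*}
obtained by testing the eigenequation against $e^{2\Phi}\psi$ and integrating by parts (here the integral in $y$ has been performed trivially since $\H_{\BO,\Gui}(h)$ does not involve $y$; the statement's $dxdy$ is just because $\psi$ is viewed on $\Omega$, but one may reduce to the 1D picture). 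Discarding the nonnegative gradient term gives
\begin{equation*}
   \int_{\R} \big((V-\la) - h^2|\Phi'|^2\big)|e^{\Phi}\psi|^2\, dx \le 0 .
\end{equation*}

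The natural choice is the \emph{linear} weight $\Phi(x)=\alpha h^{-1}x$ for $x\ge0$ (smoothly cut off to $0$ for $x\le0$), so that $h^2|\Phi'|^2=\alpha^2$ on $x>0$. On the region $x\ge0$ the integrand coefficient is then $(V-\la)-\alpha^2 \ge \tfrac38 - \Gamma_0 h^{2/3} - \alpha^2$, which is bounded below by a fixed positive constant once $\alpha$ is chosen small (say $\alpha^2<\tfrac14$) and $h$ small. On the region $x<0$ and on the transition zone where the cutoff lives, $\Phi$ and $\Phi'$ are bounded (uniformly in $h$, since $\Phi$ is constant $=0$ there up to a fixed-width transition near $x=0$ where $\Phi\le\alpha h^{-1}\cdot(\text{const})$—one should localise the cutoff within an $O(h)$-window, or more simply take the transition in a fixed window and absorb the resulting $e^{O(1)}$-factor), so the contribution of $x<0$ to the left-hand side is bounded below by $-C\|e^{\Phi}\psi\|^2_{\{x<0\}}\ge -C'\|\psi\|^2$ using that $\Phi$ is bounded there. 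Rearranging yields $\int_{x\ge0} e^{2\alpha h^{-1}x}|\psi|^2 \le C\|\psi\|^2$. The gradient bound $\int_{x\ge0} e^{2\alpha h^{-1}x}|h\partial_x\psi|^2\le C\|\psi\|^2$ then follows by keeping the gradient term in the identity: $h^2\int|\partial_x(e^{\Phi}\psi)|^2 \le \int h^2|\Phi'e^{\Phi}\psi|^2 + \la\int|e^{\Phi}\psi|^2 \le C\|\psi\|^2$, and then $\|h e^{\Phi}\partial_x\psi\|\le \|h\partial_x(e^{\Phi}\psi)\|+\|h\Phi'e^{\Phi}\psi\| = \|h\partial_x(e^{\Phi}\psi)\|+\alpha\|e^{\Phi}\psi\|_{\{x>0\}}\le C\|\psi\|$, as in Proposition \ref{Agmon1}.

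The one genuinely delicate point is the treatment near $x=0$: $V$ is only Lipschitz-with-a-corner there (left tangent slope $-\tfrac1{4\pi\sqrt2}$, right value $\tfrac12$), and $\Phi$ passes from exponentially growing to zero. Since the weight we need is only a \emph{lower} bound to exploit in $x>0$, there is no subtlety from the jump itself; the care is simply to make sure the cutoff of $\Phi$ at $x=0$ does not reintroduce a $h$-dependent loss. Taking the transition of the cutoff to happen in a window of size comparable to $h$ (where $\Phi$ has grown only by $O(\alpha)=O(1)$) keeps all the bad terms $O(1)$, and the argument closes. I expect this localisation bookkeeping—and matching it with the already-established small-eigenvalue bound $|\la-\tfrac18|\le Ch^{2/3}$—to be the only thing requiring attention; everything else is the standard Agmon scheme already carried out for Propositions \ref{Agmon1} and \ref{Agmon2}.
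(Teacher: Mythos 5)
Your proof is correct and follows essentially the strategy the paper intends: the paper gives no explicit proof of this proposition (it appears in an ``informal'' subsection, pointing to Propositions~\ref{Agmon1} and~\ref{Agmon2} and, for the 2D analogue, to the IMS formula), and your direct Agmon argument with linear weight $\Phi=\alpha h^{-1}\max(x,0)$ exploiting the uniform barrier $V-\la\ge\tfrac38-\OO(h^{2/3})$ on $x>0$ is precisely what is called for. One small remark: the weight $\Phi(x)=\alpha h^{-1}\max(x,0)$ is already Lipschitz and vanishes identically on $x\le0$, so there is no need for any smooth cutoff or transition window near $x=0$; the ``localisation bookkeeping'' you flag as the delicate point is in fact not present, and the argument closes exactly as in Propositions~\ref{Agmon1}--\ref{Agmon2}.
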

Thanks to the latter estimate we cut off the part of the eigenfunctions living on $x>0$ modulo a remainder of order $O(h^{\infty})$. This allows the comparison of $\H_{\BO, \Gui}(h)$ with $\H_{\BO, \Tri}(h)$ and provides the proof that all the lowest eigenvalues of $\H_{\BO, \Gui}(h)$ are described by the quasimode construction. In Section \ref{6.2} a similar analysis will be done for the whole waveguide.

\section{Triangle with Dirichlet boundary condition}
\label{5}
The aim of this section is to prove Theorem \ref{spectrumtriangle}. As usual, the proof will be divided into two main steps: a construction of quasimodes and the use of the true eigenfunctions of $\L_{\Tri}(h)$ as quasimodes for the Born-Oppenheimer approximation in order to obtain a lower bound for the true eigenvalues.

We first perform a change of variables to transform the triangle into a rectangle:
\begin{equation}
\label{Eut}
   u=x\in (-\pi\sqrt{2},0) ,\quad t=\frac{y}{x+\pi\sqrt{2}} \in (-1,1).
\end{equation}
so that $\Tri$ is transformed into 
\begin{equation}
\label{E:Rec}
   \Rec=(-\pi\sqrt{2},0)\times(-1,1).
\end{equation}
The operator $\L_{\Tri}(h)$ becomes:
\begin{equation}
\label{E:LRec}
   \L_{\Rec}(h)(u,t;\partial_u,\partial_t) = -h^2\Big(\dr_{u}
   -\frac{t}{u+\pi\sqrt{2}}\,\dr_{t}\Big)^2-\frac{1}{(u+\pi\sqrt{2})^2}\,\dr_{t}^2,
\end{equation}
with Dirichlet boundary conditions on $\partial\Rec$. The equation $\L_{\Tri}(h)\psi_h=\beta_h\psi_h$  is transformed into the equation 
\[
   \L_{\Rec}(h)\hat\psi_h=\beta_h\hat\psi_h \quad\mbox{with}\quad 
   \hat\psi_h(u,t) = \psi_h(x,y).
\]

\subsection{Quasimodes}
This subsection is devoted to the proof of the following proposition.
\begin{prop}\label{quasitri}
There are sequences $(\beta_{j,n})_{j\ge0\,}$ for any integer $n\ge1$ so that there holds:\\ For all $N_{0}\in\mathbb{R}$ and $J\in\mathbb{N}$, there exists $h_{0}>0$ and $C>0$ such that for $h\in(0,h_{0})$
\begin{equation}
   \dist \Big(\gS_\dis\big(\mathcal{L}_{\Tri}(h)\big),\,
  \sum_{j=0}^{J}\beta_{j,n}h^{j/3} \Big) \leq Ch^{(J+1)/3}, \quad n=1,\cdots N_0.
\end{equation}
Moreover, we have: $\beta_{0,n}=\frac{1}{8}$, $\beta_{1,n}=0$, and $\beta_{2,n}=(4\pi\sqrt{2})^{-2/3}z_{\A}(n)$.
\end{prop}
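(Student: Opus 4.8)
The plan is to mimic the one–dimensional constructions of Propositions \ref{quasitoy} and \ref{quasibotri}, but now in the rectangle $\Rec$, with two scales in the slow variable $u$. Working with $\L_\Rec(h)$, I first introduce the fast transverse eigenfunction: on each slice $u=\mathrm{const}$, the operator $-\frac{1}{(u+\pi\sqrt2)^2}\dr_t^2$ with Dirichlet conditions on $t=\pm1$ has lowest eigenvalue $\frac{\pi^2}{4(u+\pi\sqrt2)^2}$ with eigenfunction $\cos\frac{\pi t}{2}$, which is exactly the potential of $\H_{\BO,\Tri}(h)$. So I expand $\hat\psi_h(u,t)\sim\sum_j \Psi_j(u,t)h^{j/3}$ with $\Psi_0(u,t)=f_0(u)\cos\frac{\pi t}{2}$, and $\beta_h\sim\sum_j\beta_{j,n}h^{j/3}$, plug into $\L_\Rec(h)\hat\psi_h=\beta_h\hat\psi_h$, and solve order by order. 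At each order the equation for $\Psi_j$ splits, via projection onto $\cos\frac{\pi t}{2}$ and its orthogonal complement in $L^2(-1,1)$: the transverse-mean part is governed by the Airy-type operator $-\dr_s^2+V_1 s$ in the rescaled variable $s=h^{-2/3}u$ near $u=0$ (this is why the scale $x/h^{2/3}$ appears), yielding the Fredholm solvability condition that fixes $\beta_{j,n}$ and the transverse-mean component of $\Psi_{j-?}$ (as in Lemma \ref{lem-Ai0}), while the component orthogonal to the ground transverse mode is obtained by inverting $-\frac{1}{(u+\pi\sqrt2)^2}\dr_t^2-\frac{\pi^2}{4(u+\pi\sqrt2)^2}$, which is boundedly invertible there. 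The leading terms come out as $\beta_{0,n}=V_0=\frac18$, $\beta_{1,n}=0$ (no $h^{1/3}$ term, since the first correction lives at the Airy scale $h^{2/3}$), and $\beta_{2,n}=(4\pi\sqrt2)^{-2/3}z_\A(n)$ from the Dirichlet Airy operator at $u=0$, exactly as in \eqref{4E1b}--\eqref{E:Psi0}.

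Next I must handle the second endpoint $u=-\pi\sqrt2$, i.e.\ the true vertex of the triangle. Here the profiles $\Psi_j$, built as functions of $s=h^{-2/3}u$ localized near $u=0$, are exponentially small, so I multiply the truncated series by a cutoff $\chi^\lef$ as in \eqref{chi-lef} — equal to $1$ on $(-\pi/\sqrt2,+\infty)$ and vanishing near $u=-\pi$ — so that Dirichlet conditions at both ends of $\Rec$ hold (the condition at $u=0$ being already built into the Airy construction). Transporting back to the triangle $\Tri$ via \eqref{Eut} produces the quasimode $\psi_h^{[J]}$ with the announced two-scale structure $x/h^{2/3}$ and $x/h$; the scale $x/h$ enters because the subprincipal corrections from the $(u+\pi\sqrt2)^{-2}$ coefficients and from matching near the vertex are naturally expressed at that finer scale (compare the right-hand profiles $\Phi_{\ri,j}(\sigma)$, $\sigma=z/\kappa$, in \eqref{E3:2b}). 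With the exponential decay in $s$ of each $\Psi_j$ and the definitions of $\beta_{j,n},\Psi_j$, a routine estimate gives $\|(\L_\Tri(h)-\sum_{j=0}^J\beta_{j,n}h^{j/3})\psi_h^{[J]}\|\le C h^{(J+1)/3}\|\psi_h^{[J]}\|$, and the spectral theorem yields $\dist(\gS(\L_\Tri(h)),\sum_{j=0}^J\beta_{j,n}h^{j/3})\le Ch^{(J+1)/3}$. Since $\gS_\ess(\L_\Tri(h))$ is empty (the triangle is bounded), this is in fact $\gS_\dis$, giving the stated conclusion; together with the mutual orthogonality of the $\Psi_{0}$'s for distinct $n$ (the Airy eigenfunctions $g_{(n)}$ being $L^2$-orthogonal), the $N_0$ quasi-eigenvalues are pairwise separated for $h$ small.

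I expect the main obstacle to be organizing the two-scale order-by-order construction cleanly: unlike the purely one-dimensional situation, each $\Psi_j$ has a transverse structure, so the solvability analysis must be done after projecting onto $\cos\frac{\pi t}{2}$, and one has to keep track of which power of $h^{1/3}$ each contribution carries — the mismatch between the "natural" power $h^{2/3}$ of the transverse/Airy expansion and the power $h^{1/3}$ of the final asymptotic series is the bookkeeping subtlety, and is also why the odd-order coefficients vanish up to a certain rank. A secondary point to check carefully is that the error from the cutoff $\chi^\lef$ and from the remainder of the transverse expansion is genuinely $O(h^{(J+1)/3})$ and not merely $O(h^\infty\cdot(\text{something}))$ with hidden $h$-dependence; this uses the exponential smallness of $\Psi_j(h^{-2/3}u)$ on the support of $\nabla\chi^\lef$, exactly as in the passage leading to \eqref{E:BOtriest}. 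The precise values $\beta_{0,n}=\frac18$, $\beta_{1,n}=0$, $\beta_{2,n}=(4\pi\sqrt2)^{-2/3}z_\A(n)$ then drop out of the first three orders of the construction with no further work.
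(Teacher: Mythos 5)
Your ansatz $\hat\psi_h(u,t)\sim\sum_j \Psi_j(u,t)h^{j/3}$ is single-scale in the slow variable (the profiles are functions of $s=h^{-2/3}u$ only), whereas the paper's ansatz \eqref{5EAn} superposes profiles at the two scales $s=h^{-2/3}u$ and $\sigma=h^{-1}u$. The paper explicitly warns that an Ansatz containing the scale $h^{-2/3}u$ alone is not sufficient to construct quasimodes for $\L_\Rec(h)$, and your construction fails exactly at the point that warning concerns. You propose to obtain the component of $\Psi_j$ orthogonal to $c_0(t)=\cos\frac{\pi t}{2}$ by inverting, pointwise in $s$, the transverse operator on $c_0^\perp$; this produces a function $\Psi_j^\perp(s,t)$ that in general has a nonzero trace at $s=0$. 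This first occurs at order $j=4$, coming from $\L_4\Psi_0$ with $\L_4=\tfrac{\sqrt2}{\pi}t\dr_t\dr_s-\tfrac{3}{4\pi^4}s^2\dr_t^2$, and the trace $\Psi_4^\perp(0,\cdot)$ lies in $c_0^\perp$, so it cannot be cancelled by adjusting the transverse-mean component $g_j(s)c_0(t)$. Your quasimode then fails to vanish on the base $\{u=0\}$ of $\Tri$, hence is not in $H^1_0(\Tri)$, and the spectral theorem does not apply: the claimed error bound $O(h^{(J+1)/3})$ cannot be obtained once $J\geq 3$. The paper's remedy is precisely the boundary-layer ansatz $\Phi_j(\sigma,t)$ at the finer scale $\sigma=h^{-1}u$, solving $(\cN_0-\tfrac18)\Phi_j=\cdots$ on the half-strip $\Hst=\R_-\times(-1,1)$ with trace $\Phi_j(0,t)=-\Psi_j(0,t)$ and exponential decay secured by the Saint-Venant type Lemma \ref{lem-N0}, which simultaneously fixes the otherwise-free constant $g_j(0)$.

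You do gesture at a second scale $x/h$, but you place it at the wrong end of the domain: you attribute it to matching near the vertex $u=-\pi\sqrt2$ and to the Taylor remainder of the coefficients. In fact the vertex is handled by nothing more than the cutoff $\chi^\lef$ and exponential decay; the fine scale is a boundary layer concentrated at the base $u=0$, needed to restore the Dirichlet condition there against the transversally non-constant parts $\Psi_j^\perp$. Your computation of $\beta_{0,n}$, $\beta_{1,n}$, $\beta_{2,n}$ is correct because the boundary layers vanish through order $j\leq 3$, but the full statement (existence of the sequence $(\beta_{j,n})_{j\ge0}$ and the estimate for every $J$) requires the two-scale construction.
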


\begin{proof}
We want to construct quasimodes $(\beta_h,\psi_h)$ for the operator $\L_{\Tri}(h)(\partial_x,\partial_y)$. It will be more convenient to work on the rectangle $\Rec$ with the operator $\L_{\Rec}(h)(u,t;\partial_u,\partial_t)$.
We introduce the new scales
\begin{equation}
\label{E:ssigma2}
   s=h^{-2/3}u\quad\mbox{and} \quad\sigma=h^{-1}u,
\end{equation}
and we look quasimodes $(\beta_h,\hat\psi_h)$ in the form of series 
\begin{equation}
\label{5EAn}
   \beta_h\sim\sum_{j\geq 0}\beta_{j}h^{j/3} 
   \quad\mbox{and}\quad
   \hat\psi_h(u,t) \sim 
   \sum_{j\geq 0}\big(\Psi_{j}(s,t)+\Phi_{j}(\sigma,t)\big) h^{j/3}
\end{equation}
in order to solve
$\L_{\Rec}(h)\hat\psi_h=\beta_h\hat\psi_h$ in the sense of formal series.
As will be seen hereafter, an Ansatz containing the scale $h^{-2/3}u$ alone (like for the Born-Oppenheimer operator $\H_{\BO, \Tri}(h)$) is not sufficient to construct quasimodes for $\L_{\Rec}(h)$.
Expanding  the operator in powers of $h^{2/3}$, we obtain the formal series:
\begin{equation}
\label{E:Lj}
   \L_{\Rec}(h)(h^{2/3}s,t;h^{-2/3}\partial_s,\partial_t) 
   \sim \sum_{j\geq 0} \L_{2j}h^{2j/3}
   \quad\mbox{with leading term}\quad \L_{0} = -\frac{1}{2\pi^2}\dr^2_{t}
\end{equation}
and in powers of $h$:
\begin{equation}
\label{E:Nj}
   \L_{\Rec}(h)(h\sigma,t;h^{-1}\partial_\sigma,\partial_t) \sim
   \sum_{j\geq 0} \cN_{3j}h^{j}
   \quad\mbox{with leading term}\quad \cN_{0} = -\dr_{\sigma}^2-\frac{1}{2\pi^2}\dr^2_{t}.
\end{equation}
In what follows, in order to finally ensure the Dirichlet conditions on the triangle $\Tri$, we will require for our Ansatz the boundary conditions, for any $j\in\N$:
\begin{gather}
\label{5Dir1}
   \Psi_{j}(0,t)+\Phi_{j}(0,t)=0,\quad -1\leq t\leq1 \\
\label{5Dir2}
   \Psi_{j}(s,\pm1)=0,\ \  s<0 \quad\mbox{and}\quad 
\Phi_{j}(\sigma,\pm1)=0,\ \   \sigma\leq 0.
\end{gather}

More specifically, we are interested in the ground energy $\lambda=\frac18$ of the Dirichlet problem for $\L_0$ on the interval $(-1,1)$. Thus we have to solve Dirichlet problems for the operators $\cN_0-\frac18$ and $\L_0-\frac18$ on the half-strip 
\begin{equation}
\label{E:Hst}
   \Hst = \R_-\times(-1,1),
\end{equation} 
and look for \emph{exponentially decreasing solutions}. The situation is similar to that encountered in thin structure asymptotics with Neumann boundary conditions. The following lemma shares common features with the Saint-Venant principle, see for example \cite[\S2]{DauGru98}.

\begin{lem}\label{lem-N0}
We denote the first normalized eigenvector of $\L_0$ on $H^1_0((-1,1))$ by $c_0$:
\[
   c_{0}(t)=\cos\left(\frac{\pi t}{2}\right).
\]
Let $F=F(\sigma,t)$ be a function in $L^2(\Hst)$ with exponential decay with respect to $\sigma$ and let $G\in H^{3/2}((-1,1))$ be a function of $t$ with $G(\pm1)=0$. 
Then there exists a unique $\gamma\in\R$ such that the problem
$$\left(\cN_{0}-\frac{1}{8}\right)\Phi=F\ \ \mbox{in}\ \ \Hst,\quad \Phi(\sigma,\pm1)=0,\quad  \Phi(0,t)=G(t)+\gamma c_{0}(t),$$
admits a (unique) solution in $H^2(\Hst)$ with exponential decay. There holds
\[
   \gamma =-\int_{-\infty}^{0} \int_{-1}^1 F(\sigma,t)\,\sigma c_0(t)\,d\sigma dt -
   \int_{-1}^1 G(t)\,c_0(t)\,dt.
\]
\end{lem}

The following two lemmas are consequences of the Fredholm alternative.
\begin{lem}\label{lem-L0}
Let $F=F(s,t)$ be a function in $L^2(\Hst)$ with exponential decay with respect to $s$. Then, there exist solution(s) $\Psi$ such that:
$$\left(\L_{0}-\frac{1}{8}\right)\Psi=F\ \ \mbox{in}\ \ \Hst, \quad \Psi(s,\pm1)=0 $$
if and only if $\big\langle F(s,\cdot),c_{0}\big\rangle_{t}=0$ for all $s<0$. In this case, $\Psi(s,t)=\Psi^\perp(s,t) + g(s)c_0(t)$ where $\Psi^\perp$ satisfies $\big\langle\Psi(s,\cdot) ,c_{0}\big\rangle_{t}\equiv0$ and has also an exponential decay.
\end{lem}

Then, we will also need a rescaled version of Lemma \ref{lem-Ai0}.

\begin{lem}\label{lem-Ai}
Let $n\geq 1$. We recall that $z_{\A}(n)$ is the n-th zero of the reverse Airy function, and we denote by 
$$g_{(n)}=\A\big((4\pi\sqrt{2})^{-1/3}s+z_{\A}(n)\big)$$ the eigenvector of the operator $-\dr_{s}^2-(4\pi\sqrt{2})^{-1}s$ with Dirichlet condition on $\R_-$ associated with the eigenvalue $(4\pi\sqrt{2})^{-2/3}z_{\A}(n)$. Let $f=f(s)$ be a function in $L^2(\R_-)$ with exponential decay and let $c\in\R$. Then there exists a unique $\beta\in\R$ such that the problem:
$$\left(-\dr_{s}^2-\frac{s}{4\pi\sqrt{2}}-(4\pi\sqrt{2})^{-2/3}z_{\A}(n)\right)g=f+\beta g_{(n)}\ \ \mbox{in}\ \ \R_-, \mbox{ with } g(0)=c,$$
has a solution in $H^2(\R_-)$ with exponential decay.
\end{lem}

Now we can start the construction of the terms of our Ansatz \eqref{5EAn}.

\paragraph{Terms in $h^0$}
The equations provided by the constant terms are:
$$
  \L_{0}\Psi_{0}=\beta_{0}\Psi_{0}(s,t),\quad   \cN_{0}\Phi_{0}=\beta_{0}\Phi_{0}(s,t)
$$
with boundary conditions \eqref{5Dir1}-\eqref{5Dir2} for $j=0$, so that we choose $\beta_{0}=\frac{1}{8}$ and $\Psi_{0}(s,t)=g_{0}(s)c_{0}(t)$. The boundary condition \eqref{5Dir1}  provides: $\Phi_{0}(0,t)=-g_{0}(0)c_{0}(t)$ so that, with Lemma \ref{lem-N0}, we get $g_{0}(0)=0$ and $\Phi_{0}=0$. The function $g_0(s)$ will be determined later.

\paragraph{Terms in $h^{1/3}$}
Collecting the terms of order $h^{1/3}$, we are led to:
$$(\L_{0}-\beta_{0})\Psi_{1}=\beta_{1}\Psi_{0}-\L_{1}\Psi_{1}=\beta_{1}\Psi_{0},\quad (\cN_{0}-\beta_{0})\Phi_{1}=\beta_{1}\Phi_{0}-\cN_{1}\Phi_{1}=0$$
with boundary conditions \eqref{5Dir1}-\eqref{5Dir2} for $j=1$.
Using Lemma \ref{lem-L0}, we find $\beta_{1}=0$, $\Psi_{1}(s,t)=g_{1}(s)c_{0}(t)$,  $g_{1}(0)=0$ and $\Phi_{1}=0$.

\paragraph{Terms in $h^{2/3}$}
We get:
$$(\L_{0}-\beta_{0})\Psi_{2}=\beta_{2}\Psi_{0}-\L_{2}\Psi_{0},\quad(\cN_{0}-\beta_{0})\Phi_{2}=0,$$
where $\L_{2}=-\dr_{s}^2+\frac{s}{\pi^3\sqrt{2}}\,\dr_{t}^2$ and with boundary conditions \eqref{5Dir1}-\eqref{5Dir2} for $j=2$. Lemma \ref{lem-L0} provides the equation in $s$ variable
\[
   \big\langle (\beta_{2}\Psi_{0}-\L_{2}\Psi_{0}(s,\cdot)),c_{0}\big\rangle_{t}=0, \quad s<0.
\]
Taking the formula $\Psi_0=g_{0}(s)c_{0}(t)$ into account this becomes
\[
   \beta_2 g_0(s) = \left(-\dr_{s}^2-\frac{s}{4\pi\sqrt{2}}\right)g_0(s).
\]
This equation leads to take $\beta_{2}=(4\pi\sqrt{2})^{-2/3}z_{\A}(n)$ and for $g_{0}$ the corresponding eigenfunction $g_{(n)}$. We deduce $(\L_{0}-\beta_{0})\Psi_{2}=0$, then
get $\Psi_{2}(s,t)=g_{2}(s)c_{0}(t)$ with $g_{2}(0)=0$ and $\Phi_{2}=0$.

\paragraph{Terms in $h^{3/3}$}
We get:
$$(\L_{0}-\beta_{0})\Psi_{3}=\beta_{3}\Psi_{0}+\beta_{2}\Psi_{1}-\L_{2}\Psi_{1},\quad(\cN_{0}-\beta_{0})\Phi_{3}=0,$$
with boundary conditions \eqref{5Dir1}-\eqref{5Dir2} for $j=3$.
The scalar product with $c_{0}$ (Lemma \ref{lem-L0}) and then the scalar product with $g_{0}$ (Lemma \ref{lem-Ai}) provide $\beta_{3}=0$ and $g_{1}=0$. We deduce: $\Psi_{3}(s,t)=g_{3}(s)c_{0}(t)$, and $g_{3}(0)=0$, $\Phi_{3}=0$.

\paragraph{Terms in $h^{4/3}$}
We get:
$$(\L_{0}-\beta_{0})\Psi_{4}=\beta_{4}\Psi_{0}+\beta_{2}\Psi_{2}-\L_{4}\Psi_{0}-\L_{2}\Psi_{2},\quad(\cN_{0}-\beta_{0})\Phi_{4}=0,$$
where 
$$\L_{4}=\frac{\sqrt{2}}{\pi} \,t\dr_{t}\dr_{s}-\frac{3}{4\pi^4}s^2\dr_{t}^2,$$
and with boundary conditions \eqref{5Dir1}-\eqref{5Dir2} for $j=4$.
The scalar product with $c_{0}$ provides an equation for $g_{2}$ and the scalar product with $g_{0}$ determines $\beta_{4}$. By Lemma \ref{lem-L0} this step determines $\Psi_{4}=\Psi_{4}^{\perp}+c_{0}(t)g_{4}(s)$ with a non-zero $\Psi_{4}^{\perp}$ and $g_{4}(0)=0$. Since by construction $\big\langle\Psi_{4}^{\perp}(0,\cdot),\,c_0\big\rangle_t=0$, Lemma \ref{lem-N0} yields a solution $\Phi_{4}$ with exponential decay. Note that it also satisfies $\big\langle\Phi_{4}(\sigma,\cdot),\,c_0\big\rangle_t=0$ for all $\sigma<0$.

\paragraph{Terms in $h^{5/3}$}
We get:
$$(\L_{0}-\beta_{0})\Psi_{5}=\beta_{5}\Psi_{0}+\beta_{2}\Psi_{3}-\L_{2}\Psi_{3},\quad(\cN_{0}-\beta_{0})\Phi_{5}=0,$$
and with boundary conditions \eqref{5Dir1}-\eqref{5Dir2} for $j=5$.
We find $\beta_{5}=0$, $g_{3}=0$, $\Psi_{5}=g_{5}(s)c_{0}(t)$, $g_{5}(0)=0$, $\Phi_{5}=0$.

\paragraph{Terms in $h^{6/3}$}
We get:
$$(\L_{0}-\beta_{0})\Psi_{6}=\beta_{6}\Psi_{0}+\beta_{4}\Psi_{2}+\beta_{2}\Psi_{4}-\L_{2}\Psi_{4}-\L_{4}\Psi_{2},\quad(\cN_{0}-\beta_{0})\Phi_{6}=\beta_2\Phi_4,$$
and with boundary conditions \eqref{5Dir1}-\eqref{5Dir2} for $j=6$.
This determines $\beta_{6}$, $g_{4}$, $\Psi_{6}=\Psi_{6}^{\perp}+c_{0}(t)g_{6}(s)$, $g_{6}(0)=0$, and $\Phi_{6}$ with exponential decay due to the orthogonality of $\Phi_4$ to $c_0$.

\paragraph{Terms in $h^{7/3}$}
We get:
$$(\L_{0}-\beta_{0})\Psi_{7}=\beta_{7}\Psi_{0}+\beta_{2}\Psi_{5}-\L_{2}\Psi_{5},\quad(\cN_{0}-\beta_{0})\Phi_{7}=-\cN_{3}\Phi_{4},$$
where 
$$\cN_{3}=\frac{2}{\pi\sqrt{2}}t\dr_{\sigma}\dr_{t}+\frac{\sigma}{\pi^3\sqrt{2}}\dr_{t}^2,$$
and with boundary conditions \eqref{5Dir1}-\eqref{5Dir2} for $j=7$.
We take $\beta_{7}=0$, $g_{5}=0$, $\Psi_{7}=g_{7}(s)c_{0}(t)$. 
Then, Lemma \ref{lem-N0} induces a value for the trace $g_{7}(0)$ so that there exists $\Phi_{7}$ with an exponential decay.
Note that if there holds:
\begin{equation}
\label{neq0}
   \int_\Hst (\cN_{3}\Phi_{4})(\sigma,t)\, \sigma c_0(t) \,d\sigma dt \neq 0,
\end{equation}
we would deduce by Lemma \ref{lem-N0} that $g_{7}(0)\neq 0$.

\paragraph{Terms in $h^{8/3}$}
We get:
\begin{align*}
   (\L_{0}-\beta_{0})\Psi_{8}&=\beta_{8}\Psi_{0}+\beta_{6}\Psi_{2}+
   \beta_{4}\Psi_{4}+\beta_{2}\Psi_{6}-\L_{8}\Psi_{0}-\L_{6}\Psi_{2}-\L_{4}\Psi_{4}-\L_{2}\Psi_{6},\\
   (\cN_{0}-\beta_{0})\Phi_{8}&=\beta_{4}\Phi_{4}+\beta_{2}\Phi_{6}.
\end{align*}
This determines $\beta_{8}$, $g_{6}$ and $\Psi_{8}=\Psi_{8}^{\perp}+c_{0}g_{8}$, the trace $g_{8}(0)$ and the exponentially decreasing solution $\Phi_{8}$.

\paragraph{Terms in $h^{9/3}$}
We get:
$$(\L_{0}-\beta_{0})\Psi_{9}=\beta_{9}\Psi_{0}+\beta_{2}\Psi_{7}-\L_{2}\Psi_{7},\quad(\cN_{0}-\beta_{0})\Phi_{9}=\beta_2\Phi_7-\cN_3\Phi_6.$$
We find $\beta_{9}$, $g_{7}$ and then $\Psi_{9}=\Psi_{9}^{\perp}+c_{0}g_{9}$ and $g_{9}(0)$, $\Phi_{9}$. Note that if $g_{7}(0)\neq 0$, i.e.\ if \eqref{neq0} holds, we would deduce that $\beta_{9}\neq 0$.

\paragraph{Continuation.} The construction of the further terms goes on along the same lines. 
 This leads to define the quasimodes for $\L_{\Tri}(h)$:
\begin{gather}
   \beta^{[J]}_h=\sum_{j=0}^J \beta_{j}h^{j/3},\\
\label{quasi-eigenfunction-Tri}
   \psi_{h}^{[J]} = \chi^\lef(x) \sum_{j=0}^J 
   \left(\Psi_{j}\Big(\frac{x}{h^{2/3}}\,,\frac{y}{x+\pi\sqrt{2}}\Big)
   +\Phi_{j}\Big(\frac{x}{h}\,,\frac{y}{x+\pi\sqrt{2}}\Big)\right) h^{j/3},
\end{gather}
where $\chi^{\lef}$ is defined in \eqref{chi-lef}.
The conclusion follows from the spectral theorem.
\end{proof}

\subsection{Agmon estimates}
\label{S:TriAgmon}
On our way to prove Theorem \ref{spectrumtriangle}, we now state Agmon estimates like for $\mathcal{H}_{\BO, \Tri}(h)$.
Let us first notice that, due to Proposition \ref{quasitri}, the lowest eigenvalues of $\mathcal{L}_{\Tri}(h)$ still satisfy an estimate like \eqref{small-e-v}.
It turns out that we have the following lower bound, for all $\psi\in\Dom(Q_{\Tri,h})$:
$$Q_{\Tri,h}(\psi)\geq\int_{\Tri} h^2|\dr_{x}\psi|^2+\frac{\pi^2}{4(x+\pi\sqrt{2})^2}|\psi|^2\,dxdy.$$
Thus, the analysis giving Propositions \ref{Agmon1} and \ref{Agmon2} applies exactly in the same way and we obtain:

\begin{prop}\label{Agmon1'}
Let $\Gamma_0>0$. There exist $h_{0}>0$, $C_{0}>0$ and $\eta_{0}>0$ such that for $h\in(0,h_{0})$ and all eigenpair $(\la,\psi)$ of $\mathcal{L}_{\Tri}(h)$ satisfying $|\lambda-\frac18|\le\Gamma_0h^{2/3}$, we have:
$$\int_{\Tri}  e^{\eta_{0}h^{-1}|x|^{3/2}}\Big(|\psi|^2+|h^{2/3}\dr_{x}\psi|^2\Big)\,dxdy
\leq C_{0}\|\psi\|^2.$$
\end{prop}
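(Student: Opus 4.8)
The plan is to reproduce, essentially verbatim, the argument that yielded Proposition \ref{Agmon1} for the one-dimensional operator $\mathcal{H}_{\BO, \Tri}(h)$, using as the only extra ingredient the quadratic-form lower bound displayed just before the statement,
$$
   Q_{\Tri,h}(\psi)\geq\int_{\Tri} h^2|\dr_{x}\psi|^2+\frac{\pi^2}{4(x+\pi\sqrt{2})^2}|\psi|^2\,dxdy,
$$
which reduces the two-dimensional problem to the one-dimensional one by discarding the nonnegative $y$-derivative terms. First I would fix an eigenpair $(\la,\psi)$ of $\mathcal{L}_{\Tri}(h)$ with $|\la-\tfrac18|\le\Gamma_0 h^{2/3}$; such eigenpairs exist and satisfy this estimate by Proposition \ref{quasitri} (the analogue of \eqref{small-e-v}). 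Then, for a Lipschitz weight $\Phi=\Phi(x)$ to be chosen, I would write the Agmon identity obtained by testing the eigenvalue equation against $e^{2\Phi}\psi$ and integrating by parts in both variables: this gives
$$
   \int_\Tri h^2|\dr_x(e^\Phi\psi)|^2+|\dr_y(e^\Phi\psi)|^2
   +\Big(\tfrac{\pi^2}{4(x+\pi\sqrt2)^2}-\la\Big)|e^\Phi\psi|^2
   -h^2|\Phi'|^2|e^\Phi\psi|^2\,dxdy=0 ,
$$
using that $\Phi$ does not depend on $y$ so the $y$-derivative weight produces no $|\Phi'|^2$ term. Dropping the two nonnegative gradient contributions and the convexity lower bound $\tfrac{\pi^2}{4(x+\pi\sqrt2)^2}\ge\tfrac18-\tfrac{x}{4\pi\sqrt2}$ for $x\in(-\pi\sqrt2,0)$, together with $|\la-\tfrac18|\le\Gamma_0h^{2/3}$, yields
$$
   \int_\Tri\Big(-\tfrac{x}{4\pi\sqrt2}-h^2|\Phi'|^2-\Gamma_0 h^{2/3}\Big)|e^\Phi\psi|^2\,dxdy\le0 ,
$$
which is exactly the one-dimensional inequality of the proof of Proposition \ref{Agmon1}, now integrated additionally over $t$ (equivalently over $y$).

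From here the choice $\Phi(x)=\eta\,h^{-1}|x|^{3/2}$, so that $h^2|\Phi'|^2=\tfrac94\eta^2|x|$, makes the coefficient $\big(\tfrac1{4\pi\sqrt2}-\tfrac94\eta^2\big)|x|-\Gamma_0 h^{2/3}$ positive for $\eta$ small and $|x|\gg h^{2/3}$. Splitting the $x$-integral at $x=-Dh^{2/3}$ with $D$ large enough that $\tilde\eta D-\Gamma_0=:d>0$ (where $\tilde\eta>0$ is the remaining coefficient of $|x|$), one bounds the weighted mass on $(-\pi\sqrt2,-Dh^{2/3})$ by the unweighted mass on $(-Dh^{2/3},0)$ times a constant, exactly as in the cited proof; since $e^\Phi$ is bounded by a constant on $(-Dh^{2/3},0)$, this controls $\int_\Tri e^{2\eta_0 h^{-1}|x|^{3/2}}|\psi|^2$ by $C_0\|\psi\|^2$ after absorbing $\eta$ into $\eta_0$ (and taking the weight $e^{\eta_0 h^{-1}|x|^{3/2}}$, with the factor of $2$ absorbed into constants, as in the statement). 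The gradient part $\int_\Tri e^{\eta_0 h^{-1}|x|^{3/2}}|h^{2/3}\dr_x\psi|^2$ is then recovered by going back to the Agmon identity: the term $h^2\int|\dr_x(e^\Phi\psi)|^2$ is controlled once the weighted $L^2$ norm is, and $h^{2/3}\dr_x\psi=h^{2/3}e^{-\Phi}\big(\dr_x(e^\Phi\psi)-\Phi'e^\Phi\psi\big)$ with $h^{2/3}\Phi'=O(h^{-1/3}|x|^{1/2})$ handled by the same weighted mass estimate (the extra polynomial factor is harmless against a slightly smaller exponential weight). This is routine and identical to the one-dimensional case, so I would simply invoke that the proof of Proposition \ref{Agmon1} ``applies exactly in the same way.''

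The only point requiring a word of care — and the one I would flag as the main (mild) obstacle — is the justification of the integration by parts in the Agmon identity on the non-smooth domain $\Tri$: the corner at the reentrant vertex $\bf 0$ and the two acute vertices mean $\psi$ is not in $H^2(\Tri)$ a priori. However, $\Tri$ is convex (it is a triangle), so $\Dom(\mathcal{L}_{\Tri}(h))\subset H^2(\Tri)$ by the classical regularity for the Dirichlet Laplacian on convex polygons, and the weight $e^\Phi$ is Lipschitz and bounded on $\overline\Tri$; hence $e^\Phi\psi\in H^1_0(\Tri)$ and all the integrations by parts are legitimate. (Alternatively one truncates away from the vertices with a cutoff and passes to the limit, the boundary terms vanishing because $\psi\in H^1_0$.) With this remark in place, the proof is complete.
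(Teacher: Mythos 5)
Your proposal is correct and follows the paper's own route: the paper likewise invokes the transverse lower bound $Q_{\Tri,h}(\psi)\ge\int_\Tri h^2|\partial_x\psi|^2+\tfrac{\pi^2}{4(x+\pi\sqrt2)^2}|\psi|^2\,dxdy$ and then simply states that the one-dimensional analysis of Proposition \ref{Agmon1} applies in exactly the same way, which is what you carry out. Two cosmetic remarks: your displayed ``Agmon identity'' should be an inequality, since the potential term comes from bounding $\int|\partial_y(e^\Phi\psi)|^2\,dy$ below by the first transverse Dirichlet eigenvalue and therefore \emph{replaces} that gradient term rather than accompanying it (your subsequent inequality is nonetheless the correct one), and the convex triangle $\Tri$ has no reentrant vertex at $\mathbf{0}$ (the origin is the midpoint of its right edge), though your conclusion that $\Dom(\L_\Tri(h))\subset H^2(\Tri)$ and that the integrations by parts are legitimate is exactly right.
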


\begin{prop}\label{Agmon2'}
Let $\Gamma_0>0$. There exist $h_{0}>0$, $C_{0}>0$ and $\rho_{0}>0$ such that for $h\in(0,h_{0})$ and all eigenpair $(\la,\psi)$ of $\mathcal{L}_{\Tri}(h)$ satisfying $|\lambda-\frac18|\le\Gamma_0h^{2/3}$, we have:
$$\int_{\Tri} (x+\pi\sqrt{2})^{-\rho_{0}/h}\Big(|\psi|^2+|h\,\dr_{x}\psi|^2\Big)\, dxdy
\leq C_{0}\|\psi\|^2.$$
\end{prop}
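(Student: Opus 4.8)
The plan is to transcribe to two dimensions the one-dimensional argument already used for Proposition~\ref{Agmon2}, the only genuinely new ingredient being the transverse Dirichlet inequality on the vertical slices of $\Tri$. First I would fix $N_0$ and invoke Proposition~\ref{quasitri}, so that the $N_0$ lowest eigenpairs $(\la,\psi)$ of $\L_\Tri(h)$ satisfy $|\la-\tfrac18|\le\Gamma_0 h^{2/3}$ for $h$ small. On the slice $\{x=\mathrm{const}\}\cap\Tri$, which is the interval $|y|<x+\pi\sqrt2$ of length $2(x+\pi\sqrt2)$, the one-dimensional Dirichlet inequality gives $\int|\dr_y w|^2\,dy\ge\frac{\pi^2}{4(x+\pi\sqrt2)^2}\int|w|^2\,dy$; integrating in $x$ yields the lower bound $Q_{\Tri,h}(\psi)\ge\int_\Tri h^2|\dr_x\psi|^2+\frac{\pi^2}{4(x+\pi\sqrt2)^2}|\psi|^2\,dxdy$ already recorded above the statement.

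Next I would write the Agmon identity for a Lipschitz weight $\Phi=\Phi(x)$ depending on $x$ only. Multiplying $\L_\Tri(h)\psi=\la\psi$ by $e^{2\Phi}\psi$ and integrating by parts (using that $e^\Phi$ commutes with $\dr_y$ and that $\psi$ vanishes on $\dr\Tri$) gives
\[
   \int_\Tri h^2|\dr_x(e^\Phi\psi)|^2+|\dr_y(e^\Phi\psi)|^2
   -h^2|\Phi'|^2e^{2\Phi}|\psi|^2-\la\, e^{2\Phi}|\psi|^2\,dxdy=0.
\]
Applying the transverse inequality to $\int|\dr_y(e^\Phi\psi)|^2$ and using $|\la-\tfrac18|\le\Gamma_0 h^{2/3}$, this becomes
\[
   \int_\Tri h^2|\dr_x(e^\Phi\psi)|^2
   +\Big(\tfrac{\pi^2}{4(x+\pi\sqrt2)^2}-h^2|\Phi'|^2-\tfrac18-Ch^{2/3}\Big)e^{2\Phi}|\psi|^2\,dxdy\le0.
\]
Taking $\Phi(x)=-\rho h^{-1}\ln\!\big(D^{-1}(x+\pi\sqrt2)\big)$ with $\rho\in(0,\tfrac\pi2)$, one has $h^2|\Phi'|^2=\rho^2(x+\pi\sqrt2)^{-2}$, so the bracket equals $\big(\tfrac{\pi^2}{4}-\rho^2\big)(x+\pi\sqrt2)^{-2}-\tfrac18-Ch^{2/3}$, which is bounded below by a fixed positive constant on $\{x+\pi\sqrt2<D_1\}$ for a suitable $D_1>0$ and $h$ small; $D$ is chosen so as to normalise the weight. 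Splitting the integral at $x+\pi\sqrt2=D_1$, the region where $\Phi$ is unbounded lies in $\{x+\pi\sqrt2<D_1\}$ where the bracket is positive, hence is controlled by the region $\{x+\pi\sqrt2>D_1\}$ where $\Phi$ is bounded; this yields $\int_\Tri(x+\pi\sqrt2)^{-\rho_0/h}|\psi|^2\le C_0\|\psi\|^2$ with $\rho_0=2\rho$. For the derivative term one returns to the Agmon identity: $h^2\int_\Tri|\dr_x(e^\Phi\psi)|^2\le\int_\Tri(h^2|\Phi'|^2+\la)e^{2\Phi}|\psi|^2$, which is $\le C_0\|\psi\|^2$ once $\rho$ is taken slightly larger than strictly needed so as to absorb the singular factor $\rho^2(x+\pi\sqrt2)^{-2}$; then $e^{2\Phi}|h\dr_x\psi|^2\le2|h\dr_x(e^\Phi\psi)|^2+2h^2|\Phi'|^2e^{2\Phi}|\psi|^2$ and one integrates.

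I do not expect a serious obstacle here: the argument is the two-dimensional transcription of Proposition~\ref{Agmon2}. The only points requiring care are that the Agmon weight must depend on $x$ alone, so that it commutes with $\dr_y$ and the transverse Dirichlet inequality can be applied slice by slice inside the identity, and that $\rho\in(0,\tfrac\pi2)$ and $D_1$ must be tuned so that the effective term $\big(\tfrac{\pi^2}{4}-\rho^2\big)(x+\pi\sqrt2)^{-2}$ dominates both $\tfrac18$ and the $|\Phi'|^2$ contribution near the sharp vertex $x=-\pi\sqrt2$ of $\Tri$ — exactly as in the proof of Proposition~\ref{Agmon2}.
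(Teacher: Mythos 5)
Your proposal is correct and follows essentially the same route as the paper, which itself reduces the proof of Proposition~\ref{Agmon2'} to the lower bound $Q_{\Tri,h}(\psi)\ge\int_\Tri h^2|\dr_x\psi|^2+\frac{\pi^2}{4(x+\pi\sqrt2)^2}|\psi|^2\,dxdy$ and then invokes verbatim the one-dimensional argument of Proposition~\ref{Agmon2}. You have simply spelled out the two-dimensional Agmon identity with an $x$-dependent weight, the slicewise transverse Poincar\'e inequality, and the splitting at $x+\pi\sqrt2=D_1$ that the paper leaves implicit.
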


\subsection{Approximation of the first eigenfunctions by tensor products}
In this subsection, we will work with the operator $\L_{\Rec}(h)$ rather than $\L_{\Tri}(h)$.
Let us consider the first $N_{0}$ eigenvalues of $\L_{\Rec}(h)$ (shortly denoted by $\la_{n}$). In each corresponding eigenspace, we choose a normalized eigenfunction $\hat\psi_{n}$ so that $\langle\hat\psi_{n},\hat\psi_{m}\rangle=0$ if $n\neq m$.
As in Section \ref{BO-triangle-end}, we introduce:
$$\EN=\mathrm{span}(\hat\psi_{1},\ldots,\hat\psi_{N_0}).$$
Let us define $Q_{\Rec}^0$ the following quadratic form:
$$Q_{\Rec}^0(\hat{\psi})=\int_{\Rec} \left(\frac{1}{2\pi^2}|\dr_{t}\hat{\psi}|^2-\frac{1}{8}|\hat{\psi}|^2\right) (u+\pi\sqrt{2})\, dudt,$$
associated with the operator $\L_{\Rec}^0=\Id_{u}\otimes\left(-\frac{1}{2\pi^2}\dr_{t}^2-\frac{1}{8}\right)$ on $L^2(\Rec,(u+\pi\sqrt{2})dudt)$.
We consider the projection on the eigenspace associated with the eigenvalue $0$ of $-\frac{1}{2\pi^2}\dr_{t}^2-\frac{1}{8}$:
\begin{equation}\label{Pi0}
   \Pi_{0}\hat{\psi}(u,t) = 
   \big\langle\hat{\psi}(u,\cdot),c_{0}\big\rangle_{t} \,c_{0}(t),
\end{equation}
where we recall that $c_{0}(t)=\cos\left(\frac{\pi}{2}t\right)$. We can now state a first approximation result:

\begin{prop}\label{approx1}
There exist $h_{0}>0$ and $C>0$ such that for $h\in(0,h_{0})$ and all ${\hat\psi\in\EN}$:
$$0\leq Q_{\Rec}^0(\hat\psi)\leq Ch^{2/3}\|\hat\psi\|^2$$
and
$$\|(\Id-\Pi_0)\hat\psi\| + \|\dr_{t}(\Id-\Pi_0)\hat\psi\| \leq  Ch^{1/3}\|\hat\psi\|.$$
Moreover, $\Pi_{0}$ : $\EN\to\Pi_{0}(\EN)$ is an isomorphism.
\end{prop}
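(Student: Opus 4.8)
### Proof Proposal for Proposition \ref{approx1}

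The plan is to extract spectral information from the variational characterization of the first $N_0$ eigenvalues together with the Agmon estimates of Propositions \ref{Agmon1'} and \ref{Agmon2'}, translating everything into the rectangle coordinates $(u,t)$ where the operator $\L_\Rec(h)$ lives. First I would record the quadratic form identity: writing $\hat\psi\in\EN$ as an orthonormal combination of the $\hat\psi_n$, one has $Q_{\Rec,h}(\hat\psi)\le\la_{N_0}\|\hat\psi\|^2$, and by Proposition \ref{quasitri} $\la_{N_0}=\frac18+\OO(h^{2/3})$. The key point is that $Q_{\Rec,h}$, when restricted to its "transverse part", dominates $\frac18\|\hat\psi\|^2 + Q^0_\Rec(\hat\psi)$ up to harmless terms: indeed, dropping the nonnegative $-h^2(\dr_u - \tfrac{t}{u+\pi\sqrt2}\dr_t)^2$ contribution (which is a square) gives $Q_{\Rec,h}(\hat\psi)\ge \int_\Rec \frac{1}{(u+\pi\sqrt2)^2}|\dr_t\hat\psi|^2\,(u+\pi\sqrt2)\,dudt$. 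However the weight $(u+\pi\sqrt2)^{-1}$ is not quite the weight $(u+\pi\sqrt2)$ appearing in $Q^0_\Rec$, so one must use the Agmon localization near $u=-\pi\sqrt2$ (Proposition \ref{Agmon2'}, transported to the rectangle) to confine $\hat\psi$ to a region $u\ge -\pi\sqrt2 + \eps$ modulo $\OO(h^\infty)$; on that region the two weights are comparable. This yields $\frac18\|\hat\psi\|^2 + Q^0_\Rec(\hat\psi)\le Q_{\Rec,h}(\hat\psi) + \OO(h^\infty)\|\hat\psi\|^2 \le (\tfrac18 + Ch^{2/3})\|\hat\psi\|^2$, hence $0\le Q^0_\Rec(\hat\psi)\le Ch^{2/3}\|\hat\psi\|^2$, the lower bound being immediate since $0$ is the bottom of the spectrum of $-\tfrac{1}{2\pi^2}\dr_t^2-\tfrac18$ on $H^1_0((-1,1))$.

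Next I would deduce the projection estimate. Decompose $\hat\psi = \Pi_0\hat\psi + (\Id-\Pi_0)\hat\psi$. Since $(\Id-\Pi_0)\hat\psi(u,\cdot)$ is orthogonal to $c_0$ in $L^2((-1,1))$ and the second eigenvalue of $-\tfrac{1}{2\pi^2}\dr_t^2$ on $H^1_0((-1,1))$ is $\tfrac12$, with spectral gap $\tfrac12-\tfrac18=\tfrac38>0$ above the value $\tfrac18$, one has the pointwise-in-$u$ inequality
\[
   \int_{-1}^1\Big(\frac{1}{2\pi^2}|\dr_t(\Id-\Pi_0)\hat\psi|^2-\frac18|(\Id-\Pi_0)\hat\psi|^2\Big)dt
   \ge \frac38\int_{-1}^1|(\Id-\Pi_0)\hat\psi|^2\,dt .
\]
Multiplying by $(u+\pi\sqrt2)$, integrating in $u$, and using that $\Pi_0$ commutes with the $t$-differential part so that $Q^0_\Rec(\hat\psi)\ge Q^0_\Rec((\Id-\Pi_0)\hat\psi)$, I get $\tfrac38\|(\Id-\Pi_0)\hat\psi\|^2\le Q^0_\Rec(\hat\psi)\le Ch^{2/3}\|\hat\psi\|^2$, whence $\|(\Id-\Pi_0)\hat\psi\|\le Ch^{1/3}\|\hat\psi\|$. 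The control of $\|\dr_t(\Id-\Pi_0)\hat\psi\|$ follows by combining this with $Q^0_\Rec((\Id-\Pi_0)\hat\psi)\le Ch^{2/3}\|\hat\psi\|^2$: the form gives $\frac{1}{2\pi^2}\|\dr_t(\Id-\Pi_0)\hat\psi\|^2 \le Q^0_\Rec((\Id-\Pi_0)\hat\psi) + \frac18\|(\Id-\Pi_0)\hat\psi\|^2 \le Ch^{2/3}\|\hat\psi\|^2$ (here the $L^2$ norms carry the weight $(u+\pi\sqrt2)$, comparable to the unweighted one after the Agmon cutoff near $u=-\pi\sqrt2$, and near $u=0$ no cutoff is needed since the weight is bounded there).

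Finally, for the isomorphism statement: $\Pi_0$ restricted to $\EN$ is linear between spaces of equal finite dimension $N_0$ (the $\hat\psi_n$ being linearly independent), so it suffices to show injectivity. If $\Pi_0\hat\psi=0$ for some $\hat\psi\in\EN$, then $\hat\psi=(\Id-\Pi_0)\hat\psi$, so $\|\hat\psi\|\le Ch^{1/3}\|\hat\psi\|$ by the estimate just proved, forcing $\hat\psi=0$ once $h_0$ is chosen small enough that $Ch_0^{1/3}<1$; equivalently $\dim\Pi_0(\EN)=N_0$. The main obstacle I anticipate is the bookkeeping around the weight $(u+\pi\sqrt2)$: one must carefully transport the Agmon estimate of Proposition \ref{Agmon2'} from the triangle $\Tri$ to the rectangle $\Rec$ (the change of variables \eqref{Eut} turns the exponential-type weight in $x+\pi\sqrt2$ into a weight in $u+\pi\sqrt2$, and the $L^2$ measures differ by the Jacobian $(u+\pi\sqrt2)$), and then verify that cutting off near $u=-\pi\sqrt2$ costs only $\OO(h^\infty)$ in the quadratic form, using that the commutator of the cutoff with $\L_\Rec(h)$ is supported where the Agmon weight is exponentially large. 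Everything else is the standard gap argument for the transverse operator.
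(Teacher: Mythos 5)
Your overall strategy (drop the nonnegative longitudinal part of $Q_{\Rec,h}$, compare the remaining transverse term with $Q^0_\Rec$, then run the spectral-gap argument for the transverse operator and deduce injectivity of $\Pi_0|_{\EN}$) is exactly the paper's proof. However, the one place where you bring in extra machinery --- the Agmon localization near $u=-\pi\sqrt2$ to ``make the two weights comparable'' --- is both unnecessary and based on a misconception about which way the weight inequality goes.

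Dropping the square gives $Q_{\Rec,h}(\hat\psi) \ge \int_\Rec (u+\pi\sqrt2)^{-1}|\partial_t\hat\psi|^2\,du\,dt$, and the weight appearing in $Q^0_\Rec$ is $\tfrac{1}{2\pi^2}(u+\pi\sqrt2)$. For $u\in(-\pi\sqrt2,0)$ one has $u+\pi\sqrt2\in(0,\pi\sqrt2)$, hence $(u+\pi\sqrt2)^2\le 2\pi^2$, i.e.
\[
   \frac{1}{u+\pi\sqrt2}\;\ge\;\frac{1}{2\pi^2}\,(u+\pi\sqrt2)
   \qquad\text{for all } u\in(-\pi\sqrt2,0),
\]
with equality only at $u=0$. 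So the inequality $Q_{\Rec,h}(\hat\psi)-\tfrac18\|\hat\psi\|^2\ge Q^0_\Rec(\hat\psi)$ holds \emph{pointwise} on the whole rectangle, with no cutoff. Near $u=-\pi\sqrt2$ the weight $(u+\pi\sqrt2)^{-1}$ blows up while $\tfrac{1}{2\pi^2}(u+\pi\sqrt2)\to0$; that endpoint is therefore where the comparison is most generous, not where it threatens to fail. Introducing the Agmon estimate here adds bookkeeping (the transport of Proposition~\ref{Agmon2'} to the rectangle, the $\OO(h^\infty)$ commutator control) that has nothing to do with the actual difficulty. The remainder of your argument --- the identity $Q^0_\Rec(\hat\psi)=Q^0_\Rec((\Id-\Pi_0)\hat\psi)$ via $\L^0_\Rec\Pi_0\hat\psi=0$, the spectral gap $\tfrac38$, and dimension counting for the isomorphism --- matches the paper and is correct.
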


\begin{proof}
If $\hat\psi=\hat\psi_{n}$, we have:
$$Q_{\Rec,h}(\hat\psi_{n})=\la_{n}\|\hat\psi_{n}\|^2.$$
From this we infer:
$$Q_{\Rec,h}(\hat\psi_{n})\leq\left(\frac{1}{8}+Ch^{2/3}\right)\|\hat\psi_{n}\|^2.$$
The orthogonality of the $\hat\psi_{n}$ (in $L^2$ and for the quadratic form) allows to extend this inequality to $\hat\psi\in\EN$:
$$Q_{\Rec,h}(\hat\psi)\leq\left(\frac{1}{8}+Ch^{2/3}\right)\|\hat\psi\|^2.$$
This clearly implies:
$$Q_{\Rec}^0(\hat\psi)\leq Ch^{2/3}\|\hat\psi\|^2.$$
$\Pi_{0}\hat\psi$ being in the kernel of $\L_{\Rec}^0$, we have:
$$Q_{\Rec}^0(\hat\psi)=Q_{\Rec}^0((\Id-\Pi_{0})\hat\psi).$$
If we denote by $\mu_{2}$ the second eigenvalue of the $1D$ operator $-\frac{1}{2\pi^2}\dr_{t}^2-\frac{1}{8}$, we get by the min-max principle:
$$Q_{\Rec}^0((\Id-\Pi_{0})\hat\psi)\geq\mu_{2}\|(\Id-\Pi_{0})\hat\psi\|^2.$$
Now the conclusions are standard.
\end{proof}

\subsection{Reduction to the Born-Oppenheimer approximation}\label{Reduction-to-BO}
In this section, we prove Theorem \ref{spectrumtriangle} by using the projections of the true eigenfunctions ($\Pi_{0}\psi_{n}$) as test functions for the Born-Oppenheimer approximation. 
Let us consider an eigenpair $(\la,\psi)$ of $\mathcal{L}_{\Tri}(h)$ such that (\ref{small-e-v}) holds. We let $\hat\psi(u,t)=\psi(x,y)$.  Then, $(\la,\hat\psi)$ satisfies:
\begin{align*}
   -h^2\left(\dr_{u}^2-\frac{2t\dr_{u}\dr_{t}}{u+\pi\sqrt{2}}
   +\frac{2t\dr_{t}}{(u+\pi\sqrt{2})^2}+\frac{t^2\dr_{t}^2}{(u+\pi\sqrt{2})^2}\right)
   \hat\psi
-\frac{1}{(u+\pi\sqrt{2})^2}\dr_{t}^2\hat\psi=\la\hat\psi.
\end{align*}
The main idea is to determine the (differential) equation satisfied by $\Pi_{0}\hat\psi$. In other words we will compute and control the commutator between the operator and the projection $\Pi_{0}$. For that purpose, a few lemmas will be necessary. The first one is an estimate established in the original coordinates $(x,y)$ in the triangle $\Tri$:

\begin{lem}\label{Lpr}
For all $k\in\mathbb{N}$, there exist $h_{0}>0$ and $C>0$ such that we have, for ${h\in(0,h_{0})}$:
$$\int_{\Tri}(x+\pi\sqrt{2})^{-k}|\dr_{y}\psi|^2 \,dxdy\leq C\|\psi\|^2.$$
\end{lem}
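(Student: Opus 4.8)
The plan is to establish this weighted estimate on $\partial_y\psi$ by testing the eigenvalue equation against a suitable weighted version of $\psi$ and exploiting the Agmon estimate of Proposition \ref{Agmon2'}, which already controls $(x+\pi\sqrt 2)^{-\rho_0/h}|\psi|^2$ for a fixed $\rho_0$. Since $\rho_0/h\to+\infty$ as $h\to0$, for $h$ small enough the exponent $\rho_0/h$ exceeds any fixed $k$, so $\int_\Tri(x+\pi\sqrt2)^{-k}|\psi|^2\,dxdy\le C\|\psi\|^2$ follows for free (splitting the triangle into the region $x+\pi\sqrt2\le D$, where the Agmon weight dominates $(x+\pi\sqrt2)^{-k}$, and the region $x+\pi\sqrt2\ge D$, where the weight $(x+\pi\sqrt2)^{-k}$ is bounded). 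The only genuine work is to upgrade this to a bound on the \emph{gradient} in the $y$-direction with the same kind of weight.

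First I would fix $k$ and set $w(x)=(x+\pi\sqrt2)^{-k}$, a smooth positive weight on $\Tri$ that is bounded away from $x=-\pi\sqrt2$. Testing the equation $\L_\Tri(h)\psi=\la\psi$ against $w\psi$ and integrating by parts in the original $(x,y)$ coordinates yields
\[
   \int_\Tri h^2 w|\dr_x\psi|^2 + w|\dr_y\psi|^2\,dxdy
   = \la\int_\Tri w|\psi|^2\,dxdy
   - \int_\Tri h^2 w'\,\psi\,\dr_x\psi\,dxdy,
\]
using that $w$ depends only on $x$ so it does not interact with the $\dr_y$ integration by parts, and that the Dirichlet boundary conditions kill all boundary terms (the weight $w$ is smooth up to the boundary since $x+\pi\sqrt2$ does not vanish on $\overline\Tri$). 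The cross term is absorbed by Cauchy--Schwarz and Young's inequality: $|\int h^2 w'\psi\dr_x\psi|\le \frac12\int h^2 w|\dr_x\psi|^2 + Ch^2\int \frac{|w'|^2}{w}|\psi|^2$, where $|w'|^2/w = k^2(x+\pi\sqrt2)^{-k-2}$ is again a negative power of $x+\pi\sqrt2$, hence controlled by $C\|\psi\|^2$ by the first step (for $h$ small). Since $\la$ is bounded (by \eqref{small-e-v}) and $\int w|\psi|^2\le C\|\psi\|^2$, rearranging gives $\int_\Tri w|\dr_y\psi|^2\,dxdy\le C\|\psi\|^2$, which is exactly the claim.

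The main obstacle — really the only point requiring care — is ensuring that the integration by parts is legitimate and that the weight $w$ does not manufacture a boundary contribution or a non-integrable singularity; this is why it matters that the weight is a negative power of $x+\pi\sqrt2$, which is bounded below on $\overline\Tri\setminus\{x=-\pi\sqrt2\}$ and, crucially, that near $x=-\pi\sqrt2$ the weight $w$ \emph{blows up} but is harmless because the Agmon estimate of Proposition \ref{Agmon2'} provides a far stronger (in fact super-polynomial in $1/h$) decay of $\psi$ there. A routine approximation argument — working first with $w$ truncated away from $x=-\pi\sqrt2$ and then passing to the limit using Proposition \ref{Agmon2'} — makes this rigorous. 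Everything else is a one-line energy estimate.
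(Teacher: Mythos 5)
Your proof is correct and follows essentially the same route as the paper: multiply the eigenvalue equation by the weight $(x+\pi\sqrt2)^{-k}$, integrate by parts (the Dirichlet condition killing boundary terms), and control the resulting weighted terms via the Agmon estimate of Proposition \ref{Agmon2'} with $\rho_0/h$ large compared to $k$. The only cosmetic difference is that you absorb the cross term $h^2w'\psi\,\partial_x\psi$ into the discarded kinetic term by Young's inequality, whereas the paper bounds it directly using the weighted control of $|h\partial_x\psi|^2$ also provided by Proposition \ref{Agmon2'}; both are equally valid.
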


\begin{proof}
The equation satisfied by $\psi$ is:
$$(-h^2\dr_{x}^2-\dr_{y}^2)\psi=\la\psi.$$
Multiplying by $(x+\pi\sqrt{2})^{-k}$, taking the scalar product with $\psi$ and integrating by parts we find:
\[
   \int_{\Tri} (x+\pi\sqrt{2})^{-k}|\dr_{y}\psi|^2 \,dxdy\leq
   C\int_{\Tri} (x+\pi\sqrt{2})^{-k} \Big( |\psi|^2 
   +  h^2 (x+\pi\sqrt{2})^{-1} |\psi||\partial_x\psi|\Big)\,dxdy.
\]
Using the Agmon estimates of Proposition \ref{Agmon2'} with $\rho_0/h\ge k+1$ we deduce the lemma.
\end{proof}

We can now prove:

\begin{lem}\label{L1}
There exist $h_{0}>0$ and $C>0$ such that we have, for ${h\in(0,h_{0})}$:
$$\left\|\left\langle (u+\pi\sqrt{2})^{-1}t\dr_{u}\dr_{t}\hat\psi,\,
c_0(t)\right\rangle_{t}\right\|_{L^2(du)}\leq Ch^{-1}\|\hat\psi\|.$$
\end{lem}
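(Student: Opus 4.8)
The plan is to integrate by parts in $u$ to move the $\partial_u$ off $\hat\psi$ (so that we may later invoke the Agmon estimate on $\partial_x\psi$, equivalently on $\partial_u\hat\psi$), and to use the orthogonality $\langle \partial_t c_0,c_0\rangle_t = 0$ to kill the most dangerous term. Writing $w(u):=\big\langle (u+\pi\sqrt2)^{-1}t\,\partial_u\partial_t\hat\psi,\,c_0\big\rangle_t$, the first step is to note that, since $c_0$ does not depend on $u$, we have $\partial_u\big\langle (u+\pi\sqrt2)^{-1}t\,\partial_t\hat\psi,c_0\big\rangle_t = -\,(u+\pi\sqrt2)^{-2}\langle t\,\partial_t\hat\psi,c_0\rangle_t + \big\langle (u+\pi\sqrt2)^{-1}t\,\partial_u\partial_t\hat\psi,c_0\big\rangle_t$, so that $w(u)$ differs from the $u$-derivative of $\big\langle (u+\pi\sqrt2)^{-1}t\,\partial_t\hat\psi,c_0\big\rangle_t$ only by the harmless term $(u+\pi\sqrt2)^{-2}\langle t\,\partial_t\hat\psi,c_0\rangle_t$. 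Next, integrating by parts in $t$ on $(-1,1)$ (the boundary terms vanish by the Dirichlet condition $\hat\psi(u,\pm1)=0$), $\langle t\,\partial_t\hat\psi,c_0\rangle_t = -\langle\hat\psi,\partial_t(t\,c_0)\rangle_t = -\langle\hat\psi,c_0 + t\,c_0'\rangle_t$, which is controlled in $L^2(du)$ by $\|\hat\psi\|$ up to the weight $(u+\pi\sqrt2)^{-1}$; and the weight is absorbed using Proposition~\ref{Agmon2'} with $\rho_0/h$ large.

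The only genuinely non-trivial contribution is therefore the term carrying $\partial_u\hat\psi$ itself: after the manipulations above one is left to bound $\big\|\partial_u\big\langle (u+\pi\sqrt2)^{-1}t\,\partial_t\hat\psi,c_0\big\rangle_t\big\|_{L^2(du)}$ in a form that, after one more integration by parts in $t$, reads $-\big\langle (u+\pi\sqrt2)^{-1}\,\partial_u\hat\psi,\,c_0 + t\,c_0'\big\rangle_t$. Here we invoke Lemma~\ref{Lpr}: with $k$ chosen so that $(u+\pi\sqrt2)^{-k}$ dominates $(u+\pi\sqrt2)^{-2}$ on $\Tri$, Lemma~\ref{Lpr} gives $\int_\Tri (x+\pi\sqrt2)^{-k}|\partial_y\psi|^2 \le C\|\psi\|^2$; translating back to $(u,t)$ coordinates (where $\partial_y = (u+\pi\sqrt2)^{-1}\partial_t$, and the Jacobian of \eqref{Eut} is $(u+\pi\sqrt2)$) this controls $\int_\Rec (u+\pi\sqrt2)^{-k}|\partial_t\hat\psi|^2\,dudt$ — but note this controls $\partial_t\hat\psi$, not $\partial_u\hat\psi$. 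To get $\partial_u\hat\psi$ one uses instead the energy identity $Q_{\Rec,h}(\hat\psi)=\lambda\|\hat\psi\|^2$ together with \eqref{small-e-v}, which after expanding $Q_{\Rec,h}$ (the $-h^2(\partial_u - \tfrac{t}{u+\pi\sqrt2}\partial_t)^2$ part) yields $h\|\partial_u\hat\psi\| \le C\|\hat\psi\|$ modulo lower-order terms already handled; the weight $(u+\pi\sqrt2)^{-1}$ is bounded on the bounded interval $(-\pi\sqrt2,0)$ away from the reentrant end, and near $u=0$ it is exactly $1/(\pi\sqrt2)$, so no singularity arises from that factor.

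Assembling: $w(u)$ is the sum of (i) a term bounded in $L^2(du)$ by $C\|\hat\psi\|$ via the Dirichlet boundary condition and Proposition~\ref{Agmon2'}, and (ii) a term bounded by $C h^{-1}\|\hat\psi\|$ coming from $\|\partial_u\hat\psi\|\le Ch^{-1}\|\hat\psi\|$, which is the dominant one and accounts for the $h^{-1}$ in the statement. This gives the claimed bound.

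The step I expect to be the main obstacle is making the $\|\partial_u\hat\psi\|\le Ch^{-1}\|\hat\psi\|$ estimate clean: the quadratic form $Q_{\Rec,h}$ couples $\partial_u$ and $\partial_t$ through the cross term $\tfrac{2ht}{u+\pi\sqrt2}\,\partial_u\hat\psi\,\partial_t\hat\psi$, so one must either complete the square carefully or, more simply, work directly in the original variables $(x,y)$ where the operator is $-h^2\partial_x^2 - \partial_y^2$ and the bound $h\|\partial_x\psi\|\le C\|\psi\|$ is immediate from $Q_{\Tri,h}(\psi)=\lambda\|\psi\|^2$ and \eqref{small-e-v}; the only care is then to track how the weight $(u+\pi\sqrt2)^{-1}$ and the change of variables interact, which is where Lemma~\ref{Lpr} (with its built-in use of Proposition~\ref{Agmon2'}) is designed to help. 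The rest is routine integration by parts and Cauchy--Schwarz.
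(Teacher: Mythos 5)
The overall plan --- integrate by parts in $t$ and then control a quantity involving $\partial_u\hat\psi$ --- matches the paper, and you gather the right ingredients (Lemma~\ref{Lpr}, Proposition~\ref{Agmon2'}, the energy bound). But the decisive estimate is carried out incorrectly, leaving a genuine gap. After integrating by parts in $t$ (which kills the $\partial_t$ via the Dirichlet condition, as you note), the quantity to control is
$$\int_{\Rec}(u+\pi\sqrt2)^{-2}\,|\partial_u\hat\psi|^2\,du\,dt,$$
and you dismiss the weight by asserting that $(u+\pi\sqrt2)^{-1}$ ``is bounded\ldots so no singularity arises from that factor.'' That claim is false: this weight blows up at the acute vertex $u=-\pi\sqrt2$. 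Consequently, the unweighted energy bound $h\|\partial_x\psi\|\le C\|\psi\|$, which is all you invoke for the main term, cannot absorb the weight. What is needed --- and what the paper uses --- is Proposition~\ref{Agmon2'} applied to the $\partial_x\psi$ part, choosing $\rho_0/h$ large enough to dominate the weight's exponent; that is the step that produces the $h^{-1}$.

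The second omission is the change of variables \eqref{Eut}. In $(x,y)$ coordinates, $\partial_u\hat\psi=\bigl(\partial_x+\tfrac{y}{x+\pi\sqrt2}\,\partial_y\bigr)\psi$, and the Jacobian contributes a further $(x+\pi\sqrt2)^{-1}$ to the weight, so the integral above equals $\int_{\Tri}(x+\pi\sqrt2)^{-3}\bigl|\bigl(\partial_x+\tfrac{y}{x+\pi\sqrt2}\,\partial_y\bigr)\psi\bigr|^2\,dx\,dy$. This makes clear that $\partial_u\hat\psi$ carries both a $\partial_x\psi$ contribution (handled by Agmon estimates, order $h^{-2}$ after squaring) and a $\partial_y\psi$ contribution (handled by Lemma~\ref{Lpr}, order $1$). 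You noticed that Lemma~\ref{Lpr} controls $\partial_y\psi$ rather than $\partial_u\hat\psi$ and read this as a sign it is the wrong tool; in fact it is exactly the tool for the $\partial_y$ part of this decomposition, which you never wrote down. Finally, your opening manipulation --- rewriting $w(u)$ as a full $u$-derivative plus a remainder --- is an unnecessary detour: integrating by parts in $t$ on $w(u)$ directly lands on the key quantity in one step, which is what the paper does.
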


\begin{proof}
Integrating by parts in $t$ for any fixed $u\in(-\pi\sqrt2,0)$, we find:
\begin{align*}
   \left|\left\langle (u+\pi\sqrt{2})^{-1}t\dr_{u}\dr_{t}\hat\psi,\,
   c_0(t)\right\rangle_{t}\right|
   &\leq C\int_{-1}^1(u+\pi\sqrt{2})^{-1}|\dr_{u}\hat\psi| \,dt\\
   &\leq C\left(\int_{-1}^1(u+\pi\sqrt{2})^{-2}|\dr_{u}\hat\psi|^2 \,dt\right)^{1/2}.
\end{align*}
To have the lemma, it remains to prove that
\[
   \int_{\Rec} (u+\pi\sqrt{2})^{-2}|\dr_{u}\hat\psi|^2 \,dudt \le C h^{-2}
   \int_{\Rec} |\hat\psi|^2 \,dudt.
\]
We have:
$$\int_{\Rec} (u+\pi\sqrt{2})^{-2}|\dr_{u}\hat\psi|^2 \,dudt
=\int_{\Tri} (x+\pi\sqrt{2})^{-3}\left|\left(\dr_{x}
+\frac{y\dr_{y}}{x+\pi\sqrt{2}}\right)\psi\right|^2 dxdy
$$
and we apply Lemma \ref{Lpr} to control the term in $\dr_{y}$.
We end the proof using the Agmon estimates of Proposition \ref{Agmon2'}.
\end{proof}

The same kind of computations yields:

\begin{lem}\label{L2}
There exist $h_{0}>0$ and $C>0$ such that we have, for ${h\in(0,h_{0})}$:
$$\left\|\left\langle (u+\pi\sqrt{2})^{-2}t\dr_{t}\hat\psi,\,
c_0(t)\right\rangle_{t}\right\|_{L^2(du)}
\leq C\|\hat\psi\|.$$
\end{lem}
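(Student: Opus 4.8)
The plan is to mimic the argument used for Lemma \ref{L1}, the difference being that here an integration by parts in the $t$ variable eliminates the $t$-derivative altogether, so that no negative power of $h$ is produced. Throughout, $\psi$ denotes the eigenfunction under consideration (so that Proposition \ref{Agmon2'} applies) and $\hat\psi(u,t)=\psi(x,y)$ its transform to the rectangle.

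First I would fix $u\in(-\pi\sqrt2,0)$ and integrate by parts in $t$ over $(-1,1)$; the Dirichlet condition $\hat\psi(u,\pm1)=0$ cancels the boundary term and yields
$$
   \big\langle (u+\pi\sqrt2)^{-2}\,t\,\dr_{t}\hat\psi,\,c_{0}\big\rangle_{t}
   = -(u+\pi\sqrt2)^{-2}\,\big\langle \hat\psi,\,\dr_{t}\!\big(t\,c_{0}\big)\big\rangle_{t}.
$$
Since $\dr_{t}(t\,c_{0}(t))=c_{0}(t)+t\,c_{0}'(t)$ is bounded on $[-1,1]$, the Cauchy--Schwarz inequality gives, for every $u$,
$$
   \Big|\big\langle (u+\pi\sqrt2)^{-2}\,t\,\dr_{t}\hat\psi,\,c_{0}\big\rangle_{t}\Big|
   \le C\,(u+\pi\sqrt2)^{-2}\Big(\int_{-1}^{1}|\hat\psi(u,t)|^{2}\,dt\Big)^{1/2}.
$$
Squaring and integrating in $u$, and then returning to the triangle through the change of variables \eqref{Eut}, whose Jacobian is $du\,dt=(x+\pi\sqrt2)^{-1}\,dx\,dy$, I would obtain
$$
   \Big\|\big\langle (u+\pi\sqrt2)^{-2}\,t\,\dr_{t}\hat\psi,\,c_{0}\big\rangle_{t}\Big\|_{L^{2}(du)}^{2}
   \le C\int_{\Rec}(u+\pi\sqrt2)^{-4}\,|\hat\psi|^{2}\,du\,dt
   = C\int_{\Tri}(x+\pi\sqrt2)^{-5}\,|\psi|^{2}\,dx\,dy.
$$
Finally, for $h$ small enough that $\rho_{0}/h\ge 5$, Proposition \ref{Agmon2'} bounds the last integral by $C\|\psi\|^{2}$, and since $\|\psi\|^{2}=\int_{\Rec}|\hat\psi|^{2}(u+\pi\sqrt2)\,du\,dt\le\pi\sqrt2\,\|\hat\psi\|^{2}$ this proves the lemma.

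The argument is entirely elementary once the integration by parts is carried out; the only point deserving care is the bookkeeping of the weight $(x+\pi\sqrt2)$: the two factors $(u+\pi\sqrt2)^{-2}$ produce $(x+\pi\sqrt2)^{-4}$ after squaring, and the Jacobian adds one more negative power, giving $(x+\pi\sqrt2)^{-5}$, which is dominated by the Agmon weight $(x+\pi\sqrt2)^{-\rho_{0}/h}$ precisely because $\rho_{0}>0$ is fixed while $h\to 0$. In contrast with Lemma \ref{L1}, Lemma \ref{Lpr} is not needed here, since after the integration by parts no derivative of $\hat\psi$ survives, which is also the reason why the bound carries no factor $h^{-1}$.
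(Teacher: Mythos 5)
Your proof is correct and is essentially what the paper intends by ``the same kind of computations''. The key simplification you identified is exactly right: after the integration by parts in $t$ (which is legitimate because both $\hat\psi(u,\pm1)=0$ and $c_0(\pm1)=0$ annihilate the boundary term), no derivative of $\hat\psi$ remains, so in contrast with Lemma \ref{L1} you neither need Lemma \ref{Lpr} to control a $\partial_y$ term nor do you pick up the $h^{-1}$ coming from converting $h\partial_x\psi$ to $\partial_x\psi$. The weight bookkeeping $(u+\pi\sqrt2)^{-4}\,du\,dt \to (x+\pi\sqrt2)^{-5}\,dx\,dy$ is accurate. The only tiny nicety worth spelling out (though the paper itself glosses over it in the proof of Lemma \ref{Lpr}) is that replacing $(x+\pi\sqrt2)^{-5}$ by the Agmon weight $(x+\pi\sqrt2)^{-\rho_0/h}$ requires splitting the integral at $x+\pi\sqrt2=1$, since on the region where $x+\pi\sqrt2>1$ the inequality $(x+\pi\sqrt2)^{-5}\le(x+\pi\sqrt2)^{-\rho_0/h}$ reverses; there one just bounds $(x+\pi\sqrt2)^{-5}\le1$ directly. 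This does not affect the conclusion.
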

Finally, we have:

\begin{lem}\label{L3}
There exist $h_{0}>0$ and $C>0$ such that we have, for ${h\in(0,h_{0})}$:
$$\left\|\left\langle (u+\pi\sqrt{2})^{-2}t^2\dr_{t}^2\hat\psi,\,
c_0(t)\right\rangle_{t}\right\|_{L^2(du)}
\leq C\|\hat\psi\|.$$
\end{lem}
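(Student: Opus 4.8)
The plan is to mimic the proofs of Lemmas \ref{L1} and \ref{L2}: integrate by parts in $t$ to lower the order of the $t$-derivative, return to the triangle variables, and invoke Lemma \ref{Lpr}.

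First, for fixed $u\in(-\pi\sqrt{2},0)$, I would integrate by parts once in $t$. Because $c_0(\pm1)=0$, the boundary terms vanish (here this comes from $c_0$, not from the Dirichlet condition on $\hat\psi$), which gives
$$
   \big\langle (u+\pi\sqrt{2})^{-2}\,t^2\dr_t^2\hat\psi,\,c_0\big\rangle_t
   = -(u+\pi\sqrt{2})^{-2}\big\langle \dr_t(t^2c_0),\,\dr_t\hat\psi\big\rangle_t .
$$
Since $t\mapsto\dr_t(t^2c_0(t))=2t\,c_0(t)+t^2c_0'(t)$ is bounded on $[-1,1]$, the Cauchy--Schwarz inequality gives, for every $u$,
$$
   \Big|\big\langle (u+\pi\sqrt{2})^{-2}\,t^2\dr_t^2\hat\psi,\,c_0\big\rangle_t\Big|
   \le C\,(u+\pi\sqrt{2})^{-2}\Big(\int_{-1}^1|\dr_t\hat\psi|^2\,dt\Big)^{1/2}.
$$
Squaring and integrating in $u$, the claim reduces to the estimate
$$
   \int_{\Rec}(u+\pi\sqrt{2})^{-4}\,|\dr_t\hat\psi|^2\,dudt\le C\|\hat\psi\|^2 .
$$

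To establish this last bound I would switch back to the triangle through the change of variables \eqref{Eut}: one has $\dr_t\hat\psi=(x+\pi\sqrt{2})\,\dr_y\psi$ at fixed $u=x$, while $dudt=(x+\pi\sqrt{2})^{-1}\,dxdy$, so the left-hand side becomes $\int_{\Tri}(x+\pi\sqrt{2})^{-3}|\dr_y\psi|^2\,dxdy$. This quantity is controlled by $C\|\psi\|^2$ thanks to Lemma \ref{Lpr} applied with $k=3$ (which already incorporates the Agmon estimates of Proposition \ref{Agmon2'}), and $\|\psi\|_{L^2(\Tri)}$ coincides with the weighted norm $\|\hat\psi\|$ on $\Rec$; this would conclude the argument.

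I do not expect a genuine obstacle here: the computation runs parallel to that of Lemma \ref{L2}, the only extra ingredient being one further integration by parts to absorb the second $t$-derivative. The two points that require a little care are the vanishing of the boundary terms at $t=\pm1$ (ensured by $c_0(\pm1)=0$ rather than by the homogeneous Dirichlet condition) and the correct bookkeeping of the Jacobian weight $(x+\pi\sqrt{2})$ when passing between $\Rec$ and $\Tri$.
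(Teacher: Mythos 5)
Your proof is correct and follows exactly the route the paper intends: the paper does not write out a proof of Lemma \ref{L3} but states that it follows by ``the same kind of computations'' as Lemma \ref{L1}, namely an integration by parts in $t$ (with boundary terms killed by $c_0(\pm1)=0$), Cauchy--Schwarz, a return to the triangle variables, and an appeal to Lemma \ref{Lpr} (here with $k=3$) together with the Agmon estimates. Your bookkeeping of $\dr_t\hat\psi=(x+\pi\sqrt{2})\dr_y\psi$ and of the Jacobian is accurate, so nothing is missing.
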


From Lemmas \ref{L1}, \ref{L2} and \ref{L3}, and from Proposition \ref{approx1}, we infer:
\begin{prop}\label{almostBO}
Let $\Gamma_0>0$. 
There exist $h_{0}>0$ and $C>0$ such that for $h\in(0,h_{0})$ and all eigenpair $(\la,\psi)$ of $\mathcal{L}_{\Tri}(h)$ satisfying $|\lambda-\frac18|\le\Gamma_0h^{2/3}$, we have:
$$\left\|\left(-h^2\dr_{u}^2+\frac{\pi^2}{4(u+\pi\sqrt{2})^2}-\la\right)\Pi_{0}\hat\psi\right\|\leq Ch\|\Pi_{0}\hat\psi\|.$$
\end{prop}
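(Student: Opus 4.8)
The plan is to apply the transverse projection $\Pi_{0}$ of \eqref{Pi0} to the equation satisfied by $\hat\psi=\psi(x,y)$ recalled at the beginning of this subsection, and to check that $\Pi_{0}\hat\psi$ then solves the Born--Oppenheimer equation up to an $\OO(h)$ remainder. Since $\Pi_{0}$ acts only in the variable $t$, it commutes with $\dr_{u}$ and with multiplication by functions of $u$ alone; applying it to the terms $-h^{2}\dr_{u}^{2}\hat\psi$ and $-\frac{1}{(u+\pi\sqrt{2})^{2}}\dr_{t}^{2}\hat\psi$ of the equation I would obtain $-h^{2}\dr_{u}^{2}\Pi_{0}\hat\psi$ and $\frac{1}{(u+\pi\sqrt{2})^{2}}\langle-\dr_{t}^{2}\hat\psi(u,\cdot),c_{0}\rangle_{t}\,c_{0}$. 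Two integrations by parts in $t$, using that $c_{0}(\pm1)=0$ and that $\hat\psi(u,\pm1)=0$ by the Dirichlet condition on the slanted sides of $\Tri$, together with $-c_{0}''=\frac{\pi^{2}}{4}c_{0}$, turn the last expression into $\frac{\pi^{2}}{4(u+\pi\sqrt{2})^{2}}\Pi_{0}\hat\psi$. Hence $\Pi_{0}$ applied to the ``principal'' part of $\L_{\Tri}(h)$ reproduces exactly $\big(-h^{2}\dr_{u}^{2}+\frac{\pi^{2}}{4(u+\pi\sqrt{2})^{2}}\big)\Pi_{0}\hat\psi$, while $\Pi_{0}(\la\hat\psi)=\la\,\Pi_{0}\hat\psi$.

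It then remains to estimate $\Pi_{0}$ applied to the three remaining terms of the equation, namely $h^{2}\frac{2t\dr_{u}\dr_{t}}{u+\pi\sqrt{2}}\hat\psi$, $-h^{2}\frac{2t\dr_{t}}{(u+\pi\sqrt{2})^{2}}\hat\psi$ and $-h^{2}\frac{t^{2}\dr_{t}^{2}}{(u+\pi\sqrt{2})^{2}}\hat\psi$. Using that $\|\Pi_{0}F\|=\big\|\langle F(u,\cdot),c_{0}\rangle_{t}\big\|_{L^{2}(du)}$ (recall $\|c_{0}\|_{L^{2}(-1,1)}=1$), Lemma \ref{L1} bounds the first contribution by $2h^{2}\cdot Ch^{-1}\|\hat\psi\|=\OO(h)\|\hat\psi\|$, while Lemmas \ref{L2} and \ref{L3} bound the other two by $\OO(h^{2})\|\hat\psi\|$. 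Summing these, I get
$$
   \Big\|\big(-h^{2}\dr_{u}^{2}+\tfrac{\pi^{2}}{4(u+\pi\sqrt{2})^{2}}-\la\big)\Pi_{0}\hat\psi\Big\|\le Ch\|\hat\psi\|.
$$
Finally I would replace $\|\hat\psi\|$ by $\|\Pi_{0}\hat\psi\|$: since $(\la,\psi)$ is an eigenpair with $|\la-\tfrac18|\le\Gamma_{0}h^{2/3}$, the argument of Proposition \ref{approx1} (which uses only the variational bound $Q_{\Rec,h}(\hat\psi)\le(\tfrac18+\Gamma_0h^{2/3})\|\hat\psi\|^{2}$ and the spectral gap $\tfrac38$ of the transverse operator above $\tfrac18$) gives $\|(\Id-\Pi_{0})\hat\psi\|\le Ch^{1/3}\|\hat\psi\|$, hence $\|\hat\psi\|\le 2\|\Pi_{0}\hat\psi\|$ for $h$ small enough; this yields the claimed bound.

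I do not anticipate a genuine difficulty here: the substantial work is already contained in the weighted Agmon estimates (Propositions \ref{Agmon1'}--\ref{Agmon2'}), in Lemma \ref{Lpr} and in Lemmas \ref{L1}--\ref{L3}, and the present statement is precisely the quantitative form of the heuristic ``$\Pi_{0}$ almost commutes with $\L_{\Tri}(h)$''. The only points deserving a little care are the two integrations by parts in $t$ producing exactly the coefficient $\frac{\pi^{2}}{4}$ of the effective potential (one boundary term vanishing because of the Dirichlet condition on $\hat\psi$, the other because $c_{0}$ vanishes at $t=\pm1$), and the observation that the hypothesis $|\la-\tfrac18|\le\Gamma_{0}h^{2/3}$ is exactly what is needed both to invoke the Agmon-based Lemmas \ref{L1}--\ref{L3} and to run the Proposition \ref{approx1} argument for the single eigenfunction $\psi$.
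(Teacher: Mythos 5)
Your proof is correct and follows exactly the route the paper intends: the paper's own ``proof'' of Proposition \ref{almostBO} is the single sentence that it follows from Lemmas \ref{L1}--\ref{L3} and Proposition \ref{approx1}, and you have filled in precisely those details (the double integration by parts producing $\frac{\pi^2}{4}c_0$, the $\OO(h)$ and $\OO(h^2)$ bounds on the commutator terms, and the passage from $\|\hat\psi\|$ to $\|\Pi_0\hat\psi\|$ via the gap argument). Your observation that the Proposition \ref{approx1} argument applies to a single eigenpair under the hypothesis $|\la-\frac18|\le\Gamma_0h^{2/3}$ is a correct and welcome clarification.
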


\paragraph{Proof of Theorem \ref{spectrumtriangle}}
We deduce, from Proposition \ref{almostBO}, for all $n\in\{1,\cdots, N_{0}\}$:
$$\left\|\left(-h^2\dr_{u}^2+\frac{\pi^2}{4(u+\pi\sqrt{2})^2}\right)\Pi_{0}\hat\psi_{n}\right\|\leq (\la_{\Tri,N_{0}}(h)+Ch)\|\Pi_{0}\hat\psi_{n}\|.$$
From this inequality, we infer, for all $\psi\in\EN$:
$$\left\|\left(-h^2\dr_{u}^2+\frac{\pi^2}{4(u+\pi\sqrt{2})^2}\right)\Pi_{0}\hat\psi\right\|\leq (\la_{\Tri,N_{0}}(h)+Ch)\|\Pi_{0}\hat\psi\|$$
and thus:
$$Q_{\BO,\Tri,h}(\Pi_{0}\hat\psi)\leq(\la_{\Tri,N_{0}}(h)+Ch)\|\Pi_{0}\hat\psi\|.$$
It remains to apply the min-max principle to the $N_{0}$ dimensional space $\Pi_{0}\EN$ (see Proposition \ref{approx1}) and Theorem \ref{spectrumBOT} to get the separation of eigenvalues. Then, the conclusion follows from Proposition \ref{quasitri}.

\section{Eigenpair asymptotics for the waveguide}
\label{6}
In this section, we prove Theorem \ref{spectrumguide}. Firstly, we construct quasimodes and secondly we use Agmon estimates reduce to the triangle case.
On the left, $\L_{\Gui}(h)$ writes, in the coordinates $(u,t)$ defined in \eqref{Eut}:
\begin{equation}
\label{E:Llef}
   \L^\lef_{\Gui}(h)
   =-h^2\left(\dr_{u}-\frac{t}{u+\pi\sqrt{2}}\dr_{t}\right)^2-\frac{1}{(u+\pi\sqrt{2})^2}\dr_{t}^2
\end{equation}
and on the right, we let: 
\begin{equation}
\label{E:utau}
   u=x,\quad \tau=\frac{y-x}{\pi\sqrt{2}}
\end{equation}
and the operator writes:
\begin{equation}
\label{E:Lri}
   \L^\ri_{\Gui}(h)
   =-h^2\left(\dr_{u}-\frac{1}{\pi\sqrt{2}}\dr_{\tau}\right)^2-\frac{1}{2\pi^2}\dr_{\tau}^2.
\end{equation}
The integration domain is $(-\pi\sqrt{2},+\infty)\times(0,1)=\Omega_{\lef}\cup\Omega_{\ri}$ where:
$$\Omega_{\lef}=(-\pi\sqrt{2},0)\times(0,1)\mbox{ and }\Omega_{\ri}=(0,+\infty)\times(0,1).$$
The boundary conditions are Dirichlet on $(0,\infty)\times\{0\}\cup(-\pi\sqrt{2},\infty)\times\{1\}$ and Neumann on $(-\pi\sqrt{2},0)\times\{0\}$.

\subsection{Quasimodes}

The aim of this subsection is to prove the following proposition:

\begin{prop}\label{quasigui}
For any $n\ge1$, there exists a sequence $(\gamma_{j,n})$ such that, for all $N_{0}\in\mathbb{N}$ and $J\in\mathbb{N}$, there exists $h_{0}>0$ and $C>0$ such that for $h\in(0,h_{0})$:
\begin{equation}
   \dist \Big(\gS_\dis\big(\mathcal{L}_{\Gui}(h)\big),\,
  \sum_{j=0}^{J}\gamma_{j,n}h^{j/3} \Big) \leq Ch^{(J+1)/3}, \quad n=1,\cdots N_0.
\end{equation}
Moreover, we have: $\gamma_{0,n}=\frac{1}{8}$, $\gamma_{1,n}=0$ and $\gamma_{2,n}=(4\pi\sqrt{2})^{-2/3}z_{\A}(n)$.
\end{prop}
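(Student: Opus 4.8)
The plan is to build, for every $n\ge1$, a formal series solution of $\mathcal{L}_{\Gui}(h)\psi_h=\gamma_h\psi_h$, to truncate and cut it off, and to conclude by the spectral theorem, following the pattern of Propositions~\ref{quasitoy} and~\ref{quasitri}. On the left part $\Omega_\lef$ (coordinates $(u,t)$ of~\eqref{Eut}, operator~\eqref{E:Llef}) I would use the two scales $s=h^{-2/3}u$ and $\sigma=h^{-1}u$ simultaneously, and on the right part $\Omega_\ri$ (coordinates $(u,\tau)$ of~\eqref{E:utau}, operator~\eqref{E:Lri}) the single scale $\sigma=h^{-1}u$. Thus I look for $\gamma_h\sim\sum_{j\ge0}\gamma_jh^{j/3}$ and
\[
  \psi_h\sim\sum_{j\ge0}\big(\Psi_j(s,t)+\Phi^\lef_j(\sigma,t)\big)h^{j/3}\ \ \text{on}\ \Omega_\lef,
  \qquad
  \psi_h\sim\sum_{j\ge0}\Phi^\ri_j(\sigma,\tau)h^{j/3}\ \ \text{on}\ \Omega_\ri.
\]
Expanding $\mathcal{L}^\lef_{\Gui}(h)$ in powers of $h^{2/3}$ (leading term $\mathcal{L}_0=-\frac{1}{2\pi^2}\partial_t^2$, cf.~\eqref{E:Lj}) and in powers of $h$ (leading term $\cN^\lef_0=-\partial_\sigma^2-\frac{1}{2\pi^2}\partial_t^2$, cf.~\eqref{E:Nj}), and $\mathcal{L}^\ri_{\Gui}(h)$ in powers of $h$ (leading term $\cN^\ri_0=-\partial_\sigma^2-\frac{1}{2\pi^2}\partial_\tau^2$), one gets a hierarchy of equations for the profiles, which I supplement with the transverse boundary conditions (Neumann at $t=0$, Dirichlet at $t=1$ for the $\Psi_j,\Phi^\lef_j$; Dirichlet at $\tau=0$ and $\tau=1$ for the $\Phi^\ri_j$) and with the transmission conditions at $u=0$, namely continuity of the trace and of the conormal derivative $h^2\partial_x\psi_h$. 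Since $t=\tau$ along $\{u=0\}$ while the two one-sided coordinate systems differ there, the conormal-derivative condition couples $\partial_s\Psi_j(0,\cdot)$, $\partial_\sigma\Phi^\lef_j(0,\cdot)$ and $\partial_\sigma\Phi^\ri_j(0,\cdot)$ at \emph{shifted} orders: the $h$-scale derivatives enter at order $h^{1+j/3}$, the $h^{2/3}$-scale ones only at order $h^{4/3+j/3}$, the same mismatch as in~\eqref{E3:4}.

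First I would run the construction order by order. At order $h^0$, $\mathcal{L}_0\Psi_0=\gamma_0\Psi_0$ forces $\gamma_0=\frac18$ (bottom of the Neumann--Dirichlet spectrum of $-\frac{1}{2\pi^2}\partial_t^2$ on $(0,1)$) and $\Psi_0(s,t)=g_0(s)c_0(t)$ with $c_0(t)=\cos\frac{\pi t}2$; the transmission conditions, solved with the Dirichlet-to-Neumann operators of $\Stlef$ and $\Stri$ --- at energy $\frac18$, which is at the bottom of the transverse spectrum on $\Stlef$ (whence the Fredholm compatibility of Lemma~\ref{lem-N0}) and strictly below it on $\Stri$ (whence the Dirichlet problem for $\cN^\ri_0-\frac18$ is unconditionally well posed) --- then give $\Phi^\lef_0=\Phi^\ri_0=0$, hence $g_0(0)=0$ by continuity of the trace. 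At order $h^{1/3}$ one gets $\gamma_1=0$ and $\Psi_1=g_1(s)c_0(t)$. At order $h^{2/3}$, the $c_0$-projection of the $\Psi_2$-equation (Lemma~\ref{lem-L0}) is the Airy equation $\gamma_2g_0=\big(-\partial_s^2-\frac{s}{4\pi\sqrt2}\big)g_0$ with $g_0(0)=0$, whence $\gamma_2=(4\pi\sqrt2)^{-2/3}z_{\A}(n)$ and $g_0=g_{(n)}$. Here a feature absent from the triangle appears: since $g_{(n)}'(0)\neq0$, this trace enters the conormal-derivative matching at order $h^{4/3}$ and forces the first correctors $\Phi^\lef_1,\Phi^\ri_1$ and the trace $g_1(0)$ to be nonzero. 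For $j\ge3$ the step is structurally that of Proposition~\ref{quasitri}: projecting the $\Psi_j$-equation on $c_0$ and then on $g_{(n)}$ (rescaled Lemma~\ref{lem-Ai}) produces $\Psi_{j-2}^\perp$, the coefficient $\gamma_j$ and the scalar component $g_{j-2}$ --- in particular $\gamma_3$ is a nonzero multiple of $g_1(0)\,g_{(n)}'(0)$, the analogue of the non-vanishing~\eqref{neq0} (this last point being needed only for Theorem~\ref{spectrumguide}) --- while the traces of the $\Psi_j$ feed an inhomogeneous transmission problem on $\Stlef\cup\Stri$, solved by Lemma~\ref{lem-N0} on the left (which fixes $g_j(0)$) and unconditional invertibility on the right, yielding the correctors $\Phi^\lef_j,\Phi^\ri_j$ with exponential decay.

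To conclude, I would set $\gamma_h^{[J]}=\sum_{j=0}^{J}\gamma_{j,n}h^{j/3}$ and define $\psi_h^{[J]}$ by the truncated profile series, multiplied by the cutoff $\chi^\lef$ of~\eqref{chi-lef} near the tip $x=-\pi\sqrt2$ (a formula of the type~\eqref{quasi-eigenfunction-Tri}) and corrected near $u=0$ by a small term of the kind used in~\eqref{E3:2b}, so that the residual conormal-derivative jump across $\{u=0\}$ is $\OO(h^{(J+1)/3})$ and $\psi_h^{[J]}\in\Dom(\mathcal{L}_{\Gui}(h))$. The exponential decay of all profiles in $s$ and $\sigma$ then yields $\|(\mathcal{L}_{\Gui}(h)-\gamma_h^{[J]})\psi_h^{[J]}\|\le Ch^{(J+1)/3}\|\psi_h^{[J]}\|$; since $\gamma_h^{[J]}\to\frac18$, which stays below $\inf\gS_\ess(\mathcal{L}_{\Gui}(h))=\frac{1+h^2}{2}$ (Proposition~\ref{P:ess} and the reductions of Section~\ref{2}), the spectral theorem puts a point of $\gS_\dis(\mathcal{L}_{\Gui}(h))$ within $Ch^{(J+1)/3}$ of $\gamma_h^{[J]}$. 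Repeating for $n=1,\dots,N_0$ and using the $\OO(h^{2/3})$-separation of the quasi-eigenvalues together with the usual quasi-orthogonal form of this lemma yields at least $N_0$ such eigenvalues.

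The hard part will be the transmission problem at $u=0$: one must glue the two-scale left Ansatz, built on the transverse mode $c_0$ adapted to the Neumann condition at $t=0$, to the one-scale right Ansatz, built on modes vanishing at $\tau=0$, while tracking exactly the order shift between the $h^{2/3}$- and $h$-scale contributions to the conormal derivative. At each order this amounts to showing that the coupled system (continuity of trace and of conormal derivative), after projection onto $c_0$ and onto its orthogonal complement, splits into a solvable scalar ODE problem on $\R_-$ for the $c_0$-component (Lemma~\ref{lem-Ai}) and a solvable boundary-layer transmission problem on $\Stlef\cup\Stri$ for the remainder, the sign difference between the two Dirichlet-to-Neumann operators (positive on $\Stlef$, negative on $\Stri$ because $\frac18$ lies below the transverse spectrum there) making the glued problem coercive; and then checking that the compatibility constants generated on the left match the data arriving from the right, and organizing the induction so that each of $\gamma_j$, $g_{j-2}$, $\Psi_j$, $\Phi^\lef_j$, $\Phi^\ri_j$ is determined exactly once. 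This bookkeeping is the delicate point of the argument.
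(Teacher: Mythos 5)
Your proposal follows the paper's proof essentially step by step: same two-scale Ansatz on $\Omega_\lef$ and one-scale on $\Omega_\ri$, same transmission bookkeeping at $u=0$ with the $h^{1/3}$-shift between the $s$- and $\sigma$-scale conormal derivatives, same use of Lemmas \ref{lem-N0}, \ref{lem-L0}, \ref{lem-Ai} and of Dirichlet-to-Neumann operators (the paper packages the glued transmission solvability into Proposition~\ref{6prop} and Lemma~\ref{lem-N0lefri}), same order-by-order induction, and same cutoff-plus-spectral-theorem conclusion. One small slip: you describe the two Dirichlet-to-Neumann operators as having opposite signs, whereas in the paper both $T^\ri$ and $T^\lef\Pi_1$ are non-negative --- $T^\ri$ is coercive because $\tfrac18$ lies strictly below the transverse Dirichlet spectrum on $\Stri$, and $T^\lef$ is non-negative on $\Pi_1 H^{1/2}_{00}(I)$ --- so the coercivity of the sum is a same-sign phenomenon, not a cancellation; this mischaracterization does not affect the correctness of your argument.
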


\subsubsection{Preliminaries}

\paragraph{Ansatz, boundary and transmission conditions}
In order to construct quasimodes for $\L_{\Gui}(h)$ of the form $(\gamma_h,\psi_h)$, we use the coordinates $(u,t)$ on the left and $(u,\tau)$ on the right and look for quasimodes $\hat\psi_h(u,t,\tau)=\psi_h(x,y)$.
Such quasimodes will have the form on the left:
\begin{equation}
\label{6Alef}
   \psi_{\lef}(u,t) \sim \sum_{j\geq 0}h^{j/3}
   \left(\Psi_{\lef,j}(h^{-2/3}u,t)+ \Phi_{\lef,j}(h^{-1}u,t)\right),
\end{equation}
and on the right:
\begin{equation}
\label{6Ari}
   \psi_{\ri}(u,\tau)\sim\sum_{j\geq 0} h^{j/3} \Phi_{\ri,j}(h^{-1}u,\tau)
\end{equation}
associated with quasi-eigenvalues:
$$\gamma_h\sim\sum_{j\geq 0}\gamma_{j} h^{j/3}.$$
We will denote $s=h^{-2/3}u$ and $\sigma=h^{-1}u$.
Since $\psi_h$ has no jump across the line $x=0$, we find that $\psi_{\lef}$ and $\psi_{\ri}$ should satisfy two transmission conditions on the line $u=0$:
$$
   \psi_{\lef}(0,t)=\psi_{\ri}(0,t)
   \quad\mbox{and}\quad
   \left(\dr_{u}-\frac{t}{\pi\sqrt{2}}\dr_{t}\right)\psi_{\lef}(0,t) = 
   \left(\partial_u-\frac{\partial_\tau}{\pi\sqrt2}\right)\psi_{\ri}(0,t),
$$
for all $t\in(0,1)$.
For the Ans\"atze \eqref{6Alef}-\eqref{6Ari} these conditions write for all $j\ge0$
\begin{align}
\label{transD}
  & \Psi_{\lef,j}(0,t)+\Phi_{\lef,j}(0,t)=\Phi_{\ri,j}(0,t) \\
\label{trans}
  & \dr_{\sigma}\Phi_{\lef,j}(0,t)+\dr_{s}\Psi_{\lef,j-1}(0,t)
   -\frac{t\dr_{t}}{\pi\sqrt{2}}\Phi_{\lef,j-3}(0,t)-\frac{t\dr_{t}}{\pi\sqrt{2}}\Psi_{\lef,j-3}(0,t)
  \\ \nonumber 
  & \hskip 18em =\dr_{\sigma}\Phi_{\ri,j}(0,t)
   -\frac{\partial_\tau}{\pi\sqrt2}\Phi_{\ri,j-3}(0,t),
\end{align}
where we understand that the terms associated with a negative index are $0$.

\begin{notation}
\label{6not}
We still set $s=h^{-2/3}u$ and $\sigma=h^{-1}u$.
Like in the case of the triangle $\Tri$, the operators $\L^\lef_\Gui$ and $\L^\ri_\Gui$, cf.\ \eqref{E:Llef}-\eqref{E:Lri}, written in variables $(s,t)$ and $(\sigma,t)$ expand in powers of $h^{2/3}$ and $h$, respectively. Now we have three operator series:
\begin{itemize}\itemsep=4pt
\item $\L^\lef_\Gui(h)(h^{2/3}s,t;h^{-2/3}\partial_s,\partial_t) 
   \sim \sum_{j\geq 0} \L_{2j}h^{2j/3}$. The operators are the same as for $\Tri$, but they are defined now on the half-strip $\Stlef:=(-\infty,0)\times(0,1)$.

\item $\L^\lef_\Gui(h)(h\sigma,t;h^{-1}\partial_\sigma,\partial_t) \sim
   \sum_{j\geq 0} \cN^\lef_{3j}h^{j}$ defined on $\Stlef$. 

\item $\L^\ri_{\Gui}(h)(h\sigma,\tau;h^{-1}\partial_\sigma,\partial_\tau) \sim
   \sum_{j\geq 0} \cN^\ri_{3j}h^{j}$ defined on $\Stri:=(0,\infty)\times(0,1)$. 
\end{itemize}
We agree to incorporate the boundary conditions on the horizontal sides of $\Stlef$ in the definition of the operators $\L_j$, $\cN^\lef_{j}$, and $\cN^\ri_{j}$:
\begin{itemize}\itemsep=3pt
\item Neumann-Dirichlet  $\partial_t\Psi(s,0)=0$ and $\Psi(s,1)=0$ $(s<0)$ for $\L_j$,
\item Neumann-Dirichlet  $\partial_t\Phi(\sigma,0)=0$ and $\Psi(\sigma,1)=0$ $(\sigma<0)$ for $\cN^\lef_{j}$,
\item Pure Dirichlet  $\Phi(\sigma,0)=0$ and $\Psi(\sigma,1)=0$ $(\sigma>0)$ for $\cN^\ri_{j}$.
\end{itemize}
Note that
\begin{equation}
\label{E:N0}
   \cN^\lef_0 = -\partial^2_\sigma - \frac{1}{2\pi^2}\partial^2_t \quad\mbox{and}\quad
   \cN^\ri_0 = -\partial^2_\sigma - \frac{1}{2\pi^2}\partial^2_\tau\,.
\end{equation}
\end{notation}

\paragraph{Dirichlet-to-Neumann operators}
Here we introduce the Dirichlet-to-Neumann operators $T^{\ri}$ and $T^{\lef}$ which we use to solve the problems in the variables $(\sigma,t)$. We denote by $I$ the interface $\{0\}\times(0,1)$ between $\Stri$ and $\Stlef$.

On the right, and with Notation \ref{6not}, we consider the problem:
\[
   \left(\cN^\ri_{0}-\frac{1}{8}\right)\Phi_\ri=0\ \ \mbox{in}\ \ \Stri
   \quad \mbox{and}\quad  \Phi_\ri(0,t)=G(t)
\]
where  $G\in H_{00}^{1/2}(I)$. Since the first eigenvalue of the transverse part of $\cN^\ri_{0}-\frac{1}{8}$ is positive, this problem has a unique exponentially decreasing solution $\Phi_\ri$. Its exterior normal derivative $-\dr_{\sigma}\Phi_{\ri}$ on the line $I$ is well defined in $H^{-1/2}(I)$. We define:
$$T^{\ri}G=\dr_{n}\Phi_{\ri}=-\dr_{\sigma}\Phi_{\ri}.$$
We have:
$$\langle T^\ri G,G\rangle=Q_{\ri}(\Phi_{\ri})\geq C\|G\|^2_{H^{1/2}_{00}(I)}.$$

On the left, we consider the problem:
\[
   \Big(\cN^\lef_{0}-\frac{1}{8} \Big)\Phi_\lef=0\ \ \mbox{in}\ \ \Stlef
   \quad \mbox{and}\quad  \Phi_\lef(0,t)=G(t)
\]
where $G\in H^{1/2}_{00}(I)$.

For all $G\in H_{00}^{1/2}(I)$ such that $\Pi_0G=0$ (where $\Pi_{0}$ is defined in \eqref{Pi0}), this problem has a unique exponentially decreasing solution $\Phi_\lef$. Its exterior normal derivative $\dr_{\sigma}\Phi_{\lef}$ on the line $I$ is well defined in $H^{-1/2}(I)$. We define:
$$T^{\lef}G=\dr_{n}\Phi_{\lef}=\dr_{\sigma}\Phi_{\lef}.$$
We have:
$$\langle T^\lef G,G\rangle=Q_{\lef}(\Phi_{\lef})\geq 0.$$

\begin{prop}
\label{6prop} 
The operator $T^\ri +T^\lef\Pi_{1}$ is coercive on $H^{1/2}_{00}(I)$ with $\Pi_{1}=\Id-\Pi_{0}$. In particular, it is invertible from $H^{1/2}_{00}(I)$ onto $H^{-1/2}(I)$.
\end{prop}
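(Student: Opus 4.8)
The plan is to deduce everything from coercivity of the symmetric bilinear form $G\mapsto\langle(T^\ri+T^\lef\Pi_{1})G,G\rangle$ on $H^{1/2}_{00}(I)$, and then to invoke the Lax--Milgram theorem. Note that $T^\ri$ and $T^\lef\Pi_1$ are bounded from $H^{1/2}_{00}(I)$ into $H^{-1/2}(I)$, by the classical continuity of Dirichlet-to-Neumann maps on a half-strip together with the boundedness of the projector $\Pi_1$, so the form is continuous and only the lower bound has to be established. I would split $\langle(T^\ri+T^\lef\Pi_{1})G,G\rangle=\langle T^\ri G,G\rangle+\langle T^\lef\Pi_{1}G,G\rangle$ and estimate the two terms separately.

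The right-hand term is already coercive: the transverse part of $\cN^\ri_{0}-\frac18$ on $(0,1)$ with Dirichlet conditions is $-\frac1{2\pi^2}\partial_\tau^2-\frac18$, whose lowest eigenvalue $\frac38$ is positive, so the Dirichlet problem on $\Stri$ defining $\Phi_\ri$ is uniformly coercive; combined with the trace theorem (traces of $H^1(\Stri)$-functions vanishing on the horizontal Dirichlet sides land continuously in $H^{1/2}_{00}(I)$) this gives $\langle T^\ri G,G\rangle=Q_\ri(\Phi_\ri)\ge C\|G\|^2_{H^{1/2}_{00}(I)}$, which is exactly the inequality recorded just above the statement.

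For the left-hand term the key structural fact is that $T^\lef$ preserves the transverse spectral splitting, i.e. $T^\lef\Pi_1=\Pi_1T^\lef\Pi_1$. Indeed, $c_0$ is an eigenfunction of the transverse Neumann--Dirichlet operator $-\frac1{2\pi^2}\partial_t^2$ entering $\cN^\lef_0$; hence, if $G'\in H^{1/2}_{00}(I)$ satisfies $\Pi_0G'=0$, the scalar function $\sigma\mapsto\langle\Phi_\lef(\sigma,\cdot),c_0\rangle_t$ solves a one-dimensional constant-coefficient ODE on $\R_-$ with zero value at $\sigma=0$ and stays bounded as $\sigma\to-\infty$, so it vanishes identically; differentiating in $\sigma$ yields $\Pi_0(T^\lef G')=0$. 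Consequently $\langle T^\lef\Pi_1 G,G\rangle=\langle\Pi_1 T^\lef\Pi_1 G,G\rangle=\langle T^\lef\Pi_1 G,\Pi_1 G\rangle=Q_\lef(\Phi_\lef)\ge0$, where $\Phi_\lef$ is the exponentially decreasing solution with datum $\Pi_1 G$. Adding the two contributions gives $\langle(T^\ri+T^\lef\Pi_{1})G,G\rangle\ge C\|G\|^2_{H^{1/2}_{00}(I)}$, and Lax--Milgram then shows that $T^\ri+T^\lef\Pi_1$ is an isomorphism from $H^{1/2}_{00}(I)$ onto $H^{-1/2}(I)$. The delicate step -- the main obstacle -- is the identity $T^\lef\Pi_1=\Pi_1T^\lef\Pi_1$, i.e. the separation-of-variables/uniqueness argument for the left half-strip, where the exponential-decay requirement is precisely what makes the problem uniquely solvable on the orthogonal complement of $c_0$; once that is in place, the rest is routine elliptic trace theory and the positivity of the right transverse operator.
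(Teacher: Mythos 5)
Your proof is correct and follows essentially the route the paper intends: Proposition \ref{6prop} is stated without an explicit proof, the two ingredients $\langle T^\ri G,G\rangle=Q_\ri(\Phi_\ri)\ge C\|G\|^2_{H^{1/2}_{00}(I)}$ and $\langle T^\lef G,G\rangle=Q_\lef(\Phi_\lef)\ge0$ having been recorded just beforehand, and you simply combine them and invoke Lax--Milgram. The one point you make explicit --- that $\Pi_0T^\lef\Pi_1=0$, so that the cross term $\langle T^\lef\Pi_1G,\Pi_0G\rangle$ vanishes and $\langle T^\lef\Pi_1G,G\rangle=\langle T^\lef\Pi_1G,\Pi_1G\rangle\ge0$ --- is exactly the detail the paper leaves implicit, and your separation-of-variables argument for it (the $c_0$-component of $\Phi_\lef$ solves $a_0''=0$ with $a_0(0)=0$ and decay, hence vanishes) is correct.
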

This proposition allows to prove the following lemma which is in the same spirit as Lemma \ref{lem-N0}, but now for transmission problems on $\Stlef\cup\Stri$ (we recall that $c_0(t) = \cos(\frac\pi2t)$):

\begin{lem}\label{lem-N0lefri}
Let $F_\lef=F_\lef(\sigma,t)$ and $F_\ri=F_\ri(\sigma,\tau)$ be real functions defined on $\Stlef$ and $\Stri$, respectively, with exponential decay with respect to $\sigma$. Let $G^0\in H^{1/2}_{00}(I)$ and $H\in H^{-1/2}(I)$ be data on the interface $I=\partial\Stlef\cap\partial\Stri$. 
Then there exists a unique coefficient $\zeta\in\R$ and a unique trace $G\in H^{1/2}_{00}(I)$ such that the transmission problem
\[
\begin{cases}
   \big(\cN^\lef_{0}-\frac{1}{8}\big)\Phi_\lef=F_\lef\quad\mbox{in}\ \ \Stlef,\quad  
   &\Phi_\lef(0,t)=G(t)+G^0(t)+\zeta c_{0}(t), \\[0.5ex]
   \big(\cN^\ri_{0}-\frac{1}{8}\big)\Phi_\ri=F_\ri\quad\mbox{in}\ \ \Stri,\quad  
   &\Phi_\ri(0,t)=G(t), \\[0.5ex]
   \partial_\sigma\Phi_\lef(0,t)-\partial_\sigma\Phi_\ri(0,t) = H(t)
   \quad\mbox{on}\ \  I,
\end{cases}
\]
admits a (unique) solution $(\Phi_\lef,\Phi_\ri)$ with exponential decay.
\end{lem}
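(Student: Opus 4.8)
The proof follows the template of Lemma \ref{lem-N0}, but now one must handle two half-strips glued along the interface $I$ and the extra degree of freedom coming from the $c_0$-mode on the left. The strategy is: (i) reduce to the homogeneous interface conditions $H=0$, $G^0=0$ by subtracting explicit particular lifts; (ii) look for the solution as a pair of Dirichlet problems parametrized by the common trace $G\in H^{1/2}_{00}(I)$ and the scalar $\zeta$, using the Dirichlet-to-Neumann operators $T^\ri$ and $T^\lef$; (iii) write the flux-matching condition \emph{on $I$} as an operator equation, split it along $\Pi_0\oplus\Pi_1$, and invoke Proposition \ref{6prop} to invert the $\Pi_1$-part, which simultaneously fixes $G$, while the $\Pi_0$-part fixes $\zeta$ via a solvability (Fredholm) condition exactly as in Lemma \ref{lem-N0}.

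\textbf{Step 1: particular solutions.} First I would dispose of the inhomogeneous data $F_\lef$, $F_\ri$, $G^0$ and $H$ by subtracting off convenient particular functions. Since $\cN^\lef_0-\tfrac18$ and $\cN^\ri_0-\tfrac18$ are elliptic on the half-strips with the stated boundary conditions — and since on $\Stri$ the transverse operator $-\tfrac1{2\pi^2}\partial^2_\tau-\tfrac18$ is strictly positive, while on $\Stlef$ it is nonnegative with one-dimensional kernel spanned by $c_0$ — one can solve, with exponential decay, the right problem $(\cN^\ri_0-\tfrac18)\Phi_\ri^{(0)}=F_\ri$ with $\Phi_\ri^{(0)}(0,\cdot)=0$, and the left problem $(\cN^\lef_0-\tfrac18)\Phi_\lef^{(0)}=F_\lef$ with $\Phi_\lef^{(0)}(0,\cdot)=0$, \emph{provided} the $\Pi_0$-component of $F_\lef(\sigma,\cdot)$ is compensated by a $\sigma$-polynomial times $c_0$ (this is the analogue of the solvability condition in Lemma \ref{lem-N0}). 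After subtraction, the remaining unknown solves the homogeneous equations with a residual interface jump $\widetilde H\in H^{-1/2}(I)$ and a residual Dirichlet datum $G^0$, so it suffices to treat the case $F_\lef=F_\ri=0$ with prescribed $G^0$ and $H$.

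\textbf{Step 2: the DtN reduction and the operator equation.} With $F_\lef=F_\ri=0$, by the definition of $T^\ri$ and $T^\lef$ the desired solution is $\Phi_\ri=\Phi_\ri[G]$ (the decaying right solution with trace $G$) and $\Phi_\lef=\Phi_\lef[G+G^0+\zeta c_0]$; note that $T^\lef$ is well-defined only on the range $\Pi_1$, so we must choose $\zeta$ and the $\Pi_0$-component of $G$ so that $\Pi_0(G+G^0+\zeta c_0)=0$, which, recalling $\Pi_1=\Id-\Pi_0$, forces $\Pi_0 G=-(G^0,c_0)_t\,c_0-\zeta c_0$. Writing $G=\Pi_1 G$ henceforth (its $\Pi_0$-part being slaved), the flux-matching condition $\partial_\sigma\Phi_\lef(0,\cdot)-\partial_\sigma\Phi_\ri(0,\cdot)=H$ becomes, after translating into exterior normal derivatives (so $T^\lef=\partial_\sigma\Phi_\lef$, $T^\ri=-\partial_\sigma\Phi_\ri$),
\[
   \big(T^\ri+T^\lef\Pi_1\big)G \;=\; H \;-\; T^\ri\!\big(\Pi_0(\cdots)\big),
\]
an equation in $H^{-1/2}(I)$ for $G\in H^{1/2}_{00}(I)$ and $\zeta$. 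Projecting with $\Pi_1$ and using that $\Pi_1 T^\ri$ restricted to $H^{1/2}_{00}(I)$ together with $T^\lef\Pi_1$ is coercive by Proposition \ref{6prop}, one solves uniquely for $G$ (Lax–Milgram); projecting with $\Pi_0$ yields a \emph{scalar} equation determining $\zeta$ uniquely, because the $c_0$-component of $T^\ri(\zeta c_0+\cdots)$ depends affinely on $\zeta$ with a nonzero slope (this slope is $\langle T^\ri c_0,c_0\rangle>0$ by the coercivity estimate $\langle T^\ri G,G\rangle\ge C\|G\|^2_{H^{1/2}_{00}}$). Exponential decay of $\Phi_\lef,\Phi_\ri$ is automatic: each is the decaying solution of an elliptic problem on a half-strip whose transverse spectral gap above $\tfrac18$ is strictly positive (on the right unconditionally, on the left after projecting out the $c_0$-mode).

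\textbf{Main obstacle.} The delicate point is the bookkeeping of the $c_0$-mode on the left half-strip: $T^\lef$ is only densely defined (on $\ker\Pi_0$), so one cannot naively write the transmission problem as a single invertible operator. The clean way around it — and the heart of the argument — is to recognize that the scalar $\zeta$ is precisely the unknown that absorbs the obstruction: it must be chosen so that the left trace lands in the domain of $T^\lef$, and simultaneously the $\Pi_0$-flux balance becomes a nondegenerate scalar equation for $\zeta$. Making this decoupling rigorous — showing that the combined system "($\Pi_1$-equation for $G$) $\oplus$ ($\Pi_0$-equation for $\zeta$)" is triangular and uniquely solvable — is where Proposition \ref{6prop} is used, and it is the step that requires the most care; the rest is the standard Lax–Milgram plus elliptic-regularity-with-exponential-decay machinery already exploited for Lemma \ref{lem-N0}.
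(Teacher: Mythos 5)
Your Step 2 contains the central gap. You split the unknown trace into a $\Pi_1$-part and a scalar $\zeta$ (via the constraint $\Pi_0(\text{left trace})=0$), project the flux-matching equation onto $\Pi_0$ and $\Pi_1$, and assert that the resulting $2\times2$ system is triangular. It is not. The Dirichlet-to-Neumann map $T^\ri$ is not block-diagonal with respect to $\Pi_0\oplus\Pi_1$: on $\Stri$ the transverse operator has pure Dirichlet boundary conditions, whose eigenfunctions are $\sin(k\pi\tau)$, and $c_0(\tau)=\cos(\pi\tau/2)$ is not among them; hence both the $\Pi_1$-projected and the $\Pi_0$-projected equations couple $\Pi_1 G$ and $\zeta$. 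Moreover, you invoke Proposition \ref{6prop} as if it said that $\Pi_1 T^\ri+T^\lef\Pi_1$ were coercive, whereas it asserts coercivity of $T^\ri+T^\lef\Pi_1$ (no $\Pi_1$ in front of $T^\ri$), and you use the quantity $\langle T^\ri c_0,c_0\rangle$ even though $c_0(0)=1\neq0$, so $c_0\notin H^{1/2}_{00}(I)$ and the stated coercivity estimate does not apply to it.

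The paper's proof sidesteps all of this by \emph{not} splitting the trace: it keeps the full $G\in H^{1/2}_{00}(I)$ as the single unknown. One first subtracts particular solutions on each half-strip — on the left, Lemma \ref{lem-N0} is used, which automatically produces the adjustable $\zeta_0 c_0$ part of the trace needed to satisfy the $\Pi_0$-solvability condition for $F_\lef$; your Step 1's attempt to impose a homogeneous left trace and ``compensate by a $\sigma$-polynomial times $c_0$'' does not accomplish this correctly. After that reduction, the flux condition becomes $(T^\ri+T^\lef\Pi_1)G=\text{RHS}$, and the key observation is that this equation is \emph{independent of} $\zeta$: the operator $T^\lef\Pi_1$ annihilates the $c_0$-component, which is precisely where $\zeta$ lives. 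Proposition \ref{6prop} and Lax--Milgram then give a unique $G$, and $\zeta$ is recovered afterward from the orthogonality constraint $\big(G+G^0+\zeta c_0,c_0\big)=0$. So the genuine triangular structure is ``$G$ first, then $\zeta$'', not the $\Pi_1\oplus\Pi_0$ decoupling you propose, and making that distinction is exactly what your argument is missing.
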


\begin{proof}
Let $(\Phi_{\lef}^0,\zeta_0)$ be the solution provided by Lemma \ref{lem-N0} for the data $F=F_\lef$ and $G=0$. Let $\Phi_{\ri}^0$ be the unique exponentially decreasing solution of the problem
\[
   \Big(\cN^\ri_{0}-\frac{1}{8} \Big)\Phi^0_{\ri}=F_\ri\quad\mbox{in}\quad\Stri,\quad  
   \Phi^0_\ri(0,t)=0.
\]
Let $H^0$ be the jump $\partial_\sigma\Phi_{\ri}^0(0,t)-\partial_\sigma\Phi_{\lef}^0(0,t)$. If we define the new unknowns $\Phi_{\ri}^1=\Phi_{\ri}-\Phi_{\ri}^0$ and $\Phi_{\lef}^1=\Phi_{\lef}-\Phi_{\lef}^0$, the problem we want to solve becomes
\begin{align*}
   \Big(\cN^\lef_{0}-\frac{1}{8} \Big)\Phi_{\lef}^1&=0\quad\mbox{in}\quad\Stlef,\quad  
   \Phi_{\lef}^1(0,t)=G(t)+(\zeta-\zeta_0) c_{0}(t), \\
   \Big(\cN^\ri_{0}-\frac{1}{8} \Big)\Phi_{\ri}^1&=0\quad\mbox{in}\quad\Stri,\quad  
   \Phi_{\ri}^1(0,t)=G(t), \\
   \partial_\sigma\Phi_{\ri}^1(0,t)-\partial_\sigma\Phi_{\lef}^1(0,t) &= H(t)-H^0(t)
   \quad\mbox{on}\quad I.
\end{align*}
Using Proposition \ref{6prop} we can set $G=(T^\ri +T^\lef\Pi_{1})^{-1}(H-H_0)$, which ensures the solvability of the above problem. 
\end{proof}

\subsubsection{Construction of quasimodes}
\label{S:512}
\paragraph{Terms of order $h^0$}
Let us write the ``interior" equations:
\begin{align*}
\lef_{s} &:&\L_{0}\Psi_{\lef,0}&=\gamma_{0}\Psi_{\lef,0}\\ 
\lef_{\sigma} &:&\cN^\lef_{0}\Phi_{\lef,0}&=\gamma_{0}\Phi_{\lef,0}\\
\ri\ &:&\cN^\ri_{0}\Phi_{\ri,0}&=\gamma_{0}\Phi_{\ri,0}\,.\hskip5em
\end{align*}
The boundary conditions are:
$$\Psi_{\lef,0}(0,t)+\Phi_{\lef,0}(0,t)=\Phi_{\ri,0}(0,t),$$
$$ \dr_{\sigma}\Phi_{\lef,0}(0,t)=\dr_{\sigma}\Phi_{\ri,0}(0,t).$$
We get:
$$\gamma_{0}=\frac{1}{8},\quad \Psi_{\lef,0}=g_{0}(s)c_{0}(t).$$
We now apply Lemma \ref{lem-N0lefri} with $F_{\lef}=0$, $F_{\ri}=0$, $G_{0}=0$, $H=0$ to get 
$$G=0 \quad\mbox{and}\quad \zeta=0.$$
We deduce: $\Phi_{\lef,0}=0$, $\Phi_{\ri,0}=0$ and, since $\zeta=-g_{0}(0)$, $g_{0}(0)=0$. 
At this step, we do not have determined $g_{0}$ yet.

\paragraph{Terms of order $h^{1/3}$}
The interior equations read:
\begin{align*}
\lef_{s} &:&\L_{0}\Psi_{\lef,1}&=\gamma_{0}\Psi_{\lef,1}+\gamma_{1}\Psi_{\lef,0}\\ 
\lef_{\sigma} &:&\cN^{\lef}_{0}\Phi_{\lef,1}&=\gamma_{0}\Phi_{\lef,1}+\gamma_{1}\Phi_{\lef,0}\\
\ri &:&\cN^{\ri}_{0}\Phi_{\ri,1}&=\gamma_{0}\Phi_{\ri,1}+\gamma_{1}\Phi_{\ri,0}.\hskip5em
\end{align*}
Using Lemma \ref{lem-L0}, the first equation implies:
$$\gamma_{1}=0,\quad \Psi_{\lef,1}(s,t)=g_{1}(s)c_{0}(t).$$
The boundary conditions are:
$$g_{1}(0)c_{0}(t)+\Phi_{\lef,1}(0,t)=\Phi_{\ri,1}(0,t),$$
$$g_{0}'(0)c_{0}(t)+\dr_{\sigma}\Phi_{\lef,1}(0,t)=\dr_{\sigma}\Phi_{\ri,1}(0,t).$$
The system becomes:
\begin{align*}
\lef_{\sigma} &:& \Big(\cN_{0}^{\lef}-\frac{1}{8} \Big)\Phi_{\lef,1}&=0,\\
\ri\ &:& \Big(\cN_{0}^{\ri}-\frac{1}{8} \Big)\Phi_{\ri,1}&=0.\hskip5em
\end{align*}
We apply Lemma \ref{lem-N0lefri} with $F_{\lef}=0$, $F_{\ri}=0$, $G_{0}=0$, $H=-g_{0}'(0)c_{0}(t)$ to get:
$$G=-g_{0}'(0)(T^\ri +T^\lef\Pi_{1})^{-1}c_{0}.$$
Since $G=\Phi_{\ri,1}$ and $\zeta=-g_{1}(0)$, this determines $\Phi_{\lef,1}$, $\Phi_{\ri,1}$ and $g_{1}(0)$.

\paragraph{Terms of order $h^{2/3}$}
The interior equations write:
\begin{align*}
\lef_{s}&:&\L_{2}\Psi_{\lef,0}+\L_{0}\Psi_{\lef,2}&=\sum_{l+k=2}\gamma_{l}\Psi_{\lef,k}\\
\lef_{\sigma}&:&\cN_{0}^{\lef}\Phi_{\lef,2}&=\sum_{l+k=2}\gamma_{l}\Phi_{\lef,k}\\
\ri&:&\cN_{0}^{\ri}\Phi_{\ri,2}&=\frac{1}{8}\Phi_{\ri,2},\hskip5em
\end{align*}
with 
$$\L_{2}\Psi_{\lef,0}=-g_{0}''(s)c_{0}(t)+\frac{1}{\pi^3\sqrt{2}} sg_{0}(s)\dr_{t}^2(c_{0}).$$
Lemma \ref{lem-L0} and then Lemma \ref{lem-Ai} imply:
\begin{equation}
\label{E:g0}
   -g_{0}''-\frac{1}{4\pi\sqrt{2}} sg_{0}=\gamma_{2}g_{0}.
\end{equation}
Thus, $\gamma_{2}$ is one of the eigenvalues of the Airy operator  and $g_{0}$ an associated eigenfunction.
In particular, this determines the unknown functions of the previous steps. We are led to take:
$$\Psi_{\lef,2}(s,t)=\Psi_{\lef,2}^{\perp}+g_{2}(s)c_{0}(t), \mbox{ with }\Psi_{\lef,2}^{\perp}=0$$
and to the system:
\begin{align*}
\lef_{\sigma}&:&\Big(\cN_{0}^{\lef}-\frac{1}{8} \Big)\Phi_{\lef,2}&=0\\
\ri&:& \Big(\cN_{0}^{\ri}-\frac{1}{8} \Big)\Phi_{\ri,2}&=0.\hskip5em
\end{align*}
Using Lemma \ref{lem-N0lefri}, we find 
$$G=-g_{1}'(0)(T^\ri +T^\lef\Pi_{1})^{-1}c_{0}.$$
This determines $\Phi_{\ri,2}$,  $\Phi_{\lef,2}$ and $g_{2}(0)$. The function $g_{1}$ is still unknown at this step.

\paragraph{Further terms}
Let us assume that we can write $\Psi_{\lef,k}=\Psi^{\perp}_{\lef,k}+g_{k}(s)c_{0}(t)$ for $0\leq k\leq j$ and that $(g_{k})_{0\leq k\leq j-2}$ and $(\Psi^{\perp}_{\lef,k})_{0\leq k\leq j}$  are determined. Let us also assume that $g_{j-1}(0)$, $(\gamma_{k})_{0\leq k \leq j}$,  $(\Phi_{\ri,k})_{0\leq k\leq j-1}$, $(\Phi_{\lef,k})_{0\leq k\leq j-1}$ are already known. Finally, we assume that $g_{j}(0)$, $\Phi_{\lef,j}$, $\Phi_{\ri,j}$ are known once $g_{j-1}$ is determined and that all the functions have an exponential decay.

Let us collect the terms of order $h^{(j+1)/3}$.
The interior equations write:
\begin{align*}
\lef_{s}&:&\sum_{k=0}^{j+1} \L_{k}\Psi_{\lef,j+1-k}&=\sum_{k=0}^{j+1}\gamma_{k}\Psi_{\lef,j+1-k}\\
\lef_{\sigma}&:&\sum_{k=0}^{j+1}\cN_{k}^{\lef}\Phi_{\lef,j+1-k}&=\sum_{k=0}^{j+1}\gamma_{k}\Phi_{\lef,j+1-k}\\
\ri&:&\sum_{k=0}^{j+1}\cN_{k}^{\ri}\Phi_{\ri,j+1-k}&=\sum_{k=0}^{j+1}\gamma_{k}\Phi_{\ri,j+1-k},\hskip5em
\end{align*}
We examine the first equation and notice that $\L_{1}=0$ and $\gamma_{1}=0$ so that $\Psi_{\lef,j}$ does not appear. We can write this equation in the form:
\begin{multline*}
   \left(\L_{0}-\frac{1}{8}\right)\Psi_{\lef,j+1}=
   -\L_{2}\Psi_{\lef,j-1}-\gamma_{2}\Psi_{\lef,j-1}-\gamma_{j+1}\Psi_{\lef,0}\\[-1.2ex]
   -\sum_{k=4}^{j+1} \L_{k}\Psi_{\lef,j+1-k}
   -\sum_{k=3}^{j}\gamma_{k}\Psi_{\lef,j+1-k}.
\end{multline*}
We apply Lemma \ref{lem-L0} and we obtain an equation in the form:
$$-g_{j-1}''-\frac{1}{4\pi\sqrt{2}} sg_{j-1}-\gamma_{2}g_{j-1}=f+\gamma_{j+1}g_{0},$$
where $f$ and $g_{j-1}(0)$ are known. Then, Lemma \ref{lem-Ai} applies and provides a unique value of $\gamma_{j+1}$ such that $g_{j-1}$ has an exponential decay. 
From the recursion assumption, we deduce that $g_{j}(0)$, $\Phi_{\lef,j}$, $\Phi_{\ri,j}$ are now determined.
Lemma \ref{lem-L0} uniquely determines $\Psi_{\lef,j+1}^{\perp}$ such that:
$$\Psi_{\lef,j+1}=\Psi_{\lef,j+1}^{\perp}+g_{j+1}(s)c_{0}(t).$$
We can now write the system in the form:
\begin{align*}
\lef_{\sigma}&:&\Big(\cN_{0}^{\lef}-\frac{1}{8} \Big)\Phi_{\lef,j+1}&=F_{\lef}\\
\ri&:& \Big(\cN_{0}^{\ri}-\frac{1}{8} \Big)\Phi_{\ri,j+1}&=F_{\ri},\hskip5em
\end{align*}
where $F_{\lef}, F_{\ri}$ have an exponential decay. 
The transmission conditions are, cf.\ \eqref{transD}--\eqref{trans}:    
\begin{align*}
   \Phi_{\lef,j+1}(0,t) &=\Phi_{\ri,j+1}(0,t)-\Psi_{\lef,j+1}(0,t)\\
   &=\Phi_{\ri,j+1}(0,t)-\Psi_{\lef,j+1}^{\perp}(0,t)-g_{j+1}(0)c_{0}(t)
\end{align*}
and
$$ \partial_\sigma\Phi_{\lef,j+1}(0,t)-\partial_\sigma\Phi_{\ri,j+1}(0,t) = H(t) =-g'_{j}(0)c_{0}(t)+\tilde{H}(t),$$
where $\tilde{H}$ is known.
We can apply Lemma $\ref{lem-N0lefri}$ which determines $\Phi_{\ri,j+1}$, $\Phi_{\lef,j+1}$ (with an exponential decay) and $g_{j+1}(0)$ once $g_{j}$ is known.

\paragraph{Quasimodes}
The previous construction leads to introduce:
\begin{subequations}
\label{6quasi}
\begin{equation}
  \hat{\psi}_{h}^{[J]}(u,\cdot) = 
   \begin{cases}\di
   \ \sum_{j=0}^{J+2} \left(\Psi_{\lef,j}\Big(\frac{u}{h^{2/3}},t\Big)+\Phi_{\lef,j}\Big(\frac{u}{h},t\Big)\right) h^{j/3}
   \ \ &\mbox{when} \ \  u\le0 \\[0.8ex] \di
   \ \sum_{j=0}^{J+2} \Phi_{\ri,j}\Big(\frac{u}{h},\tau\Big)\, h^{j/3}+ u\,
   \chi^\ri\left(\frac{u}{h}\right)R_{J,h}(\tau)
   \ \ &\mbox{when} \ \  u\ge0 \,,
   \end{cases}
\end{equation}
where the correction term
\begin{align}
   R_{J,h}(\tau) &= \dr_{s}\Psi_{\lef,J+2}(0,\tau) h^{J/3}\\ \nonumber
   &-\sum_{j=J}^{J+2}\left(\frac{t\dr_{t}}{\pi\sqrt{2}}
   \Big(\Psi_{\lef,j}(0,\tau)+\Phi_{\lef,j}(0,\tau)\Big) \right)h^{j/3}
   +\sum_{j=J}^{J+2}\frac{\dr_{\tau}}{\pi\sqrt{2}}\Phi_{\ri,j}(0,\tau)\, h^{j/3}
\end{align}
is added to make $\hat{\psi}_{h}^{[J]}$ satisfy the transmission condition \eqref{trans}. Here we have used a smooth cutoff function $\chi^\ri$ being $1$ near $0$.
By construction, $\psi_{h}^{[J]}$ defined by the identity 
\begin{equation}
\label{}
   \psi_{h}^{[J]}(x,y)=\chi^{\lef}(u)\,\hat{\psi}_{h}^{[J]}(u,\cdot)
\end{equation} 
\end{subequations}
belongs to the domain of $\L_{\Gui}(h)$.
Using the exponential decays, for all $J\in\N$ we get the existence of $h_{0}>0$, $C(J,h_{0})>0$ such that for $h\in(0,h_{0})$:
$$\Big\|\Big(\L_{\Gui}(h)-\sum_{j=0}^{J+2}\gamma_{j} h^{j/3} \Big)
\psi_{h}^{[J]}\Big\|\leq C(J,h_{0}) \,h^{1+J/3}.$$

\subsection{Agmon estimates and consequences}\label{6.2}
In this last subsection, we prove Theorem \ref{spectrumguide}. For that purpose, we first state Agmon estimates to show that the first eigenfunctions are essentially living in the triangle $\Tri$ so that we can compare the problem in the whole guide with the triangle (see also Section \ref{S:4.4} where this idea was explained in the one-dimensional setting).
\begin{prop}
\label{6Agmon}
Let $(\la,\psi)$ be an eigenpair of $\L_{\Gui}(h)$ such that $|\la-\frac{1}{8}|\leq Ch^{2/3}$. There exist $\alpha>0$, $h_{0}>0$ and $C>0$ such that for all $h\in(0,h_{0})$, we have:
$$\int_{x\geq 0} e^{\alpha h^{-1}x}
\Big(|\psi|^2 + |h\dr_{x}\psi|^2\Big)\, dxdy\leq C\|\psi\|^2 .$$
\end{prop}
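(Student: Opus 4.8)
\emph{Proof (plan).} The statement is an ``effective one--dimensional'' Agmon estimate in the same spirit as Proposition~\ref{AgmonBO} and Propositions~\ref{Agmon1}--\ref{Agmon2}, but now for the genuine two--dimensional operator $\L_{\Gui}(h)$. The structural fact that makes it work is the behaviour of the first transverse eigenvalue of $\L_{\Gui}(h)$ at abscissa $x$: on a fiber $\{x=x_0<0\}$ the operator $-\dr_y^2$ with Neumann--Dirichlet conditions has first eigenvalue $\tfrac{\pi^2}{4(x_0+\pi\sqrt2)^2}\ge\tfrac18$, while on a fiber $\{x=x_0>0\}$ the operator $-\dr_y^2$ with pure Dirichlet conditions has first eigenvalue $\tfrac12$. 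Thus the effective potential jumps from (at best) $\tfrac18$ on the left to $\tfrac12$ on the right, and the spectral gap $\tfrac12-\tfrac18=\tfrac38$ is what produces exponential decay at scale $h$ on $\{x\ge0\}$. No decay can be extracted inside the triangle by this argument, which is consistent with the statement.

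The plan is as follows. Fix a bounded Lipschitz weight $\Phi=\Phi_h$ depending only on $x$, with $\Phi\equiv0$ on $\{x\le0\}$, $|\Phi'|\le\tfrac{\alpha}{2}h^{-1}$, and $\Phi(x)\nearrow\tfrac{\alpha}{2}h^{-1}x$ once the truncation at large $x$ is removed; boundedness guarantees that $e^{2\Phi}\psi$ belongs to $H^1_\Mix(\Om)$, so the integrations by parts below are legitimate (the Neumann part of $\dr\Om$ on $\{x<0\}$ contributes no boundary term since $\Phi$ depends on $x$ alone and $\dr_y\psi=0$ there), and the final estimate follows by monotone convergence. Testing $\L_{\Gui}(h)\psi=\la\psi$ against $e^{2\Phi}\psi$ and using that $\Phi$ is independent of $y$ yields the Agmon identity
\[
   h^2\|\dr_x(e^{\Phi}\psi)\|^2+\|\dr_y(e^{\Phi}\psi)\|^2
   =\la\,\|e^{\Phi}\psi\|^2+h^2\,\||\Phi'|e^{\Phi}\psi\|^2 .
\]
Bounding the left-hand side from below fiber by fiber with the transverse inequalities above gives $\|\dr_y(e^{\Phi}\psi)\|^2\ge\tfrac18\int_{x<0}|\psi|^2+\tfrac12\int_{x>0}e^{2\Phi}|\psi|^2$, while on the right-hand side $\la\le\tfrac18+Ch^{2/3}$ and $h^2|\Phi'|^2\le\tfrac{\alpha^2}{4}$. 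After rearrangement the contributions on $\{x<0\}$ cancel up to a $Ch^{2/3}\|\psi\|^2$ remainder, leaving
\[
   \Big(\tfrac38-\tfrac{\alpha^2}{4}-Ch^{2/3}\Big)\int_{x>0}e^{2\Phi}|\psi|^2\le Ch^{2/3}\|\psi\|^2 .
\]
Choosing $\alpha$ small enough and then $h_0$ small, the bracket is bounded below by a positive constant, so $\int_{x>0}e^{2\Phi}|\psi|^2\le C\|\psi\|^2$; removing the truncation gives the $L^2$ part of the claim with weight $e^{\alpha h^{-1}x}$. For the gradient term, the Agmon identity also gives $h^2\|\dr_x(e^{\Phi}\psi)\|^2\le C\|\psi\|^2$, and combining this with the pointwise inequality $|\dr_x(e^{\Phi}\psi)|^2\ge\tfrac12 e^{2\Phi}|\dr_x\psi|^2-e^{2\Phi}|\Phi'|^2|\psi|^2$ together with the $L^2$ bound just obtained yields $\int_{x>0}e^{2\Phi}|h\dr_x\psi|^2\le C\|\psi\|^2$.

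The computations are routine; there is no serious obstacle. The only points requiring care are the correct identification of the transverse Dirichlet eigenvalue $\tfrac12$ on $\{x>0\}$ (so that the full gap $\tfrac38$ is available to absorb both the weight cost $\tfrac{\alpha^2}{4}$ and the $O(h^{2/3})$ error coming from $|\la-\tfrac18|\le Ch^{2/3}$), the bookkeeping of the cancellation of the $\{x<0\}$ terms, and the standard justification of the integration by parts and of the passage to the limit in the truncation of $\Phi$.
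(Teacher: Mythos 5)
Your proof is correct and is essentially the implementation the paper has in mind: the paper leaves this proof to the reader, citing as ingredients exactly the fiber-wise transverse lower bound (i.e.\ that $\mathcal{H}_{\BO,\Gui}(h)$ minorizes $\L_{\Gui}(h)$ in the form sense, which is your $\tfrac18$ vs.\ $\tfrac12$ gap) combined with the Agmon/IMS localization, and your weighted identity with a truncated weight supported in $\{x>0\}$ is a clean, standard way to carry that out. The quantitative bookkeeping ($\tfrac38-\tfrac{\alpha^2}{4}-Ch^{2/3}>0$), the justification of the integration by parts at the form level, and the recovery of the $h\dr_x\psi$ term are all sound.
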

\begin{proof}
The proof is left to the reader, the main ingredients being the IMS formula and the fact that $\mathcal{H}_{\BO, \Gui}$ is a lower bound of $\L_{\Gui}(h)$ in the sense of quadratic forms (see the analysis of Propositions \ref{Agmon1} and \ref{Agmon2}). See also \cite[Proposition 6.1]{DaLafRa11} for a more direct method.
\end{proof}
\paragraph{Proof of Theorem {\rm\ref{spectrumguide}}}
Let $\psi_{n}^h$ be an eigenfunction associated with $\la_{\Gui,n}(h)$ and assume that the $\psi_{n}^h$ are orthogonal in $L^2(\Omega)$, and thus for the bilinear form $B_{\Gui,h}$ associated with the operator $\L_{\Gui}(h)$.
 
We choose $\varepsilon\in(0,\frac13)$ and introduce a smooth cutoff $\chi^{h}$at the scale $h^{1-\eps}$ for positive $x$
\[
   \chi^h(x) =\chi(xh^{\varepsilon-1}) \quad\mbox{with}\quad 
   \chi\equiv1 \ \mbox{ if }\ x\le\tfrac12,\quad
   \chi\equiv0  \ \mbox{ if }\ x\ge1
\] 
and we consider the functions $\chi^{h}\psi_{n}^h$.
We denote:
$$\EN=\mathrm{span}(\chi^h\psi_{1}^h,\ldots,\chi^h\psi_{N_0}^h).$$
We have:
$$Q_{\Gui,h}(\psi_{n}^h)=\la_{\Gui,n}(h)\|\psi_{n}^h\|^2$$
and deduce by the Agmon estimates of Proposition \ref{6Agmon}:
$$Q_{\Gui,h}(\chi^{h}\psi_{n}^h)=
\big(\la_{\Gui,n}(h)+\OO(h^{\infty})\big)\|\chi^{h}\psi_{n}^h\|^2.$$
In the same way, we get the "almost"-orthogonality, for $n\neq m$:
$$B_{\Gui,h}(\chi^{h}\psi_{n}^h,\chi^{h}\psi_{m}^h)=\OO(h^{\infty}).$$
We deduce, for all $v\in\EN$:
$$Q_{\Gui,h}(v)\leq
\big(\la_{\Gui,N_{0}}(h)+\OO(h^{\infty})\big)\|v\|^2.$$
We can extend the elements of $\EN$ by zero so that $Q_{\Gui,h}(v)=Q_{\Tri_{\eps,h}}(v)$ for ${v\in\EN}$ where $\Tri_{\eps,h}$ is the triangle with vertices $(-\pi\sqrt{2},0)$, $(h^{1-\eps},0)$ and $(h^{1-\eps},h^{1-\eps}+\pi\sqrt{2})$. A dilation reduces us to:
$$\left(1+\frac{h^{1-\eps}}{\pi\sqrt{2}}\right)^{-2}(-h^2\dr_{\tilde{x}}^2-\dr_{\tilde{y}}^2)$$
on the triangle $\Tri$.
The lowest eigenvalues of this new operator admits the lower bounds $\frac{1}{8}+z_{\A}(n)h^{2/3}-Ch^{1-\eps}$ ; in particular, we deduce: 
$$\la_{\Gui,N_{0}}(h)\geq \frac{1}{8}+z_{\A}(N_{0})h^{2/3}-Ch^{1-\eps}.$$
This provides the separation of the eigenvalues and, joint with Proposition \ref{quasigui}, this implies Theorem \ref{spectrumguide}.

\pagebreak[4]

\subsection{Conclusion}
\subsubsection{Eigenfunction asymptotics}
\label{S631}
With Theorem \ref{spectrumguide}, we deduce that the lowest eigenvalues of $\mathcal{L}_{\Gui}(h)$ are simple as soon as $h$ is small enough. Then, through the spectral theorem, we infer that the quasimodes constructed in \eqref{6quasi} are approximations of the true eigenfunctions (see for instance \cite{Hel88}). As a consequence, with the coordinates $u, t, \tau$ defined in \eqref{Eut} and \eqref{E:utau}, the $n$-th normalized eigenfunction admits the following expansion:
\begin{equation}
\label{QuasiLgui}
  \hat{\psi}_{n,h}(u,\cdot)\sim
   \begin{cases}\do
   \ \sum_{j\geq 0} \left(\Psi_{n,\lef,j}\Big(\frac{u}{h^{2/3}},t\Big)+\Phi_{n,\lef,j}\Big(\frac{u}{h},t\Big)\right) h^{j/3}
   \ \ &\mbox{when} \ \  u\le 0 \\[0.8ex] \di
   \ \sum_{j\geq 0} \Phi_{n,\ri,j}\Big(\frac{u}{h},\tau\Big)\, h^{j/3}
   \ \ &\mbox{when} \ \  u\ge 0 \,,
   \end{cases}
\end{equation}
where the functions $\Psi_{n,\lef,j}, \Phi_{n,\lef,j}, \Phi_{n,\ri,j}$ were constructed in Section \ref{S:512} (the subscript $n$ emphasizes the dependence on the rank of the zero of the Airy function determined when solving Equation \eqref{E:g0}).

\subsubsection{Remark on the Born-Oppenheimer approximation}

At the very beginning of this paper we have introduced the operator $\mathcal{H}_{\BO, \Gui}(h)$ (see \eqref{HG}) and we have somehow suggested that it is an approximation of $\mathcal{L}_{\Gui}(h)$ in the limit $h\to 0$. It turns out that we have not used $\mathcal{H}_{\BO, \Gui}(h)$ to investigate the spectrum of the waveguide. 
In fact, our analysis proves that the two term asymptotic expansion of the eigenvalues of $\mathcal{H}_{\BO, \Tri}(h)$, $\mathcal{H}_{\BO, \Gui}(h)$, $\mathcal{L}_{\Tri}(h)$ and $\mathcal{L}_{\Gui}(h)$ are the same so that we can \textit{a posteriori} say that $\mathcal{H}_{\BO, \Gui}(h)$ approximates $\mathcal{L}_{\Gui}(h)$.

\subsubsection{Back to the physical coordinates}
The two-term asymptotics
\[
   \Psi^\lef_0(s,t) \mathds{1}_{s<0} 
   + h^{1/3}\Big(\Phi^\lef_1(\sigma,t)  \mathds{1}_{\sigma<0}+\Phi^\ri_1(\sigma,\tau)\mathds{1}_{\sigma>0}\Big)
\]
provides us with the leading behavior of the eigenvectors in the scaled half-guide $\Omega$. It is interesting to come back to the physical domain and to interpret this two-term asymptotics in the original variables $(x_1,x_2)$. We have to chain formulas \eqref{E:xy} giving $(x,y)$, \eqref{Eut} giving $(u,t)$,  \eqref{E:utau} giving $(u,\tau)$, and \eqref{E:ssigma2} giving $s$ and $\sigma$. We have also to take the relation $h=\tan\theta$ into account.

Returning to section \ref{S:512} and more particularly to \eqref{E:g0} --- and Lemma \ref{lem-Ai}, we find that
\[
   \Psi^\lef_0(s,t) = \A\Big((4\pi\sqrt{2})^{-1/3}s+z_{\A}(n)\Big) 
   \cos\Big(\frac{\pi t}{2}\Big).
\]
Coming back to physical variables $(x_1,x_2)$ we find that
\[
   \Psi^\lef_0(s,t) = \A\Big( \Big(\frac{\theta}{2\pi}\Big)^{1/3}x_1 + z_{\A}(n)\Big) 
   \cos\Big(\frac{x_2}{2} - \frac{\theta x_1}{2\pi} \Big) + \OO(\theta^2)\quad\mbox{as}\quad \theta\to0.
\]
As for the term $\Phi_1 := \Phi^\lef_1  \mathds{1}_{\sigma<0}+\Phi^\ri_1\mathds{1}_{\sigma>0}$, we find that there exists a profile $\check \Phi_1$ independent of $\theta$ such that
\[
   \Phi_1(\sigma,t\mathds{1}_{\sigma<0}+\tau\mathds{1}_{\sigma>0}) = \check \Phi_1(\check x_1,\check x_2) 
   + \OO(\theta^2)\quad\mbox{as}\quad \theta\to0.
\]
Here $\check x_1=x_1$ and
\[
   \check x_2 = \begin{cases}
   \di\frac{\pi x_2\cos\theta}{\pi+x_1\sin\theta}\quad &\mbox{if}\ \ x_1<0,\\[1em]
   x_2\cos\theta - x_1\sin\theta\quad &\mbox{if}\ \ x_1>0.
   \end{cases}
\]
This profile $\check \Phi_1$ is exponentially decreasing as $\check x_1\to\pm\infty$. It is solution of a transmission problem with smooth data for the Laplace operator on the infinite strip $\R\times (0,\pi)$ with mixed Neumann-Dirichlet conditions on the bottom side $\check x_2=0$, and Dirichlet on $\check x_2=\pi$. Hence, it is piecewise $H^2$ modulo the addition of a multiple of the singular function $\psi^{0}_\sing$, cf.\ \eqref{eq:sing}.

The consequence of this is that the coefficient of the singularity $\psi^{\theta}_\sing$ for a normalized eigenvector of $\Delta^\Dir_{\Omega_{\theta}}$ behaves as $\OO(\theta^{1/3})$ as $\theta\to0$.

\subsubsection{X-shaped waveguides}
Our results provide without any difficulty the structure of the eigenpairs of lowest energy in the small angle limit when the domain is formed by the union of two infinite strips of same width $\pi$ crossing with an angle $2\theta$ (this model appears in the physical literature, see \cite{BEPS02}). The two non-convex corners of this structure are at the distance $\frac\pi{\sin\theta}=\OO(\theta^{-1})$. This X-structure can be viewed as a double symmetric V-structure and the eigenmodes can be constructed from the V-structure eigenmodes since they interact very weakly (their lower scale is $\theta^{1/3}$). Nevertheless they do interact by an exponentially small tunnelling effect which would be interesting to investigate.

\bigskip
\paragraph{Acknowledgments}
The authors would like to thank Francis Nier for giving them the impulse to write this paper. They are also grateful to the referee whose comments have improved the presentation of the strategy and of the spirit developed throughout this paper.


\begin{thebibliography}{10}

\bibitem{Agmon82}
{\sc S.~Agmon}.
\newblock {\em Lectures on exponential decay of solutions of second-order
  elliptic equations: bounds on eigenfunctions of {$N$}-body {S}chr\"odinger
  operators}, volume~29 of {\em Mathematical Notes}.
\newblock Princeton University Press, Princeton, NJ 1982.

\bibitem{Agmon85}
{\sc S.~Agmon}.
\newblock Bounds on exponential decay of eigenfunctions of {S}chr\"odinger
  operators.
\newblock In {\em Schr\"odinger operators ({C}omo, 1984)}, volume 1159 of {\em
  Lecture Notes in Math.}, pages 1--38. Springer, Berlin 1985.

\bibitem{ABGM91}
{\sc Y.~Avishai, D.~Bessis, B.~G. Giraud, G.~Mantica}.
\newblock Quantum bound states in open geometries.
\newblock {\em Phys. Rev. B} {\bf 44}(15) (Oct 1991) 8028--8034.

\bibitem{BDPR}
{\sc V.~Bonnaillie, M.~Dauge, N.~Popoff, N.~Raymond}.
\newblock Discrete spectrum of a model schr\"odinger operator on the half-plane
  with neumann conditions.
\newblock {\em To appear in ZAMP}  (2011).

\bibitem{BoFre09}
{\sc D.~Borisov, P.~Freitas}.
\newblock Singular asymptotic expansions for {D}irichlet eigenvalues and
  eigenfunctions of the {L}aplacian on thin planar domains.
\newblock {\em Ann. Inst. H. Poincar\'e Anal. Non Lin\'eaire} {\bf 26}(2)
  (2009) 547--560.

\bibitem{BoFre10}
{\sc D.~Borisov, P.~Freitas}.
\newblock {Asymptotics of Dirichlet eigenvalues and eigenfunctions of the
  Laplacian on thin domains in $\mathbb{R}^d$ }.
\newblock {\em J. Funct. Anal.} {\bf 258} (2010) 893--912.

\bibitem{BO27}
{\sc M.~Born, R.~Oppenheimer}.
\newblock {Zur Quantentheorie der Molekeln}.
\newblock {\em Ann. Phys.} {\bf 84} (1927) 457--484.

\bibitem{BEPS02}
{\sc E.~N. Bulgakov, P.~Exner, K.~N. Pichugin, A.~F. Sadreev}.
\newblock Multiple bound states in scissor-shaped waveguides.
\newblock {\em Phys. Rev. B} {\bf 66} (Oct 2002) 155109.

\bibitem{CLMM93}
{\sc J.~P. Carini, J.~T. Londergan, K.~Mullen, D.~P. Murdock}.
\newblock Multiple bound states in sharply bent waveguides.
\newblock {\em Phys. Rev. B} {\bf 48}(7) (Aug 1993) 4503--4515.

\bibitem{Carron04}
{\sc G.~Carron, P.~Exner, D.~Krej{\v{c}}i{\v{r}}{\'{\i}}k}.
\newblock Topologically nontrivial quantum layers.
\newblock {\em J. Math. Phys.} {\bf 45}(2) (2004) 774--784.

\bibitem{Duclos05}
{\sc B.~Chenaud, P.~Duclos, P.~Freitas, D.~Krej{\v{c}}i{\v{r}}{\'{\i}}k}.
\newblock Geometrically induced discrete spectrum in curved tubes.
\newblock {\em Differential Geom. Appl.} {\bf 23}(2) (2005) 95--105.

\bibitem{CDS81}
{\sc J.-M. Combes, P.~Duclos, R.~Seiler}.
\newblock {The Born-Oppenheimer approximation}.
\newblock {\em Rigorous atomic and molecular physics (eds G. Velo, A.
  Wightman).}  (1981) 185--212.

\bibitem{DauGru98}
{\sc M.~Dauge, I.~Gruais}.
\newblock Asymptotics of arbitrary order for a thin elastic clamped plate.
  {II}. {A}nalysis of the boundary layer terms.
\newblock {\em Asymptot. Anal.} {\bf 16}(2) (1998) 99--124.

\bibitem{DaLafRa11}
{\sc M.~Dauge, Y.~Lafranche, N.~Raymond}.
\newblock Quantum waveguides with corners.
\newblock In {\em Actes du {C}ongr\`es SMAI 2011)}, ESAIM Proc. EDP Sciences,
  Les Ulis 2012.

\bibitem{Duclos95}
{\sc P.~Duclos, P.~Exner}.
\newblock Curvature-induced bound states in quantum waveguides in two and three
  dimensions.
\newblock {\em Rev. Math. Phys.} {\bf 7}(1) (1995) 73--102.

\bibitem{EL98}
{\sc {\'A}.~Elbert, A.~Laforgia}.
\newblock Asymptotic expansion for zeros of {B}essel functions and of their
  derivatives for large order.
\newblock {\em Atti Sem. Mat. Fis. Univ. Modena} {\bf 46}(suppl.) (1998)
  685--695.
\newblock Dedicated to Prof. C. Vinti (Italian) (Perugia, 1996).

\bibitem{ExSe89}
{\sc P.~Exner, P.~{\v{S}}eba}.
\newblock Bound states in curved quantum waveguides.
\newblock {\em J. Math. Phys.} {\bf 30}(11) (1989) 2574--2580.

\bibitem{Exner89}
{\sc P.~Exner, P.~{\v{S}}eba, P.~{\v{S}}{\v{t}}ov{\'{\i}}{\v{c}}ek}.
\newblock {On existence of a bound state in an L-shaped waveguide }.
\newblock {\em Czech. J. Phys.} {\bf 39}(11) (1989) 1181--1191.

\bibitem{ExTa10}
{\sc P.~Exner, M.~Tater}.
\newblock {Spectrum of Dirichlet Laplacian in a conical layer}.
\newblock {\em J. Phys.} {\bf A43} (2010).

\bibitem{Fre07}
{\sc P.~Freitas}.
\newblock { Precise bounds and asymptotics for the first Dirichlet eigenvalue
  of triangles and rhombi }.
\newblock {\em J. Funct. Anal.} {\bf 251} (2007) 376--398.

\bibitem{FriSolo08}
{\sc L.~Friedlander, M.~Solomyak}.
\newblock On the spectrum of narrow periodic waveguides.
\newblock {\em Russ. J. Math. Phys.} {\bf 15}(2) (2008) 238--242.

\bibitem{FriSolo09}
{\sc L.~Friedlander, M.~Solomyak}.
\newblock On the spectrum of the {D}irichlet {L}aplacian in a narrow strip.
\newblock {\em Israel J. Math.} {\bf 170} (2009) 337--354.

\bibitem{Grisvard85}
{\sc P.~Grisvard}.
\newblock {\em Boundary Value Problems in Non-Smooth Domains}.
\newblock Pitman, London 1985.

\bibitem{Hel88}
{\sc B.~Helffer}.
\newblock {\em Semi-classical analysis for the {S}chr\"odinger operator and
  applications}, volume 1336 of {\em Lecture Notes in Mathematics}.
\newblock Springer-Verlag, Berlin 1988.

\bibitem{HelMo02}
{\sc B.~Helffer, A.~Morame}.
\newblock Magnetic bottles for the {N}eumann problem: the case of dimension 3.
\newblock {\em Proc. Indian Acad. Sci. Math. Sci.} {\bf 112}(1) (2002) 71--84.
\newblock Spectral and inverse spectral theory (Goa, 2000).

\bibitem{KMSW92}
{\sc M.~Klein, A.~Martinez, R.~Seiler, X.~P. Wang}.
\newblock On the {B}orn-{O}ppenheimer expansion for polyatomic molecules.
\newblock {\em Comm. Math. Phys.} {\bf 143}(3) (1992) 607--639.

\bibitem{Kondratev67}
{\sc V.~A. Kondrat'ev}.
\newblock Boundary-value problems for elliptic equations in domains with
  conical or angular points.
\newblock {\em Trans. Moscow Math. Soc.} {\bf 16} (1967) 227--313.

\bibitem{LuPan00a}
{\sc K.~Lu, X.-B. Pan}.
\newblock Surface nucleation of superconductivity in 3-dimensions.
\newblock {\em J. Differential Equations} {\bf 168}(2) (2000) 386--452.
\newblock Special issue in celebration of Jack K. Hale's 70th birthday, Part 2
  (Atlanta, GA/Lisbon, 1998).

\bibitem{Martinez89}
{\sc A.~Martinez}.
\newblock D\'eveloppements asymptotiques et effet tunnel dans l'approximation
  de {B}orn-{O}ppenheimer.
\newblock {\em Ann. Inst. H. Poincar\'e Phys. Th\'eor.} {\bf 50}(3) (1989)
  239--257.

\bibitem{MoTr05}
{\sc A.~Morame, F.~Truc}.
\newblock Remarks on the spectrum of the {N}eumann problem with magnetic field
  in the half-space.
\newblock {\em J. Math. Phys.} {\bf 46}(1) (2005) 012105, 13.

\end{thebibliography}

\bigskip

\end{document}